\documentclass{siamart250211}  
\usepackage{amsmath}
\usepackage{float,subfigure}

\usepackage[sort&compress]{natbib}
\usepackage{afterpage}
\usepackage{amsfonts}

\usepackage{amssymb}
\usepackage{graphicx}
\usepackage{amssymb}
\usepackage{amsfonts}
\usepackage{url}
\usepackage{epstopdf}
\usepackage{color}
\usepackage{tikz-cd}
\usepackage{bm}
\usepackage{multirow}
\usepackage{rotating}

\newcommand{\bsub}{\begin{subequations}}
\newcommand{\esub}{\end{subequations}$\!$}

\newcommand{\eps}{{\varepsilon}}
\newcommand{\X}{{\bf X}}
\newcommand{\B}{{\mathcal B}}

\newcommand{\kapdif}{\mathcal{Q}}
\newcommand{\orthomat}{\mathbf{Q}}

\newcommand{\teta}{\eta}
\newcommand{\txione}{\xi_1}
\newcommand{\txitwo}{\xi_2}
\newcommand{\tdelta}{\Delta_{\y}}

\newcommand{\HC}{{\mathcal H}}

\newcommand{\highblue}[1]{{\color{blue}#1}}

\newcommand{\M}{\mathcal M}

\newcommand{\pa}{{\partial\Omega}}

\newcommand{\clb}{\color{black}}

\newcommand{\x}{{\bf{x}}}

\newcommand{\vc}{{\bf{C}}}

\newcommand{\vb}{{\bf{a}}}
\newcommand{\y}{{\bf{y}}}
\newcommand{\n}{{\bf{n}}}

\newcommand{\area}{|\Omega|}
\newcommand{\R}{{\mathbb{R}}}

\newcommand{\PT}{{\Gamma}}
\newcommand{\DT}{{\bf p}}

\newtheorem{prop}{Proposition}

\renewcommand\thesection{\arabic{section}}
\renewcommand\thesubsection{\thesection.\arabic{subsection}}
\renewcommand\thesubsubsection{\thesubsection.\arabic{subsubsection}}
\renewcommand\theequation{\thesection.\arabic{equation}}

\graphicspath{{figures/}}

\title{Asymptotic Analysis of the Narrow Escape and Berg-Purcell problems on general three-dimensional domains with reactive boundary patches}

\author{Alan E. Lindsay\thanks{University of Notre Dame, Notre Dame,
    IN, 46556, USA.  Email: {\tt a.lindsay@nd.edu} (corresponding author)},
  \and Andrew J. Bernoff\thanks{Department of Mathematics, Harvey Mudd College, Claremont, CA, 91711, USA.  Email: {\tt bernoff@g.hmc.edu}}, 
  \and Denis
  S. Grebenkov\thanks{Laboratoire de Physique de la Mati\'{e}re
    Condens\'{e}e, CNRS -- Ecole Polytechnique, Institut Polytechnique
    de Paris, 91120 Palaiseau, France. Email:
    {\tt denis.grebenkov@polytechnique.edu}}, \and Jeremy
  G. Hoskins\thanks{Department of Statistics, University of Chicago,
    USA and NSF-Simons National Institute for Theory and Mathematics
    in Biology, Chicago, IL. Email: {\tt jeremyhoskins@uchicago.edu}}
  \and Michael J. Ward\thanks{Department of Mathematics, University of
    British Columbia, Vancouver, B.C., Canada, V6T 1Z2. Email:
    {\tt ward@math.ubc.ca}}}

\date{\today}
\begin{document}

\label{firstpage}
\maketitle

\baselineskip=12pt

\begin{abstract} \clb
We present an asymptotic analysis of two diffusive capture problems in
general smooth closed three-dimensional geometries with multiple small
reactive boundary patches of arbitrary shapes.  (i) The narrow escape
problem seeks to determine the escape rate of Brownian particles from
an enclosed region through small boundary windows.  (ii) The related
Berg-Purcell (or narrow entrance) problem seeks to resolve the capture
rate for signaling molecules diffusing outside the cell and entering
through localized reactions at membrane-bound receptors.  We obtain
matched asymptotic solutions of these two problems and thus address
the long-standing challenge of describing the role that curvature and
local reactivities play in modulating diffusive capture rates.  Our
explicit expansions quantify local effects on diffusive capture
through the sizes, shapes, and reactivities of the patches together
with the principal curvatures of the manifold at each patch.  In turn,
we examine global effects on diffusive capture such as the spatial
configuration of patches on the manifold, as encloded by the
associated surface Neumann Green's function and its regular part.  The
accuracy of our asymptotic formulas is validated against a full
numerical solution for an ellipsoidal domain.  Overall, our results
yield new insights on how geometry and stochasticity combine to shape
the dynamics of various biological processes.
%We present an asymptotic analysis of diffusive capture problems in
%general smooth and closed three dimensional geometries with small
%reactive boundary patches. We obtain matched asymptotic solutions to
%the narrow escape and Berg-Purcell problems for multiple partially
%reactive and non-overlapping boundary sites. The narrow escape problem
%seeks to determine the escape rate of Brownian particles from an
%enclosed region through small boundary windows. The related
%Berg-Purcell problem seeks to resolve the reaction rate of a cell with
%diffusing extracellular signaling molecules through localized
%reactions at membrane bound receptors.  Our asymptotic solution in the
%scenario of a general three-dimensional manifold addresses the
%long-standing problem of describing the role that curvature and
%reaction rates play in modulating diffusive capture rates. Our
%explicit expansions quantify the effect of the role that the size and
%reactivity of the sites together with their geometric configuration on
%the manifold. We identify local geometric effects in terms of the
%principal curvatures of the manifold at each site while global
%geometric effects are described through the regular part of the
%associated Green's function. Results from our asymptotic analysis are
%validated against full numerical results for an ellipsoidal-shaped
%domain. Overall, our results yield new insights on how geometry and
%stochasticity combine to shape the dynamics of biological processes.
%\vspace*{0.1cm}
\end{abstract}

\begin{keywords}
    Narrow escape problem, Berg-Purcell, Diffusion, Asymptotic expansions.
\end{keywords}

% MSC 35B25, 35J05, 35P05, 58J50, 92C05

\section{Introduction}\label{sec:into}

The event where a diffusing particle hits a threshold or arrives at a
small reaction site is a key milestone in the completion of numerous
biological phenomena.  Examples include the binding of the T-cell
receptor to a ligand on an opposing cell, the entry of a virus into a
cell or the trafficking of intra-cellular molecules through nuclear
pore complexes
\cite{ZSchuss2013,Wei2011,benichou2014first,schuss2012narrow,RH,app10186543,redner2001guide,FPPA2014,NewbyBressloff2013,Bressloff2024,Grebenkov2023}.

An ongoing and challenging avenue of research is to describe how the
interaction of diffusive processes with complex geometries affects the
timescale of such processes incling shielding, confinement and
boundary induced motion. The interaction of diffusive particles with
curved manifolds is of great importance in understanding cellular
functions where morphology plays a significant role
\cite{Endres2025,Cavanagh2020,GomezCheviakov2015,Curvature2023}.

In this paper, we describe the solution by matched asymptotic analysis
of two related problems in diffusive transport, the {\em Narrow Escape
Problem} (NEP)
\cite{SchussSinger2007,schuss2012narrow,HolcmanReview2014,chen,Lagache2017,carillo2026}
and the {\em Berg-Purcell (BP) problem}
\cite{bergp,LLM2020,HANDY20212237,LWB2017}.  Before introducing the
precise mathematical {\clb formulation} of these problems and our main
results, we outline the geometric setting which is common to both.

Let $\M \subset {\mathbb R}^3$ be a three-dimensional simply-connected
bounded domain,
%that has many small partially reactive
%sites embedded in its boundary $\partial\M$. We assume that the
{\clb whose smooth reflecting} boundary $\partial\M$ hosts the union
$\partial {\mathcal M}_a=\cup_{i=1}^{N} \partial {\mathcal M}_i$ of
$N$ small {\clb partially} reactive patches $\partial {\mathcal M}_i$.
%on an otherwise reflecting boundary.  
Each boundary patch $\partial {\mathcal M}_i$ has a length-scale
$L_i$, and a {\clb constant} reactivity $\B_i>0$.  {\clb More
  specifically, we denote $2L_i$ to be the diameter of the orthogonal
  projection of the patch $\partial\Omega_i$ when mapped onto the
  tangent plane.}  The geometry of each patch is assumed to be
simply-connected with a smooth boundary, but with an otherwise
arbitrary shape.  Within this setting, the NEP is posed on the
interior of $\M$ while the BP problem is solved in the exterior region
$\R^3\setminus\M$.  Owing to differences that arise from this
geometric consideration, we outline the two problems separately.

\begin{figure}[htbp]
\centering
\includegraphics[width=0.9\textwidth]{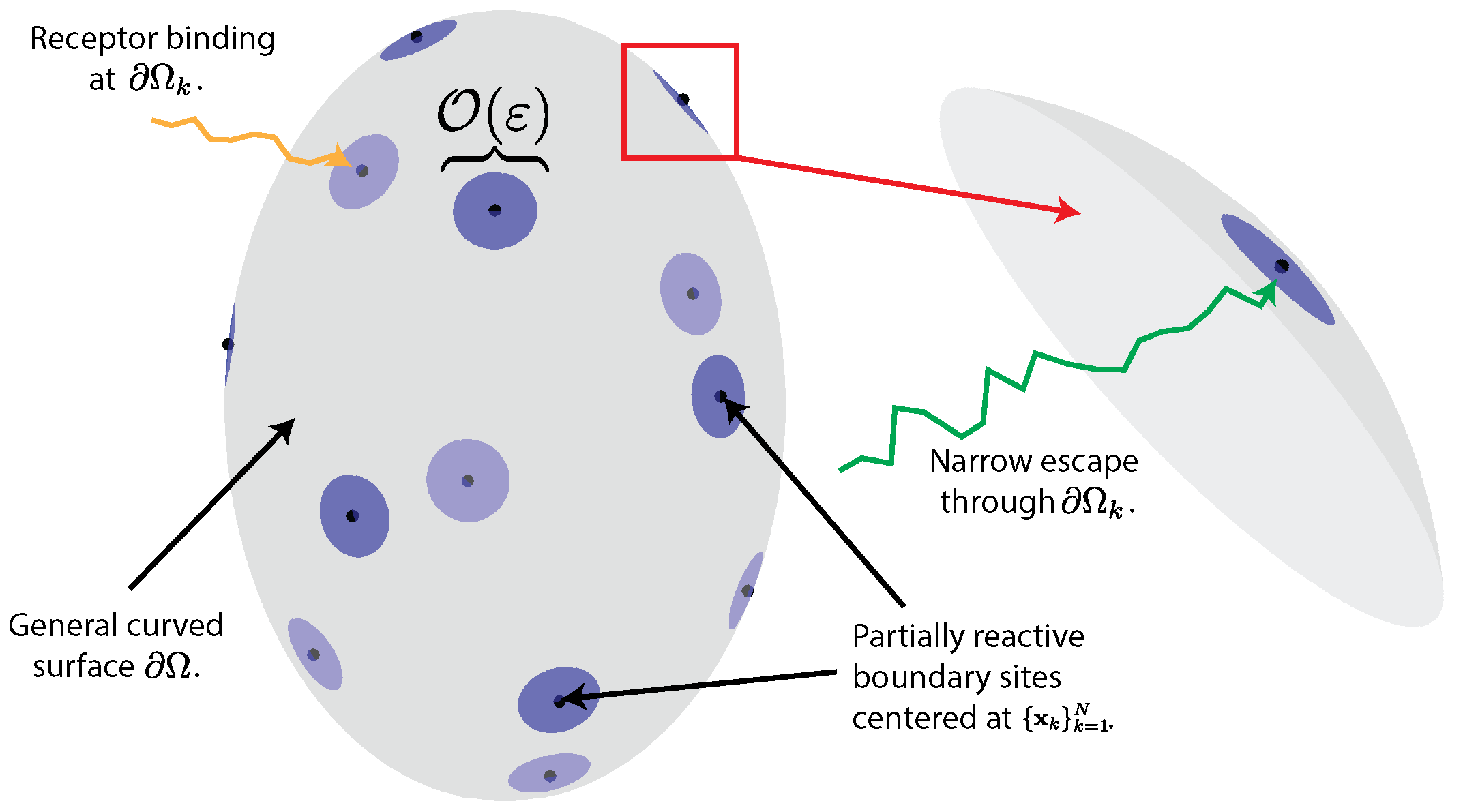}
\caption{
Three-dimensional diffusion interior ($\x\in\Omega$) and exterior
($\x\in\mathbb{R}^3\setminus\Omega$) to a general closed and smooth
curved surface $\partial\Omega$.  The Berg-Purcell problem
(\ref{berg_intro:ssp0}) {\clb solves the exterior diffusion problem
and} seeks the {\clb capture} rate of ligands on a cell surface with
localized reaction receptors $\{\partial\Omega_k\}_{k=1}^N$.  The
narrow escape problem (\ref{mfpt_intro:ssp}) {\clb solves the interior
diffusion problem and} seeks the escape rate of diffusing molecules in
$\Omega$ through small boundary windows
$\{\partial\Omega_k\}_{k=1}^N$.  The non-overlapping {\clb reactive
patches} are centered at points $\{\x_k \}_{k=1}^N $ with spatial
extent $\varepsilon\ll1$.}
\label{fig:intro_schem}
\end{figure}

\noindent \underline{The Berg-Purcell (BP) problem:} A foundational model of
diffusion-mediated signaling, introduced by Howard Berg and Edward
Purcell \cite{bergp}, posits that interactions between cells and the
extra-cellular medium is negotiated through binding of ligands to
membrane-bound surface receptors.  In this context, the steady-state
concentration ${\mathcal U}$ of diffusing ligands satisfies the mixed
Neumann-Robin boundary value problem (BVP)
\bsub \label{berg_intro:ssp0}
\begin{align}
  \Delta_{\X} {\mathcal U} & = 0 \,, \quad \X \in \R^3\backslash {\mathcal M} \,,
  \\  \label{berg_intro:RC}
   D\partial_{n} {\mathcal U} + \B_i {\mathcal U} & = 0\,, \quad \X \in
     \partial {\mathcal M}_i \,, \quad i=1,\ldots,N \,, \\
  \partial_{n} {\mathcal U} & = 0 \,, \quad \X \in \partial {\mathcal M}_r=
                              \partial {\mathcal M}\backslash\cup_{i=1}^{N}
                              \partial{\mathcal M}_i \,,\\
  {\mathcal U} & \sim  {\mathcal U}_{\rm \inf}\left(1 -\frac{{\mathcal C}_{\rm T}}
                 {|\X|} +    {\mathcal O}(|\X|^{-2})\right)
  \,,  \quad \mbox{as} \quad |\X|\to \infty \,, \label{berg_intro:ssp0_c}
\end{align}
\esub 
where $D>0$ is the constant diffusivity, ${\mathcal U}_{\rm \inf}$ is
the constant concentration imposed at infinity, $\Delta_{\X}$ is the
Laplacian in the dimensional coordinate $\X$, and $\partial_n$ is the
outward normal derivative to $\partial {\mathcal M}$, directed into
${\mathcal M}$.  We emphasize that the original work \cite{bergp}
dealt with perfectly reactive patches ($\B_i = \infty$), in which case
the Robin condition (\ref{berg_intro:RC}) is reduced to the Dirichlet
condition ${\mathcal U} = 0$; various physical and probabilistic
interpretations of the Robin boundary condition (\ref{berg_intro:RC})
are discussed in \cite{Grebenkov20,Grebenkov2023}.  The monopole
coefficient ${\mathcal C}_{\rm T}$ in the far-field
(\ref{berg_intro:ssp0_c}) is referred to as the {\em capacitance} of
the heterogeneous partially reactive boundary $\partial \M$.  The key
quantity of interest is the combined flux through the whole cell surface, given
by
\begin{equation}\label{eq:j_origin}
  J\equiv  - D \int\limits_{\partial {\mathcal M}} \partial_n {\mathcal U}\, dS\,
  =  
    4\pi D \, {\mathcal U}_{\rm \inf} {\clb \mathcal C}_{\rm T} \,.
\end{equation}
In arriving at \eqref{eq:j_origin}, we have used the divergence theorem to
relate the total flux $J$ to the far field behavior
(\ref{berg_intro:ssp0_c}). {\clb Our analysis will provide an asymptotic expansion for the total flux $J$ in the small patch diameter limit.}

To analyze (\ref{berg_intro:ssp0}), we introduce the dimensionless
variables
\begin{equation}\label{intro_bp:scalings}
    \x = \frac{\X}{R_{\star}}, \quad L =\max\limits_{i} \{L_i\}, \quad
    \eps = \frac{L}{R_{\star}}, \quad a_i=\frac{L_i}{L}, \quad b_i =
    \frac{L \B_i}{D}, \quad  u = \frac{{\mathcal U}}{{\mathcal U}_{\rm inf}},
  \quad C_{\rm T}=\frac{{\mathcal C}_{\rm T}}{R_{\star}},
\end{equation}
where $R_{\star}$ is a length-scale for ${\mathcal M}$.  We also
consider the rescaled domain $\Omega = R^{-1}_{\star} \M$ and the
rescaled patches
$\partial\Omega^{\eps}_i=R_{\star}^{-1}\partial{\mathcal M}_{i}$ of
small ${\mathcal O}(\eps)$ diameter, which are assumed to satisfy
$\partial\Omega^{\eps}_i\to\x_i\in \partial\Omega$ as $\eps\to 0$.
Moreover, we assume that the patches are well-separated in the sense
that $\left|\x_i-\x_j\right|={\mathcal O}(1)$ for all $i\neq j$ as
$\eps\to0$.
In the region exterior to $\Omega$,
(\ref{berg_intro:ssp0}) becomes \bsub \label{berg_bp:ssp}
\begin{align}
  \Delta_{\x} u & = 0 \,, \quad \x \in \R^3\backslash\Omega \,,
                  \label{berg_bp:ssp_1}\\
  \eps\partial_{n} u + b_i u & = 0\,, \quad \x \in \partial\Omega^{\eps}_i
                                    \,, \quad i=1,\ldots,N \,,\label{berg_bp:ssp_2b} \\
  \partial_{n} u & = 0 \,, \quad \x \in \partial \Omega_r
      =\partial\Omega\backslash\cup_{i=1}^{N}\partial\Omega^{\eps}_i\,,
                   \label{berg_bp:ssp_2}\\
  u & \sim  1 - \frac{C_{\rm T}}{|\x|} +{\mathcal O}(|\x|^{-2})
  \,,  \quad \mbox{as} \quad |\x|\to \infty \,, \label{berg_bp:ssp_3}
\end{align}
\esub where $\Delta_{\x}$ is the Laplacian in $\x$, and $\partial_n$
is the outward normal derivative, directed into $\Omega$.  The
relevant biophysical question is to understand the dependence of $J_i
\equiv J_i(\x_1,\ldots,\x_N)$ on the geometry $\Omega$ together with
the trapping configuration as described by the locations
$\{\x_1,\ldots,\x_N \}\in\partial\Omega$ of the centers of the
patches, their relative sizes, shapes, and reactivities
$\{b_1,\ldots,b_N\}$ (see Fig.~\ref{fig:intro_schem}).  The
distribution of fluxes across the reactive sites plays an important
role in cellular mechanisms of source detection
\cite{LLM2020,Lindsay2023a,BJNL2023,DOBRAMYSL201822,BL2025}.

In \S \ref{sec:berg} we will analyze {\clb the BP problem}
(\ref{berg_bp:ssp}) in the small patch limit $\eps\to 0$ by using the
method of matched asymptotic expansions.  Our analysis 
%for (\ref{berg:main_res}) 
provides a three-term asymptotic expansion for the capacitance $C_{\rm
T}$, which is valid for a general smooth and closed domain $\Omega$.
The result extends previous works \cite{LBS2018,LB17,LLM2020} where
perfectly absorbing circular patches ($b_i=\infty$) on the boundary of
a sphere were considered.  It also extends upon the results of
\cite{GrebenkovWard2026} where a three-term expansion for $C_{\rm T}$
was derived for a spherical domain with partially reactive boundary
patches of arbitrary shape.  Our result $C_{\rm T}$ for an arbitrary
bounded 3-D domain is given by (\ref{berg:main_res_1}) and depends on
the exterior surface Neumann Green's function, which will be computed
numerically, together with some local geometric properties of the
domain boundary $\partial\Omega$ near the patch locations.  
An important case of biological interest is $N$ identical {\clb
perfectly} absorbing circular patches of radius $\eps$.  In this
scenario, our main result for $\eps\to 0$, as given in Proposition
\ref{berg:main_res}, simplifies to
\bsub\label{eq:intro_CT}
\begin{equation}\label{eq:intro_CT_a}
  \frac{1}{C_T} \sim \frac{\pi}{\eps N}\left[ 1 -
    \frac{\eps\overline{\HC}}{N\pi} \log(4 \eps) + \frac{4\eps}{N}
    \left( p_e(\x_1,\ldots,\x_N) + \frac{3\overline{\HC}}{8\pi}\right) +
   {\mathcal O}(\eps^2\log\eps) \right] \,.
\end{equation}
Here 
\begin{equation} \label{eq:HCsum}
\overline{\HC} \equiv \sum_{i=1}^N\HC_i, 
\end{equation}
where $\HC_i$ is the mean curvature of the surface at $\x=\x_i$,
defined in terms of the two principal curvatures $\kappa_1(\x)$ and
$\kappa_2(\x)$ at $\x=\x_i$ by $\HC_i =
\tfrac{1}{2}\left[\kappa_1(\x_i)+
\kappa_2(\x_i)\right]$, with sign convention that $\HC_i=-1$ if
$\Omega$ is the unit sphere.  In (\ref{eq:intro_CT_a}), the
inter-patch interaction energy $p_e(\x_1,\ldots,\x_N)$ is defined in
terms of the exterior surface Neumann Green's function $G_e$ and its
regular part $R_e$ (see (\ref{berg:green_int}) and
(\ref{berg:green_int_sing}) below) by
\begin{equation}\label{eq:intro_CT_b}
  p_e(\x_1,\ldots,\x_N) =\sum_{i=1}^N\Big[R_{e}(\x_i) +
  \sum_{\substack{j=1\\j\neq i}}^N G_{e}(\x_j;\x_i)\Big]\,.
\end{equation}
\esub

\noindent \underline{The Narrow Escape Problem (NEP):} By using a
similar asymptotic framework, we will also analyze the {\em mean
first-reaction time} (MFRT) for diffusing ligands inside $\M$ to react
on a collection of small well-separated partially reactive patches
situated on an otherwise reflecting boundary. Adopting the same
notation as (\ref{berg_bp:ssp}), the MFRT ${\mathcal U}(\X)$ for a
Brownian particle starting at $\X\in {\mathcal M}$ satisfies the mixed
Neumann-Robin problem
\begin{subequations}  \label{mfpt_intro:ssp}
\begin{align}   
-D \Delta_{\X} {\mathcal U} & = 1 \,, \quad \X\in {\mathcal M}\,, \\ 
  D \partial_n {\mathcal U} + \B_i {\mathcal U} & = 0\,,
                                                 \quad \X \in
     \partial {\mathcal M}_i \,, \quad i=1,\ldots,N \,,\\
  \partial_{n} {\mathcal U} & = 0 \,, \quad \X \in \partial {\mathcal M}_r=
                              \partial {\mathcal M}\backslash\cup_{i=1}^{N}
                              \partial{\mathcal M}_i \,,
\end{align}
\end{subequations}
where $\partial_n$ now denotes the normal derivative directed outward
to ${\mathcal M}$.  A central quantity to describe is the {\clb global
(or volume-averaged)} MFRT,
\begin{equation} \label{mfpt_intro:global} \overline{U} \equiv
  \frac{1}{|{\mathcal M}|} \int_{{\mathcal M}} U(\X) \, d\X \,,
\end{equation}
which corresponds to the average MFRT with respect to a uniform
distribution of initial points $\X\in{\mathcal M}$.  Here $|{\mathcal
M}|$ denotes the volume of ${\mathcal M}$.  We non-dimensionalize the
MFRT problem (\ref{mfpt_intro:ssp}) by introducing the dimensionless
variables defined by
\begin{equation}\label{intro:scalings}
  \x = \frac{\X}{R_{\star}} \,, \quad L =\max\limits_{i} \{L_i\} \,, \quad
  \eps = \frac{L}{R_{\star}}
  \,, \quad a_i=\frac{L_i}{L} \,, \quad b_i = \frac{L \B_i}{D} \,,
  \quad     u(\x)=\frac{D}{R_{\star}^2}U(\x R_{\star}) \,.
\end{equation}
In terms of (\ref{intro:scalings}), the dimensionless MFRT $u(\x)$
with partially reactive patches and dimensionless reactivities $b_i>0$
satisfies
\bsub \label{mfpt:ssp}
\begin{align}
  \Delta_{\x} u & = - 1 \,, \quad \x \in \Omega \,,
                  \label{mfpt:ssp_1}\\
  \eps\partial_{n} u + b_i u & = 0\,, \quad \x \in \partial\Omega^{\eps}_i
                                    \,, \quad i=1,\ldots,N \,, \\
  \partial_{n} u & = 0 \,, \quad \x \in \partial \Omega_r
       =\partial\Omega\backslash\cup_{i=1}^{N} \partial\Omega^{\eps}_{i}\,,
                   \label{mfpt:ssp_2}
\end{align}
\esub 
with the same notation and assumptions on the patches as for
(\ref{berg_bp:ssp}). For a uniform distribution of initial points
$\x\in\Omega$, the dimensionless {\clb global} MFRT is
\begin{equation} 
 \overline{u} \equiv \frac{1}{\area} \int_{\Omega} u(\x) \,
 d\x \,. \label{mfpt:ubar}
\end{equation}
By using the method of matched asymptotic expansions, we will derive
in \S \ref{sec:mfpt} a three-term asymptotic expansion for
$\overline{u}$ in the limit $\eps\to 0$, which is valid for a general
smooth and closed domain $\Omega$.  Our analysis extends that of
\cite{cheviakov2010asymptotic} and \cite{GrebenkovWard2026big} for
perfectly absorbing and for partially reactive patches, respectively,
on the boundary of a spherical domain.  It also extends the work of
\cite{NURSULTANOV2021202} for a single perfectly absorbing boundary
patch by describing the interaction of $N$ non-overlapping partially
reactive patches.  Finally, it extends to higher order the two-term
asymptotic result of \cite{GomezCheviakov2015} for perfectly absorbing
circular patches on the boundary of non-spherical domains.

For the important case of $N$ identical perfectly absorbing circular
patches with common radius $\eps$, our main result for
(\ref{mfpt:ubar}) given in Proposition
\ref{mfpt_b:main_res} simplifies to
\begin{equation}\label{eqn:MFPT_reduced}
  \overline{u} \sim \frac{|\Omega|}{4N\eps}\left[ 1 -
    \frac{\eps \overline{\HC}}{N\pi} \log 4\eps +
    \frac{4\eps}{N} \left( p_{s}(\x_1,\ldots,\x_N) +
    \frac{3\overline{\HC}}{8\pi} \right)+ {\mathcal O}(\eps^2\log\eps)\right]\,.
\end{equation}
In (\ref{eqn:MFPT_reduced}), the quantities $\overline{\HC}$ and
$p_{s}(\x_1,\ldots,\x_N)$ take the form outlined in
(\ref{eq:intro_CT_b}), but with values drawn from the interior surface
Neumann Green's function $G_{s}$ and its regular part $R_{s}$
satisfying (\ref{mfpt:green_int}) and (\ref{mfpt:green_int_sing}).  In
(\ref{eqn:MFPT_reduced}), the sign convention is now $\HC_i=1$ if
$\Omega$ is the unit sphere.

Our asymptotic analysis of (\ref{berg_bp:ssp}) and (\ref{mfpt:ssp})
relies heavily on a careful resolution of the singularity behavior of
the surface Neumann Green's function with Dirac source located on the
curved boundary. While some previous works on resolving this behavior
were given in
\cite{Silbergleit2003,Popov92,Holcman2006a,NURSULTANOV2021202,lindsay20263D_GFun},
our local analysis in Appendix \ref{app_g:int_green} of this
singularity behavior is characterized in terms of the local
tangential-normal coordinate system near each patch (see Appendix
\ref{app_b:bernoff}).  This local coordinate system recapitulates the
weak path-dependent singularity structure uncovered using a microlocal
analysis approach in \cite{NURSULTANOV2021202}, but in terms of
coordinates that are especially convenient for our asymptotic
analysis.  In comparison to the analysis in \cite{Popov92} and
\cite{Holcman2006a}, which yielded the first two terms of the
singularity behavior only on the surface, our analysis resolves the
first three terms of this singularity behavior, and is valid in a
small neighborhood of the singular point, i.e. both on and off the
surface.

The outline of the paper is the following.  In \S \ref{sec:berg} we
analyze the BP problem (\ref{berg_bp:ssp}) in the limit $\eps\to0$.
Our result provides explicit formulas for the capacitance $C_T$
(\ref{berg_bp:ssp_3}) in
terms of the reactivities and spatial configuration of boundary
patches.  In \S \ref{sec:mfpt}, we apply this asymptotic analysis to
the NEP defined in (\ref{mfpt:ssp}).  Again our result takes the form
of an explicit asymptotic expansion for the mean exit time of a
Brownian particle through multiple boundary windows. Numerical
validation of our results are given in \S \ref{sec:results} and we
show that the derived expansion has the errors predicted by the
asymptotic theory.  We also demonstrate that our asymptotic framework
is able to accurately describe the effects that non-constant surface
curvature has on the capture statistics of Brownian particles to small
reactive sites. In \S \ref{sec:discussion} we conclude by discussing
some future research directions which emanate from this work.

\section{Asymptotic analysis of the Berg-Purcell problem}\label{sec:berg}

We now use the method of matched asymptotic expansions to derive a
three-term asymptotic expansion for (\ref{berg_bp:ssp}) in the
small-patch limit $\eps\to 0$. The analysis is valid for arbitrary
reactivities and arbitrary patch shapes on the domain boundary.

For our analysis, it is convenient to introduce $U$ by
\begin{equation}\label{berg:change}
  u=-C_{\rm T} U \,,
\end{equation}
so that from (\ref{berg_bp:ssp}) we find that $U$ satisfies
\bsub \label{berg:ssp}
\begin{align}
  \Delta_{\x} U & = 0 \,, \quad \x \in \R^3\backslash\Omega \,,
                  \label{berg:ssp_1}\\
  \eps\partial_{n} U + b_i U & = 0\,, \quad \x \in \partial\Omega^{\eps}_i
                                    \,, \quad i=1,\ldots,N \,, \\
  \partial_{n} U & = 0 \,, \quad \x \in \partial \Omega_r
   = \partial\Omega\backslash\cup_{i=1}^{N} \partial\Omega^{\eps}_{i}\,,
     \label{berg:ssp_2}\\
  U & \sim  -\frac{1}{C_{\rm T}} + \frac{1}{|\x|} + \frac{\DT_T {\bf \cdot} \x}
      {C_{\rm T}|\x|^3} + \cdots\,, \quad \mbox{as}\quad |\x|\to \infty \,.
      \label{berg:ssp_3}
\end{align}
In (\ref{berg:ssp_3}), $\DT_T$ is the dipole vector.  Equivalently, we
can write the far-field condition (\ref{berg:ssp_3}) as a flux
condition over the boundary $\partial\Omega_\sigma$ of a large sphere
of radius $\sigma$ centered at $\x=0$, in the form
\begin{equation}\label{berg:flux}
  \lim_{\sigma\to\infty} \int_{\partial\Omega_\sigma} \partial_{r} U \vert_{r=\sigma}
  \, dS = - 4\pi\,.
\end{equation}
\esub

In the outer region away from the boundary patches we expand the outer
solution for (\ref{berg:ssp}) as
\begin{equation}
  U \sim \eps^{-1} U_0 + U_1 + \eps \log(\eps) \, U_2 + \eps U_3 +
   \ldots \,,
  \label{berg:outex}
\end{equation}
where $U_0$ is a constant to be determined. The correction terms $U_k$
for $k\geq 1$ satisfy
\begin{equation}  \label{berg:Uk}
\begin{split}
&  \Delta_{\x} U_k = 0 \,, \quad \x \in \R^{3}\backslash \Omega \,; 
  \qquad \partial_n U_k = 0 \,, \quad \x\in \partial\Omega\backslash
  \lbrace{\x_1,\ldots,\x_N\rbrace} \,, \\
  &  \lim_{\sigma\to\infty} \int_{\partial\Omega_\sigma} \partial_n U_k \vert_{r=\sigma}
  \, dS = - 4\pi \delta_{k1}\,,
\end{split}
\end{equation}
where $\delta_{k1}$ is the Kronecker symbol.  Singularity behaviors for
$U_{k}$ as $\x\to \x_i$, for each $i=1,\ldots,N$, will be derived by
asymptotic matching to the local or inner solutions near each patch.

Near each boundary patch centered at $\x=\x_i\in \partial\Omega$, in
(\ref{app_b:coord}) of Appendix \ref{app_b:bernoff} we introduce a
local tangential-normal coordinate system $(t_1,t_2,d)^{T}$, where
$d=\mbox{dist}(\x,\partial\Omega)\geq 0$, and where $t_1$ and $t_2$
are aligned with the two principal directions through $\x_i\in
\partial\Omega$ associated with the two principal curvatures
$\kappa_{1i}$ and $\kappa_{2i}$, respectively.  We define the
$\eps-$localized coordinates by $\txione\equiv {t_1/\eps}$,
$\txitwo\equiv {t_2/\eps}$, and $\teta\equiv {d/\eps}$, and label the
inner solution near each $\x_i$ as $V_i(\y)$, where $\y\equiv
(\txione,\txitwo,\teta)^T$.

We then expand the inner solution near each patch as
\begin{equation}
  V  \sim \eps^{-1} V_{0i} + \log(\eps)\, V_{1i} +
  V_{2i}  +   \ldots \,. \label{berg:innex}
\end{equation}
Upon substituting (\ref{berg:innex}) into the local transformation
(\ref{app_b:local_laplace}) of the Laplacian derived in Appendix
\ref{app_b:bernoff}, we obtain that $V_{ki}$ for $k=0,1$ satisfies
\bsub \label{berg:Vk}
\begin{align}
  \tdelta V_{ki} &= 0\,, \quad
   \y \in \R_{+}^{3} \,, \label{berg:Vk_1}\\
   -\partial_{\teta} V_{ki} + b_i V_{ki} &=0 \,, \quad \teta=0 \,,\,
    (\txione,\txitwo)\in \PT_i\,,  \label{berg:Vk_2}\\
  \partial_{\teta} V_{ki} &=0 \,, \quad \teta=0 \,,\, (\txione,\txitwo)
                            \notin \PT_i  \,, \label{berg:Vk_3}
\end{align}
\esub 
where $\tdelta\equiv \partial_{\txione\txione}+\partial_{\txitwo\txitwo}+
\partial_{\teta\teta}$, {\clb with the behavior at infinity being fixed
below via asymptotic matching conditions.}  Here $\PT_i \asymp
\eps^{-1}\partial\Omega^{\eps}_i$ is the {\clb flattened}
compactly-supported partially reactive patch on the horizontal plane
$\teta=0$, obtained from the $\eps-$localized tangential-normal
coordinate system (e.g., if $\partial\Omega_i^\eps$ is a small
spherical cap of radius $\eps a_i$, then $\Gamma_i$ is the disk of
radius $a_i$; in other words, $\Gamma_i$ represents a rescaled
flattened shape of the patch in the limit $\eps\to 0$). In addition,
we find that $V_{2i}$ satisfies
\bsub \label{berg:V2}
\begin{align}
&  \tdelta V_{2i} = 2 \HC_i\left(\teta V_{0i,\teta\teta} +  V_{0i,\teta}
        \right) - 2\kapdif_i\teta\left(V_{0i,\txione\txione}-V_{0i,\txitwo\txitwo}
                       \right) \,, \quad
   \y \in \R_{+}^{3} \,, \label{berg:V2_1}\\
 &  -\partial_{\teta} V_{2i} + b_i V_{2i} =0 \,, \quad \teta=0 \,,\,
    (\txione,\txitwo)\in \PT_i\,,  \label{berg:V2_2}\\
&  \partial_{\teta} V_{2i} =0 \,, \quad \teta=0 \,,\, (\txione,\txitwo)
                            \notin \PT_i  \,. \label{berg:V2_3}
\end{align}
\esub 
Here $\HC_i$ and $\kapdif_i$ are the mean curvature and curvature
difference of $\partial\Omega$ at $\x_i$, given
\begin{equation}\label{berg:curve}
  \HC_i \equiv \frac{1}{2}(\kappa_{1i}+\kappa_{2i}) \,, \qquad
  \kapdif_i \equiv \frac{1}{2}(\kappa_{1i}-\kappa_{2i}) \,,
\end{equation}
with $\HC_i<0$ if $\partial\Omega$ is convex at $\x_i$ as seen from a
point $\x\in \R^3\setminus \Omega$.  In (\ref{berg:Vk}) and
(\ref{berg:V2}), we label the upper half-space by
\begin{equation}\label{berg:r3+}
  \R_{+}^{3}\equiv\lbrace{\y=(\txione,\txitwo,\teta)^T \, \vert \, \, \teta>0
    \,, \, -\infty<\txione,\txitwo<\infty \rbrace} \,.
\end{equation}

Upon imposing the leading-order matching condition $V_{0i}\sim U_0$ as
$|\y|\to\infty$, the solution to (\ref{berg:Vk}) for $k=0$ is
\begin{equation}\label{berg:v0sol}
    V_{0i} = U_0 \left( 1 - w_{i} \right) \,,
\end{equation}
where $w_{i}(\y;b_i)$ is the solution to
\bsub \label{berg:wc}
\begin{align}
    \tdelta w_{i} &=0 \,, \quad \y \in \R_{+}^{3} \,, \label{berg:wc_1}\\
    -\partial_{\teta} w_{i} + b_i w_{i} &=b_i \,, \quad \teta=0 \,,\,
    (\txione,\txitwo)\in \PT_i\,,  \label{berg:wc_2}\\
  \partial_{\teta} w_{i} &=0 \,, \quad \teta=0 \,,\, (\txione,
                           \txitwo)\notin \PT_i
    \,, \label{berg:wc_3}\\
  w_{i}&\sim \frac{C_{i}(b_i)}{|\y|} +
                {  \frac{\DT_i(b_i) {\bf \cdot} \y}{|\y|^3}}
         + \cdots\,,  \quad \mbox{as}\quad
    |\y|\to \infty \,, \label{berg:wc_4}
\end{align}
\esub 
where $|\y|=(\txione^2+\txitwo^2+\teta^2)^{1/2}$. In
(\ref{berg:wc_4}), $C_i(b_i)$ is referred to as the {\em reactive
capacitance} of $\PT_i$
(cf.~\cite{GrebenkovWard2026big},\cite{GrebenkovWard2026}).  From the
divergence theorem, it satisfies the identity
\begin{equation}\label{berg:wc_charge}
  C_i(b_i) = \frac{1}{\pi} \int_{\PT_i} q_{i}(\txione,\txitwo; b_i) \,
  d\txione d\txitwo  \,, \quad \mbox{where} \quad
  q_i(\txione,\txitwo;b_i)\equiv -\frac{1}{2}
  \partial_{\teta} w_{i}\vert_{\teta=0}\,.
\end{equation}
We refer to $q_i$ as the {\em charge density} in analogy with
electrostatics.  We remark that $C_i(b_i)$ is invariant under rotation
of a given patch shape $\PT_i$.  As a result, in computing $C_i(b_i)$
it is not necessary to align the coordinate axes of $\PT_i$ along the
two principal directions.  The dipole vector $\DT_i=\DT_i(b_i)$ in
(\ref{berg:wc_4}) must have the form $\DT_i=(p_{1i},p_{2i},0)^T$ to
ensure that the far-field behavior (\ref{berg:wc_4}) satisfies
(\ref{berg:wc_3}).

In \cite{GrebenkovWard2026big} (see also \cite{GrebenkovWard2026}),
the reactive capacitance for a circular patch was computed numerically
in terms of a Steklov eigenfunction expansion.  The form of this
expansion was the motivation for constructing a heuristic
approximation for $C_i$, which is rather accurate over the full range
$0<b_i<\infty$ (see \cite{GrebenkovWard2026big} and \S
\ref{sec:numerics_consts}).  Moreover, this analysis of
\cite{GrebenkovWard2026big} and \cite{GrebenkovWard2026} was extended
in \cite{Grebenkov-Maurette} to numerically compute $C_i(b_i)$ for
patches of arbitrary shape and to derive monotonicity principles for
$C_i$. For circular patches, the following lemma was established in
\cite{GrebenkovWard2026} (see also
\cite{GrebenkovWard2026big}):

\begin{lemma}\label{lemma:Cj_kappa} (Lemma 3.1 of \cite{GrebenkovWard2026big}) 
When $\PT_i$ is the disk $0\leq s\leq a_i$, where $s^2\equiv
\txione^2+\txitwo^{2}$, we have $w_{i}=w_{i}(s,\teta;b_i)$ and
\begin{equation}\label{berg:Cj}
  C_i(b_i) = 2 \int_{0}^{a_i} q_i(s;b_i) s \, ds \,, \qquad
  q_i(s;b_i)=-\frac{1}{2} w_{i, \teta}\vert_{\teta=0} \,,
\end{equation}
where $w_i$ is the solution to (\ref{berg:wc}).  The asymptotics of
$C_i$ are
\bsub\label{berg:Cj_asy}
\begin{align}
      C_i(b_i) &\sim C_i(\infty) + {\mathcal O}\left(
  \frac{\log b_i}{b_i}\right) \,,\quad \mbox{as}\quad
                    b_i\to\infty\,, \quad \mbox{with} \quad
  C_i(\infty)=\frac{2a_i}{\pi} \,, \label{berg:Cj_large}  \\  
  C_i(b_i) &  \sim a_i \biggl[c_{1} (b_i a_i) - c_{2} (b_i a_i)^2 
     + c_{3} (b_i a_i)^3 + {\mathcal O}((b_i a_i)^4)\biggr] \,,
     \quad \mbox{as}  \quad b_i\to 0 \,,  \label{berg:Cj_small}
\end{align}
\esub 
where $c_{1}=0.5$, $c_{2} ={4/(3\pi)} \approx 0.4244$ and
$c_{3} \approx 0.3651$. Moreover, for $b_i\to \infty$,
$q_i\sim q_{i}(s;\infty)=\pi^{-1}\left(a_i^2-s^2\right)^{-1/2}$ on
$0\leq s<a_i$.
\end{lemma}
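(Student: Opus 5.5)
The plan has four parts: reduce to axisymmetry and a one-dimensional flux formula, then treat the limits $b_i\to\infty$ and $b_i\to 0$ separately via the Steklov basis. Since $\PT_i$ is a disk and (\ref{berg:wc}) is rotationally invariant, uniqueness of the decaying solution gives $w_i=w_i(s,\teta;b_i)$. Then (\ref{berg:Cj}) follows from (\ref{berg:wc_charge}) in polar coordinates: $\int_{\PT_i} q_i\,d\txione d\txitwo = 2\pi\int_0^{a_i} q_i s\,ds$, and dividing by $\pi$ gives the factor $2$. The identity (\ref{berg:wc_charge}) itself comes from integrating $\tdelta w_i=0$ over a large half-ball. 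The flux through the hemisphere is $-2\pi C_i$ from (\ref{berg:wc_4}); the dipole term contributes nothing at leading order. Only $\PT_i$ contributes on $\teta=0$.

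For the limit $b_i\to\infty$, the boundary condition on $\PT_i$ tends to $w_i=1$, which is the classical electrified disk problem. Its explicit solution in oblate spheroidal coordinates gives $w_{i,\teta}|_{\teta=0}=-(2/\pi)(a_i^2-s^2)^{-1/2}$. This yields $q_i(s;\infty)=\pi^{-1}(a_i^2-s^2)^{-1/2}$, and integrating gives $C_i(\infty)=2\pi^{-1}\int_0^{a_i}s(a_i^2-s^2)^{-1/2}ds=2a_i/\pi$.

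The correction is the step I expect to be hardest. Write $w_i = w_i^\infty - \phi$, where $\phi$ solves the mixed problem with $\phi = b_i^{-1}\partial_\teta w_i$ on the disk. This is an $O(b_i^{-1})$ Dirichlet datum, but it is singular like $(a_i^2-s^2)^{-1/2}$ at the rim. Resolving the boundary layer of width $O(b_i^{-1})$ near $s=a_i$ produces the logarithm. Concretely, I would first try a flux-weighted identity, $C_i(\infty)-C_i(b_i) = (2/\pi) \int_{\PT_i} q_i^\infty\,(1-w_i)\,dA$, obtained from Green's second identity with $w_i^\infty$. Here $1-w_i=2q_i/b_i$, and I would cut off $q_i\le O(b_i)$ in the rim layer; the integral $\int (a_i^2-s^2)^{-1}s\,ds$ truncated at distance $b_i^{-1}$ gives $O(b_i^{-1}\log b_i)$.

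For $b_i\to 0$, expand $w_i$ in the Steklov eigenfunctions of the disk problem, or equivalently iterate in powers of $b_i$. The leading charge is $q_i\approx b_i/2$, since $\partial_\teta w_i\approx -b_i$. This gives $C_i\approx \pi^{-1}\cdot \pi a_i^2\cdot b_i/2 = a_i(b_ia_i)/2$, so $c_1=1/2$. At the next order, $w_i$ on the disk equals $b_i$ times the single-layer potential of uniform density $1/(2\pi)$. Its disk average is known in closed form: $\int_{\text{disk}}\int_{\text{disk}}|x-y|^{-1}=16a^3/3$. Substituting gives $c_2=4/(3\pi)$. The constant $c_3$ requires a second iteration (a double integral of the potential of the disk potential), which I would evaluate numerically via the Steklov expansion rather than in closed form.
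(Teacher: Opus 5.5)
Your plan is sound, and it is essentially the physical-space form of the Steklov route the paper relies on. The paper gives no proof here: it cites Lemma 3.1 of \cite{GrebenkovWard2026big}, and in \S\ref{sec:numerics_consts} it only recalls the unit-disk representation $C(b)=\frac{b}{2\pi}\sum_k \mu_k d_k^2/(\mu_k+b)$. Write $\mathcal{N}$ for the Neumann-to-Dirichlet map on $\PT$ (half-space, decay at infinity). Your iteration in powers of $b$ is then the Taylor expansion of that formula: $c_1=\frac{1}{2\pi}\sum_k d_k^2=\frac12$ by Parseval, $c_2=\frac{1}{2\pi}\sum_k d_k^2/\mu_k=\frac{1}{2\pi}\langle 1,\mathcal{N}1\rangle$, and $c_3=\frac{1}{2\pi}\sum_k d_k^2/\mu_k^2=\frac{1}{2\pi}\|\mathcal{N}1\|^2$. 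Your Green's-identity formula is the spectral identity $C(\infty)-C(b)=\frac{1}{2\pi}\sum_k \mu_k^2 d_k^2/(\mu_k+b)$. Your version yields closed forms without computing any Steklov data. The spectral version gives one formula for all $b$, but its large-$b$ order needs tail information on $\mu_k^2d_k^2$, which your rim analysis replaces.

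Several details need fixing.
\begin{itemize}
\item The disk self-energy is $\int_{\PT}\int_{\PT}|\y-\y'|^{-1}\,d\y\,d\y'=\frac{16\pi}{3}a^3$, not $\frac{16}{3}a^3$. The uniform-density potential at radius $s$ is $4aE(s/a)$, and $\int_0^1E(k)k\,dk=\frac23$. With your value the substitution yields $c_2=4/(3\pi^2)$; only the corrected value gives $C\approx\frac{ba^2}{2}-\frac{4}{3\pi}b^2a^3$.
\item For $c_3$ no second potential is needed. Since $\mathcal{N}$ is symmetric, $c_3=\frac{1}{2\pi}\int_{\PT}w_1^2$ with $w_1=\frac{2}{\pi}E(s)$ on the unit disk. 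Hence $c_3=\frac{4}{\pi^2}\int_0^1E(k)^2k\,dk\approx 0.3651$, a one-dimensional quadrature.
\item Green's identity with $w^\infty$ gives $C(\infty)-C(b)=\frac{1}{\pi}\int_{\PT}q^\infty(1-w)\,dA=\frac{2}{\pi b}\int_{\PT}q^\infty q\,dA$. The prefactor is $1/\pi$, not $2/\pi$.
\item On the disk, $\phi=1-w=-b^{-1}\partial_\teta w=2q/b$; your sign is reversed.
\end{itemize}
None of these changes the orders you claim.

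The one step that is not yet an argument is the rim cutoff. Nothing you state bounds $q$ by a multiple of $q^\infty$ away from the rim. Your two cutoffs also disagree: the local Robin--Neumann edge problem gives $q=O(b^{1/2})$ at distance $b^{-1}$ from the rim, whereas $q\le O(b)$ corresponds to distance $b^{-2}$. Because $\int_{\PT}(q^\infty)^2$ diverges only logarithmically, any polynomial cutoff gives $O(\log b/b)$, so the order is robust.

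To make it rigorous, use two variational bounds:
\begin{itemize}
\item $C(b)\le C(\infty)$ follows from the Dirichlet principle $2\pi C(b)=\min_v\{\int_{\R_{+}^{3}}|\nabla v|^2+b\int_{\PT}(1-v)^2\}$ with $v=w^\infty$.
\item The dual (Thomson) principle gives $\frac{1}{2\pi C(b)}\le\frac{1}{2\pi}\iint f(\y)f(\y')|\y-\y'|^{-1}\,d\y\,d\y'+\frac{1}{b}\int_{\PT}f^2$ for any $f\ge 0$ with $\int_{\PT}f=1$. Take the trial $f\propto\min(q^\infty,b)$. The potential of $q^\infty$ is constant on the disk, so truncation perturbs the first term only at second order, while the second term is $O(b^{-1}\log b)$.
\end{itemize}
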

  
We now proceed with the asymptotic analysis.  The matching condition
is that the local behavior of the outer expansion (\ref{berg:outex})
as $\x\to\x_i$ must agree with the far-field behavior of the inner
expansion (\ref{berg:innex}), so that for each $i=1,\ldots,N$ we must have
\begin{equation} \label{berg:mat_1}
  \begin{split}
  \frac{U_0}{\eps} + U_1 + \eps \log(\eps)  \, U_2 + &\eps U_3 + \ldots \\
   \sim \frac{U_0}{\eps} & \left( 1 - \frac{C_i}{|\y|} - \frac{\DT_i {\bf \cdot}
       \y}{|\y|^3} \right)  + 
  \log(\eps) V_{1i} + V_{2i} + \ldots \,.
  \end{split}
\end{equation}
By using  $|\y|\sim\eps^{-1}|\x-\x_i|$ from (\ref{app_g:loc}) of
Appendix \ref{app_b:bernoff}, this matching condition provides the
singular behavior for $U_1$ as $\x\to\x_i$. In this way, we conclude
that $U_1$ satisfies
\bsub \label{berg:U1prob}
\begin{gather}
  \Delta_{\x} U_{1}= 0 \,, \quad \x\in \R^3\backslash \Omega \,; \qquad
  \partial_n U_1=0 \,, \quad \x\in \partial\Omega\backslash
  \lbrace{\x_1,\ldots,\x_N\rbrace} \,, \\
  U_1\sim -\frac{U_0 C_{i}}{|\x-\x_i|}\,,  \quad \mbox{as} \quad
  \x\to\x_i \in \partial\Omega \,, \quad i=1,\ldots,N \,,\\
  \lim_{\sigma\to\infty} \int_{\partial\Omega_\sigma} \partial_{r}U_{1}\vert_{r=\sigma}
  \, dS=-4\pi \,. 
\end{gather}
\esub
From the divergence theorem, the solvability condition for
(\ref{berg:U1prob}) determines $U_0$ as
\begin{equation} 
  U_0 = -\frac{2}{\overline{C}} \,, \qquad \mbox{where} \qquad
\overline{C}\equiv\sum_{i=1}^{N} C_i(b_i) \,. \label{berg:U0sol}
\end{equation}

To determine $U_1$, we introduce the surface Neumann Green's function
$G_e(\x;\x_i)$ for the exterior problem, which is the unique solution to
\bsub\label{berg:green_ext_full}
\begin{equation}\label{berg:green_int}
  \begin{split}
 \Delta_{\x} G_e &= 0 \,, \quad  \x\in \R^{3}\backslash\Omega \,; \qquad
 \partial_{n} G_e =  \delta(\x-\x_i)\,, \quad \x\in \partial\Omega\,, \\
 G_e &\sim \frac{1}{4\pi|\x|} \,, \quad \mbox{as} \quad |\x|\to \infty\,,
  \end{split}
\end{equation}
where $\x_i\in\partial\Omega$ and $\partial_{n}$ is the outward normal
derivative to $\R^{3}\backslash\Omega$, which points into $\Omega$.
In Appendix \ref{app_g:int_green} we show that the local singularity
behavior of $G_e$ is
\begin{equation}\label{berg:green_int_sing}
  G_{e}(\x;\x_i)= \frac{1}{2\pi|\x-\x_i|} -\frac{\HC_i}{4\pi}
  \log\left(|\x-\x_i|
    +d \right) + e(\x;\x_i) + R_{e}(\x_i)  + o(1)
  \,, \quad \mbox{as} \quad \x\to \x_i
  \,.
\end{equation}
\esub
Here $\HC_i$ is the mean curvature of $\partial\Omega$ at $\x=\x_i$
(with $\HC_i=-1$ if $\Omega$ is the unit sphere),
$d=\mbox{dist}(\x,\partial\Omega)>0$, while the term $e(\x;\x_i)$
given in (\ref{app_g:e_final}) is proportional to the curvature
difference and depends on the specific path of approach to
$\x=\x_i\in\partial\Omega$.  In contrast, $R_{e}(\x_i)$ is a {\em
globally determined quantity}, referred to as the {\em regular part}
of $G_e$, which depends on the domain shape and on $\x_i$. When
$\Omega$ is the unit sphere, the explicit analytical solution to
(\ref{berg:green_int}) and the corresponding regular part $R_e$ are
given in (\ref{berg:gs_exact}).  For the case of prolate spheroids, a
series solution for {\clb $G_e$} and $R_e$ has been derived in
\cite{lindsay20263D_GFun}, while for more general domains a numerical
method is also available \cite{lindsay20263D_GFun}.

The solution to (\ref{berg:U1prob}) is represented in terms of $G_e$
as
\begin{equation}\label{berg:u1_sol}
  U_1 = \overline{U}_1 - 2\pi U_0 \sum_{j=1}^{N} C_j G_{e}(\x;\x_j) \,, \quad
\end{equation}
where $\overline{U}_1$ is an unknown constant, which must be expanded as
(cf.~\cite{cheviakov2010asymptotic}, \cite{LWB2017},
\cite{GrebenkovWard2026big}) 
\begin{equation}\label{berg:swit}
  \overline{U}_1 = \overline{U}_{10} \log(\eps) + \overline{U}_{11}\,,
\end{equation}
where $\overline{U}_{10}$ and $\overline{U}_{11}$ are constants,
independent of $\eps$, to be determined.  The term
$\overline{U}_{10}\log(\eps)$ is a ``switchback term''
(cf.~\cite{LA}) and simply corresponds to inserting a
constant term between ${U_0/\eps}$ and $U_1$ in the outer expansion
(\ref{berg:outex}).

In order to determine the local behavior of $U_1$ as $\x\to\x_i$, in
Appendix \ref{app_g:int_green} we establish the refined asymptotic
behavior of the surface Neumann Green's function $G_e$ as
$\x\to \x_i$, written in terms of the local tangential-normal
coordinate system.  In terms of the $\eps$-localized boundary-fitted
coordinates it is given in (\ref{app_g:glocal_all}).  In this way, by
using (\ref{app_g:glocal_all}) and (\ref{berg:swit}) in
(\ref{berg:u1_sol}), we obtain as $\x\to\x_i$ that
\begin{equation}\label{berg:u1_loc}
  \begin{split}
    U_{1} &\sim -\frac{U_0C_i}{\eps |\y|} + \left( \frac{\HC_i U_0 C_i}{2}
      + \overline{U}_{10}\right) \log\eps +
    \frac{\HC_i U_0 C_{i}}{2} \left( \log\left(\teta+|\y|\right) -
    \frac{\teta (\txione^2+\txitwo^2)}{|\y|^3} \right) \\
  & \qquad - \frac{\kapdif_i U_0 C_i}{4} \left(
    \frac{2\teta}{|\y|^3} + \frac{1}{\left(\teta+|\y|\right)^2}
  \right) (\txione^2-\txitwo^2) + \overline{U}_{11} + U_0\beta_{ei}\,.
\end{split}
\end{equation}
Here the constant $\beta_{ei}$ is the $i$-th component of a
matrix-vector product defined by
\begin{equation}\label{berg:Bi}
  \beta_{ei} \equiv  -2\pi \left({\mathcal G}_e \vc\right)_{i} \,,
\end{equation}
where $\vc\equiv(C_1,\ldots,C_N)^T$ and ${\mathcal G}_{e}$ is the
Green's matrix defined by
\begin{equation}\label{berg:green_mat}
    {\mathcal G}_e \equiv \left ( 
\begin{array}{cccc}
 R_{e1} & G_{e12} & \cdots & G_{e1N} \\
 G_{e21} & R_{e2} & \cdots   &G_{e2N} \\
 \vdots & \vdots  &\ddots  &\vdots\\ 
 G_{eN1} &\cdots & G_{eN,N-1} & R_{eN}
\end{array}
\right ) \,, \quad R_{ei} \equiv R_{e}(\x_i) \,, \quad G_{eij} \equiv
  G_{e}(\x_i;\x_j) \,.
\end{equation}

Upon substituting (\ref{berg:u1_loc}) into the matching condition
(\ref{berg:mat_1}), we observe that the ${\mathcal O}(\log\eps)$ term
in (\ref{berg:u1_loc}) specifies the far-field limiting behavior
$V_{1i}\sim \overline{U}_{10} + {\HC_i U_0 C_i/2}$ as $|\y| \to \infty$,
where $V_{1i}$ satisfies the inner problem (\ref{berg:Vk}) with
$k=1$. In terms of $w_{i}$ of (\ref{berg:wc}), we conclude that
\begin{equation}
  V_{1i} = \left( \frac{\HC_i U_0 C_i}{2} + \overline{U}_{10} \right)
  \left(1 - w_{i}\right) \,, \label{berg:V1sol}
\end{equation}
which has the far-field behavior
\begin{equation}
  V_{1i} \sim \left( \frac{\HC_i U_0 C_i}{2} + \overline{U}_{10} \right)
  \left(1 - 
      \frac{C_i}{|\y|} +\cdots  \right)\,,
    \quad \mbox{as} \quad |\y| \to \infty\,.
        \label{berg:V1ff}
\end{equation}

Upon substituting (\ref{berg:V1ff}) into the matching condition
(\ref{berg:mat_1}), we obtain the singularity behavior for the
correction $U_2$ in the outer expansion (\ref{berg:outex}). In this
way, we conclude that $U_2$ satisfies
\bsub \label{berg:U2prob}
\begin{gather}
  \Delta_{\x} U_{2} = 0 \,, \quad \x\in \R^3\backslash \Omega \,; \qquad
  \partial_n U_2=0 \,, \quad \x\in \partial\Omega\backslash
  \lbrace{\x_1,\ldots,\x_N\rbrace} \,, \\
  U_2\sim -\left( \frac{\HC_i U_0 C_i}{2} +\overline{U}_{10} \right) \frac{C_i}
  {|\x-\x_i|}\,, \quad \mbox{as} \quad
  \x\to\x_i \in \partial\Omega \,, \quad i=1,\ldots,N \,, \\
  \lim_{\sigma\to\infty} \int_{\partial\Omega_\sigma} \partial_{r}U_{2}\vert_{r=\sigma}
  \, dS=0  \,. 
\end{gather}
\esub

By using the divergence theorem, we readily find that (\ref{berg:U2prob}) is
solvable only when $\sum_{j=1}^{N}C_j\left[\overline{U}_{10}+{\HC_j U_0 C_j/2}
\right]=0$, which determines $\overline{U}_{10}$ as
\begin{equation}
  \frac{\overline{U}_{10}}{U_0} = -\frac{1}{2\overline{C}} \left(
    \sum_{j=1}^{N} \HC_j C_j^2 \right)\,, \quad \mbox{where} \quad
   \overline{C}=\sum_{j=1}^{N} C_j \,. \label{berg:u10}
\end{equation}
With $\overline{U}_{10}$ determined in this way, the solution to
(\ref{berg:U2prob}) is given in terms of the surface Neumann
Green's function and an additional unknown constant
$\overline{U}_{2}$ as
\begin{equation}
  U_2 = \overline{U}_{2} - 2\pi \sum_{j=1}^{N} C_j \left( \frac{\HC_j U_0 C_j}{2}
    + \overline{U}_{10} \right)  G_{e}(\x;\x_j)  \,.
  \label{berg:U2}
\end{equation}

Next, we match the ${\mathcal O}(1)$ terms in (\ref{berg:u1_loc})
when substituted into the matching condition (\ref{berg:mat_1}). In this
way, we find that $V_{2i}$ satisfies (\ref{berg:V2}) with the
far-field behavior
\begin{equation}\label{berg:V2inf}
  \begin{split}
    V_{2i} \sim V_{2i\infty} &\equiv \beta_{ei} U_0 + \overline{U}_{11} +
    \frac{\HC_i U_0 C_i}{2} \left(
    \log(\teta + |\y|) - \frac{\teta (\txione^2 + \txitwo^2)}{|\y|^3}
  \right)\,, \\
  & \qquad \qquad - \frac{\kapdif_i U_0 C_i}{4} \left[
    \frac{2\teta}{|\y|^3} + \frac{1}{\left(\teta+|\y|\right)^2}
  \right] (\txione^2-\txitwo^2) \,, \quad \mbox{as} \quad |\y| \to \infty \,. 
\end{split}
\end{equation}
To analyze (\ref{berg:V2inf}), it is convenient to decompose $V_{2i}$ as
\begin{equation}\label{berg:V2_decomp}
  V_{2i} = \left( \beta_{ei} U_0 + \overline{U}_{11}\right) \left(1-w_i
  \right) + \HC_i U_0 \Phi_{i+} + \kapdif_{i} U_0 \Phi_{i-} \,,
\end{equation}
while substituting $V_{0i}=U_0(1-w_i)$ into the right-hand side of
(\ref{berg:V2}). In this way, we obtain that $\Phi_{i+}$ satisfies
\bsub \label{berg:Phi+}
\begin{align}
  \tdelta \Phi_{i+} &= -2 \left( \teta w_{i,\teta\teta} + w_{i,\teta} 
                       \right) \,, \quad
   \y \in \R_{+}^{3} \,, \label{berg:Phi+_1} \\
   -\partial_{\teta} \Phi_{i+} + b_i \Phi_{i+} &=0 \,, \quad \teta=0 \,,\,
    (\txione,\txitwo)\in \PT_i\,,  \label{berg:Phi+_2}\\
  \partial_{\teta} \Phi_{i+} &=0 \,, \quad \teta=0 \,,\, (\txione,\txitwo)
                               \notin \PT_i
                               \,, \label{berg:Phi+_3}\\
  \Phi_{i+} &\sim  \frac{C_i}{2} \left(
              \log(\teta + |\y|) - \frac{\teta (\txione^2 + \txitwo^2)}{|\y|^3}
              \right)\,, \quad
   \mbox{as} \quad |\y| \to \infty \,, \label{berg:Phi+_4} 
\end{align}
\esub
while $\Phi_{i-}$ is the solution to
\bsub \label{berg:Phi-}
\begin{align}
  \tdelta \Phi_{i-} &= {\mathcal N}_{-}(\y) \equiv
                      2 \teta \left( w_{i,\txione\txione} -w_{i,\txitwo\txitwo}
                       \right) \,, \quad
   \y \in \R_{+}^{3} \,, \label{berg:Phi-_1}\\
   -\partial_{\teta} \Phi_{i-} + b_i \Phi_{i-} &=0 \,, \quad \teta=0 \,,\,
    (\txione,\txitwo)\in \PT_i\,,  \label{berg:Phi-_2}\\
  \partial_{\teta} \Phi_{i-} &=0 \,, \quad \teta=0 \,,\, (\txione,\txitwo)
                               \notin \PT_i
                               \,, \label{berg:Phi-_3}\\
  \Phi_{i-} &\sim  -\frac{C_i}{4} \left[
              \frac{2\teta}{|\y|^3} + \frac{1}{\left(\teta+|\y|\right)^2}
  \right] (\txione^2-\txitwo^2) \,, \quad
   \mbox{as} \quad |\y| \to \infty \,. \label{berg:Phi-_4} 
\end{align}
\esub

In Appendix C of \cite{GrebenkovWard2026big} (see also
\cite{GrebenkovWard2026}), with derivation summarized
below in Appendix \ref{app:mono-plus}, it was shown that the solution
to (\ref{berg:Phi+}) has the refined far-field behavior
\begin{equation}\label{berg:Phi+_ff}
  \Phi_{i+} \sim \frac{C_i}{2} \left(
    \log(\teta + |\y|) - \frac{\teta (\txione^2 + \txitwo^2)}{|\y|^3}
  \right) + \frac{E_{i+}}{|\y|}\,, \quad \mbox{as} \quad |\y| \to \infty \,.
\end{equation}
The properties of the monopole coefficient $E_{i+}=E_{i+}(b_i)$, as
derived in \cite{GrebenkovWard2026big} and also summarized in Appendix
\ref{app:mono-plus} (see also \cite{GrebenkovWard2026}), are given in
the next lemma.

\begin{lemma}\label{lemma:monop} For an arbitrary patch shape $\PT_i$,
  we have
\begin{equation}  \label{berg:Ei_general0}
  E_{i+}(b_i) = - \frac{1}{2\pi^2} \int\limits_{\PT_i}\int\limits_{\PT_i}
  q_i(\y;b_i)\, q_i(\y^{\prime};b_i)
\, \log|\y-\y^{\prime}|\, d\y \,d\y^{\prime} \,,
\end{equation}
where we have labeled $\y=(\txione,\txitwo)^T$ and $\y^{\prime}=
(\txione^{\prime},\txitwo^{\prime})^T$. As $b_i\to 0$, we have 
$q_{i}\sim {b_i/2}$ so that to leading order
\begin{equation}  \label{berg:Ei_asympt0}
  E_{i+}(b_i) \sim -\frac{b_i^2}{8\pi^2} \left(\,
  \int\limits_{\PT_i}\int\limits_{\PT_i} \log|\y-\y^{\prime}|\, d\y \,
  d\y^{\prime} \right) \,.
\end{equation}
The coefficient $E_{i+}$ is invariant under rotations and so does not
depend on the orientation of the patch with respect to the principal
directions. When $\PT_i$ is the disk
$s^{2}\equiv \txione^2+\txitwo^{2}\leq a_i^2$, for which
$q_i=q_i(s;b_i)$ in (\ref{berg:wc_charge}) is radially symmetric, we
have
\begin{equation}\label{berg:Ej_all}  
  E_{i+} =  E_{i+}(b_i) = -\frac{\log{a_i}}{2} [C_i(b_i)]^2 + 2
  \int_{0}^{a_i} \frac{1}{s}
\left(\int_{0}^{s}  t q_i(t;b_i) \, dt\right)^2  \,  ds\,,
\end{equation}
where $C_i(b_i)$ is given by Lemma \ref{lemma:Cj_kappa}. For the disk,
the asymptotic behavior of $E_{i+}$ is 
\bsub \label{berg:Ej_asy}
  \begin{align}
  E_{i+} &\sim E_{i+}(\infty)\equiv -\frac{2 a_i^2}{\pi^2}\left(
    \log{a_i} + \log{4} - \frac{3}{2} \right)\,,   \quad \mbox{as}\quad
  b_i\to \infty\,, \label{berg:Ej_asy_large} \\
  E_{i+} & \sim \frac{b_i^2 a_i^4}{8} \left( \frac{1}{4}-\log{a_i}\right)\,,
  \quad \mbox{as} \quad b_i \to 0 \,.  \label{berg:Ej_asy_small}
\end{align}
\esub
\end{lemma}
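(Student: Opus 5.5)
The plan is to construct $\Phi_{i+}$ explicitly as a particular solution plus a harmonic correction, and then read off the monopole coefficient through a reciprocity (Green's second identity) argument against $w_i$.

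\emph{Step 1: particular solution.} We use the single-layer representation $w_i(\y)=\pi^{-1}\int_{\PT_i} q_i(\y')\,|\y-\y'|^{-1}\,d\y'$, which follows from (\ref{berg:wc}) and (\ref{berg:wc_charge}). We seek a kernel $K(\y)$ with
$$\tdelta K=-2\left(\teta\,\partial_{\teta\teta}+\partial_\teta\right)|\y|^{-1}.$$
Guided by the far field (\ref{berg:Phi+_4}), we take
$$K(\y)=\tfrac12\Big(\log(\teta+|\y|)-\teta(\txione^2+\txitwo^2)|\y|^{-3}\Big)$$
and verify the Laplacian identity directly. We also check that $\partial_\teta K=0$ on $\teta=0$. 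Indeed, both $\partial_\teta\log(\teta+|\y|)$ and $\partial_\teta\big(\teta s^2|\y|^{-3}\big)$ reduce to $1/s$ there.

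We then set $\Psi(\y)=\pi^{-1}\int_{\PT_i}q_i(\y')K(\y-\y')\,d\y'$. This function solves (\ref{berg:Phi+_1}) and has zero normal derivative on the whole plane $\teta=0$. Since $\pi^{-1}\int q_i=C_i$, its leading far field is $\tfrac{C_i}{2}\big(\log(\teta+|\y|)-\teta s^2|\y|^{-3}\big)$.

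\emph{Step 2: harmonic correction and reciprocity.} Write $\Phi_{i+}=\Psi+\chi$. Then $\chi$ is harmonic in $\R^3_+$, satisfies $\partial_\teta\chi=0$ off $\PT_i$, and satisfies $\partial_\teta\chi=b_i(\Psi+\chi)$ on $\PT_i$. Its monopole coefficient is $E_{i+}=(2\pi)^{-1}\int_{\PT_i}(-\partial_\teta\chi)$.

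We apply Green's second identity to $\chi$ and $w_i$ over the half-space. The surface term at infinity vanishes because both functions decay like $1/|\y|$. Using $-\partial_\teta w_i=b_i(1-w_i)$ on $\PT_i$, this gives $\int_{\PT_i}b_i(\chi+w_i\Psi)=0$. Hence
$$\int_{\PT_i}(-\partial_\teta\chi)=-b_i\int_{\PT_i}(1-w_i)\Psi=\int_{\PT_i}\partial_\teta w_i\,\Psi=-2\int_{\PT_i}q_i\Psi .$$
On $\teta=0$ we have $\Psi=(2\pi)^{-1}\int q_i(\y')\log|\y-\y'|\,d\y'$. Substituting this yields (\ref{berg:Ei_general0}).

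Rotation invariance is immediate from this formula, since $q_i$ rotates with $\PT_i$ and $\log|\y-\y'|$ is invariant. The small-$b_i$ limit follows from $w_i=O(b_i)$, so that $q_i\sim b_i/2$.

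\emph{Main obstacle.} The delicate point is justifying that the expansion of $K(\y-\y')$ about $\y'=0$ contributes no additional pure $1/|\y|$ monopole term. The shift of $\log(\teta+|\y|)$ generates $O(1/|\y|)$ terms, but these are angularly dependent, dipole-type terms that are odd in $\y'$. I would check carefully that these terms are orthogonal to the monopole, or else absorb them into the $\DT$-type correction, so that $E_{i+}$ is well defined as the coefficient of the isotropic $1/|\y|$ term. A related issue is making sure the reciprocity argument holds without boundary contributions at infinity, which requires that $\chi$ have no logarithmic growth.

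\emph{Disk case.} For a radially symmetric $q_i$ we use the angular average
$$\frac{1}{2\pi}\int_0^{2\pi}\log|s e^{i\theta}-t|\,d\theta=\log\max(s,t).$$
Writing $\log\max(s,t)=\log a_i-\int_{\max(s,t)}^{a_i}\rho^{-1}\,d\rho$ and exchanging the order of integration gives (\ref{berg:Ej_all}).

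For large $b_i$, insert $q_i=\pi^{-1}(a_i^2-s^2)^{-1/2}$. Then $\int_0^s t\,q_i\,dt=\big(a_i-\sqrt{a_i^2-s^2}\big)/\pi$, and an elementary integral gives (\ref{berg:Ej_asy_large}).

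For small $b_i$, take $q_i=b_i/2$. Then $\int_0^s t\,q_i\,dt=b_i s^2/4$, so
$$2\int_0^{a_i}\frac{b_i^2 s^3}{16}\,ds=\frac{b_i^2a_i^4}{32}.$$
Combined with $-\tfrac12\log a_i\,(b_ia_i^2/2)^2$, this gives (\ref{berg:Ej_asy_small}).
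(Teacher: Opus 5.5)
Your proposal is correct and follows essentially the paper's route: your convolution $\Psi=\pi^{-1}\int_{\PT_i}q_i(\y')K(\y-\y')\,d\y'$ is exactly the paper's particular solution $\Phi_{p+}$ of Lemma~\ref{app+:Phip+}, and its trace on $\teta=0$ is the paper's ${\mathcal F}_{+}$. Likewise, your reciprocity argument with $w_i$ in place of $\hat w=-1+w_i$ is an inessential variant of (\ref{app+c:green})--(\ref{app+c:e}), and your disk computation via the angular mean $\log\max(s,t)$ replaces the paper's radial ODE and integration by parts. The obstacle you flag is harmless: the extra $O(1/|\y|)$ terms in $\Psi$ are $-\DT_i{\bf \cdot}\nabla K(\y)$, which are proportional to $\cos\phi$ or $\sin\phi$ and so carry no net flux and do not change the isotropic coefficient $E_{i+}$ (the monopole of your $\chi$); the paper's $o(1/\rho)$ remainder in (\ref{app+:Phi+ff}) simply does not mention these terms.
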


The new problem (\ref{berg:Phi-}), which incorporates curvature anisotropy
due to differences in the two principal curvatures for
non-circular patches, is analyzed in Appendix
\ref{app:mono-minus}. There it is shown that, in terms of an
additional monopole coefficient $E_{i-}=E_{i-}(b_i)$, $\Phi_{i-}$ has
the refined far-field asymptotic behavior
\begin{equation}\label{berg:Phi-_ff}
  \Phi_{i-} \sim  -\frac{C_i}{4} \left[
              \frac{2\teta}{|\y|^3} + \frac{1}{\left(\teta+|\y|\right)^2}
  \right] (\txione^2-\txitwo^2) + \frac{E_{i-}}{|\y|} \,.
\end{equation}
As derived in Appendix \ref{app:mono-minus}, this new monopole coefficient
is characterized as follows:

\begin{lemma}\label{lemma:monom} For an arbitrary patch shape $\PT_i$,
  we have
\bsub \label{berg:Ei_-}
\begin{equation}  \label{berg:Ei-main}
  E_{i-}(b_i) =  \frac{1}{4\pi^2} \int_{\PT_i} q_{i}(\y;b_i)
  {\mathcal J}_{i}(\y;b_i) \, d\y\,, \quad \mbox{with} \quad
  q_i(\y;b_i)=-\frac{1}{2} w_{i, \teta}\vert_{\teta=0} \,,
\end{equation}
where ${\mathcal J}_{i}$ is defined by
\begin{equation}\label{berg:Ei-main_J}
  {\mathcal J}_{i}(\y;b_i) \equiv \int_{\PT_i}
  \frac{\left(\txione^{\prime}-\txione\right)^2 -
      \left(\txitwo^{\prime}-\txitwo\right)^2}{|\y^{\prime}-\y|} \,
    q_{i}(\y^{\prime};b_i) \, d\y^{\prime} \,.
  \end{equation}
\esub
  As $b_i\to 0$, we have $q_{i}\sim {b_i/2}$ so that to leading order
\begin{equation}  \label{berg:Ei_minus_asympt0}
  E_{i-}(b_i) \sim \frac{b_i^2}{16\pi^2} 
    \int\limits_{\PT_i}\int\limits_{\PT_i}
    \frac{  \left(\txione^{\prime}-\txione\right)^2 -
      \left(\txitwo^{\prime}-\txitwo\right)^2}{|\y^{\prime}-\y|} 
  d\y \, d\y^{\prime}  \,.
\end{equation}
In (\ref{berg:Ei_-}) and (\ref{berg:Ei_minus_asympt0}), we have
labeled $\y=(\txione,\txitwo)^T$ and
$\y^{\prime}=(\txione^{\prime},\txitwo^{\prime})^T$.
The coefficient $E_{i-}$ is not invariant under rotations of $\PT_i$, and
so it depends on the orientation of the patch with respect to the
two principal directions. When $\PT_i$ is a
disk, then $E_{i-}\equiv 0$ for all $b_i>0$.
\end{lemma}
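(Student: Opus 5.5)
The plan is to mirror the derivation of Lemma \ref{lemma:monop} for $E_{i+}$ (Appendix \ref{app:mono-plus}). I would first build an explicit particular solution for a point charge, superpose it over the patch, and then read off the monopole of the remaining homogeneous Robin problem by a reciprocity (Green's identity) argument against $w_i$.

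\emph{Step 1: superposition.} From (\ref{berg:wc}) and the half-space Neumann Green's function, $w_i(\y)=\pi^{-1}\int_{\PT_i} q_i(\y^{\prime})|\y-\y^{\prime}|^{-1}\,d\y^{\prime}$. This is consistent with (\ref{berg:wc_charge}) and with $w_i\sim C_i/|\y|$. Hence the forcing ${\mathcal N}_{-}$ in (\ref{berg:Phi-_1}) is a superposition of the point-source forcings $2\teta(\partial_{\txione\txione}-\partial_{\txitwo\txitwo})|\y-\y^{\prime}|^{-1}$.

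\emph{Step 2: point-source particular solution.} I would construct an explicit $P(\y)$ satisfying $\tdelta P=2\teta(\partial_{\txione\txione}-\partial_{\txitwo\txitwo})|\y|^{-1}$ in $\R^3_+$ with $\partial_\teta P=0$ on $\teta=0$. It must reproduce, after the $\pi^{-1}\int q_i$ weighting, the far field $-\tfrac14[2\teta|\y|^{-3}+(\teta+|\y|)^{-2}](\txione^2-\txitwo^2)$ of (\ref{berg:Phi-_4}). I would look for $P$ in the form $(\txione^2-\txitwo^2)f(\teta,|\y|)$, reducing to an ODE in the variable $\teta/|\y|$ as for the singularity analysis of $G_e$ in Appendix \ref{app_g:int_green}. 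Possibly a harmonic quadrupole-type term will have to be added to kill $\partial_\teta P$ on the plane.

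This is the step I expect to be the main obstacle. The correct $P$ need not be homogeneous of degree $0$: the trace $P(\txione,\txitwo,0)$ must turn out to be proportional to $[(\txione)^2-(\txitwo)^2]/|\y|$, i.e. degree one, so that (\ref{berg:Ei-main_J}) emerges. Getting this normalization and the constant $1/(4\pi^2)$ right requires care. I would verify the far field carefully by expanding $P(\y-\y^{\prime})$ for $|\y|\gg|\y^{\prime}|$. The shift corrections are $\mathcal{O}(|\y|^{-1})$ but odd or quadrupolar in angle, so they contribute no monopole.

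\emph{Step 3: monopole via reciprocity.} Set $\Phi_{i-}=\pi^{-1}\int_{\PT_i}q_i(\y^{\prime})P(\y-\y^{\prime})\,d\y^{\prime}+H$. Then $H$ is harmonic, satisfies $\partial_\teta H=0$ off $\PT_i$, and satisfies $-\partial_\teta H+bH=-bP_q$ on $\PT_i$, where $P_q$ denotes the convolution trace. Its far field is $H\sim E_{i-}/|\y|$ with $E_{i-}=\pi^{-1}\int_{\PT_i}(-\tfrac12\partial_\teta H)$. Applying Green's second identity to $H$ and $1-w_i$ on a large half-ball (the boundary term at infinity vanishes in the limit) and using the Robin conditions for both functions converts $\int_{\PT_i}\partial_\teta H$ into $\int_{\PT_i}q_i\,P_q\,d\y$ times an explicit constant. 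Inserting the trace of $P$ from Step 2 yields (\ref{berg:Ei-main})--(\ref{berg:Ei-main_J}).

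\emph{Step 4: consequences.} For $b_i\to0$ one has $q_i\sim b_i/2$ (since $w_i=\mathcal{O}(b_i)$), which gives (\ref{berg:Ei_minus_asympt0}) directly. Rotating $\PT_i$ changes the kernel $(\txione^{\prime}-\txione)^2-(\txitwo^{\prime}-\txitwo)^2$, so $E_{i-}$ is not rotation invariant. For a disk, $q_i$ is radial and the rotation $(\txione,\txitwo)\mapsto(\txitwo,-\txione)$ preserves $q_i$ and $\PT_i$ but flips the sign of the kernel. Hence $E_{i-}=-E_{i-}=0$ for every $b_i>0$.
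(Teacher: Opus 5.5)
Your overall plan is sound, and once Step 2 is done correctly it is a cleaner route than the paper's. As you frame it, though, Step 2 would fail, because you are steering it toward a kernel that cannot be right.

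The point forcing $2\teta(\partial_{\txione\txione}-\partial_{\txitwo\txitwo})|\y|^{-1}=6\teta(\txione^2-\txitwo^2)/|\y|^5$ is homogeneous of degree $-2$. The point-source particular solution is therefore homogeneous of degree $0$. It is exactly $P=4\pi G_{-}$, with $G_-$ from (\ref{app_g:g-_final2}), and its trace on $\teta=0$ is $-\tfrac14(\txione^2-\txitwo^2)/(\txione^2+\txitwo^2)$.

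A degree-one trace $\propto(\txione^2-\txitwo^2)/|\y|$ is incompatible with the degree-$0$ far field $-\tfrac{C_i}{4}[\cdots](\txione^2-\txitwo^2)$ that you impose in the same step, since it would grow linearly along the plane. Adding harmonic Neumann terms cannot produce it without destroying that far field. Executing Steps 2--3 correctly gives $E_{i-}=-\pi^{-1}\int_{\PT_i}q_i P_q\,d\y$, which is (\ref{berg:Ei_-1}), with
\[
P_q(\y)=-\frac{1}{4\pi}\int_{\PT_i}\frac{(\txione^{\prime}-\txione)^2-(\txitwo^{\prime}-\txitwo)^2}{|\y^{\prime}-\y|^{2}}\,q_i(\y^{\prime})\,d\y^{\prime}.
\]
This yields the prefactor $1/(4\pi^2)$ of (\ref{berg:Ei-main}), but with $|\y^{\prime}-\y|^{2}$ rather than $|\y^{\prime}-\y|$ in ${\mathcal J}_i$ and in (\ref{berg:Ei_minus_asympt0}).

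That squared denominator is what the paper's own proof produces in (\ref{app-c:final_J}), and what it uses for the ellipse in (\ref{app:e-}). The single power in the displayed statement is a misprint. A scaling check settles it: under $\y\to\lambda\y$, $b_i\to b_i/\lambda$ one has $C_i\to\lambda C_i$ and $q_i\to q_i/\lambda$. The far field of $\Phi_{i-}$ then forces $E_{i-}\to\lambda^2E_{i-}$, so the kernel must be homogeneous of degree $0$.

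So do not try to engineer a degree-one trace; prove the corrected formula instead. Your Step 4 is unaffected, since the disk and non-invariance arguments use only the sign change of the numerator.

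With that correction, your route differs from the paper's in a useful way. The paper writes $\Phi_{p-}$ as an image-method volume integral of ${\mathcal N}_-$, so $E_{i-}$ first appears as $\pi^{-2}\int q_i{\mathcal I}$ with ${\mathcal I}$ a three-dimensional integral. It then needs the auxiliary function $\Psi$ (a translate of $G_-$) and a second Green's identity with $w_i$ to collapse ${\mathcal I}$ onto the patch.

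You instead write $w_i$ as the single layer $\pi^{-1}\int_{\PT_i}q_i|\y-\y^{\prime}|^{-1}d\y^{\prime}$ and superpose translates of $4\pi G_-$. This gives the trace directly as a patch integral, and is equivalent to the paper's $\Psi$ step by reciprocity. Your expansion of $P(\y-\y^{\prime})$ also gives a more explicit no-monopole check than the paper's leading-order comparison with $G_-$.
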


To show that $E_{i-}\equiv 0$ when $\PT_i$ is a disk of radius $a_i$,
we first observe for a disk that $q_i$ is radially symmetric so that
$q_i=q_i\left(|\y^{\prime}|;b_i\right)$. It then readily follows that
${\mathcal J}_{i}$ in (\ref{berg:Ei-main_J}) has the symmetry property
(written in component form)
${\mathcal J}_{i}(\txione,\txitwo;b_i)= -{\mathcal
  J}_{i}(\txitwo,\txione;b_i)$. As a result, since the integrand in
(\ref{berg:Ei-main}) is an odd function with respect to reflection
about the line $\xi_1=\xi_2$ that partitions the disk $\PT_i$ into
equal halves, it follows from symmetry that $E_{i-}\equiv 0$ for all
$b_i>0$.

With the properties of $E_{i\pm}$ established above, we now proceed
with the asymptotic analysis. To determine the refined far-field
behavior of $V_{2i}$ we substitute the far-field behaviors
(\ref{berg:Phi+_ff}), (\ref{berg:Phi-_ff}), and $w_{i}\sim {C_i/|\y|}$
as $|\y|\to \infty$ into our decomposition (\ref{berg:V2_decomp}). We
conclude that
\begin{equation}\label{berg:v2i_refined}
  V_{2i} \sim V_{2i\infty} + \frac{E_{i} U_0}{|\y|} -
  \frac{C_i \left(U_0 \beta_{ei} + \overline{U}_{11}\right)}{|\y|} \,, \quad
  \quad \mbox{as} \quad |\y|\to \infty \,,
\end{equation}
where $V_{2i\infty}$ was defined in (\ref{berg:V2inf}) and $E_{i}$ is defined
in terms of $E_{i\pm}$ by
\begin{equation}\label{berg:Ei_all}
  E_{i} \equiv \HC_i E_{i+} + \kapdif_i E_{i-} \,,
\end{equation}
where $\HC_i$ and $\kapdif_i$ were defined in (\ref{berg:curve}).

Upon substituting (\ref{berg:v2i_refined}) into the matching condition
(\ref{berg:mat_1}), we observe that the two monopole terms in
(\ref{berg:v2i_refined}) determine one component of the required
singularity behavior for the outer correction $U_3$ in
(\ref{berg:outex}). The remaining part of the singularity condition
arises from the dipole term $\DT_i$ in (\ref{berg:wc_4}), which is
written in outer variables using
$\y\sim {{\orthomat}_i (\x-\x_i)/\eps}$ from (\ref{app_b:vab_all}),
where ${\orthomat}_i$ is a suitable orthogonal matrix. In this way,
we conclude from (\ref{berg:Uk}), (\ref{berg:mat_1}) and
(\ref{berg:v2i_refined}) that $U_3$ must satisfy \bsub
\label{berg:U3prob}
\begin{align}
  \Delta_{\x} U_{3} &= 0 \,, \quad \x\in \R^3\backslash \Omega \,; \qquad
             \partial_n U_3=0 \,, \quad \x\in \partial\Omega\backslash
                      \lbrace{\x_1,\ldots,\x_N\rbrace} \,, \\
  U_3 &\sim \frac{\left[U_0 E_i -\left(\beta_{ei} U_0 +\overline{U}_{11}\right)C_i
        \right]}{|\x-\x_i|} -U_0 \frac{\DT_i {\bf \cdot}
        {\orthomat}_i (\x-\x_i)}{|\x-\x_i|^3}\,, \nonumber \\
&\qquad \qquad \mbox{as} \quad \x\to\x_i \in \partial\Omega \,, \quad
                                           i=1,\ldots,N \,,\\
 &  \lim_{\sigma\to\infty} \int_{\partial\Omega_\sigma} \partial_{r}U_{3}\vert_{r=\sigma}
        \, dS=0  \,.                                            
\end{align}
\esub 

From the divergence theorem, we conclude that (\ref{berg:U3prob}) has a
solution if and only if $\overline{U}_{11}$ satisfies
\begin{equation}
  \frac{\overline{U}_{11}}{U_0} = \frac{1}{\overline{C}}
  \left(\sum_{j=1}^{N} E_j -
    \sum_{j=1}^{N} \beta_{ej} C_j\right) \,. \label{berg:u11_1}
\end{equation}
We remark that the contribution from the dipole term vanishes
identically by symmetry since $\DT_i$ has the form
$\DT_i=(p_{1i},p_{2i},0)^T$.  Finally, by using (\ref{berg:Bi}) for
$\beta_{ei}$, and recalling (\ref{berg:Ei_all}) for $E_i$, we determine
$\overline{U}_{11}$ as
\begin{equation}\label{berg:u11}
  \frac{\overline{U}_{11}}{U_0} = \frac{2\pi}{\overline{C}} \vc^T
  {\mathcal G}_e \vc + \frac{\overline{E}}{\overline{C}} \,, \quad
  \mbox{where} \quad  \overline{E}\equiv \sum_{j=1}^{N} E_j=\sum_{j=1}^{N}
  \left( \HC_j E_{j+} + \kapdif_j E_{j-}\right) \,.
\end{equation}

Finally, to identify the capacitance $C_{\rm T}$ in
(\ref{berg:ssp_3}), we must take the limit as $|\x|\to\infty$ of our
outer asymptotic expansion (\ref{berg:outex}) and compare it with the
required limiting behavior in (\ref{berg:ssp_3}). By comparing the
${\mathcal O}(1)$ terms in the resulting expression we obtain the
following main result for $C_{\rm T}$:

\begin{prop}\label{berg:main_res} 
  As $\eps \to 0$, the dimensionless capacitance $C_{\rm T}$ for
  (\ref{berg:ssp}) in the presence of $N$ well-separated partially
  reactive Robin patches, centered at $\x_i$ and
  with local reactivities $b_i$ for $i=1,\ldots,N$, has the three-term
  asymptotic expansion
\bsub \label{berg:main_res_1}
\begin{equation}
  \frac{1}{C_{\rm T}} \sim \frac{|U_0|}{\eps} \left[ 1 +
    \eps \frac{\overline{U}_{10}}{U_0} \log\left(\eps\right)
    + \eps \frac{\overline{U}_{11}}{U_0} + {\mathcal O}\left(\eps^2
    \log\eps\right) \right]\,,
\end{equation}
where
\begin{equation}\label{berg:main_res_c}
  \begin{split}
 & |U_0| = \frac{2}{\overline{C}} \,, \qquad
 \frac{\overline{U}_{10}}{U_0} = -\frac{1}{2\overline{C}} \sum_{j=1}^{N}
 \HC_j C_j^2    \,, \\
 &  \frac{\overline{U}_{11}}{U_0} =
 \frac{2\pi}{\overline{C}} \vc^T {\mathcal G}_e
 \vc + \frac{\overline{E}}{\overline{C}}\,, \qquad
 \overline{E}=\sum_{j=1}^{N}
 \left( \HC_j E_{j+} + \kapdif_j E_{j-}\right) \,,
 \end{split}
\end{equation}
\esub 
with $\HC_j={(\kappa_{1j}+\kappa_{2j})/2}$,
$\kapdif_j={(\kappa_{1j}-\kappa_{2j})/2}$, and
$\overline{C}=\sum_{j=1}^{N} C_j$.  Here $E_{j+}$ and $E_{j-}$ are
characterized in Lemmas \ref{lemma:monop} and \ref{lemma:monom},
respectively. The Green's matrix ${\mathcal G}_e$ in
(\ref{berg:main_res_c}) is given by (\ref{berg:green_mat}) in terms of
the surface Neumann Green's function of (\ref{berg:green_int}).  In
terms of the dimensional capacitance ${\mathcal C}_{\rm T}$, defined
in (\ref{berg_intro:ssp0_c}), we use (\ref{intro_bp:scalings}) to
obtain for a domain of diameter $2R_{\star}$ and with a collection of
partially reactive boundary patches of maximum diameter $2L$, that
\begin{equation}\label{berg:dimen}
  {\mathcal C}_{\rm T} = R_{\star} C_{\rm T} \,,
\end{equation}
which determines the dimensional flux to all the surface receptors as
(see (\ref{eq:j_origin}))
\begin{equation}\label{berg:flux_all}
  J\equiv   4\pi D \, {\mathcal U}_{\rm \inf}{\mathcal C}_{\rm T} \,.
\end{equation}
In evaluating $C_{\rm T}$ from (\ref{berg:main_res_1}) for the
dimensional problem, we use $C_i=C_i\left({L\B_i/D}\right)$ and
$E_{i\pm}=E_{i\pm}\left({L\B_i/D}\right)$, while calculating the
Green's matrix ${\mathcal G}_e$ at $\x_i={\X_i/R_{\star}}$.
\end{prop}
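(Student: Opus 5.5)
The proof assembles the matched asymptotic construction already carried out above; what remains is to read off $C_{\rm T}$ from the far field of the outer expansion (\ref{berg:outex}). We recall $U=-u/C_{\rm T}$ from (\ref{berg:change}) and the required behavior (\ref{berg:ssp_3}), $U\sim -1/C_{\rm T}+1/|\x|+\cdots$ as $|\x|\to\infty$. We then compute the constant term at infinity of each outer correction. $U_0$ is constant. $U_1$ is given by (\ref{berg:u1_sol}) with (\ref{berg:swit}); since $G_e(\x;\x_j)\sim 1/(4\pi|\x|)$, it tends to $\overline{U}_{10}\log\eps+\overline{U}_{11}$. Its $1/|\x|$ coefficient is $-2\pi U_0\overline{C}/(4\pi)=-U_0\overline{C}/2=1$ by (\ref{berg:U0sol}), which is exactly consistent with the unit monopole in (\ref{berg:ssp_3}). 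The higher corrections $U_2$, $U_3$ have zero net flux at infinity, so they only contribute constants at order $\eps\log\eps$ and higher.

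Matching the ${\mathcal O}(1)$ constant terms at infinity then gives
\begin{equation*}
-\frac{1}{C_{\rm T}}\sim \frac{U_0}{\eps}+\overline{U}_{10}\log\eps+\overline{U}_{11}+{\mathcal O}(\eps\log\eps),
\end{equation*}
where the error comes from the undetermined constant $\overline{U}_2$ in (\ref{berg:U2}), multiplied by $\eps\log\eps$. Since $U_0=-2/\overline{C}<0$, we factor out $U_0/\eps=-|U_0|/\eps$ and obtain (\ref{berg:main_res_1}), with a relative error of ${\mathcal O}(\eps^2\log\eps)$. Substituting $\overline{U}_{10}/U_0$ from (\ref{berg:u10}) and $\overline{U}_{11}/U_0$ from (\ref{berg:u11}) gives (\ref{berg:main_res_c}). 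To get (\ref{berg:u11}) itself, we insert $\beta_{ej}=-2\pi({\mathcal G}_e\vc)_j$ into the solvability condition (\ref{berg:u11_1}); this yields $-\sum_j\beta_{ej}C_j=2\pi\vc^T{\mathcal G}_e\vc$. The dimensional statements (\ref{berg:dimen}) and (\ref{berg:flux_all}) follow directly from the scalings (\ref{intro_bp:scalings}) and the flux identity (\ref{eq:j_origin}).

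The delicate points are the solvability conditions (\ref{berg:u10}) and (\ref{berg:u11_1}) obtained from the divergence theorem. For each $U_k$ we apply Green's identity on $\R^3\setminus\Omega$ minus small half-balls around the $\x_i$, using the large sphere $\partial\Omega_\sigma$ as the outer boundary. Each boundary singularity $A_i/|\x-\x_i|$ carries flux $2\pi A_i$ into the hemisphere, to leading order. The curvature-induced terms, namely the logarithm and $e(\x;\x_i)$ in (\ref{berg:green_int_sing}), contribute no flux to leading order. The dipole terms $\DT_i\cdot\orthomat_i(\x-\x_i)/|\x-\x_i|^3$ integrate to zero over a hemisphere because $\DT_i$ has no normal component.

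I expect the main obstacle to be justifying the refined far-field expansions of $V_{2i}$, i.e.\ the monopole coefficients $E_{i\pm}$ in (\ref{berg:Phi+_ff}) and (\ref{berg:Phi-_ff}). These rest on Lemmas~\ref{lemma:monop} and \ref{lemma:monom} together with the local Green's function expansion (\ref{app_g:glocal_all}), both of which we take as given. With those in hand, the remaining argument is bookkeeping of orders in $\eps$.
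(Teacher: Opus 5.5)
Your proposal is correct and follows the paper's own route. The paper likewise obtains the proposition by letting $|\x|\to\infty$ in the outer expansion (\ref{berg:outex}), using $G_e\to 0$ so that $U_1\to\overline{U}_{10}\log\eps+\overline{U}_{11}$, and comparing the constant term with $-1/C_{\rm T}$ in (\ref{berg:ssp_3}); $U_0$, $\overline{U}_{10}$ and $\overline{U}_{11}$ are fixed by the same divergence-theorem solvability conditions for $U_1$, $U_2$, $U_3$, with the dipole flux vanishing by symmetry. Your additional check that the $1/|\x|$ coefficient of $U_1$ equals one is a useful consistency confirmation, not a different argument.
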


When $\Omega$ is the unit sphere, we have $\HC_i=-1$ and
$\kapdif_{i}=0$ for all $i=1,\ldots,N$, and so our main result
(\ref{berg:main_res_1}) reduces to that derived in Proposition 1 of
\cite{GrebenkovWard2026}.

For $N$ identical locally circular, and perfectly absorbing, patches
of radius $\eps$ on a smooth boundary, our main result
(\ref{berg:main_res_1}) can be simplified to that given in
(\ref{eq:intro_CT}) by setting $C_i={2/\pi}$, $E_{i-}=0$, and
$E_{i+}=-2{\left(\log{4}-{3/2}\right)/\pi^2}$ for $i=1,\ldots,N$.

We emphasize that the term $\kapdif_{i}E_{i-}$ appearing in
(\ref{berg:main_res_c}) will in general be non-vanishing {\em only}
when the patch has a non-circular shape and is centered at a point on
the boundary where the two principal curvatures are different.

\section{The Mean First-Reaction Time}\label{sec:mfpt}

In this section we use a very similar asymptotic approach to analyze
(\ref{mfpt:ssp}) and to derive a three-term expansion for the global
MFRT $\overline{u}$ in (\ref{mfpt:ubar}).

In the outer region, we expand the outer solution as in
(\ref{berg:outex}) where $U_0$ is a constant to be determined, and
where the correction terms $U_k$ for $k\geq 1$ satisfy
\begin{equation}
  \Delta_{\x} U_k = - \delta_{k1} \,, \quad \x \in \Omega \,; 
  \qquad \partial_n U_k = 0 \,, \quad \x\in \partial\Omega\backslash
\lbrace{\x_1,\ldots,\x_N\rbrace} \,,  \label{mfpt:Uk}
\end{equation}
where $\delta_{k1}$ is the Kronecker symbol. Singularity
behaviors for $U_{k}$ as $\x\to \x_i$, for $i=1,\ldots,N$, will be derived
by asymptotic matching to the inner (local) solutions near each patch.
The outer corrections $U_k$ for $k\geq 1$ are
now represented in terms of the surface Neumann Green's function
$G_{s}(\x;\x_i)$ for the interior problem, which is the unique solution to
\begin{equation}
\Delta_{\x} G_{s} = \frac{1}{|\Omega|} \,, \quad  \x\in \Omega \,; \qquad
\partial_n G_{s} =  \delta(\x-\x_i)\,, \quad \x \in \partial \Omega \,; \qquad
 \int_{\Omega} G_{s} \, d\x  = 0 \,, \label{mfpt:green_int}
\end{equation}
with $\x_i\in\partial\Omega$. As shown in Appendix \ref{app_g:int_green} the
local singularity behavior of $G_s$ is
\begin{equation}\label{mfpt:green_int_sing}
  G_{s}(\x;\x_i)= \frac{1}{2\pi|\x-\x_i|} -\frac{\HC_i}{4\pi}
  \log\left(|\x-\x_i|
    + d \right) + e(\x;\x_i) + R_{s}(\x_i)+ o(1)
  \,, \quad \mbox{as} \quad \x\to \x_i  \,,
\end{equation}
where $d=\mbox{dist}(\x,\partial\Omega)$. For this interior problem we note the sign convention that if the
boundary at $\x=\x_i\in\partial\Omega$ is locally convex as
seen from $\x\in \Omega$, then $\HC_i>0$. When $\Omega$ is the unit
sphere, $\HC_i=1$, and the exact solution to
(\ref{mfpt:green_int}) is given in (\ref{mfpt:gs_exact}).

Since the asymptotic analysis of the inner solution near each patch is
identical to that for the exterior problem studied in \S \ref{sec:berg},
we can readily determine the appropriate singularity behavior for
$U_k$ as $\x\to \x_i$ for (\ref{mfpt:Uk}) by simply appealing to
the results in \S \ref{sec:berg}.

In particular, in analogy with (\ref{berg:U1prob}), we obtain that
$U_{1}$ satisfies \bsub \label{mfpt:U1prob}
\begin{gather}
  \Delta_{\x} U_{1}= -1 \,, \quad \x\in \Omega \,; \qquad
  \partial_n U_1=0 \,, \quad \x\in \partial\Omega\backslash
  \lbrace{\x_1,\ldots,\x_N\rbrace} \,, \\
  U_1\sim -\frac{U_0 C_{i}}{|\x-\x_i|}\,,  \quad \mbox{as} \quad
  \x\to\x_i \in \partial\Omega \,, \quad i=1,\ldots,N \,.
\end{gather}
\esub From the divergence theorem, the solvability condition for
(\ref{mfpt:U1prob}) determines $U_0$ as
\begin{equation} 
  U_0 = \frac{|\Omega|}{2\pi\overline{C}}\,,  \quad \mbox{where} \quad
  \overline{C}\equiv   \sum_{j=1}^{N} C_{j} \,. \label{mfpt:U0sol}
\end{equation}
In terms of constants $\overline{U}_{10}$ and $\overline{U}_{11}$ to
be determined, the solution to (\ref{mfpt:U1prob}) is
\begin{equation}\label{mfpt:u1_sol}
  U_1 = \overline{U}_1 - 2\pi U_0 \sum_{j=1}^{N} C_j G_{s}(\x;\x_j) \,, \quad
  \mbox{where} \quad \overline{U}_1=\overline{U}_{10} \log(\eps) +
  \overline{U}_{11}\,.
\end{equation}

In analogy with (\ref{berg:U2prob}), we obtain that $U_{2}$ satisfies
\bsub \label{mfpt:U2prob}
\begin{gather}
  \Delta_{\x} U_{2} = 0 \,, \quad \x\in \Omega \,; \qquad
  \partial_n U_2=0 \,, \quad \x\in \partial\Omega\backslash
  \lbrace{\x_1,\ldots,\x_N\rbrace} \,, \\
  U_2\sim -\left( \frac{\HC_i U_0 C_i}{2} +\overline{U}_{10} \right) \frac{C_i}
  {|\x-\x_i|}\,, \quad \mbox{as} \quad
  \x\to\x_i \in \partial\Omega \,, \quad i=1,\ldots,N \,.
\end{gather}
\esub By using the divergence theorem, the solvability condition for
(\ref{mfpt:U2prob}) determines $\overline{U}_{10}$ as
\begin{equation}
  \frac{\overline{U}_{10}}{U_0} = -\frac{1}{2\overline{C}}
  \left( \sum_{j=1}^{N} \HC_j C_j^2 \right)\,,  \quad
 \mbox{where}\quad  \overline{C} \equiv \sum_{j=1}^{N} C_j \,. \label{mfpt:u10}
\end{equation}
The solution to (\ref{mfpt:U2prob}) is then represented in terms of
an unknown constant $\overline{U}_{2}$ as
\begin{equation}
  U_2 = \overline{U}_2 -2\pi \sum_{j=1}^{N} C_j \left( \frac{\HC_j U_0 C_j}{2} +
    \overline{U}_{10} \right)  G_{s}(\x;\x_j)  \,.
  \label{mfpt:U2sol}
\end{equation}

Finally, in analogy with (\ref{berg:U3prob}), we obtain that $U_3$
satisfies
\bsub \label{mfpt:U3prob}
\begin{align}
  \Delta_{\x} U_{3} &= 0 \,, \quad \x\in \Omega \,; \qquad
             \partial_n U_3=0 \,, \quad \x\in \partial\Omega\backslash
                      \lbrace{\x_1,\ldots,\x_N\rbrace} \,, \\
  U_3 &\sim \frac{\left[U_0 E_i -\left(\beta_{si} U_0 +\overline{U}_{11}\right)C_i
        \right]}{|\x-\x_i|} -U_0 \frac{\DT_i {\bf \cdot} {\orthomat}_i
        (\x-\x_i)}{|\x-\x_i|^3}\,, \nonumber \\
     & \qquad \qquad \mbox{as} \quad \x\to\x_i \in \partial\Omega \,, \quad
                      i=1,\ldots,N \,, \label{mfpt:U3prob_2} 
\end{align}
\esub where $E_i$ was defined in (\ref{berg:Ei_all}) and
${\orthomat}_i$ is the rotation matrix between Cartesian and local
coordinates (see (\ref{app_b:coord_1})). In
(\ref{mfpt:U3prob_2}), $\beta_{si}$ is defined by the matrix-vector
product
\begin{equation}\label{mfpt:Bi}
  \beta_{si} =  -2\pi \left({\mathcal G}_s \vc\right)_{i} \,,
\end{equation}
where $\vc=(C_1,\ldots,C_N)^T$ and 
\begin{equation}\label{mfpt:green_mat}
    {\mathcal G}_s \equiv \left ( 
\begin{array}{cccc}
 R_{s1} & G_{s12} & \cdots & G_{s1N} \\
 G_{s21} & R_{s2} & \cdots   &G_{s2N} \\
 \vdots & \vdots  &\ddots  &\vdots\\ 
 G_{sN1} &\cdots & G_{sN,N-1} & R_{sN}
\end{array}
\right ) \,, \quad R_{si} \equiv R_{s}(\x_i) \,, \quad G_{sij} \equiv
  G_{s}(\x_i;\x_j) \,.
\end{equation}

From the divergence theorem, (\ref{mfpt:U3prob}) has a solution only
when $\overline{U}_{11}$ satisfies
\begin{equation}
  \frac{\overline{U}_{11}}{U_0} = \frac{1}{\overline{C}}
  \left(\sum_{j=1}^{N} E_j -
    \sum_{j=1}^{N} \beta_{sj} C_j\right) \,. \label{mfpt:u11_1}
\end{equation}
By using (\ref{mfpt:Bi}) for $\beta_{si}$, and recalling
(\ref{berg:Ei_all}) for $E_i$, we conclude that
\begin{equation}\label{mfpt:u11}
  \frac{\overline{U}_{11}}{U_0} = \frac{2\pi}{\overline{C}} \vc^T
  {\mathcal G}_s \vc + \frac{\overline{E}}{\overline{C}} \,, \quad
  \mbox{where} \quad  \overline{E}\equiv \sum_{j=1}^{N} E_j=\sum_{j=1}^{N}
  \left( \HC_j E_{j+} + \kapdif_j E_{j-}\right) \,.
\end{equation}

We summarize our main result for the dimensionless MFRT $u(\x)$ and
the {\clb global} MFRT $\overline{u}$ in the small-patch limit in the
following formal proposition.  We also provide the corresponding
dimensional result based on the scalings (\ref{intro:scalings}).

\begin{prop}\label{mfpt_b:main_res} 
  As $\eps \to 0$, the asymptotic solution to (\ref{mfpt:ssp}) is
  given in the outer region $|\x-\x_i|\gg {\mathcal O}(\eps)$
  for $i=1,\ldots,N$ by
\begin{equation}\label{mfpt:main_res_1}
  \begin{split}
    u(\x) &\sim \frac{U_0}{\eps} \left[ 1 +
      \eps\log(\eps) \frac{\overline{U}_{10}}{U_0} +
      \eps\left(\frac{\overline{U}_{11}}{U_0} -2\pi \sum_{j=1}^{N}
        C_j G_{s}(\x;\x_j)\right)
    \right.\\
    & \qquad \left. + \eps^2\log(\eps)\left(\frac{\overline{U}_2}{U_{0}} -2\pi
        \sum_{j=1}^{N} C_j \left(\frac{\HC_j C_j}{2}+
          \frac{\overline{U}_{10}}{U_0}\right)
        G_{s}(\x;\x_j) \right) +
      {\mathcal O}(\eps^2) \right] \,,
  \end{split}
\end{equation}
up to an unknown constant $\overline{U}_2$.  The global  MFRT
$\overline{u}$, defined by (\ref{mfpt:ubar}), satisfies
\begin{equation}\label{mfpt:main_res_2}
  \overline{u}\sim \frac{U_0}{\eps} \left[ 1 +
    \eps\log(\eps)
    \frac{\overline{U}_{10}}{U_0} + \eps \frac{\overline{U}_{11}}{U_0} +
    {\mathcal O}\left(\eps^2\log \eps\right)\right] \,.
\end{equation}
In (\ref{mfpt:main_res_1}) and (\ref{mfpt:main_res_2}), $U_0$,
$\overline{U}_{10}$ and $\overline{U}_{11}$ are determined in terms of
$\vc=(C_1,\ldots,C_N)^T$, $\overline{C}= \sum_{j=1}^{N} C_j$,
$\overline{E}= \sum_{j=1}^{N} E_j$, and the Green's matrix ${\mathcal
G}_s$, defined in (\ref{mfpt:green_mat}), by
\begin{equation}\label{mfpt:main_U}
 U_0=\frac{|\Omega|}{2\pi \overline{C}} \,,\qquad
 \frac{\overline{U}_{10}}{U_0}= -\frac{1}{2\overline{C}}
   \sum_{j=1}^{N} \HC_j C_j^2 \,, \qquad
  \frac{\overline{U}_{11}}{U_0} = \frac{2\pi}{\overline{C}} \vc^T {\mathcal G}_s
  \vc + \frac{\overline{E}}{\overline{C}} \,.
\end{equation}
Here $\HC_i$ is the mean curvature of $\partial\Omega$ at $\x=\x_i$,
and $E_j$ is defined in (\ref{mfpt:u11}).  In terms of the dimensional
variables, we use (\ref{intro:scalings}) to conclude for a domain with
length-scale $R_{\star}$, which has a well-separated collection of partially
reactive Robin patches of length-scale $L$, that
\begin{equation}\label{mfpt_b:dimen}
  \overline{U} \sim \frac{R_{\star}^2}{D} \overline{u}\,.
\end{equation}
Here in calculating $\overline{u}$ in (\ref{mfpt:main_res_2}) we set
$C_i=C_i\left({L\B_i/D}\right)$ and
$E_{i\pm}=E_{i\pm}\left({L\B_i/D}\right)$ in (\ref{mfpt:main_U}),
while evaluating the Green's matrix ${\mathcal G}_s$ at
$\x_i={\X_i/R_{\star}}$.
\end{prop}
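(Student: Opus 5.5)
The plan is to assemble the outer expansion from the pieces already constructed in this section, and then integrate it over $\Omega$. First, collect the outer terms: $U_0$ from the solvability condition (\ref{mfpt:U0sol}), $U_1$ from (\ref{mfpt:u1_sol}) with $\overline{U}_{10}$ fixed by (\ref{mfpt:u10}), and $U_2$ from (\ref{mfpt:U2sol}). Insert these into $u\sim \eps^{-1}U_0+U_1+\eps\log(\eps)U_2+\eps U_3+\cdots$ and factor out $U_0/\eps$. This gives (\ref{mfpt:main_res_1}) directly. The terms $\eps\log\eps\,\overline{U}_{10}/U_0$ and $\eps\,\overline{U}_{11}/U_0$ come from the switchback splitting of $\overline{U}_1$. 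The $\eps^2\log\eps$ bracket comes from $U_2$ divided by $U_0$, using the definition $\HC_jU_0C_j/2+\overline{U}_{10} = U_0(\HC_jC_j/2+\overline{U}_{10}/U_0)$. Everything from $U_3$ onward is absorbed into ${\mathcal O}(\eps^2)$ inside the bracket. The constant $\overline{U}_2$ remains undetermined, because fixing it requires the solvability condition of a still higher-order problem.

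Second, determine $\overline{U}_{11}$. I would rely on the solvability condition for (\ref{mfpt:U3prob}): integrate $\Delta U_3=0$ over $\Omega$ with small hemispherical excisions around each $\x_i$. The monopole singularities contribute flux $-2\pi\sum_i[U_0E_i-(\beta_{si}U_0+\overline{U}_{11})C_i]$. The dipole terms contribute nothing, since $\DT_i=(p_{1i},p_{2i},0)^T$ is tangential and its flux through a hemisphere vanishes by odd symmetry. This yields (\ref{mfpt:u11_1}), and substituting $\beta_{si}=-2\pi({\mathcal G}_s\vc)_i$ gives $\sum_j\beta_{sj}C_j=-2\pi\vc^T{\mathcal G}_s\vc$, hence (\ref{mfpt:main_U}). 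The inner analysis (matching to $V_{0i},V_{1i},V_{2i}$ and the monopoles $E_{i\pm}$ of Lemmas \ref{lemma:monop}, \ref{lemma:monom}) carries over unchanged from \S\ref{sec:berg}. The inner problems are local and insensitive to whether the outer domain is interior or exterior. The only change is the sign convention for $\HC_i$, which is already built into the singularity structure (\ref{mfpt:green_int_sing}) of $G_s$.

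Third, compute the global MFRT. Integrate (\ref{mfpt:main_res_1}) over $\Omega$ and divide by $|\Omega|$. The normalization $\int_\Omega G_s\,d\x=0$ in (\ref{mfpt:green_int}) kills every Green's function term, so only the constants survive. The inner regions have volume ${\mathcal O}(\eps^3)$ and $u={\mathcal O}(\eps^{-1})$ there, so they contribute ${\mathcal O}(\eps^2)$ to the average. The $\eps^2\log\eps\,\overline{U}_2/U_0$ term is of the stated error order, which gives (\ref{mfpt:main_res_2}). The dimensional statement (\ref{mfpt_b:dimen}) follows directly from $u=DU/R_\star^2$ and $b_i=L\B_i/D$ in (\ref{intro:scalings}).

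The main obstacle is justifying that the singular local behaviour of the $G_s$ terms does not spoil the averaging. The singularities of $G_s$ are integrable: $|\x-\x_i|^{-1}$ and the logarithm are both integrable in 3-D. The $1/|\y|$-type correction terms in the inner region integrate to ${\mathcal O}(\eps^2)$ relative order. So replacing $u$ by its outer expansion throughout $\Omega$ incurs an error that stays within ${\mathcal O}(\eps^2\log\eps)$ in the bracket. I would verify this by splitting $\Omega$ into $|\x-\x_i|<\sqrt{\eps}$ and its complement and bounding each piece.
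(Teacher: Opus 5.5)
Your proposal is correct and follows essentially the same route as the paper: you assemble the outer expansion from $U_0$, $U_1$, $U_2$, and fix $U_0$, $\overline{U}_{10}$ and $\overline{U}_{11}$ by the divergence-theorem solvability conditions for $U_1$, $U_2$, $U_3$ (the tangential dipoles carry no net flux); you then reuse the inner analysis of \S\ref{sec:berg} unchanged and obtain $\overline{u}$ by averaging with $\int_\Omega G_s\,d\x=0$. Your justification of the averaging step goes beyond what the paper writes, but on $|\x-\x_i|<\sqrt{\eps}$ the crude bound $u={\mathcal O}(\eps^{-1})$ only gives an ${\mathcal O}(\eps^{3/2})$ relative error, so that estimate must use the smallness of the inner--outer mismatch in the overlap region rather than the size of $u$ alone.
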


{\clb Although determining the constant $\overline{U}_2$ in
(\ref{mfpt:main_res_1}) from a higher-order analysis is intractable
analytically, our analysis does provide the spatial dependence
of the MFRT, up to this unknown constant.}

When $\Omega$ is the unit sphere, for which $\HC_i=1$ and
$\kapdif_i=0$ for $i=1,\ldots,N$, (\ref{mfpt:main_res_2}) reduces to
the result derived in Proposition 1 of \cite{GrebenkovWard2026big}.
This previous result of \cite{GrebenkovWard2026big} extended that
derived in
\cite{cheviakov2010asymptotic} for the case of perfectly reactive
circular patches on the boundary of the sphere.

For $N$ identical locally circular patches of a common
radius $\eps$ on a smooth boundary that are all perfectly absorbing,
(\ref{mfpt:main_res_1}) reduces to the result given in
(\ref{eqn:MFPT_reduced}) upon setting $C_i={2/\pi}$, $E_{i-}=0$ and
$E_{i+}=-2{\left(\log{4}-{3/2}\right)/\pi^2}$ for $i=1,\ldots,N$.
In particular, for one such circular patch centered at $\x_1$ on
the boundary, (\ref{eqn:MFPT_reduced}) can be written as
\begin{equation}\label{mfpt_b:one_circular}
  \overline{u} \sim \frac{|\Omega|}{4\eps}\left[ 1 -
    \frac{\eps \HC_1}{\pi} \log(4\eps) + \eps \left(4 R_{s}(\x_1) +
       \frac{3\HC_1}{2\pi}\right) + {\mathcal O}(\eps^2\log\eps)\right]\,.
\end{equation}
This result for the {\clb global} MFRT is equivalent to that derived
in Theorem 1.2 of \cite{NURSULTANOV2021202}.  For one circular
perfectly absorbing patch of radius $\eps$ on the boundary of the unit
sphere we substitute the known values $\HC_1=1$,
$R_{s}(\x_1)=\frac{1}{4\pi}\log2-\frac{9}{20\pi}$ (see
(\ref{mfpt:gs_loc1}) of Appendix \ref{app:explicit}) for any $\x_1
\in\partial\Omega$.  With $|\Omega|={4\pi/3}$ this reduces
(\ref{mfpt_b:one_circular}) to
\begin{equation}
  \overline{u} = \frac{|\Omega|}{4\eps}
  \left( 1  - \frac{\eps}{\pi} \log(2\eps)  -\frac{3\eps}{10\pi}\right)\,,
\end{equation}
which agrees with the single patch result previously derived in
Eq. (2.45) of \cite{cheviakov2010asymptotic}. {\clb We remark that the
leading-order term of this expansion was discovered by Lord Rayleigh
\cite{Rayleigh}, whereas the logarithmic correction was first given in
\cite{Holcman2006a}, but with an incorrect numerical prefactor.}

Another limiting case of our main result corresponds to a perfectly
absorbing elliptical-shaped patch $\PT_1$ of semi-axes $\eps a_{1}$
and $\eps a_{2}$ that are aligned with the directions of principal
curvatures.  By adapting the approach in \cite{Strieder09}, for this
special case in Lemma \ref{lemma:ellipse} of Appendix \ref{app:ellipe}
we give explicit analytical formulas for the reactive capacitance
$C_1=C_1(\infty)$, the charge density $q_1(\y;\infty)$, and the
monopole coefficients $E_{1+}=E_{1+}(\infty)$ and
$E_{1-}=E_{1-}(\infty)$.  For this special case,
(\ref{mfpt:main_res_2}) can be written as
\bsub \label{mfpt_b:one_ellipse}
\begin{equation}\label{mfpt_b:one_ellipse_a}  
  \overline{u} \sim \frac{|\Omega|}{4\pi} \biggl[\frac{2}{C_1 \eps} -
  \HC_1 \log\eps + 4\pi R_{s}(\x_1) +
  2 \left( \HC_{1} \frac{E_{1+}}{C_{1}^2} +
    \kapdif_{1} \frac{E_{1-}}{C_{1}^2} \right) +
  {\mathcal O}(\eps\log\eps)\biggr]\,,
%  \overline{u} \sim \frac{|\Omega|}{2\pi C_1 \eps} -
%  \frac{|\Omega| \HC_1}{4 \pi} \log\eps + |\Omega| R_{s}(\x_1) +
%  \frac{|\Omega|}{2\pi} \left( \HC_{1} \frac{E_{1+}}{C_{1}^2} +
%    \kapdif_{1} \frac{E_{1-}}{C_{1}^2} \right) +
%  {\mathcal O}(\eps\log\eps)\,,
\end{equation}
where $C_1$, ${E_{1+}/C_1^2}$ and ${E_{1-}/C_1^2}$ are given in
(\ref{app:c1}), (\ref{app:e+}) and (\ref{app:e-}), respectively.  The
resulting expression for the {\clb global} MFRT for one perfectly
absorbing elliptical-shaped patch is equivalent to that derived in
Eq. (1.3) of Theorem 1.3 of \cite{NURSULTANOV2021202}.  Moreover, in
Lemma \ref{lemma:ellipse} we derive the new result that the ratios
${E_{1\pm}/C_1^2}$ in (\ref{mfpt_b:one_ellipse_a}) can be explicitly
written in terms of the semi-axes of the ellipse as
\begin{equation}\label{mfpt_b:one_ellipse_b}
  \frac{E_{1+}}{C_{1}^2} = -\frac{1}{2}\log\left(\frac{a_1+a_2}{2}\right)
  -\log{2} + \frac{3}{4}  \,, \qquad \frac{E_{1-}}{C_{1}^2} = \frac{a_1-a_2}
  {4(a_1+a_2)} \,.
\end{equation}
\esub

\subsection{Principal Eigenvalue of the Laplacian}
\label{sec:mfrt:eig}

By using the result in (\ref{mfpt:main_res_2}) and (\ref{mfpt:main_U})
for the MFRT $\overline{u}$, we now briefly outline the derivation of
the asymptotic result for the principal eigenvalue $\lambda_0$ of the
Laplacian for
\bsub \label{mfpt_eig:ssp}
\begin{align}
  \Delta_{\x} \phi +\lambda \phi & = 0 \,, \quad \x \in \Omega \,; \qquad
   \int_{\Omega} \phi^2 \, d\x =1 \,,  \label{mfpt_eig:ssp_1}\\
  \eps\partial_{n} \phi + b_i \phi & = 0\,, \quad \x \in
     \partial\Omega^{\eps}_i  \,, \quad i=1,\ldots,N \,, \\
  \partial_{n} \phi & = 0 \,, \quad \x \in \partial \Omega_r
        =\partial\Omega\backslash\cup_{i=1}^{N} \partial\Omega^{\eps}_{i}
                      \,. \label{mfpt_eig:ssp_2}
\end{align}
\esub In {\clb Sec.~4.5} of \cite{GrebenkovWard2026big} a three-term expansion
of $\lambda_0$ was derived for (\ref{mfpt_eig:ssp}) when $\Omega$ is a
sphere, which extended the previous result in
\cite{cheviakov2010asymptotic} that was limited to perfectly reactive
circular patches on the boundary of the sphere.

To derive the corresponding result for $\lambda_0$ for a general
domain, we simply proceed as in {\clb Sec.~4.5} of
\cite{GrebenkovWard2026big} to relate $\lambda_0$ to $\overline{u}$,
through an eigenfunction expansion of $u$ in terms of the eigenpairs
of (\ref{mfpt_eig:ssp}), which yields that
\begin{equation}\label{sec_eig:uave_1}
  \lambda_0 \sim \frac{1+{\mathcal O}(\eps^2)}
  {\overline{u}-{\mathcal O}(\eps^2)}\,.
\end{equation}
Upon substituting the expansion (\ref{mfpt:main_res_2}) for $\overline{u}$
in (\ref{sec_eig:uave_1}) we observe that we can neglect the
${\mathcal O}(\eps^2)$ terms in (\ref{sec_eig:uave_1}), to obtain
\begin{equation}\label{sec_eig:lam0_1}
\lambda_0 \sim \frac{\eps}{U_0} \left( 1 + \eps \log\eps
    \frac{\overline{U}_{10}}{U_0} + \eps \frac{\overline{U}_{11}}{U_0}
     + {\mathcal O}(\eps^2 \log\eps) \right)^{-1} \,.
\end{equation}
Then, by using $(1+y)^{-1}\sim 1-y + {\mathcal O}(y^2)$ for $|y|\ll 1$
together with (\ref{mfpt:main_U}) we obtain that the principal
eigenvalue $\lambda_0$ of (\ref{mfpt_eig:ssp}) has the three-term
asymptotics
\begin{equation}\label{sec_eig:end}  
  \lambda_0 |\Omega| \sim 2\pi \eps \overline{C} +
  \pi \eps^2\log\eps  \sum_{j=1}^{N} \HC_j C_j^2 \\
   - 2\pi \eps^2 \left(2\pi\vc^{T}{\mathcal G}_s\vc +
    \overline{E} \right) + {\mathcal O}(\eps^3\log^2\eps)\,,
%  \lambda_0 \sim \frac{2\pi \eps \overline{C}}{|\Omega|} +
%  \frac{\pi \eps^2\log\eps}{|\Omega|} \sum_{j=1}^{N} \HC_j C_j^2 \\
%   -\frac{2\pi \eps^2}{|\Omega|} \left(2\pi\vc^{T}{\mathcal G}_s\vc +
%    \overline{E} \right) + {\mathcal O}(\eps^3\log^2\eps)\,,
\end{equation}
where $\overline{C}=\sum_{j=1}^{N}C_j$, $\overline{E}=\sum_{j=1}^{N}
\left( \HC_j E_{j+} + \kapdif_j E_{j-}\right)$, while ${\mathcal G}_s$
is the surface Neumann Green's matrix defined in
(\ref{mfpt:green_mat}).  When $\Omega$ is the unit sphere, where
$\HC_j=1$ and $\kapdif_j=0$ for $j=1,\ldots,N$, (\ref{sec_eig:end}) is
equivalent to that derived in \cite{GrebenkovWard2026big} upon noting
the different definition of the common diagonal elements of the
Green's matrix that was compensated for by the modified logarithmic
gauge $\log\left({\eps/2}\right)$.

\vspace*{0.1cm}

\section{Numerical validation and optimization results}\label{sec:results}

In this section, we describe our methods for the boundary integral
solution of the Berg-Purcell \eqref{berg_bp:ssp} and narrow escape
problems \eqref{mfpt:ssp}.  Using these numerical methods, we validate
the main results in Propositions~\ref{berg:main_res} and
\ref{mfpt_b:main_res}, and demonstrate that these expansions exhibit
the predicted error as $\eps\to0$.  Through some simple examples, we
explore how geometry and patch configurations combine to modulate the
key quantities of capacitance and GMFRT.

\subsection{Boundary integral method}\label{sec:BIEMethod}

Previous numerical methods relied on the Neumann-to-Dirichlet map to
represent the solution of \eqref{berg_bp:ssp} and \eqref{mfpt:ssp}
followed by spectral expansion of the surface potential and flux on
each patch \cite{BL2018,KAYE2020}, {\clb as well as other spectral
methods \cite{Grebenkov19} and Monte Carlo simulations
\cite{PlunkettLawley2024,GrebenkovWard2026}.}  In our numerical treatment, we use a standard boundary integral
equation approach to solve both the Berg-Purcell problem
\eqref{berg_bp:ssp} and narrow escape problem \eqref{mfpt:ssp}.  In
order to make the presentation as self-contained as possible, we
briefly sketch the procedure.  We first describe the approach for
solving \eqref{berg_bp:ssp} before outlining the modifications
required for \eqref{mfpt:ssp}.  In the following we will always take
$\partial \Omega$ to be smooth and $\n$ to denote its outward pointing
normal (note that this is the opposite of the convention used in
\eqref{berg_bp:ssp} but is more standard in the boundary integral
literature).

We begin by introducing the ansatz,
\begin{align}\label{ansatz:berg}
  u(\x) = 1 + \int_{\partial \Omega} \frac{1}{4\pi |\x-\y|}
  \sigma(\y)\,ds(\y), \quad \x \in \mathbb{R}^3\setminus \Omega \,,
\end{align}
where $\sigma$ is a yet to be determined boundary density. Noting that
$\Delta_\x [4\pi |\x-\y|]^{-1} = -\delta(\x-\y)$, it follows
immediately that the $u(\x)$ defined in \eqref{ansatz:berg} trivially
satisfies \eqref{berg_bp:ssp_1} and \eqref{berg_bp:ssp_3}, provided
$\sigma \in L^1(\partial \Omega)$.  All that remains is to satisfy the
boundary conditions on $\partial \Omega$.  To that end, we note that
for $\sigma \in L^2(\partial \Omega)$ the normal trace of the integral
exists in an $L^2(\partial \Omega)$ sense~\cite{kersten}, and
\begin{align}\label{limit_L2}
  \int_{\partial \Omega} \left| \n(\x)\cdot \nabla_\x \mathcal{S}[\sigma]
  (\x\pm h \n(\x)) - S^{\prime}[\sigma](\x) \pm
  \frac{1}{2} \sigma(\x) \right|^2\,
  ds(\x) \to 0 \,,
\end{align}
as $h\to 0^+$, where we have defined
\begin{equation*}
  \mathcal{S}[\sigma](\x) \equiv \int_{\partial \Omega} \frac{1}{4\pi
    |\x-\y|}\sigma(\y)\,ds(\y)\,, \quad \x \notin \partial \Omega\,,
\end{equation*}
and 
\begin{equation*}
  S^{\prime}[\sigma](\x) \equiv \int_{\partial \Omega}
  \frac{(\x-\y)\cdot \n(\x)}{4\pi |\x-\y|^3}\sigma(\y)\, ds(\y)\,,
  \quad \x \in \partial \Omega \,.
\end{equation*}
We also let $S$ denote the trace of $\mathcal{S}$, i.e. $S[\sigma](\x)
= \lim_{h\to 0} \mathcal{S}[\sigma](\x + h \n(\x))$ for $\x \in
\partial \Omega$.  
\iffalse Note that if $\partial \Omega$ is twice continuously differentiable then $(\x-\y)\cdot \n(\x) = O(|\x-\y|^2)$ as $\x \to \y$ on $\partial \Omega,$ with an implicit constant that can be chosen uniformly in $\partial \Omega.$ Hence, for some fixed constant $C$ depending only on $\partial \Omega$ the kernels of both $S$ and $S'$ are both bounded by $C/|\x-\y|$ uniformly in $(\x,\y) \in \partial \Omega \times \partial \Omega.$ In particular, if $\partial \Omega$ is smooth then $S',S : H^{s}(\partial \Omega) \to H^{s+1}(\partial \Omega)$ for every $s \in \mathbb{R},$ and are compact from $L^2(\partial \Omega)$ to itself (see~\cite{costabelShapeDerivativesBoundary2012} for example, and the references therein).\fi

Inserting our ansatz into the boundary conditions
(\ref{berg_bp:ssp_2b}-\ref{berg_bp:ssp_2}) yields the following
integral equation
\begin{align}\label{eqn:berg_bie}
  \frac{1}{2}\sigma(\x) -S^{\prime}[\sigma](\x) +
  \sum_{i=1}^N \frac{b_i}{\eps} \chi_{\partial \Omega_i^\eps}(\x)S[\sigma](\x)
  = - \sum_{i=1}^N \frac{b_i}{\eps}\chi_{\partial \Omega_i^\eps}(\x)\,,
  \quad \x \in \partial \Omega \,,
\end{align}
where $\chi_{\partial \Omega_i^\eps}$ denotes the indicator function
of $\partial \Omega_i^\eps.$ Standard arguments~\cite{kress} give that
$S$ and $S^{\prime}$ are compact from $L^2(\partial \Omega)$ to itself which
in turn implies that the above equation is a Fredholm second-kind
integral equation.  Uniqueness of \eqref{eqn:berg_bie} follows by
standard arguments, see~\cite{kress}, for example.  For the given
right-hand side, standard bootstrapping arguments give that $\sigma$
is $C^\infty$ away from the boundaries of the patches $\partial
\Omega_i^{\eps}.$ Near the patch boundaries $\sigma$ is discontinuous,
and the leading singular behavior beyond the jump scales like $d \log
d,$ where $d$ is the distance to the patch boundary.

Conveniently, we note that the capacitance $C_{\rm T}$ can be computed
directly from $\sigma$ as
\begin{equation*}
C_{\rm T} = -\frac{1}{4\pi} \int_{\partial \Omega} \sigma(\x)\,{\rm d}\x \,.
\end{equation*}

We discretize~\eqref{eqn:berg_bie} using the package
\texttt{fmm3dbie}~\cite{fmm3dbie}, which uses the collocation scheme
described in~\cite{loc_cor_quad}.  Since $\sigma$ is discontinuous
across the patch boundaries, we require meshes with patch boundaries
coinciding with edges of mesh elements. We construct such meshes as
follows.  We first compute a first-order triangular mesh using
\texttt{distmesh}~\cite{doi:10.1137/S0036144503429121}, modified to
constrain a ring of vertices to lie on each patch boundary, so that
every patch boundary is approximated by a closed polygon of mesh
edges.  Each triangle is then upgraded to a curvilinear mesh element
by placing Vioreanu--Rokhlin
nodes~\cite{vioreanu2014spectra,xiaogimbutas} on the flat triangle and
projecting them onto $\partial\Omega$ by a Newton iteration along the
gradient of the level-set function defining $\Omega$.  For triangles
with an edge or vertex on a patch boundary, the chart is blended with
a parameterization of the bounding circle, so that curvilinear mesh
element boundaries lie on the patch boundaries to the full order of
the interpolation.  Finally, mesh elements may be graded dyadically
toward the patch boundaries to resolve singularities in the density.

Solving \eqref{mfpt:ssp} proceeds in an almost identical way,
replacing the ansatz \eqref{ansatz:berg} with
\begin{equation*}
u(\x) = -\frac{|\x|^2}{6} +\mathcal{S}[\sigma](\x)\,, \quad \x \in \Omega\,.
\end{equation*}
The volume integral \eqref{mfpt:ubar} can be reduced to a boundary
integral using integration by parts.  For further details, see
\cite{chakraborty2025fastintegralmethodsneumann,lindsay20263D_GFun}.

Numerical experiments suggest our method is able to resolve the
solutions of \eqref{berg_bp:ssp} and \eqref{mfpt:ssp} with numerous
well separated small patches.  With modest grading, we obtain relative
errors in the density of at most $10^{-4}$ which is sufficient for
verification of the asymptotic results.

\subsection{Calculation of capacitance and monopole constants}\label{sec:numerics_consts}

For validation of the asymptotic expansions in the case of circular
patches, we need to calculate the capacitance $C$ and monopole
coefficient $E_{+}$. For a disk-shaped patch $\PT$ of radius $a$ with
reactivity $b$, the capacitance is given by
\cite[Eqn.~(2.6)]{GrebenkovWard2026},
\begin{equation}\label{mfpt:C_steklov}
  C(b) =  a {\mathcal C}^{}(ab) \,, \quad
  \mbox{where} \quad {\mathcal C}(z)= \frac{z}{2\pi} \sum_{k=0}^{\infty} \frac{\mu_k d_k^2}{\mu_{k} + z}
   \,.
\end{equation}
In the formulation \eqref{mfpt:C_steklov}, the pairs $(\mu_k,d_k)$ are
determined by the exterior Steklov eigenvalue problem for the unit
disk $\Gamma = \{(y_1,y_2) \in\mathbb{R}^2\ | \ y_1^2 + y_2^2 \leq
1\}$.  This is defined \cite[Eqn.~B1]{GrebenkovWard2026} by
\bsub\label{eqn:Steklov}
\begin{align}
    \Delta \Psi_k &= 0, \quad \y\in\mathbb{R}^3_+;&\\
    \partial_{n} \Psi _k &= \mu_k\Psi_k, \quad y_3 = 0 ,\ (y_1,y_2)\in\Gamma;&\\
    \partial_{n} \Psi _k &= 0, \quad  y_3 = 0,\quad (y_1,y_2) \notin\Gamma;\\
    \Psi_k (\y) &= \mathcal{O}(1/|\y|), \quad \mbox{as} \quad |\y|\to\infty.
\end{align}
\esub
This problem (see \cite{GrebenkovWard2026big}) admits infinitely many
nontrivial solutions $\{\mu_k,\Psi_k\}_{k=0}^{\infty}$ such that the
eigenvalues form an ordered sequence
\[
0 = \mu_0<\mu_1\leq \mu_2\leq \ldots
\]
while the restriction of the eigenfunctions $\Psi_k|_{\Gamma}$ form a
complete orthonormal basis $L^2(\Gamma)$ (see
\cite{Arendt15,Bundrock25} for more details).  The complementary
values $d_k$ are defined by
\[
d_k = \int_{\Gamma} \Psi_k(\y) d\y.
\]
Algorithms to numerically compute the parameters $\mu_k$ and $d_k$
were provided in \cite{Grebenkov24,GrebenkovWard2026}.  Extensions to
patches of general shape were developed in
\cite{Grebenkov-Maurette}.

The monopole coefficient $E_{+}$ was determined in \cite[Appendix
E]{GrebenkovWard2026} to be
\bsub\label{eqn:monopolE_num}
\begin{equation}
    E_{+}(\kappa) = -\frac{C^2}{2} \log a + a^2 \mathcal{E}(a\kappa),
\end{equation}
where the function $\mathcal{E}(z)$ is defined by
\begin{equation}\label{eqn:E+Int}
  \mathcal{E}(z) = 2 \int_0^1 \frac{1}{r} \left( \int_0^r r^{\prime}
  a q(a r^{\prime};z/a )dr^{\prime} \right)^2 dr \,,
\end{equation}
\esub
where $q(\y;\kappa)$ is the charge density defined in
\eqref{berg:wc_charge}. By representing the charge density
$q(\y;\kappa)$ in terms of the Steklov problem \eqref{eqn:Steklov},
the calculation of \eqref{eqn:E+Int} is reduced to quadrature.

\subsection{Convergence of the MFRT for prolate spheroids}

To validate the asymptotic analysis, we consider a prolate spherical
geometry with boundary given by
\begin{equation}\label{eq:prolate}
  {\partial \Omega} = \Big\{ (x,y,z)\in\mathbb{R}^3 \ \Big|
  \ \frac{x^2 + y^2}{a_e^2} + \frac{z^2}{c_e^2} = 1 \ \Big\} \, ,
\end{equation}
where $c_e>a_e>0$.  The prolate spheroidal coordinate system for
$\x=(x,y,z)^T$ is
\begin{equation}
    x = f \sqrt{(\zeta^2-1) (1 - \chi^2)} \cos\phi\,, \quad
    y = f \sqrt{(\zeta^2-1) (1 - \chi^2)} \sin\phi\,, \quad 
    z = f \, \zeta\, \chi \,,
\end{equation}
where $f = \sqrt{c_e^2-a_e^2}$ is half the inter-focal distance. Here
$\zeta\in[1,\infty)$ is the radial variable, $\chi \in[-1,1]$ is the
elevation coordinate and $\phi \in [0,2\pi)$ is the azimuthal
variable. The surface of the prolate spheroid is defined by points
$\x = (\chi,\phi,\zeta_b)$ where $\zeta_b = {c_e/f}$. In our 
experiments below we fill fix $a_e= 1$ and $c_e= \sqrt{2}$ so that
$f=1$ and $\zeta_b = \sqrt{2}$.

To apply (\ref{mfpt_b:one_circular}) for the prolate spheroid case, we
note that the mean curvature of this surface at the point
$(\chi,\phi,\zeta_b)$ is
\begin{equation}\label{eqn:meanCurvature}
  \HC = \frac{\zeta_b(2\zeta_b^2 - \chi^2 -1)}{2(\zeta_b^2-1)^{\frac12}
    (\zeta_b^2 - \chi^2)^{\frac32}}\,.
\end{equation}
The evaluation of the regular part $R_{s}(\x_1)$ satisfying
(\ref{mfpt:green_int_sing}) is performed numerically using the methods
described in \cite{lindsay20263D_GFun}. The values of $\HC$ and $R_s$ as
the elevation coordinate $\chi\in[-1,1]$ is varied are shown in
Fig.~\ref{fig:Prolate_a}.

\begin{figure}[htbp]
\centering
\subfigure[$R_s(\x_1)$ (left axis) and $\HC_1$ (right axis).]{\includegraphics[width = 0.495\textwidth]{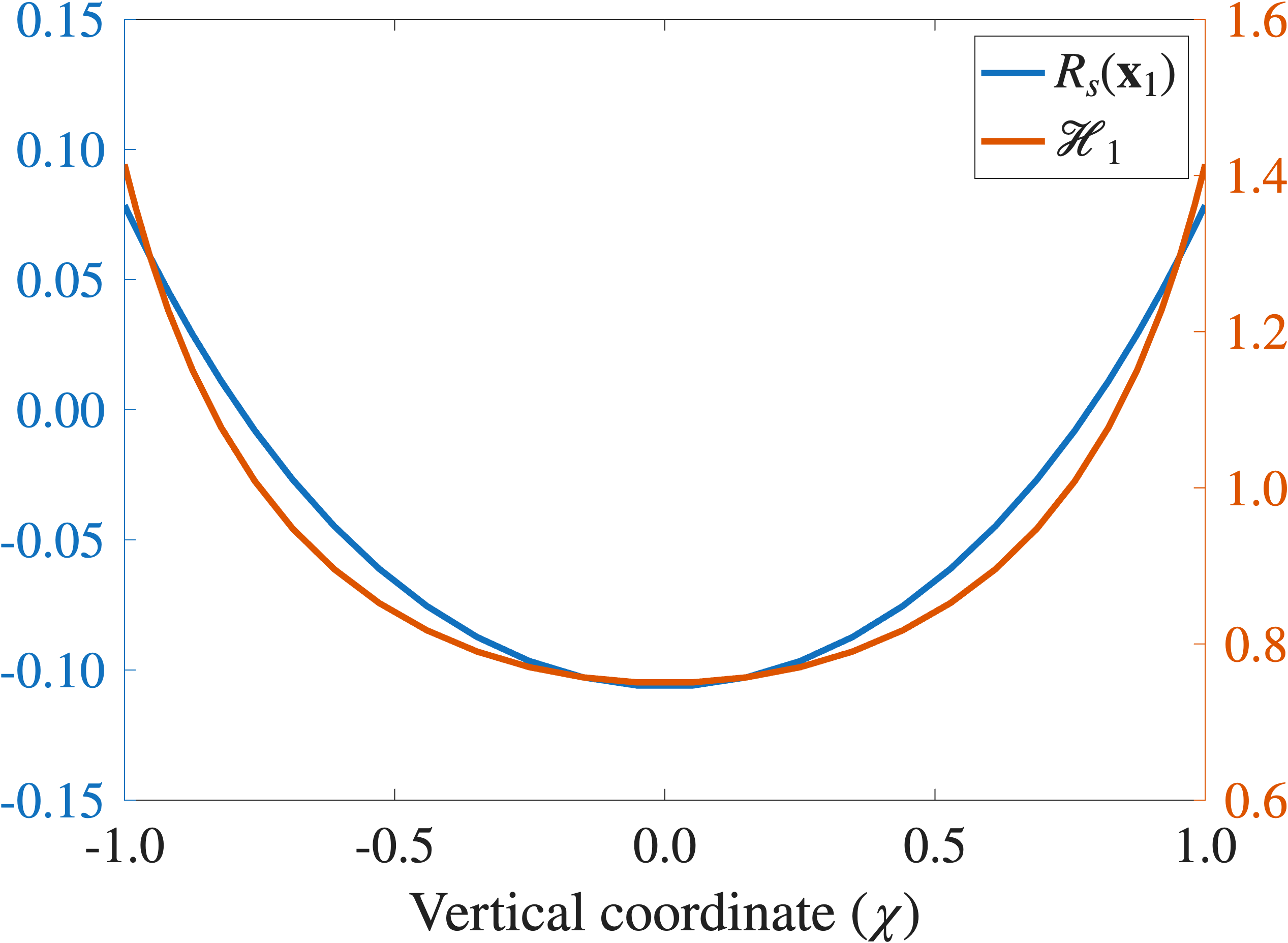}\label{fig:Prolate_a}}\quad
\subfigure[Agreement between $\overline{u}$ and FEM vs $\chi_0$.]{\includegraphics[width = 0.45\textwidth]{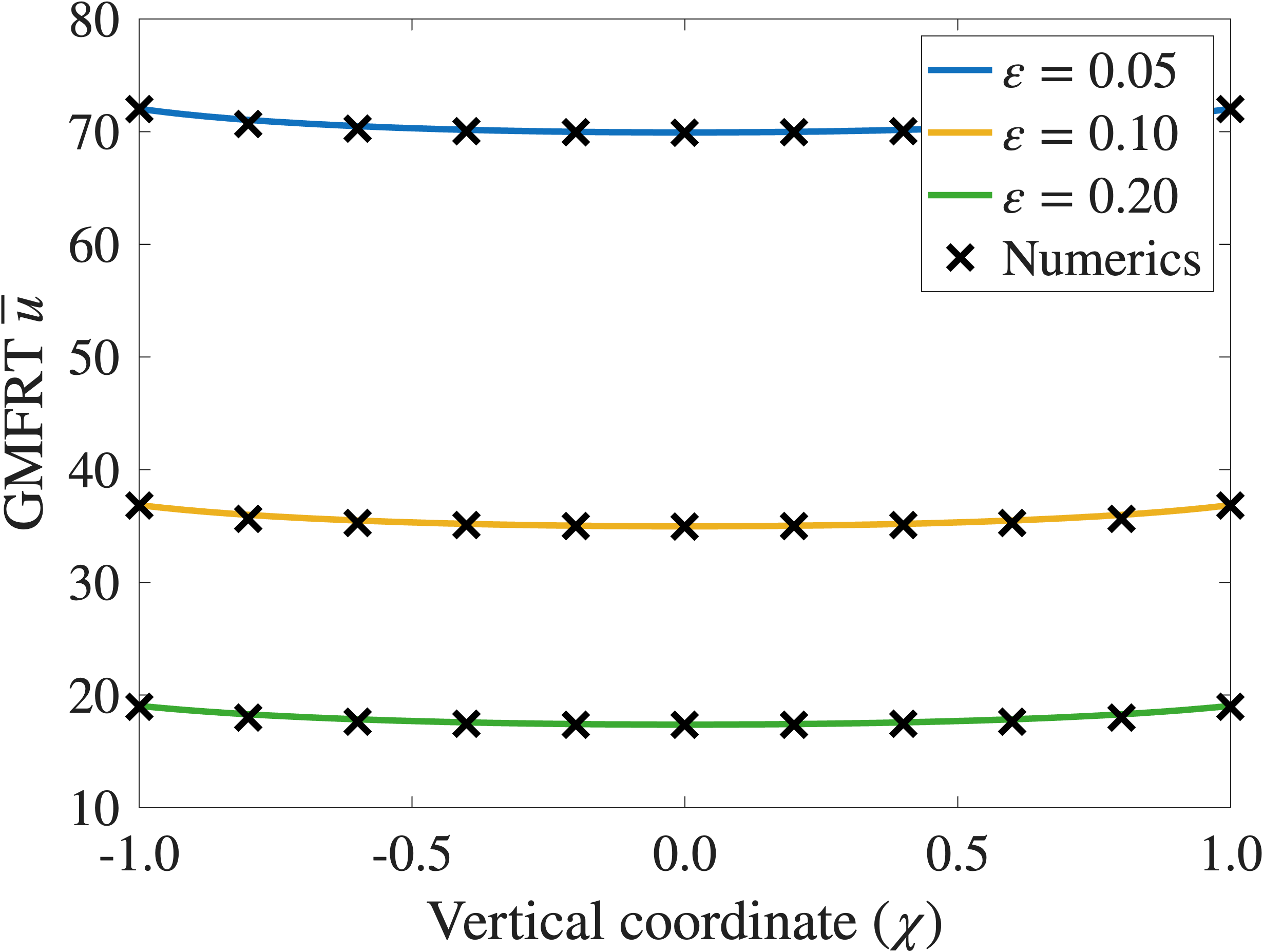}\label{fig:Prolate_b}}\\[5pt]
\subfigure[Rescaled GMFRT $\eps\overline{u}/U_0$ for a single polar and equator patch vs. $\eps$.]{\includegraphics[width = 0.475\textwidth]{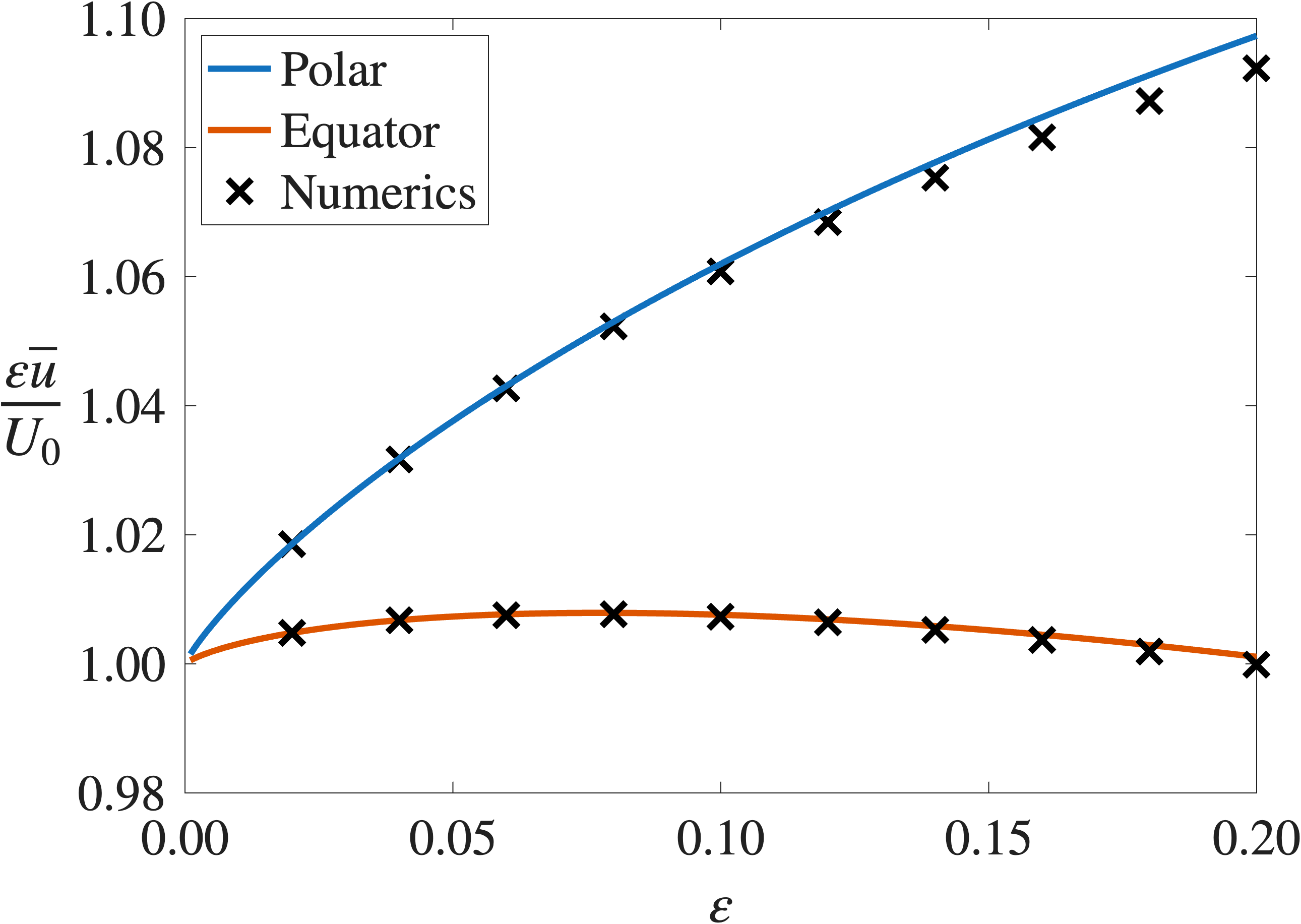}\label{fig:Prolate_c}}\quad
\subfigure[Relative error in $\overline{u}$ against $\eps$.]{\includegraphics[width = 0.475\textwidth]{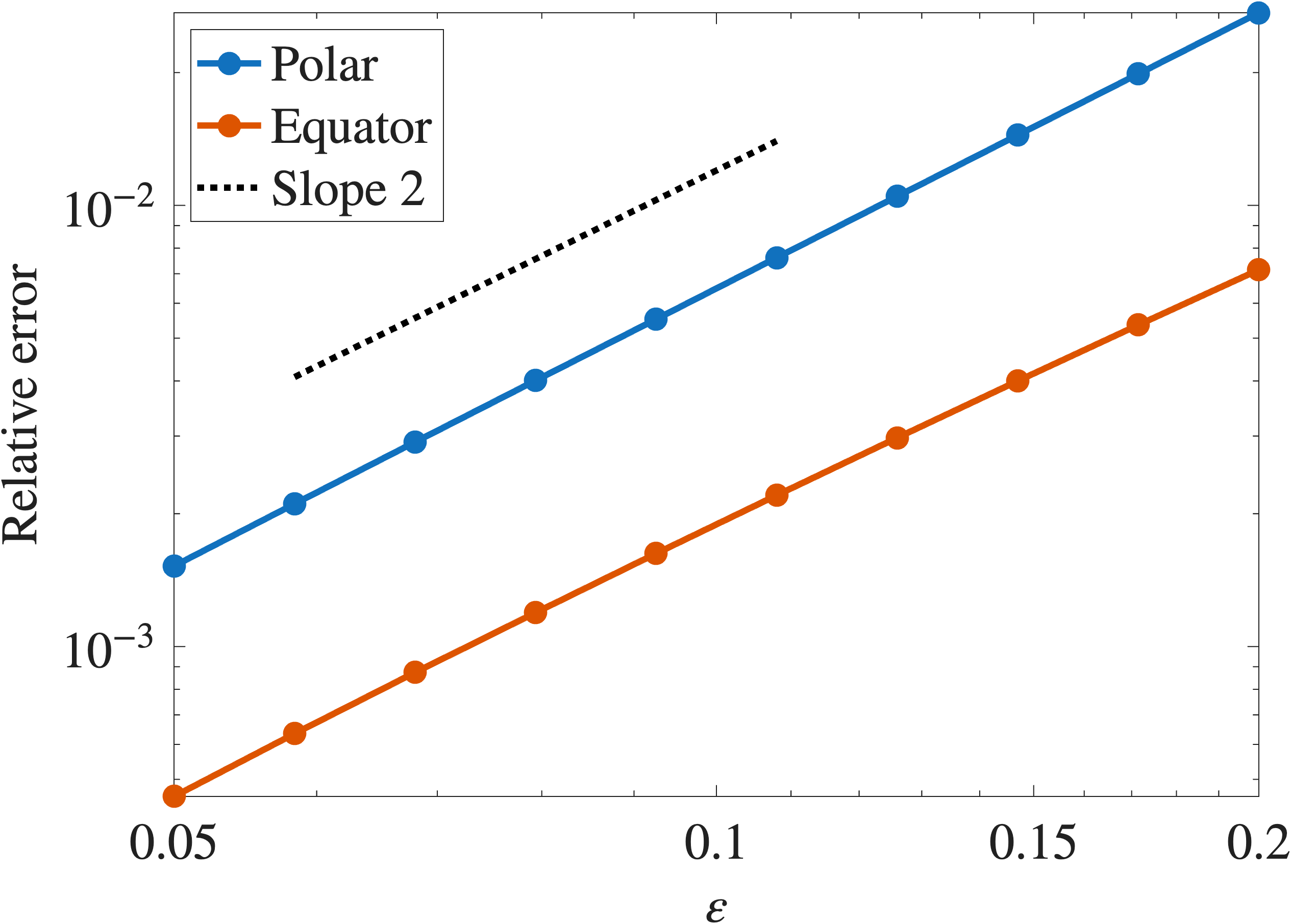}\label{fig:Prolate_d}}
\caption{
Validation of the GMFRT calculation for a single perfectly absorbing
circular patch centered at $\x_1 = (\chi,\phi,\zeta_b)$. Panel (a):
The regular part $R_{s}(\x_1)$ and $\HC_1$ versus $\chi$. Panel (b):
Comparison of asymptotics (\ref{mfpt_b:one_circular}) and the
numerical solution (FEM) as the patch center moves from the south pole
$(\chi= -1)$ to the north pole ($\chi=1$) for various values of
$\eps$. Panel (c): Comparison between numerical solution and
asymptotics for a single patch centered at the north pole and the
equator.  Panel (d): Convergence of the relative errors
(\ref{eqn:RelativeError}) between the asymptotic approximation and
numerical solution as $\eps\to0$ with line of slope $2$
overlaid. }
\label{fig:Prolate}
\end{figure}

One of our primary observations from
Fig.~\ref{fig:Prolate_b}-\ref{fig:Prolate_c} is that the GMFRT
$\overline{u}$ is minimized when the boundary patch is placed at the
equator.  This location corresponds to the global minimum of both
$R_{s}(\x_1)$ and $\HC_1$ as seen in Fig.~\ref{fig:Prolate_a} and, as
predicted by our asymptotic result (\ref{mfpt_b:one_circular}), should
correspond to the location where $\overline{u}$ is minimized when
$\eps\ll 1$.  This expected behavior is confirmed in
Fig.~\ref{fig:Prolate_b} where we show, for several values of $\eps$,
that the minimizer occurs for a patch centered at the
equator. Conversely, the global MFRT $\overline{u}$ is maximized when
the single patch is centered at a pole as shown in
Fig.~\ref{fig:Prolate_c}.  In Fig.~\ref{fig:Prolate_d}, we calculate
the relative error
\begin{equation}\label{eqn:RelativeError}
  \mathcal{E}_{\text{rel}} = \left|\frac{\overline{u}_{\text{num}}-
      \overline{u}_{\text{asy}}}{\overline{u}_{\text{num}}}\right|\,,
\end{equation}
where $\overline{u}_{\text{num}}$ is the GMFRT from the numerical
solution (see \S\ref{sec:BIEMethod}) and $\overline{u}_{\text{asy}}$
is the asymptotic result (\ref{mfpt_b:one_circular}).  This result is
consistent with the predicted behavior of the relative error
$\mathcal{O}(\eps^2\log\eps)$ as $\eps\to 0$.

\subsection{Validation of the Berg-Purcell problem}

In this subsection we consider several special cases of our result in
Proposition \ref{berg:main_res} for the Berg-Purcell problem.  We
again use the prolate spheroid described by \eqref{eq:prolate}.

\paragraph{A single {\clb perfectly} absorbing patch}
We first consider perfectly absorbing circular patches of a common
radius $\eps$ so that $C_i={2/\pi}$ and $E_{i+}=-2{\big(\log4 -
\frac{3}{2}\big)\pi^2}$. In this case, \eqref{berg:main_res_1} for
$C_{T}$ was given previously in (\ref{eq:intro_CT}). For a single
patch $(N=1)$, (\ref{eq:intro_CT}) becomes
\begin{equation}\label{BP_reducedN=1}
  \frac{1}{C_T} \sim \frac{\pi}{\eps }\left[ 1 -
    \frac{\eps\HC_1}{\pi} \log(4 \eps) + 4\eps \left( R_{e}(\x_1) +
    \frac{3\HC_1}{8\pi}\right) + {\mathcal O}(\eps^2\log\eps)\right] \,,
\end{equation}
where the regular part of the exterior surface Neumann Green's function
$R_{e}(\x_1)$ was defined in (\ref{berg:green_int_sing}). This
expression shows how $C_T$ depends on local geometric factors through
the mean curvature $\HC_1$ and the global contribution $R_{e}(\x_1)$.
When $\Omega$ is the unit sphere, $\HC_1=-1$ and $R_e=-{\log(2)/(4\pi)}$
for any $\x_1\in \partial\Omega$ (see (\ref{berg:gs_loc1}) of Appendix
\ref{app:explicit}), so that (\ref{BP_reducedN=1}) becomes
\begin{equation}\label{BP_reducedN=1_sphere}
  \frac{1}{C_T} \sim \frac{\pi}{\eps }\left[ 1 +
    \frac{\eps}{\pi} \log(2\eps) -\frac{3\eps}{2\pi}+
    {\mathcal O}(\eps^2\log\eps)\right] \,,
\end{equation}
which is in agreement with Eq. (3.39) of \cite{LWB2017}.

Based on our sign convention for $\HC$, to apply (\ref{BP_reducedN=1})
for a spheroidal geometry one needs only to replace $\HC$ in
(\ref{eqn:meanCurvature}) with $-\HC$.

With this minor change, in Fig.~\ref{fig:BPexample_a} we plot both
$\HC_1$ and $R_e(\x_1)$ on the {\clb spheroid} with semi-axes
$(a_e,a_e,c_e) = (1,1,\sqrt{2})$.  The value of $R_{e}(\x_1)$ has been
obtained numerically from a boundary integral method described
\cite{lindsay20263D_GFun}.  We observe that both these quantities
attain their global minimum at the poles of the {\clb spheroid}
indicating that a single patch placed at either $\x = (0,0,\pm
\sqrt{2})$ will generate the largest capacitance $C_T$.

\paragraph{Two Robin patches}

As a further test of \eqref{eq:intro_CT}, we consider two circular
patches of common radius $\eps$ and reactivity $b=1$.  We calculate
that
\bsub\label{eqn:TwoPatchRobin}
\begin{equation}\label{eqn:TwoPatchRobin_a}
    C = 0.271628\,, \qquad E_{+} = 0.008360\,, \qquad E_{-} = 0 \,.
\end{equation}
In this case, the formula \eqref{eq:intro_CT} for $C_T$ reduces to
\begin{equation}\label{eqn:TwoPatchRobin_b}
    \frac{1}{C_T} \sim \frac{2}{NC \eps} \left[ 1 - \frac{C\bar{\HC}}{2N} \eps\log\eps 
+ \eps\Big(2\pi C p_e(\x_1,\x_2) + \frac{\bar{\HC} E_{+}}{NC}\Big) \right],
\end{equation}
where $\bar{\HC} = {\HC}_1 + {\HC}_2$ and the interaction term is given by
\begin{equation} \clb
    p_e(\x_1,\x_2) = 2G_{e}(\x_1;\x_2) + R_{e}(\x_1) + R_{e}(\x_2) \,.
\end{equation}
\esub We consider patches that are aligned symmetrically about either
the poles or the equator (see
Figs.~\ref{fig:BPexample_a}-\ref{fig:BPexample_b}).  For this case, we
numerically calculate the regular parts and interaction terms to be
\begin{align*}
  R_e(\x_{1,2}) = -0.03232 \quad& (\text{Polar case})\,,
  \qquad R_e(\x_{1,2}) =-0.05982&(\text{Equatorial case})\,; \\
  G_e(\x_1;\x_2) = 0.01992 \quad &(\text{Polar case})\,, \qquad
  G_e(\x_1;\x_2) = 0.02269 &(\text{Equatorial case}) \,.
\end{align*}

As a function of $\eps$, in Fig.~\ref{fig:BPexample_d} we observe
clear agreement between our asymptotic formula (\ref{eq:intro_CT}) and
numerical simulations. We observe as expected that patches centered at
the polar regions provide a larger capacitance. As a further
validation of our asymptotic result (\ref{eq:intro_CT}), we calculate
the relative error between the numerically computed capacitance {\clb
($C_T^{\text{num}})$} and the asymptotic approximation ($C_T$) defined
by
\begin{equation}\label{eqn:rel_err_C} \clb
    \mathcal{E}_{\text{rel}} = \frac{|C_T-C_T^{\text{num}}| }{C_T^{\text{num}}}.
\end{equation}
In Fig.~\ref{fig:BPexample_e} we plot $\mathcal{E}_{\text{rel}}$ for a
range of $\eps\to0$ and observe the predicted decay rate.

\begin{figure}[htbp]
    \centering
    \subfigure[Equator patches.]{\includegraphics[width = 0.22\textwidth]{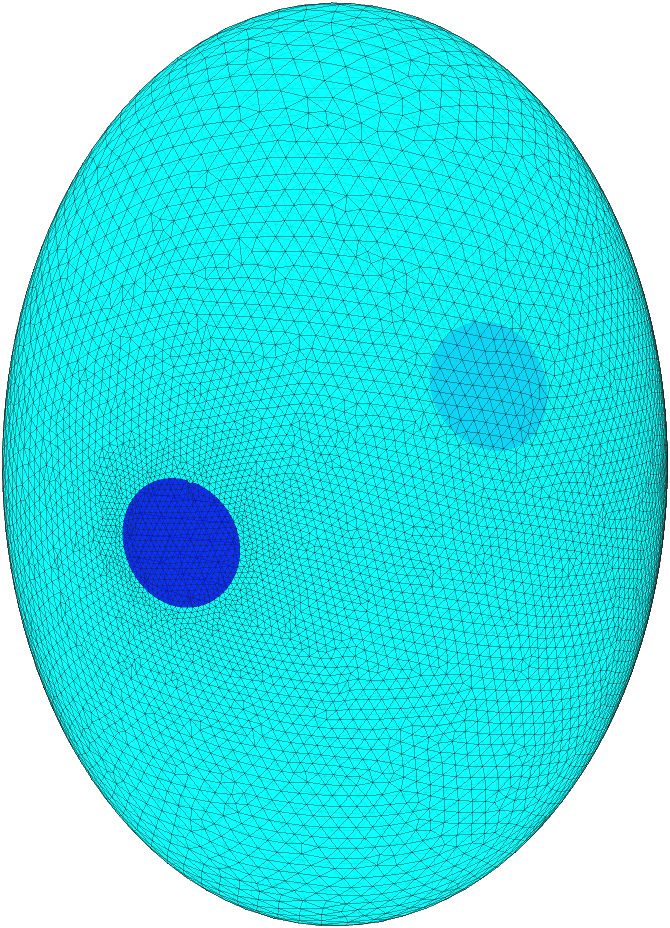}\label{fig:BPexample_a}}
    \subfigure[Polar patches.]{\includegraphics[width = 0.25\textwidth]{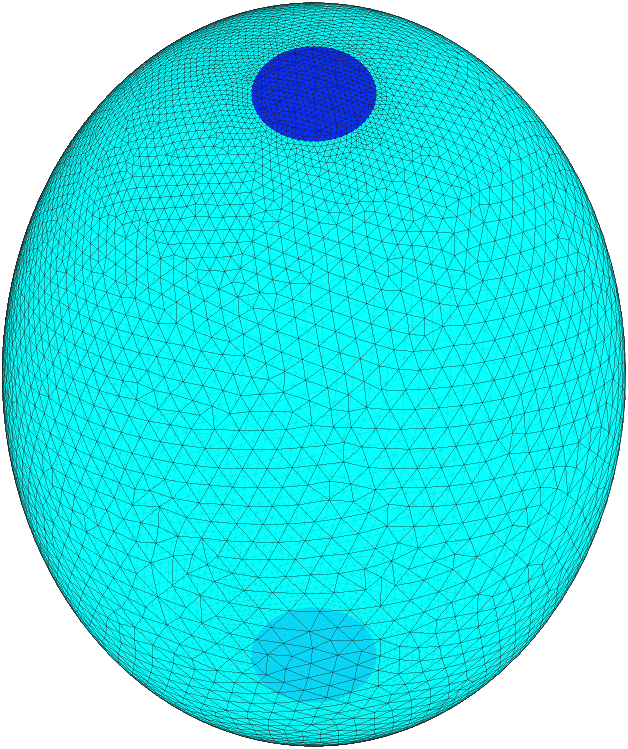}\label{fig:BPexample_b}}\qquad
     \subfigure[$R_e(\x_1)$ (left axis) and $\HC_1$ (right axis)]{\includegraphics[width = 0.45\textwidth]{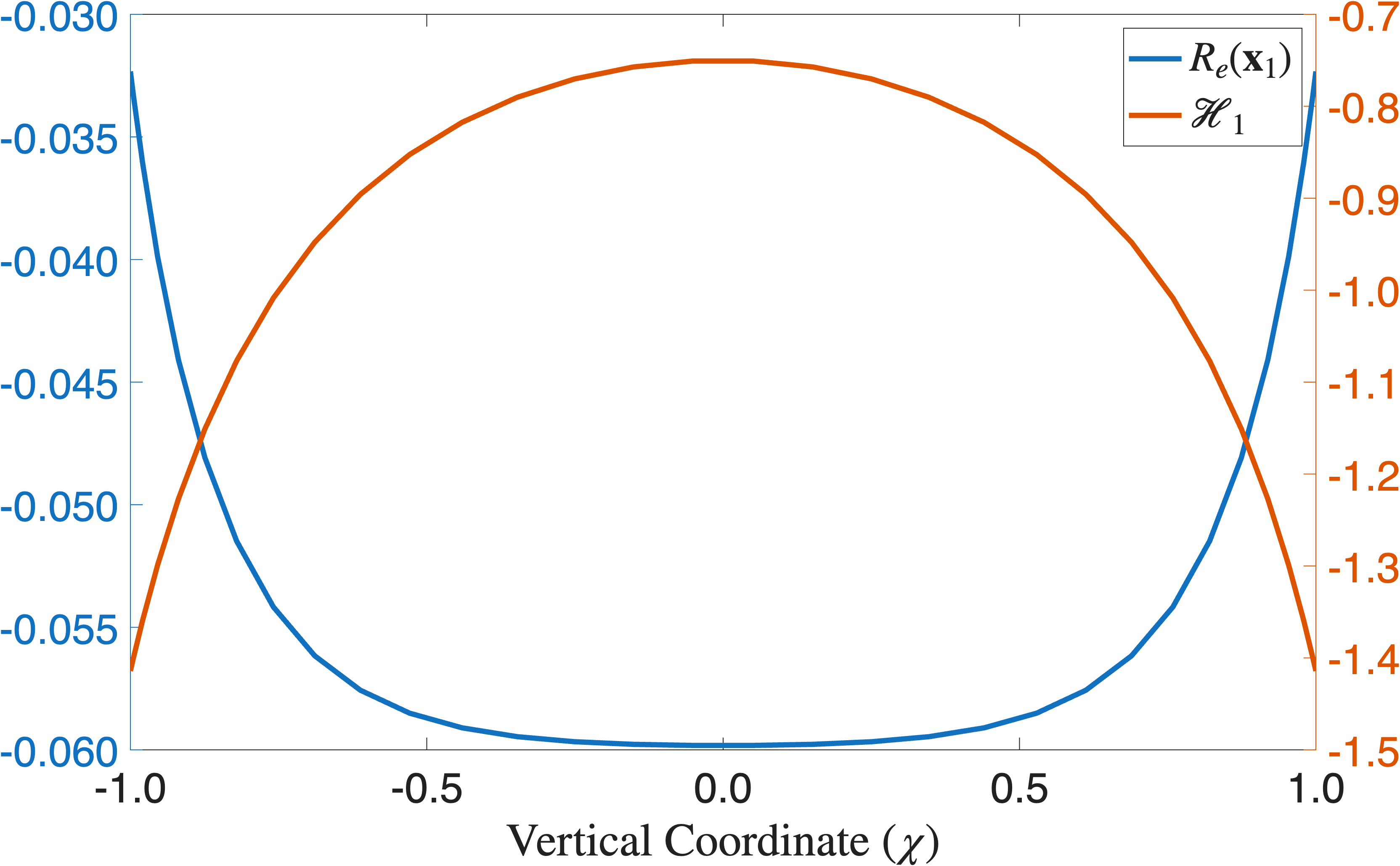}\label{fig:BPexample_c}}\\
       \subfigure[Rescaled $C_T$ against $\eps$.]{\includegraphics[width = 0.45\textwidth]{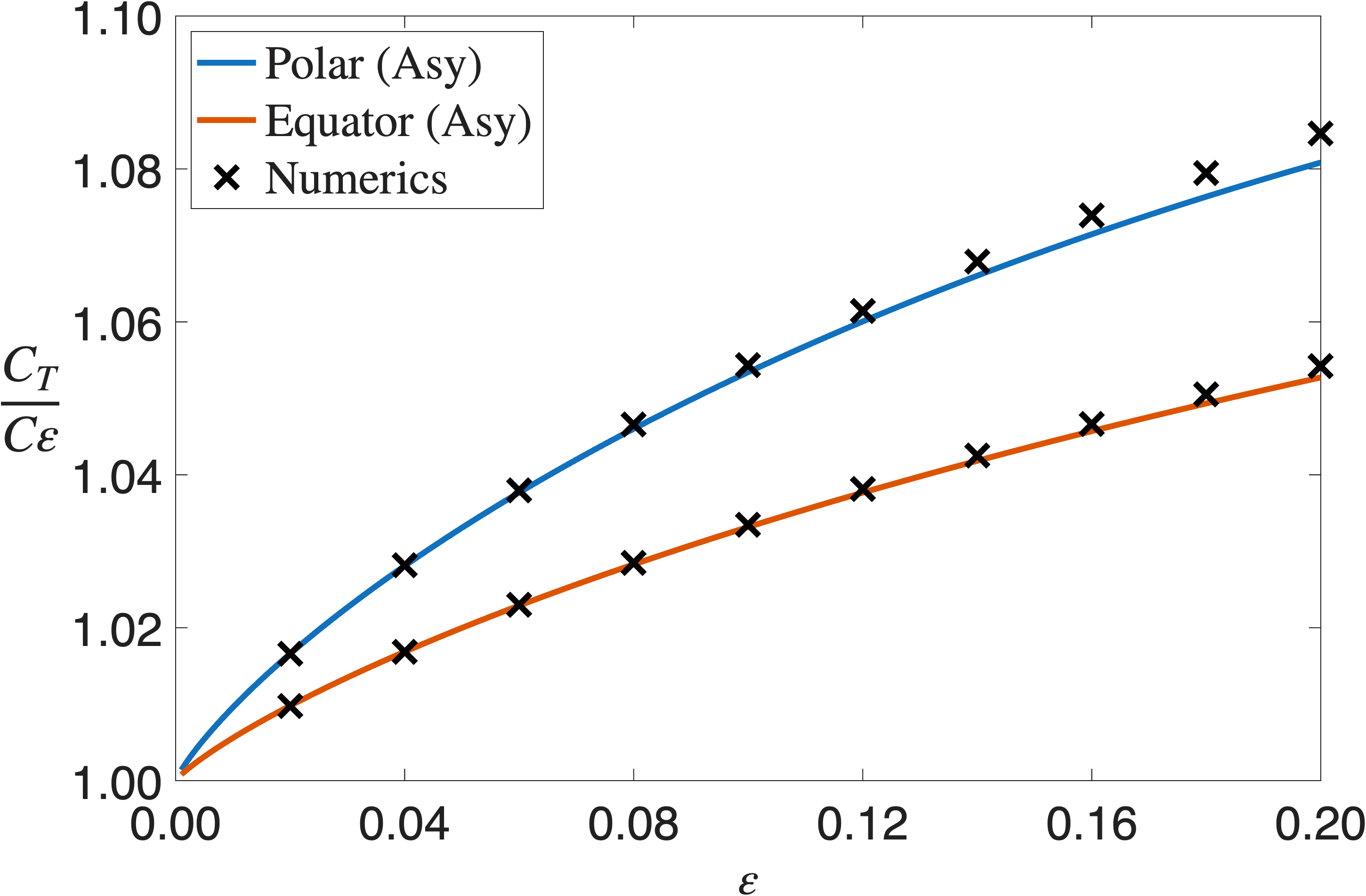}\label{fig:BPexample_d}}
       \qquad
        \subfigure[Relative error in $C_T$ against $\eps$.]{\includegraphics[width = 0.45\textwidth]{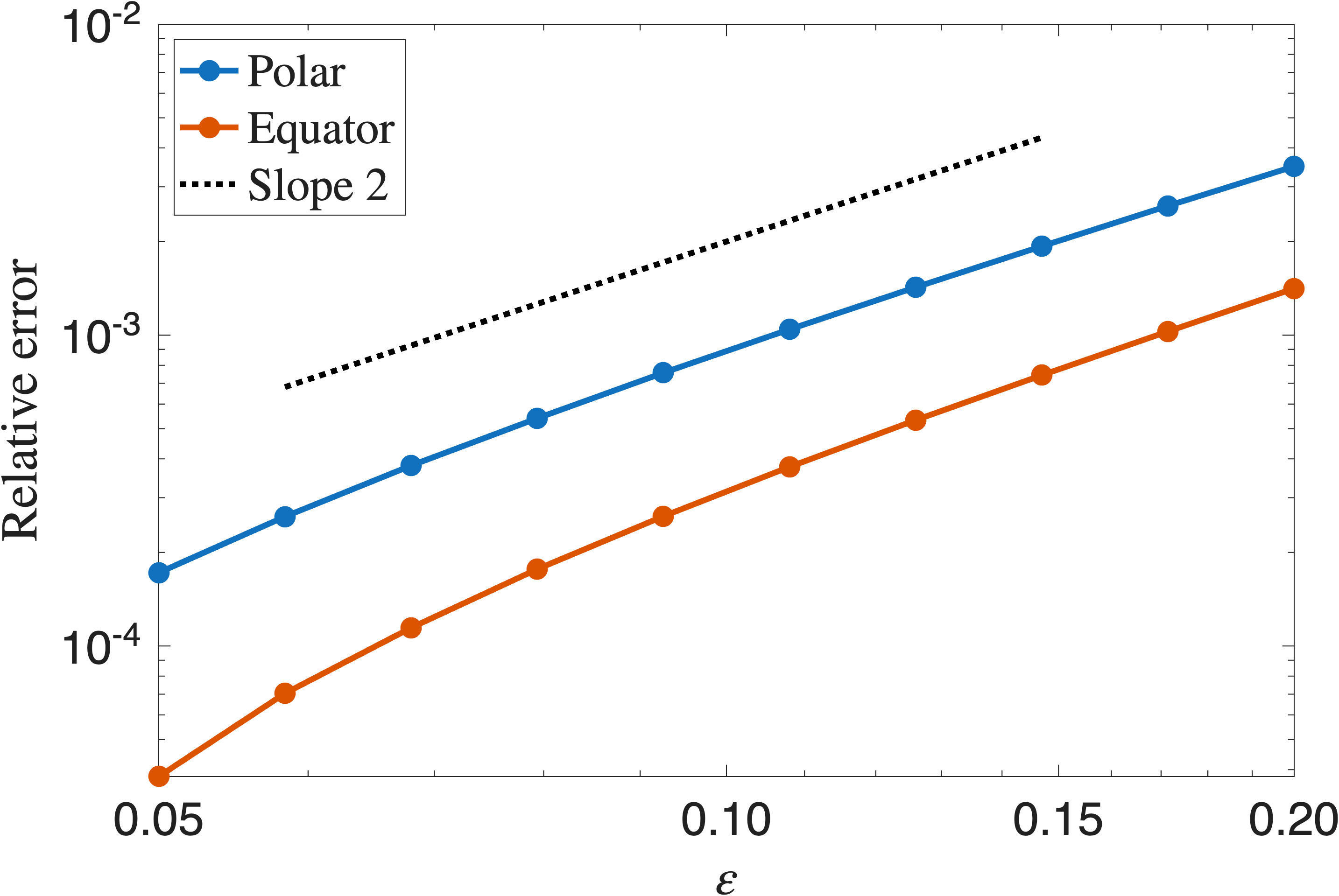}\label{fig:BPexample_e}}
\caption{
Berg-Purcell problem for the {\clb spheroid} with semi-axes
$(a_e,a_e,c_e)=(1,1,\sqrt{2})$.  The capacitance of a configuration with two
patches of common reactivity $b=1$ and radius $\eps$ situated either
on the equator (panel (a)) or on the poles (panel (b)) is calculated.
Panel (c): regular part $R_e(\x_1)$ of the surface Neumann Green's
function computed by the numerical method of
\cite{lindsay20263D_GFun}.  Panel (d): comparison between our
asymptotic result (\ref{eq:intro_CT}) and numerical computations for
shrinking patch radius $\eps$.  The polar configuration provides the
larger capacitance.  Panel (e): the relative error
\eqref{eqn:rel_err_C} in $C_T$ decays according to the rate predicted
in \eqref{eq:intro_CT}. }
\label{fig:BPexample}
\end{figure}

\subsection{Comparison with points on the Fibonacci spiral}

In this example, we consider the Berg-Purcell formula for the case of
$N$ identical patches arranged on nodes of the Fibonacci spiral
points.  The Fibonacci spiral points are a simple and highly optimized
set of points for generating {\clb near-uniform} coverings of the
sphere \cite{Gonzalez2009,Swinbank2006}.

The $N$ patch centers are placed at the nodes of a Fibonacci (spherical)
spiral, which gives a near-uniform, well-separated configuration for any $N$.
With the golden angle $\gamma_g \equiv \pi\,(3-\sqrt{5})$, define for
$k = 0,1,\dots,N-1$
\begin{equation}\label{eq:fib_angles}
  \chi_k = 1 - \frac{2k+1}{N} \;\in\; (-1,1),
  \qquad
  \phi_k = k\,\gamma_g \pmod{2\pi},
\end{equation}
where $\chi_k$ is the elevation and $\phi_k$ the azimuth.  The
half-integer offset $(2k+1)/N$ spaces the nodes equally in $\chi$
(hence by equal area in latitude) and keeps them strictly off the
poles, while the golden-angle increment $\gamma_g$ distributes the
azimuths quasi-uniformly.  The corresponding centers on the {\clb
  spheroidal boundary} $\partial\Omega$ with semi-axes $(a_e,a_e,c_e)
= (1,1,\sqrt{2})$ are
\begin{equation}\label{eq:fib_points}
  \mathbf{x}_k =
  \Big(\, a\sqrt{1-\chi_k^2}\,\cos\phi_k,\;\;
          a\sqrt{1-\chi_k^2}\,\sin\phi_k,\;\;
          c\,\chi_k \,\Big),
  \qquad k = 0,\dots,N-1 .
\end{equation}

\begin{figure}
    \centering
    \includegraphics[width=0.975\textwidth]{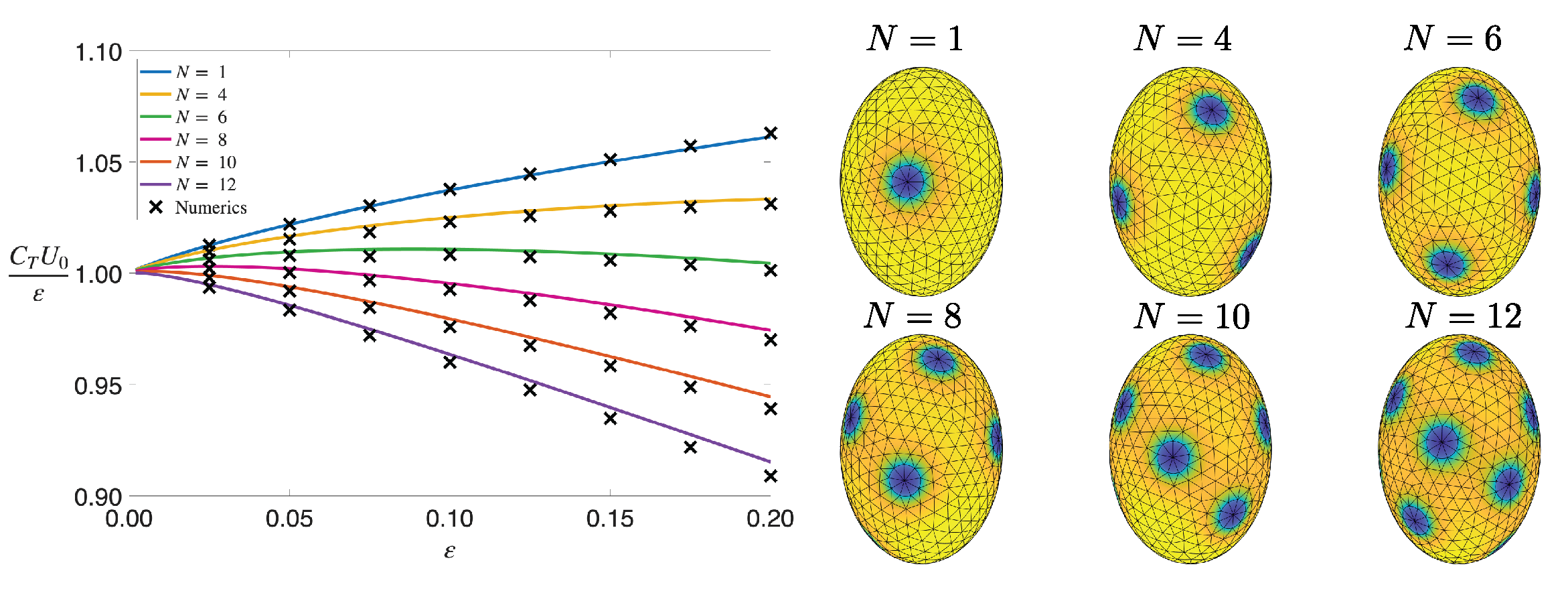}
\caption{
Solution of the Berg-Purcell problem \eqref{berg_bp:ssp} with $N$
identical Robin patches of radius $\eps$ and reactivity $b=1$ on the
{\clb spheroid} $(a_e,a_e,c_e) = (1,1,\sqrt{2})$.  For a range of patches
centered at the Fibonacci spiral points \eqref{eq:fib_points}, we plot
the rescaled capacitance $C_TU_0/\eps$ as $\eps\to0$ from asymptotic
expansions \eqref{berg:main_res_1} and full numerical solution (black
crosses).  The surface solution $u|_{\partial\Omega}$ of
\eqref{berg_bp:ssp} obtained from the boundary integral method
(cf.~Sec.~\ref{sec:BIEMethod}) is shown together with the mesh which
resolves the solution near each patch.\label{fig:Fibonacci} }
\end{figure}

In Fig.~\ref{fig:Fibonacci}, we plot the rescaled capacita nce
$(C_TU_0)/\eps$ against $\eps$ for various values of $N$. The solid
lines are the asymptotic approximation \eqref{berg:main_res_1} where
the entries of the Green's matrix are constructed using the methods in
\cite{lindsay20263D_GFun}.  The corresponding rescaled capacitance
determined from the boundary integral method (see
Sec.~\ref{sec:BIEMethod}) are shown with black crosses.  The close
agreement between these curves in the case of many patch numbers
serves as a highly non-trivial validation of our solution
methodologies applied to the problem \eqref{berg_bp:ssp}.  This
encompasses our high-order asymptotic expansions (Propositions
\ref{berg:main_res} and \ref{mfpt_b:main_res}), numerical computation
of the path-dependent singular Green's function
\eqref{berg:green_ext_full} and our boundary integral method
(\S\ref{sec:BIEMethod}).  In Fig.~\ref{fig:Fibonacci}, we plot the
solution $u|_{\partial\Omega}$ of \eqref{berg_bp:ssp} for $\eps=0.2$.
We note that the surface is well resolved by the mesh which conforms
to each of the $N$ boundary patches.

\subsection{Optimization of $C_T$ for a perturbation of a
  sphere}\label{sec:OptimNearSphere}

For the Berg-Purcell problem, we now investigate how to optimize
configurations of patches for domains that are small perturbations of
the unit sphere.  More specifically, we consider $N$ partially
reactive circular boundary patches of a common radius $\eps$ and
reactivity $b$.  Upon setting $C_i=C(b)$, $E_{i+}=E_{+}(b)$ and
$E_{i-}=0$ in (\ref{berg:main_res_1}), we obtain that
\begin{equation}\label{optim:main_f}
  \frac{1}{C_{\rm T}} = \frac{2}{N C \eps} \left[ 1 +
    \frac{\eps C}{2 N} \left(4\pi p_e - {\clb \overline{\HC}} \log\eps 
	+ {\clb \overline{\HC}} \frac{2 E_{+}}{C^2} \right)
    +  {\mathcal O}(\eps^2\log\eps) \right],
\end{equation}
where $p_e=p_e(\x_1,\ldots,\x_N)$ is the discrete inter-patch
interaction energy defined in (\ref{eq:intro_CT_b}), {\clb and
$\overline{\HC}$ is the sum of mean curvatures (see (\ref{eq:HCsum})).}
By recalling (\ref{berg:Ei_general0}) and (\ref{berg:wc_charge}) for
$E_{+}$ and $C$, we write (\ref{optim:main_f}), after neglecting the
${\mathcal O}(\eps^2\log\eps)$ error term, as
\bsub \label{optim:main}
\begin{equation}\label{optim:main_1}
  \frac{1}{C_{\rm T}} \sim \frac{2}{N C \eps} + \frac{1}{N^2} 
    {\mathcal P}(\x_1,\ldots,\x_N;b)  \,,
\end{equation}
where the objective function ${\mathcal P}={\mathcal
P}(\x_1,\ldots,\x_N;b)$ is defined by
\begin{equation}\label{optim:main_2}
  {\mathcal P} \equiv \left(-\log\eps + \gamma(b)\right)
  {\clb \overline{\HC}}
  + 4\pi \sum_{i=1}^N\Big[R_{e}(\x_i) +
  \sum_{\substack{j=1\\j\neq i}}^N G_{e}(\x_j;\x_i)\Big]\,. 
\end{equation}
Here $\gamma(b)$, depending on the common patch reactivity $b$, is
defined by
\begin{equation}\label{optim:main_3}
  \gamma(b)\equiv \frac{2E_{+}}{C^2} = -\left( \frac{
    \int_{\mathbb{D}} \int_{\mathbb{D}} q(|\y|;b) q(|\y^{\prime}|;b)\,
    \log|\y-\y^{\prime}| \, d\y \, d\y^{\prime}}{
    \left(\int_{\mathbb{D}} q(|\y|;b)\, d\y\right)^2} \right) \,,
\end{equation}
\esub
where $\mathbb{D}$ is the unit disk centered at the origin. By recalling
the limiting asymptotics for $C$ and $E_{+}$ in (\ref{berg:Cj_asy}) and
(\ref{berg:Ej_asy}), respectively, we have
\bsub \label{optim:gamma}
\begin{equation}\label{optim:gamma_low}
  \gamma(b)\sim \frac{1}{4} \,, \quad \mbox{as} \quad b\to 0 \,; \qquad
  \gamma(b) \sim \frac{3}{2}-\log{4}\approx 0.114 \,, \quad \mbox{as} \quad
  b \to \infty  \,.
\end{equation}
The numerical evaluation of $\gamma(b)$ is discussed in
\S\ref{sec:numerics_consts}.  A heuristic approximation of $\gamma(b)$
for all $b>0$ is given by the monotonically decreasing function
\cite{GrebenkovWard2026big}
\begin{equation}\label{optim:gamma_all}
  \gamma(b) \approx \frac{3}{2} -\log{4} +
  \frac{2}{\frac{1}{\log{2} - 5/8} + 5.17 \, b^{0.81}}\,.
\end{equation}
\esub This leads to the formulation of a {\em discrete optimization
problem.}

\noindent {\em Optimization: For a given domain $\Omega$ with smooth
  and closed boundary $\partial\Omega$, let {\clb us fix the number
    $N>1$ of circular boundary patches, their common radius $\eps\ll
    1$ and reactivity $b>0$.  We seek to determine the configuration
    $\lbrace{\x_1,\ldots,\x_N\rbrace} \subset \pa$ of the centers of
    the patches} that minimizes ${\mathcal P}$ in
  (\ref{optim:main_2}).  Such an optimal patch arrangement maximizes
  the capacitance $C_T$.}

\vspace{0.1cm} From (\ref{optim:main_2}), the optimal arrangement
depends on $\eps$, the patch reactivity through $\gamma(b)$, and the
domain shape via the Green's function $G_e$ and its regular part
$R_e$. Since $\gamma(b)>0$, but is monotone decreasing in $b$,
partially reactive patches increase the influence of the mean
curvature in the discrete energy, but to a lesser extent as $b$
increases. The curvature term and the inter-patch interaction effect
in (\ref{optim:main_2}) will balance in an optimum arrangement since
$G_{e}(\x_i;\x_j)\to +\infty$ as $\x_i\to\x_j$, which prevents the
optimally located patches from concentrating too close to minima of
the mean curvature.

We now apply (\ref{optim:main_2}) to a domain $\Omega$ that is a
perturbation of the unit sphere whose boundary is
$r=1+\mu F(\phi,\theta)$, where $\mu\ll 1$, $r=|\x|$, and with
$0\leq\phi<2\pi$ and $0\leq \theta\leq \pi$ being the azimuthal and
polar angles, respectively. For this geometry, the mean curvature of
$\partial\Omega$ for $\mu\ll 1$ is (noting our sign convention)
\begin{equation}\label{optim:hpert}
  {\mathcal H} \sim -1 +\frac{\mu}{2} \left( \Delta_{S^2} F + 2F \right)
  \quad \mbox{with} \quad \Delta_{S^2} F \equiv
  \frac{1}{\sin\theta} \left(\sin\theta \, \partial_{\theta}
    F\right)_{\theta} + \frac{1}{\sin^{2}\theta} \partial_{\phi\phi} F \,.
\end{equation}
We will consider two cases. {\bf Case I:} For a perturbed sphere
bulging out at the poles, we take $F=\cos^{2}\theta$, which vanishes
on the equator $\theta={\pi/2}$. In {\bf Case II} we take
$F=\sin^{2}\theta$, so that the perturbed sphere instead bulges out on the
equator. From (\ref{optim:hpert}) we calculate for these two cases
that
\begin{equation}\label{optim:pole_equator}
  {\mathcal H} = -1 + \mu \tilde{{\mathcal H}}(\theta) \,, \quad
  \mbox{where} \quad \tilde{{\mathcal H}}(\theta) \equiv \begin{cases}
    1-2\cos^{2}\theta \,, &\text{Case I}; \\
    2 \cos^{2}\theta \,, &\text{Case II}.
    \end{cases} 
\end{equation}
For our two choices of boundary deformations, the centers of the
patches on the boundary of the perturbed sphere are labeled as
$\x_j=(1+\mu F(\theta_j))\x_{j0}$ where
$\x_{j0}\equiv \left(\cos\phi_j \sin\theta_j,
  \sin\phi_j\sin\theta_j,\cos\theta_j \right)^{T}$ for
$j=1,\ldots,N$. We will choose the size of the boundary deformation to
satisfy $\mu={\mathcal O}\left({-1/\log\eps}\right)$ so that the
curvature and inter-patch interaction terms in (\ref{optim:main_2})
are of the same order. More specifically, we will set
$\mu={-\beta/\log\eps}$, where $\beta>0$ is independent of
$\eps$. Upon substituting (\ref{optim:pole_equator}) into
(\ref{optim:main_2}) we obtain with this choice of $\mu$ that
\begin{align}\label{optim:prelim_1}
 \nonumber {\mathcal P} &\equiv N\left( -\log\eps +
    \gamma(b)\right)\\
    & \qquad +
  \frac{\beta}{2} \sum_{j=1}^{N} \tilde{\mathcal H}(\theta_j) +
  4\pi \sum_{i=1}^N\Big[R_{e}(\x_{i0}) +
  \sum_{\substack{j=1\\j\neq i}}^N G_{e}(\x_{j0};\x_{i0})\Big] +
  {\mathcal O}\left(\frac{-1}{\log\eps}\right) .
\end{align}
Finally, we explicitly evaluate the surface Neumann Green's function in
(\ref{optim:prelim_1}) on the unit sphere using
(\ref{berg:gs_loc1}) of Appendix \ref{app:explicit}. In this way, we
conclude that
\bsub \label{optim:example}
\begin{equation}\label{optim:example_1}
  {\mathcal P}(\x_1,\ldots,\x_N) \sim N\left(\log\left(\frac{\eps}{2}\right)
    -\gamma(b) \right) + {\mathcal P}_{0}(\x_{10},\ldots,\x_{N0}) \,,
\end{equation}
where, with the two choices of $\tilde{\mathcal H}(\theta_j)$ given in
(\ref{optim:pole_equator}), we have
\begin{equation}\label{optim:example_2}
  {\mathcal P}_{0}\equiv   \frac{\beta}{2} \sum_{j=1}^{N}
  \tilde{\mathcal H}(\theta_j) +\sum_{i=1}^{N} \sum_{j=i+1}^{N} \left(
    \frac{4}{|\x_{i0}-\x_{j0}|}-2\log\left(1 +
      \frac{2}{|\x_{i0}-\x_{j0}|}\right)\right) \,.
\end{equation}
\esub

In Figs.~\ref{fig:optim_case1}-\ref{fig:optim_case2} we present
results obtained from the \textsc{Matlab} optimization routine {\tt
fmincon} called with the {\tt interior-point} algorithm.  The
objective function is given by ${\mathcal P}_{0}$ defined in
\eqref{optim:example_2} with the two choices of $\tilde{\mathcal
H}(\theta_j)$ described in \eqref{optim:pole_equator}.  We consider
$N=1,\ldots,12$ patches and initialize the algorithm at points with
angular coordinates chosen with uniform random distribution in
$(\phi,\theta)\in[0,2\pi)\times[0,\pi)$.

In case I where the choice $F = \cos^2\theta$ yields an elongated
domain, we see in Fig.~\ref{fig:optim_case1} that the optimizing
configurations have a tendency to cluster patches around the poles of
the domain.  In case II the choice $F = \sin^2\theta$ results in a
domain which is more bulged in the equatorial region.  In
Fig.~\ref{fig:optim_case2}, we observe a trend for the optimizing
patch locations to congregate around the equator.  For $N=1,2,3,4$, we
observe that the optimizers are coplanar on $z=0$.  For $N\geq5$, the
optimizing configurations occupy patch centers with $z\neq0$.  Similar
geometric bifurcations have been observed in related optimization
problems for narrow capture in two \cite{TargetSearch2024} and three
dimensions \cite{lindsay20263D_GFun}.

\begin{figure}
    \centering
    \includegraphics[width =0.15\textwidth]{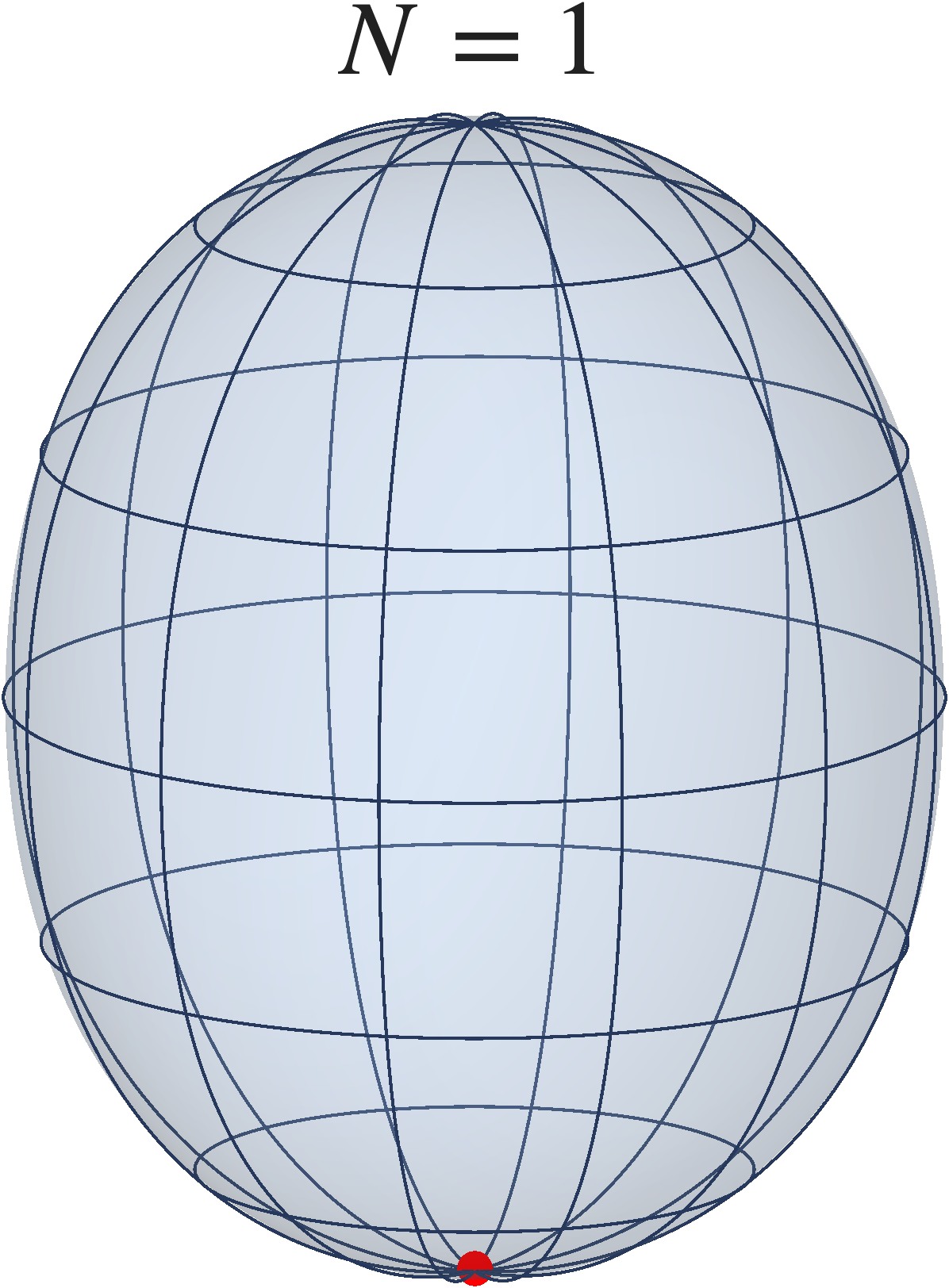}
    \includegraphics[width =0.15\textwidth]{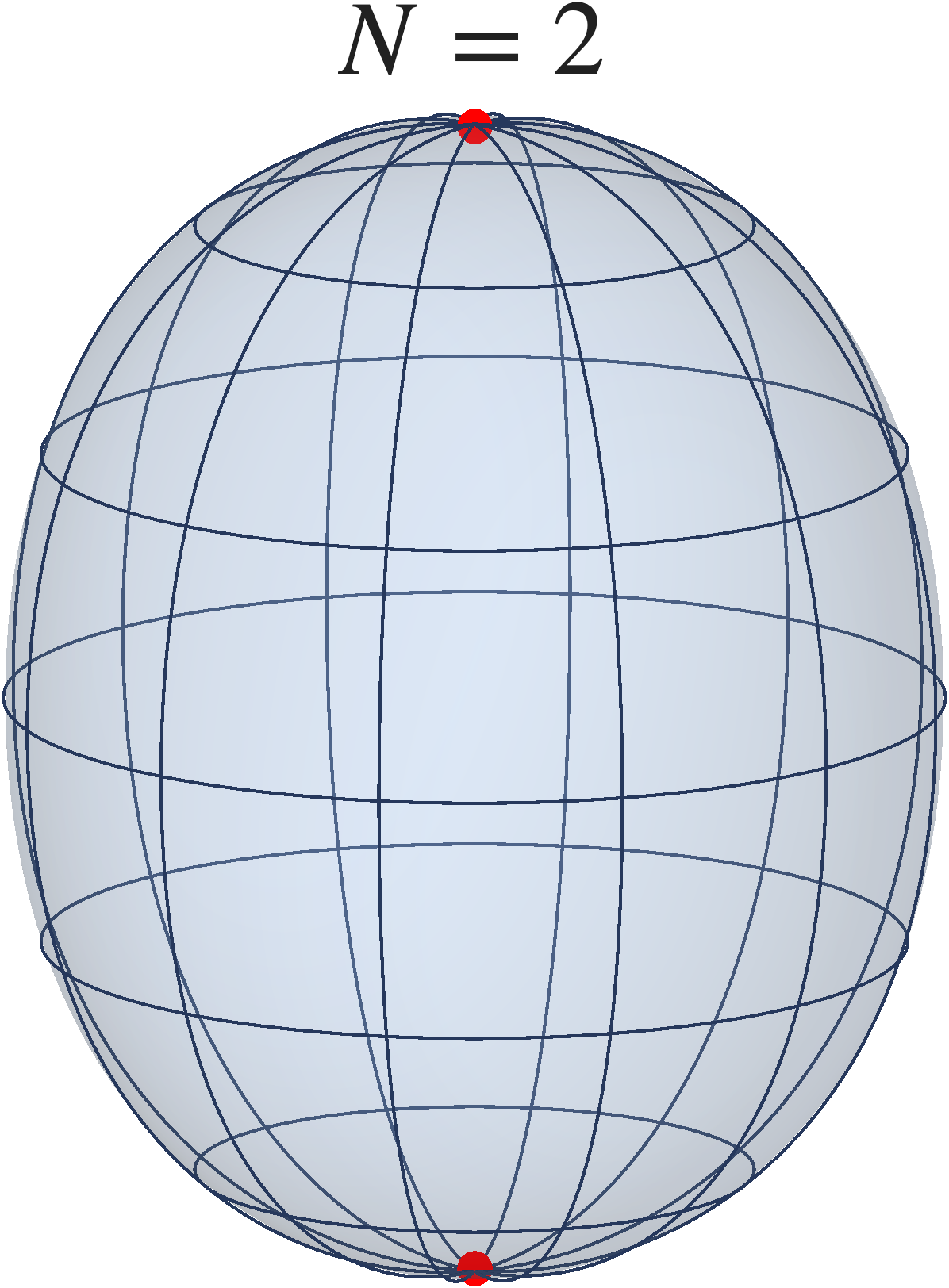}
    \includegraphics[width =0.15\textwidth]{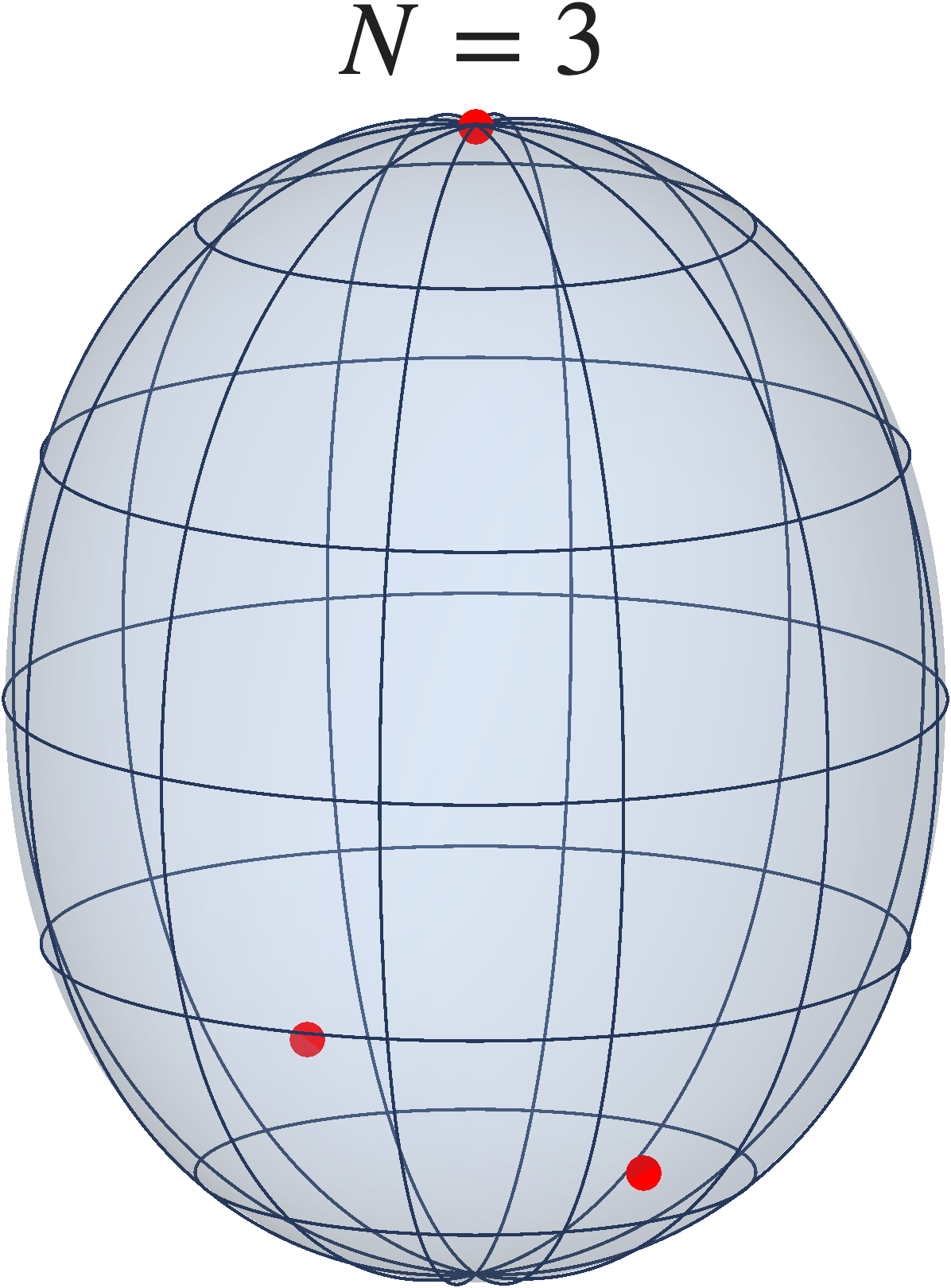}
    \includegraphics[width =0.15\textwidth]{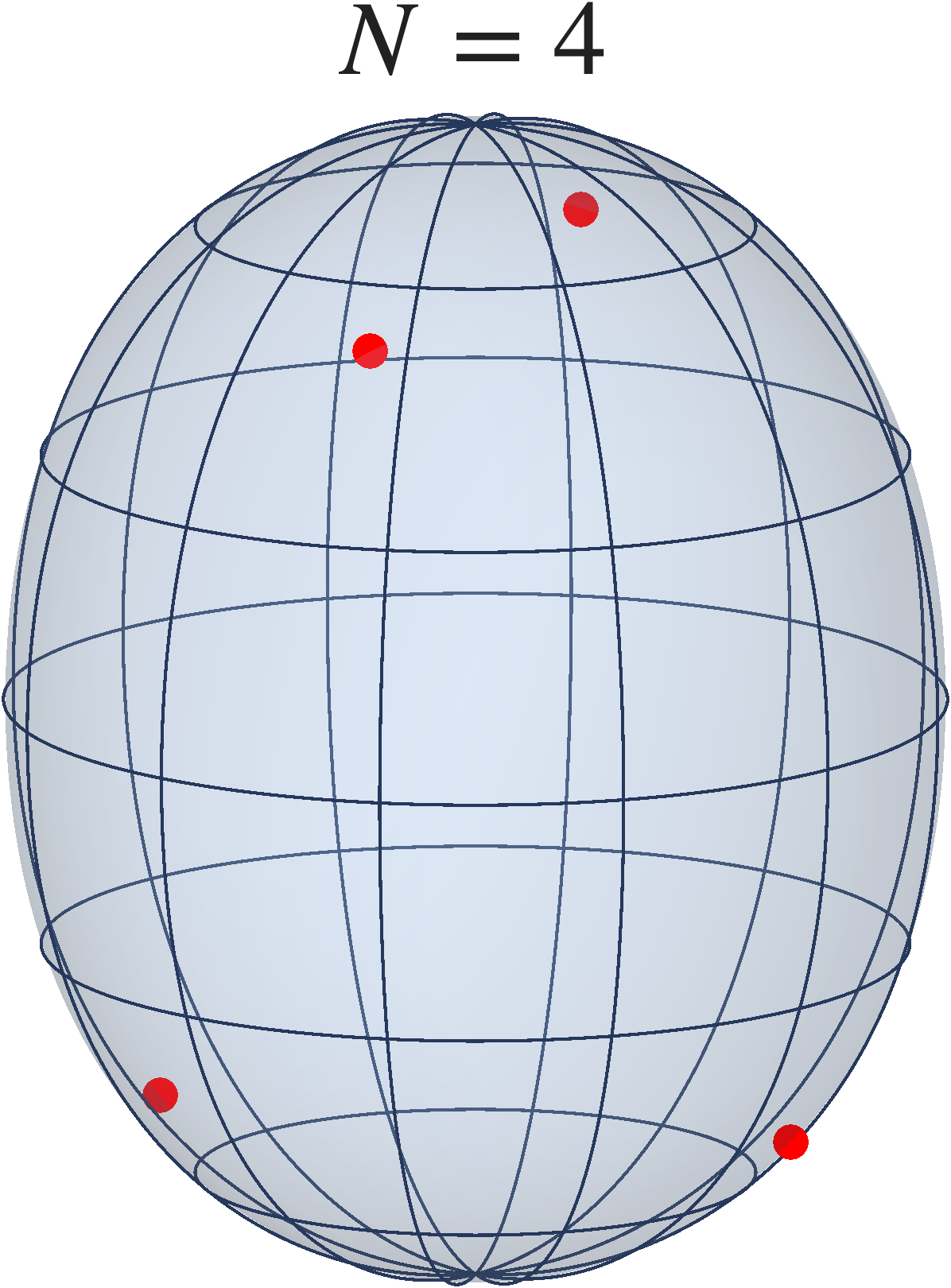}
    \includegraphics[width =0.15\textwidth]{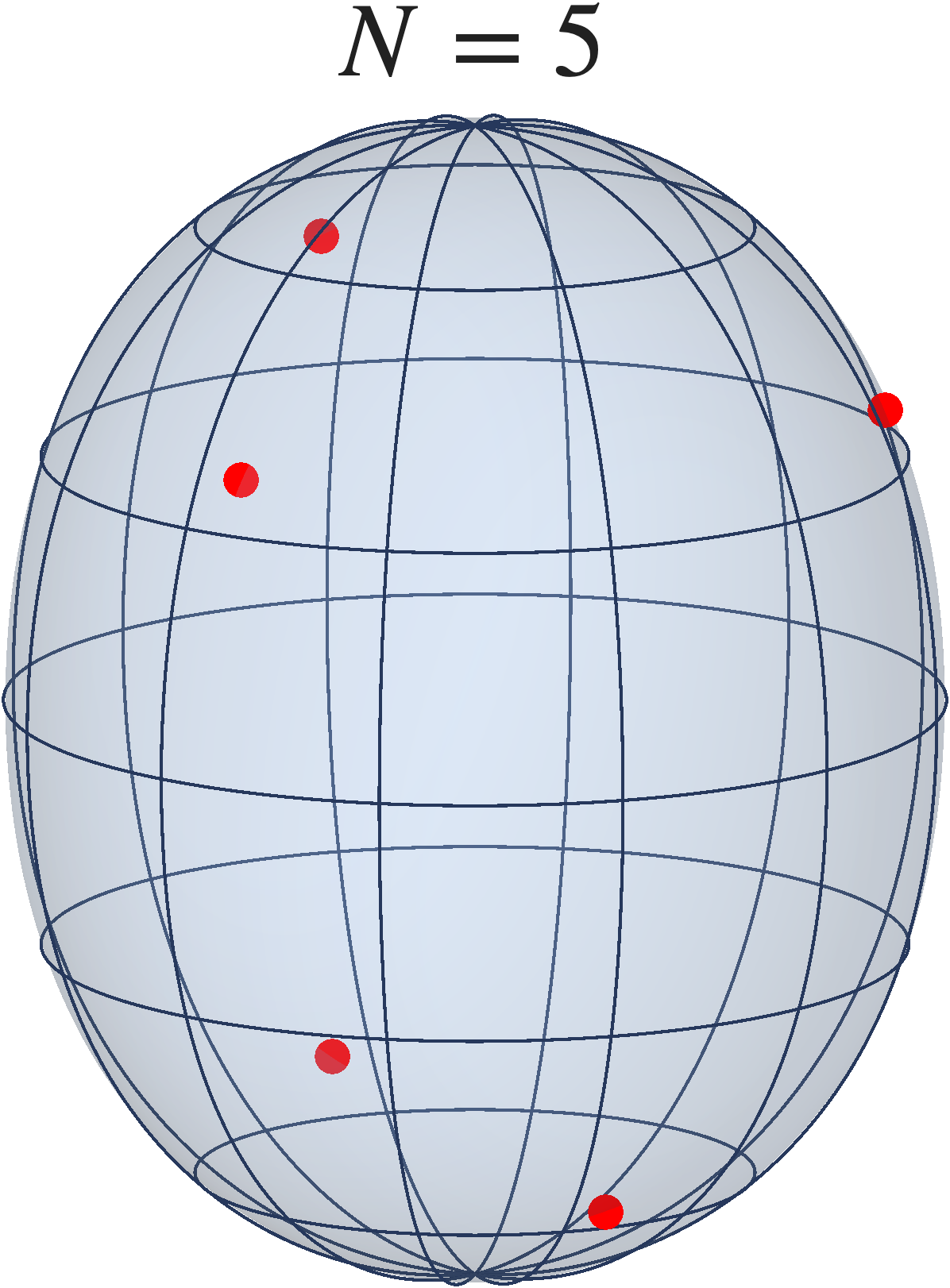}
    \includegraphics[width =0.15\textwidth]{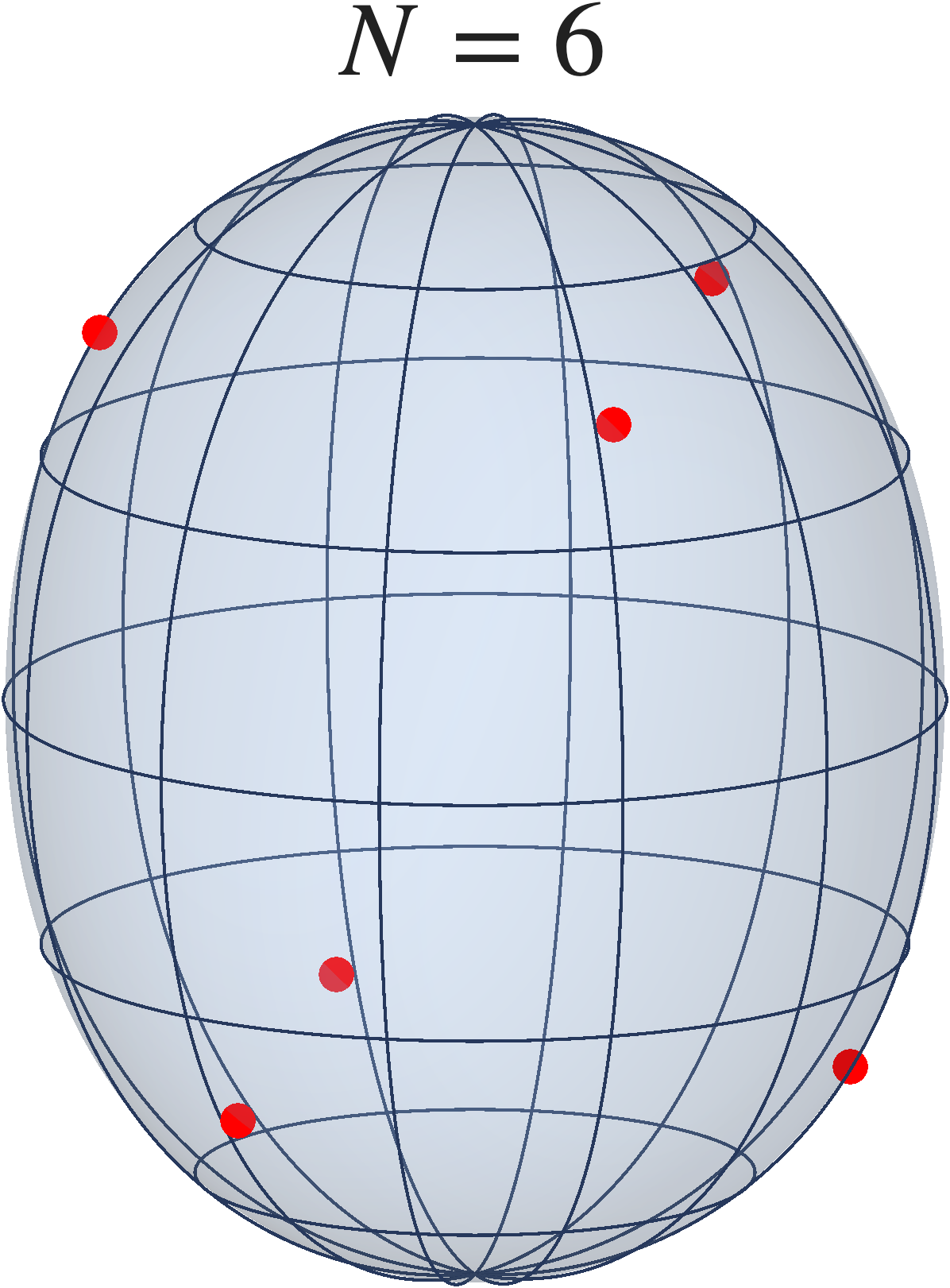}\\[5pt]
    \includegraphics[width =0.15\textwidth]{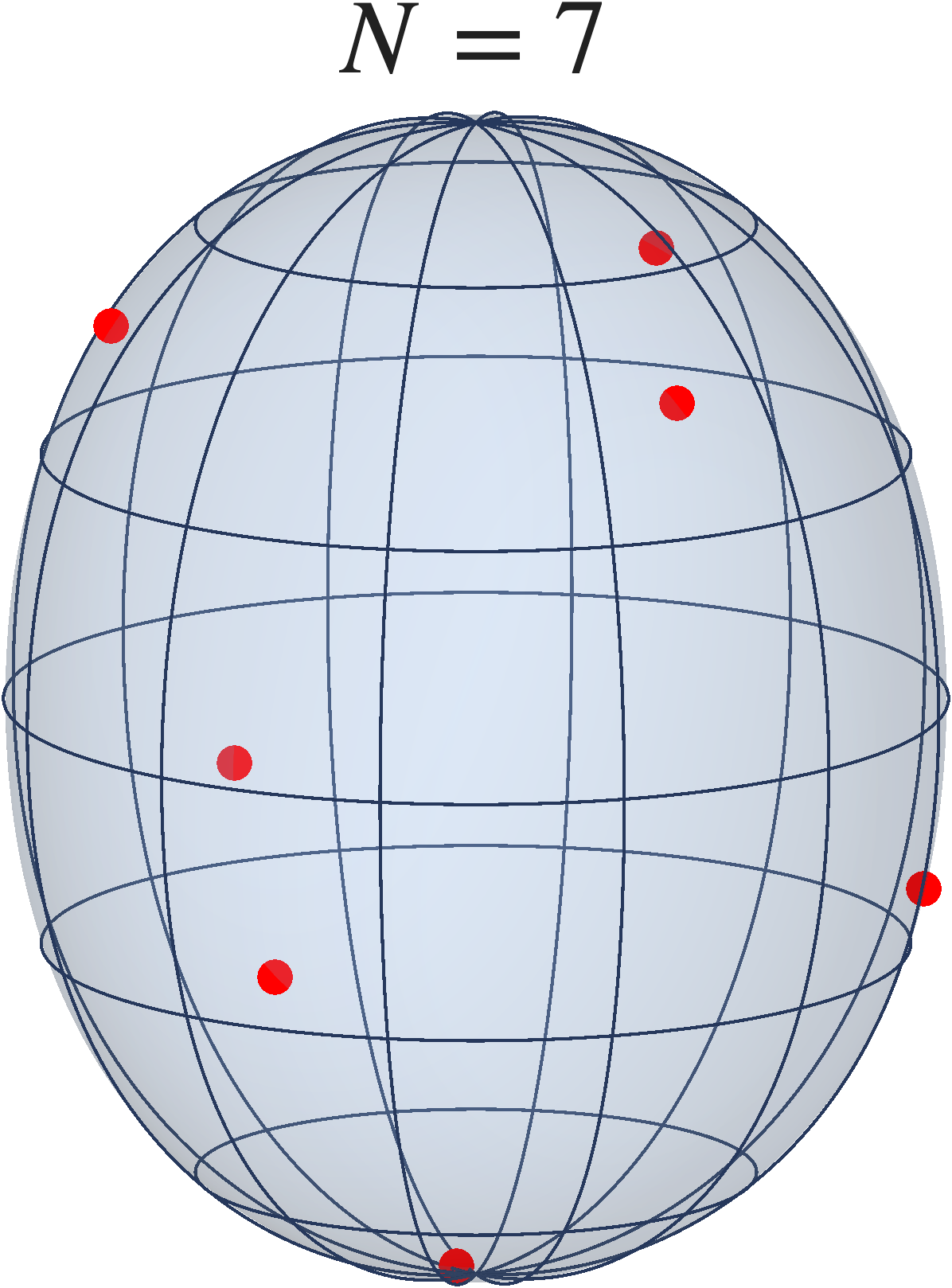}
    \includegraphics[width =0.15\textwidth]{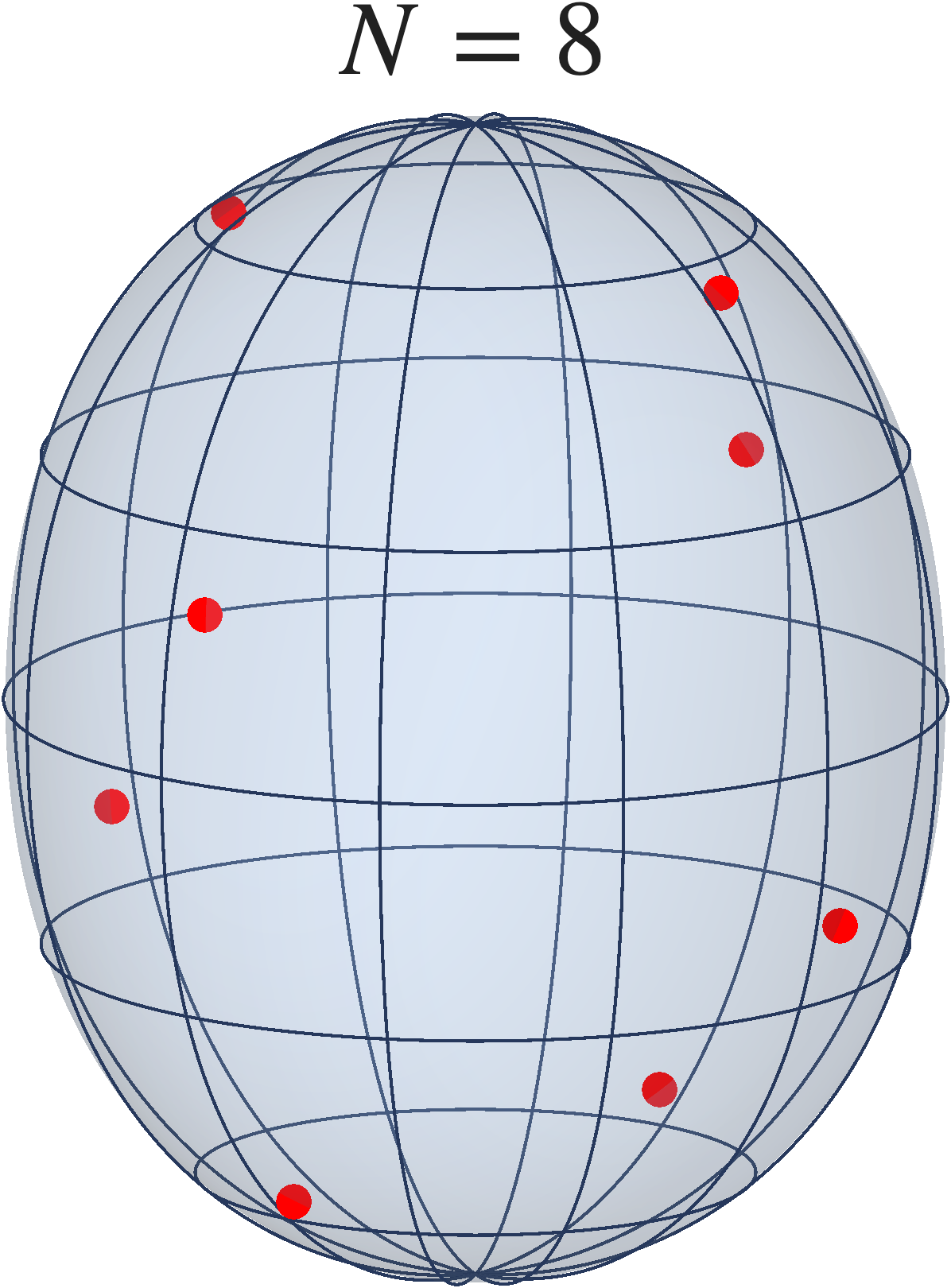}
    \includegraphics[width =0.15\textwidth]{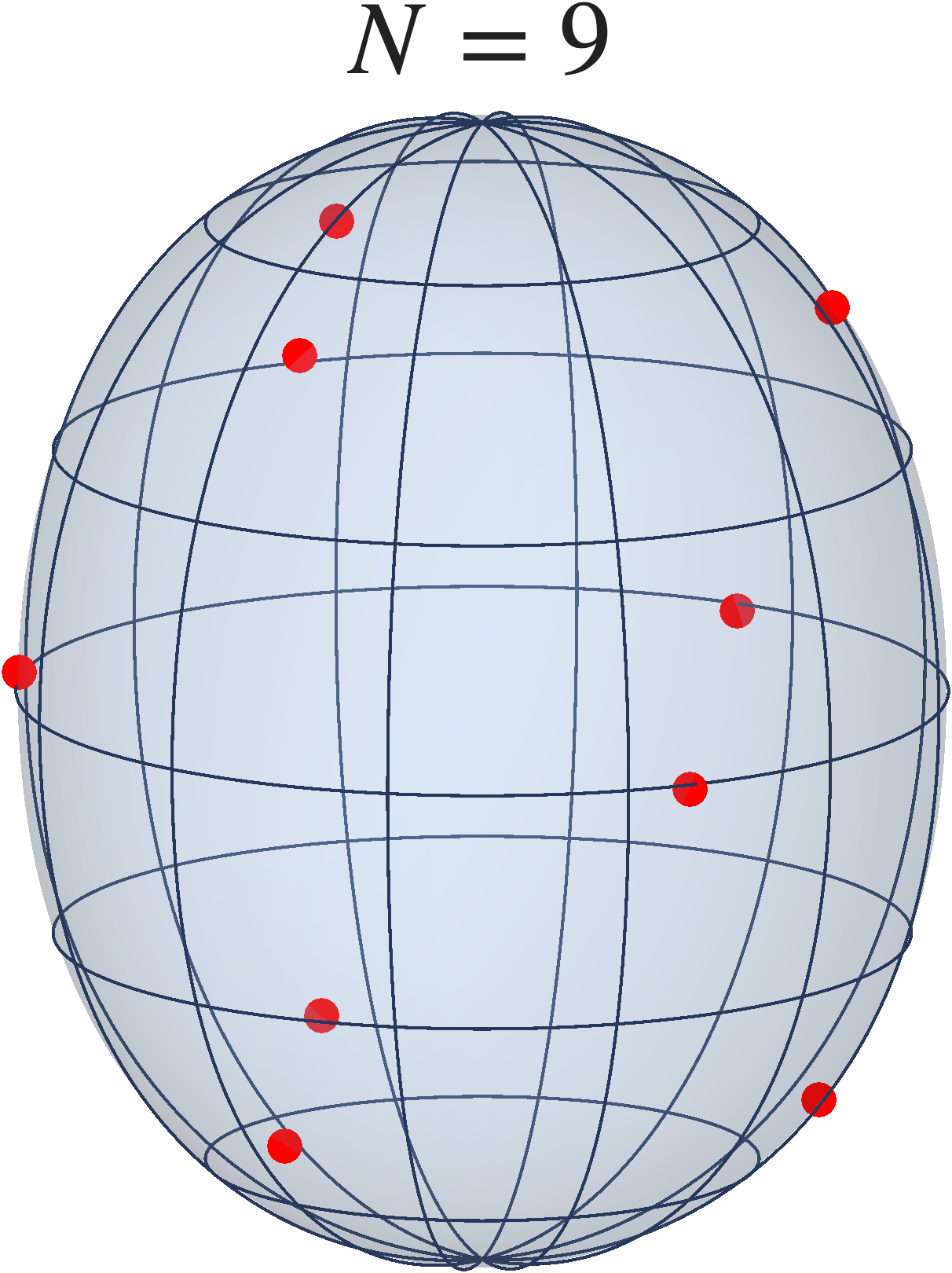}
    \includegraphics[width =0.15\textwidth]{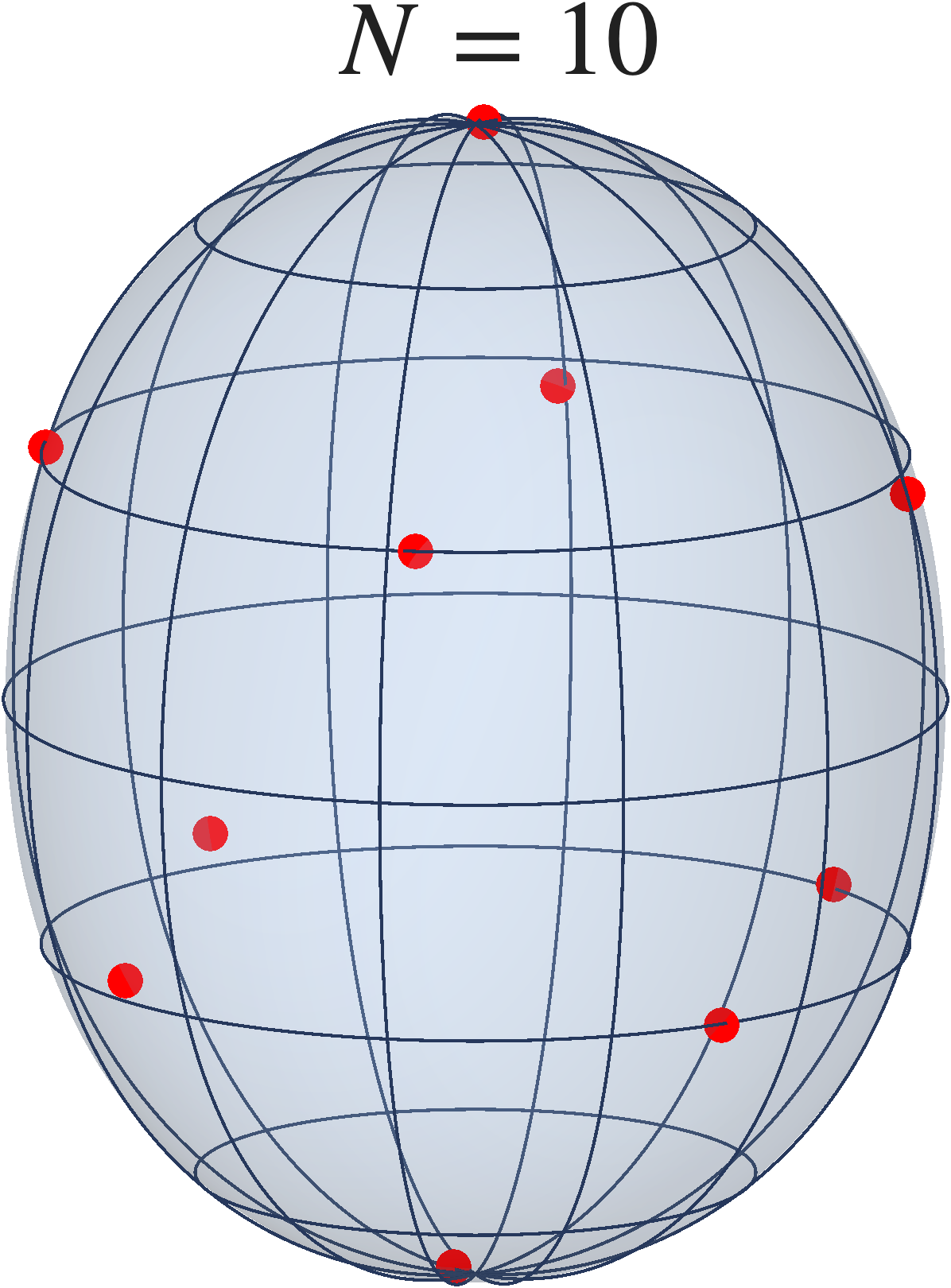}
    \includegraphics[width =0.15\textwidth]{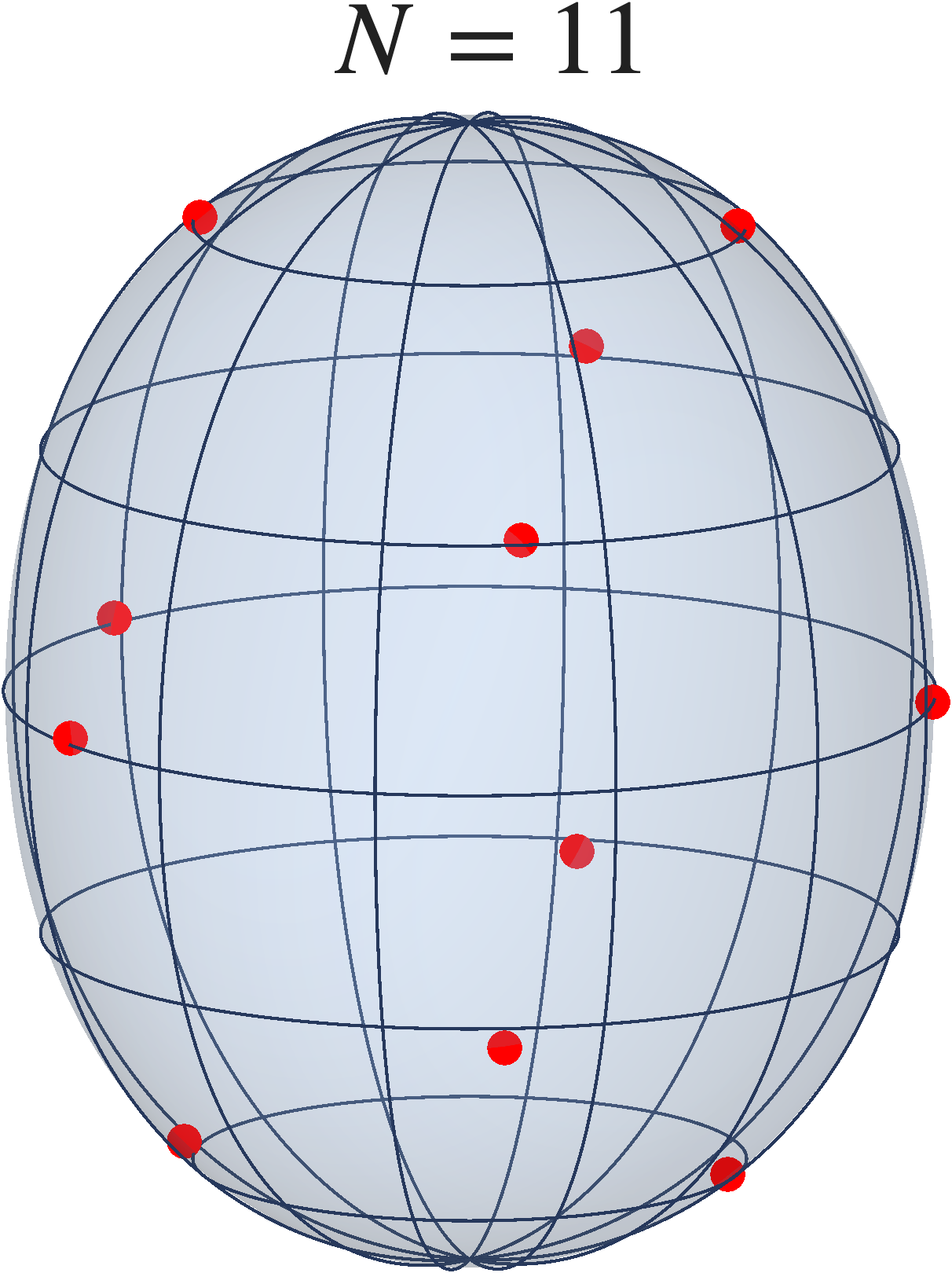}
    \includegraphics[width =0.15\textwidth]{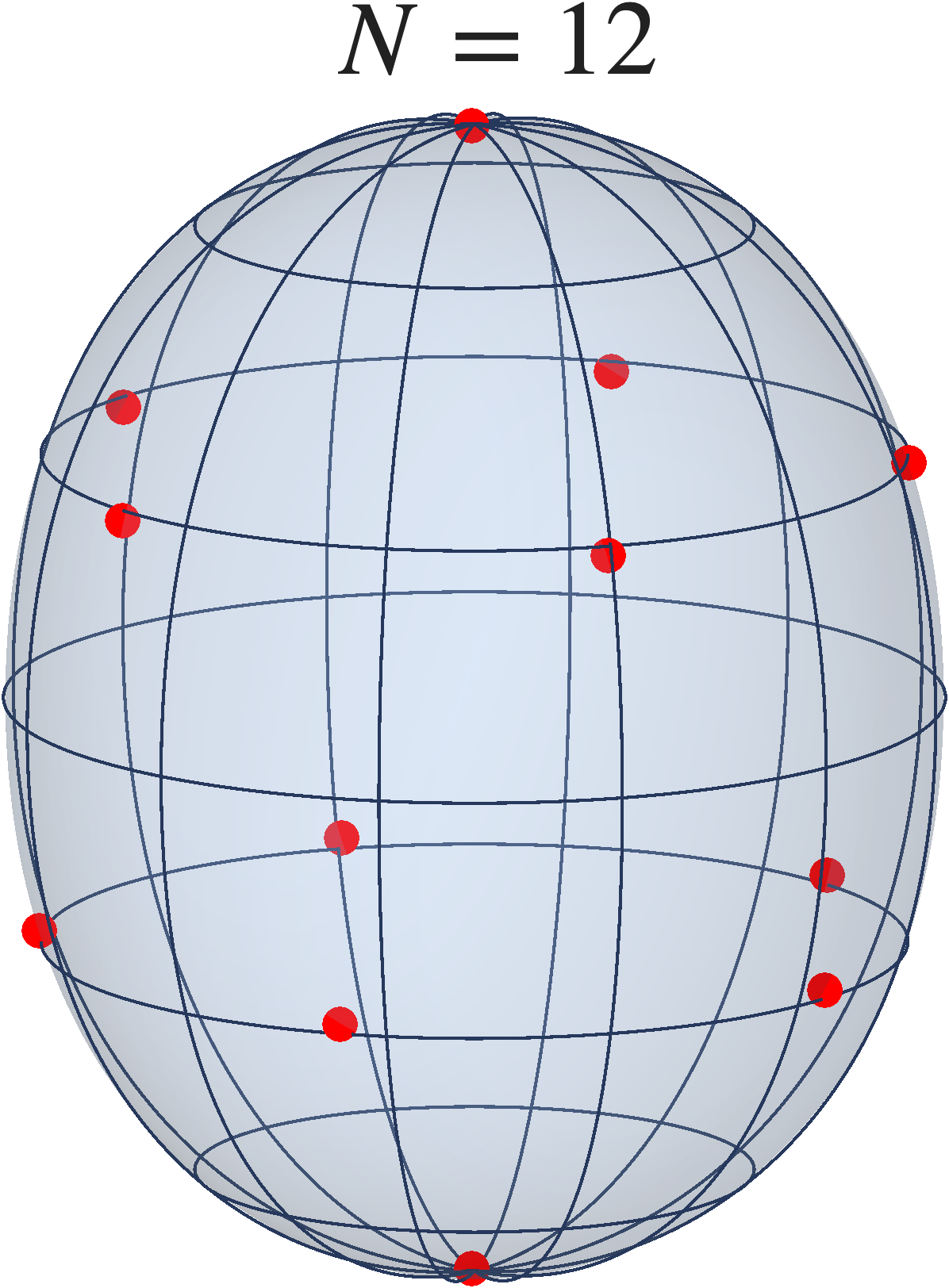}
\caption{
Optimization of the capacitance $C_T$ for $N=1,\ldots,12$ surface
patches in the Berg-Purcell problem, Case I.  We consider {\clb a
nearly spherical boundary} defined by radial coordinate $r = 1+
\mu\cos^2\theta$ for $\mu = 0.25$.  For $N=1,\ldots,12$ points, we
numerically calculate the points $\{\x_j\}_{j=1}^N$ that minimize the
discrete energy $\eqref{optim:example}$.}
\label{fig:optim_case1}
\end{figure}

\begin{figure}
    \centering
    \includegraphics[width =0.15\textwidth]{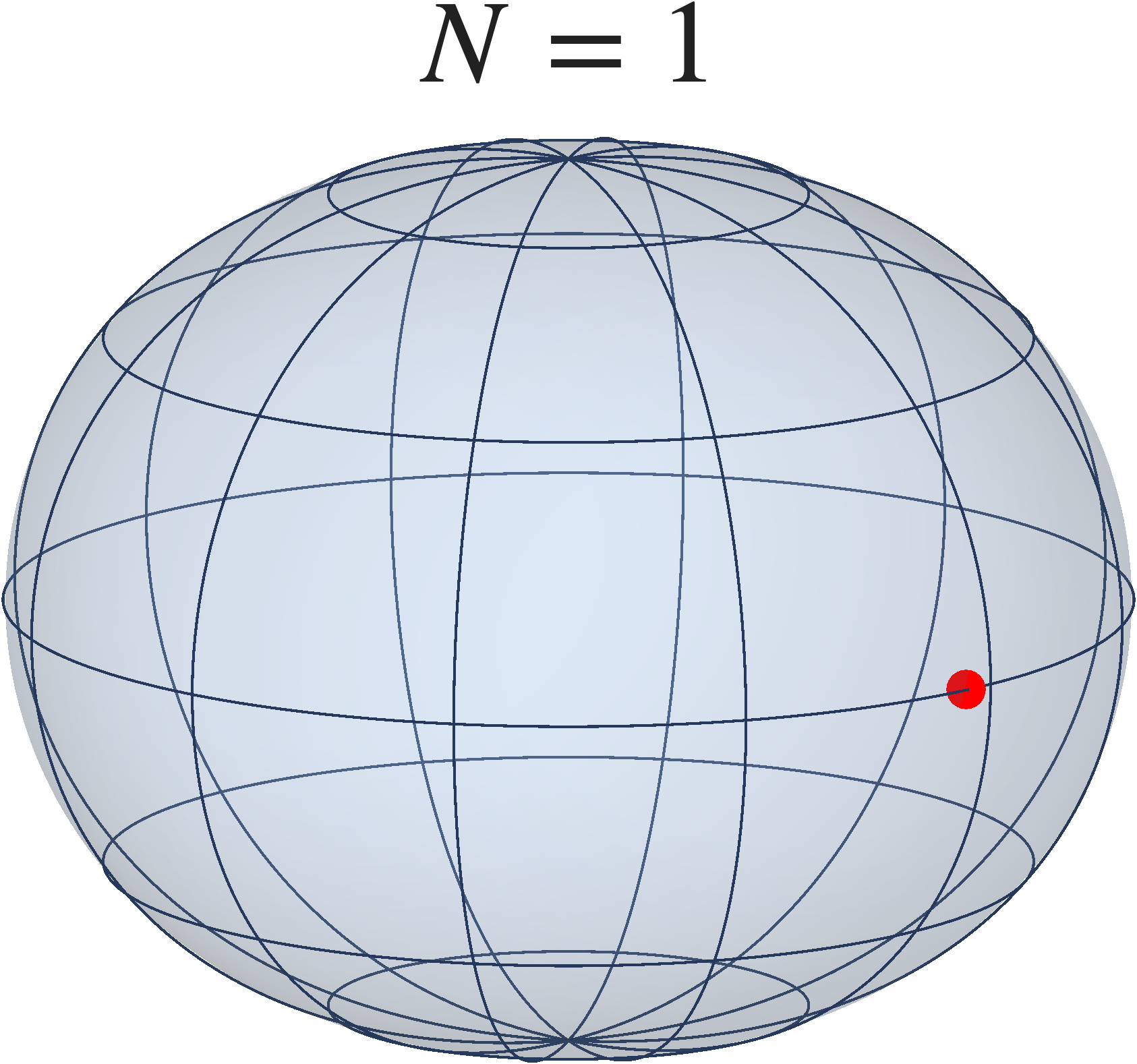}
    \includegraphics[width =0.15\textwidth]{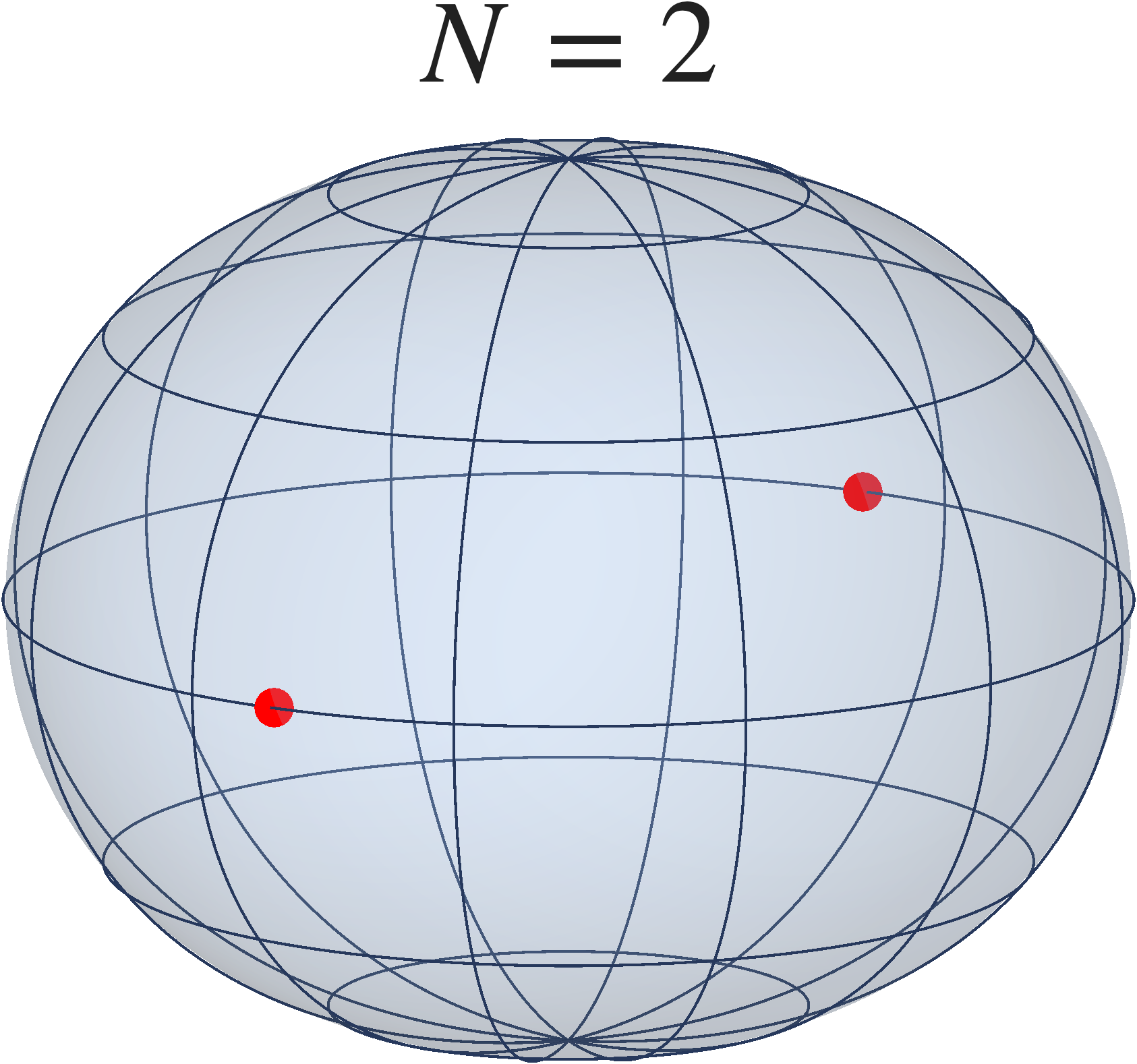}
    \includegraphics[width =0.15\textwidth]{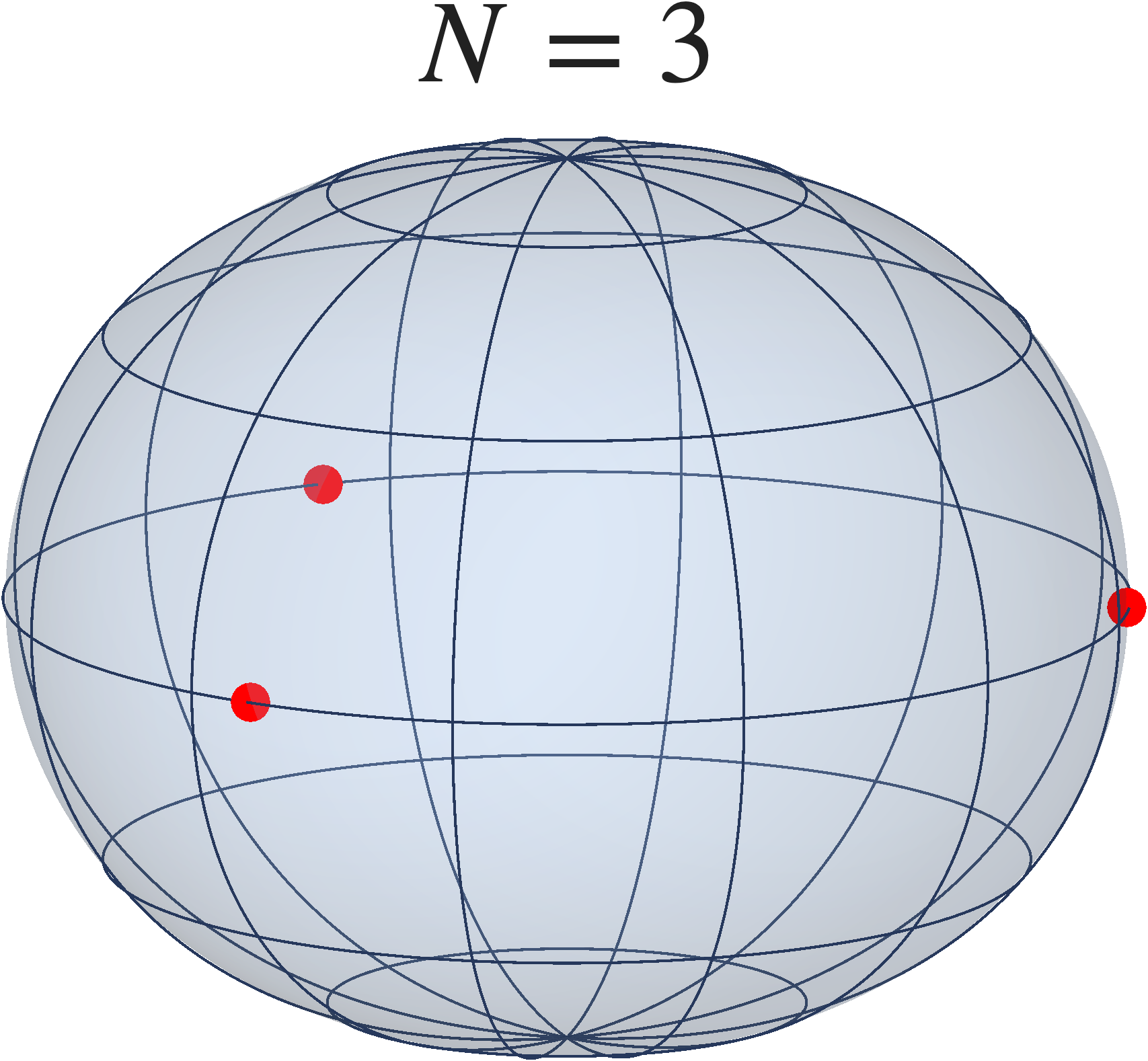}
    \includegraphics[width =0.15\textwidth]{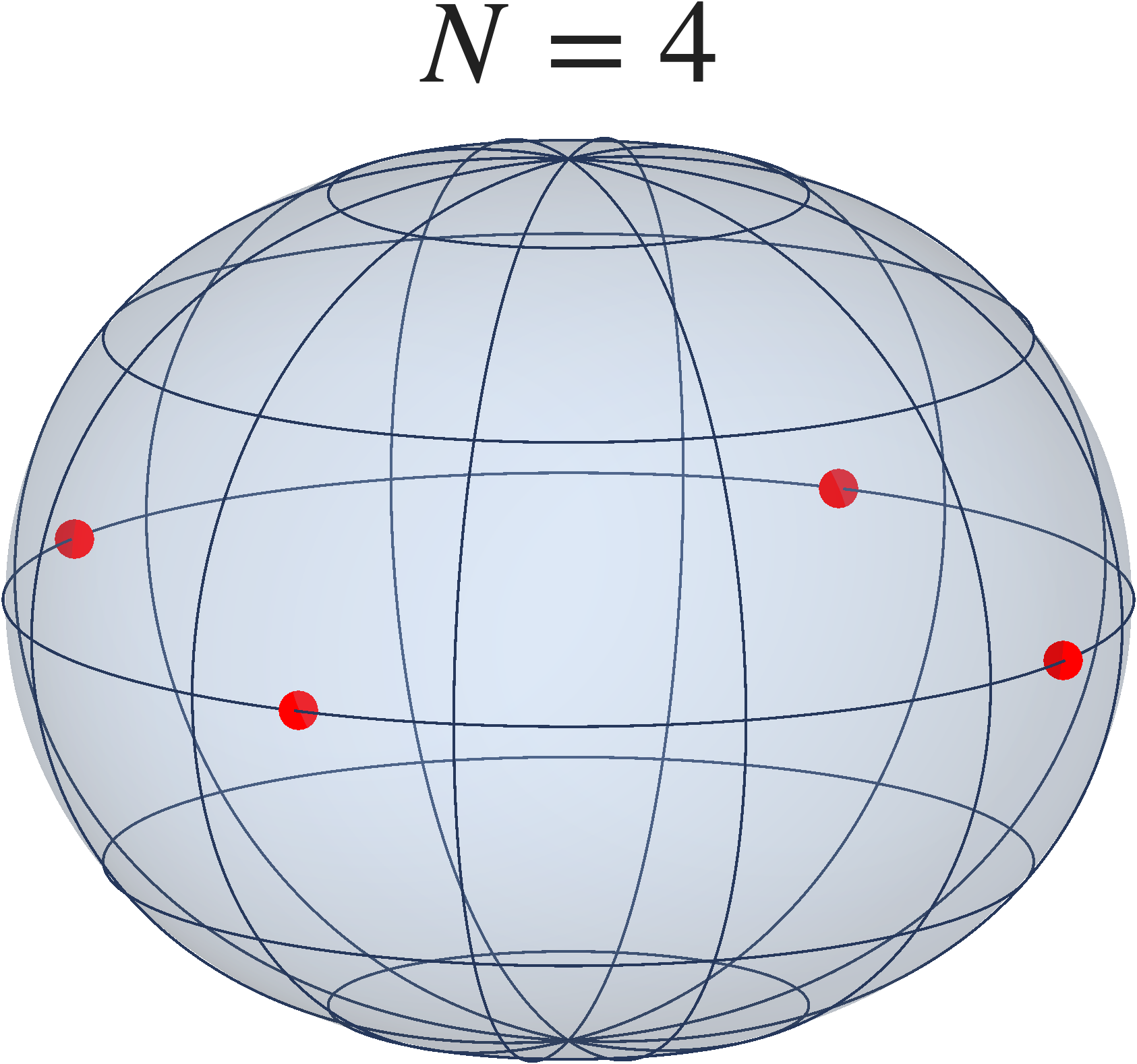}
    \includegraphics[width =0.15\textwidth]{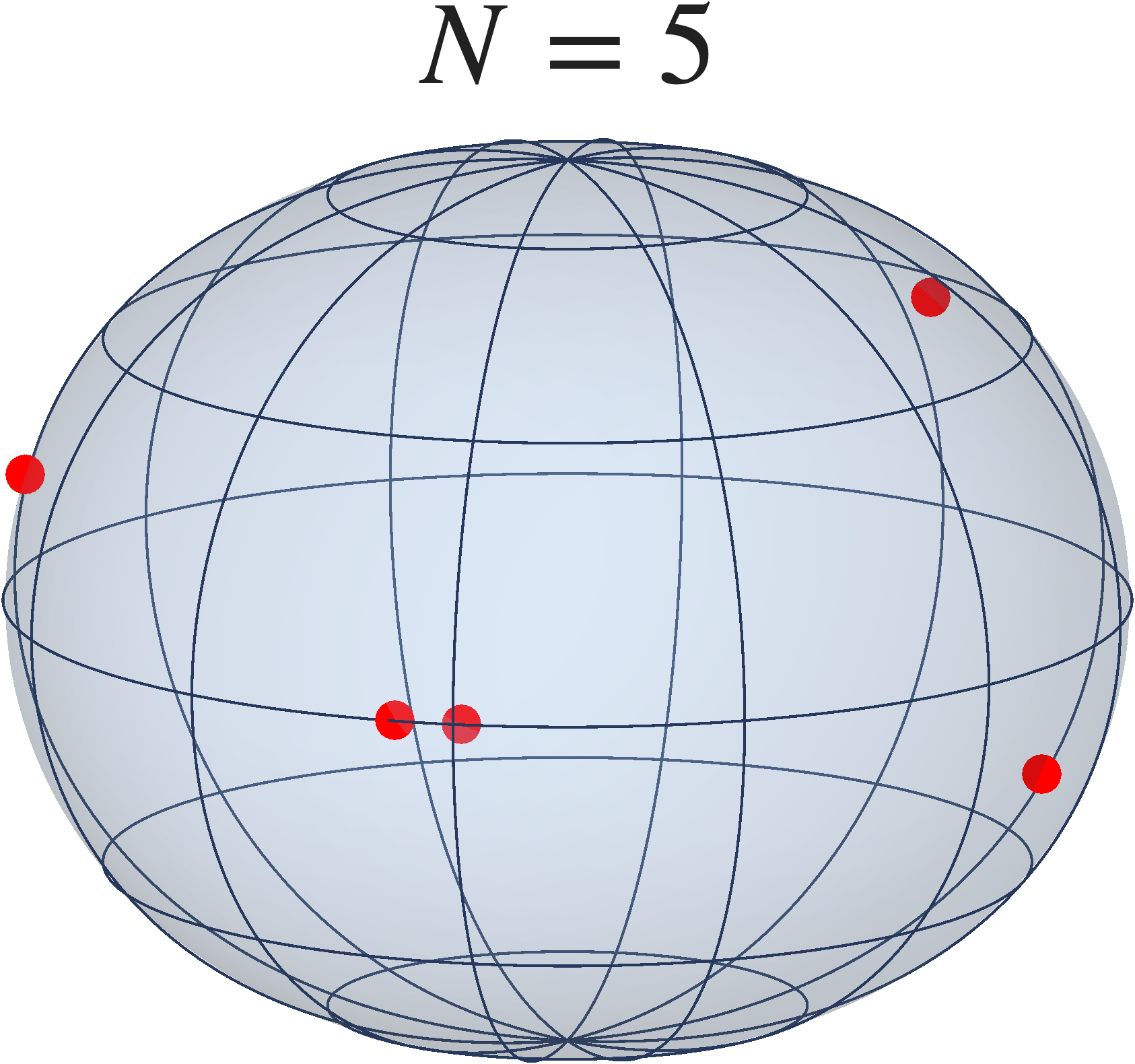}
    \includegraphics[width =0.15\textwidth]{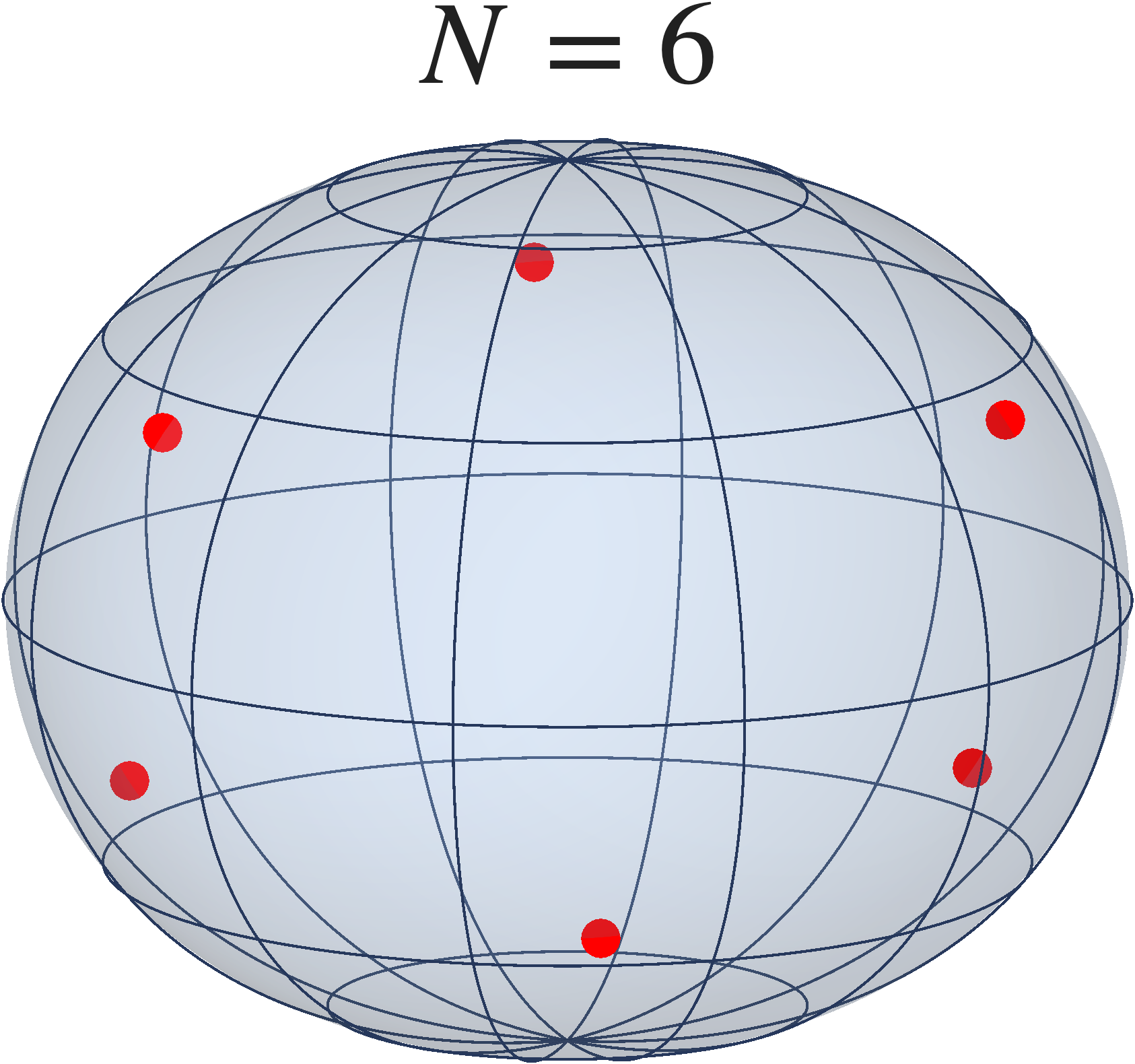}\\[5pt]
    \includegraphics[width =0.15\textwidth]{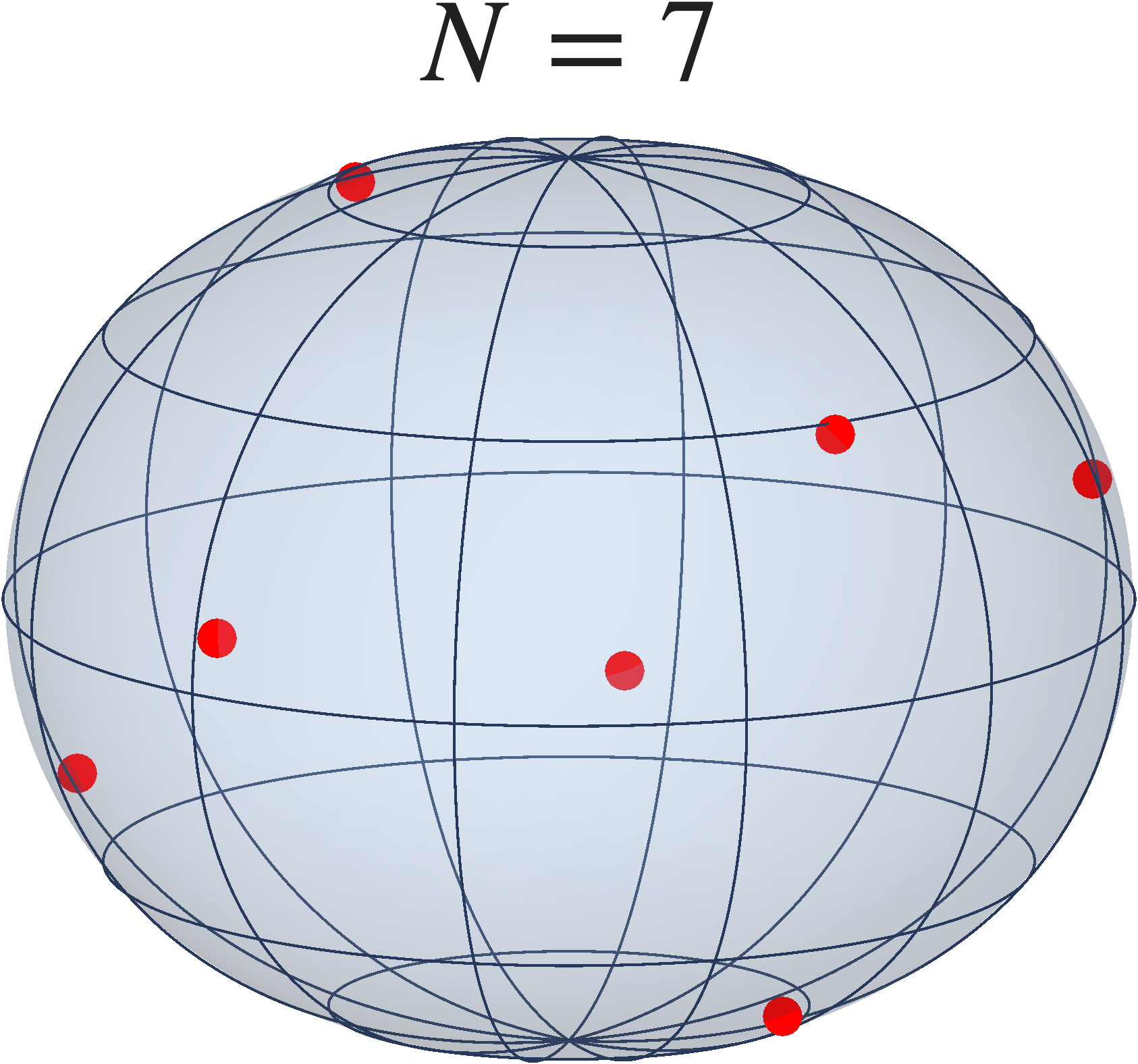}
    \includegraphics[width =0.15\textwidth]{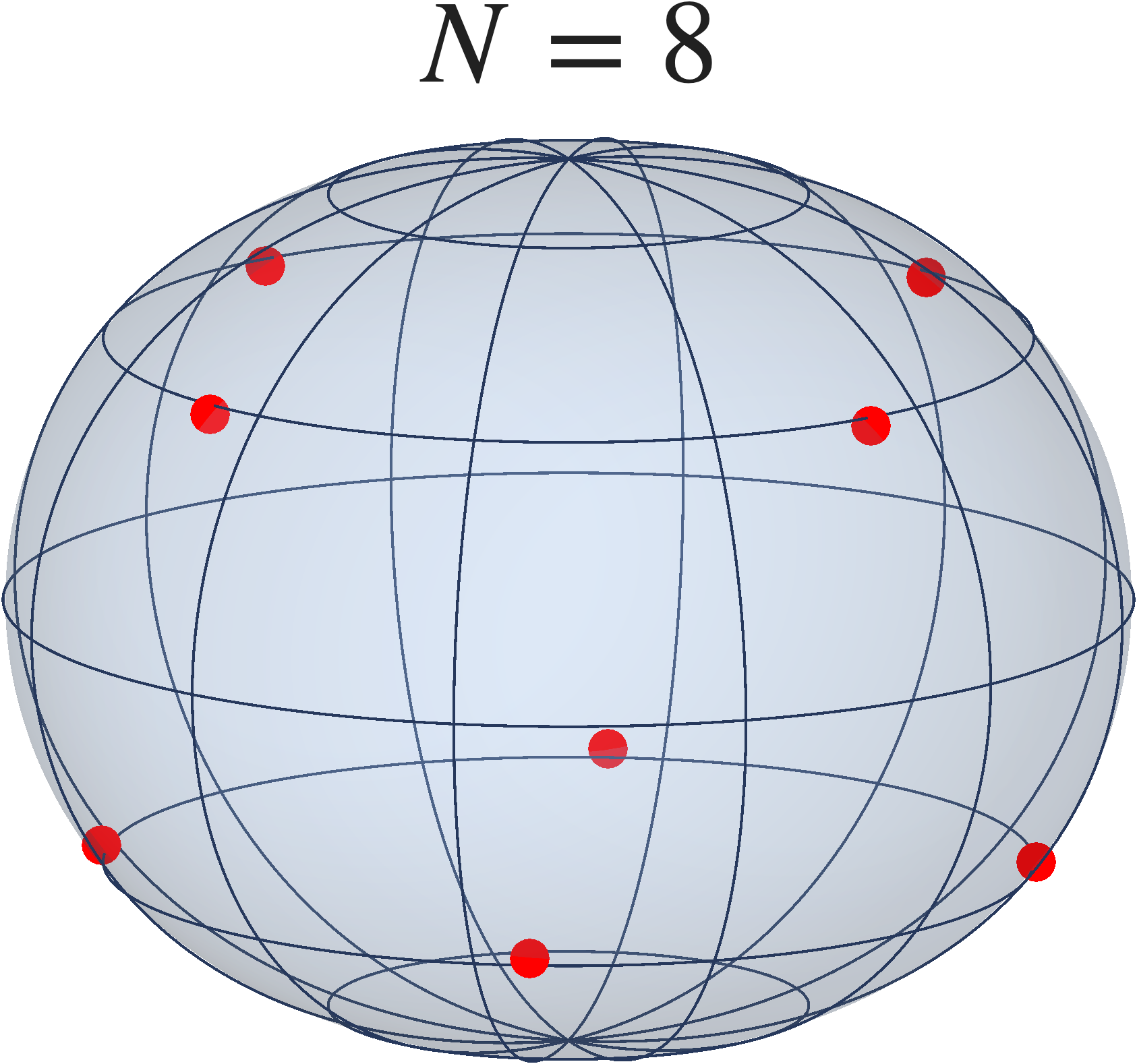}
    \includegraphics[width =0.15\textwidth]{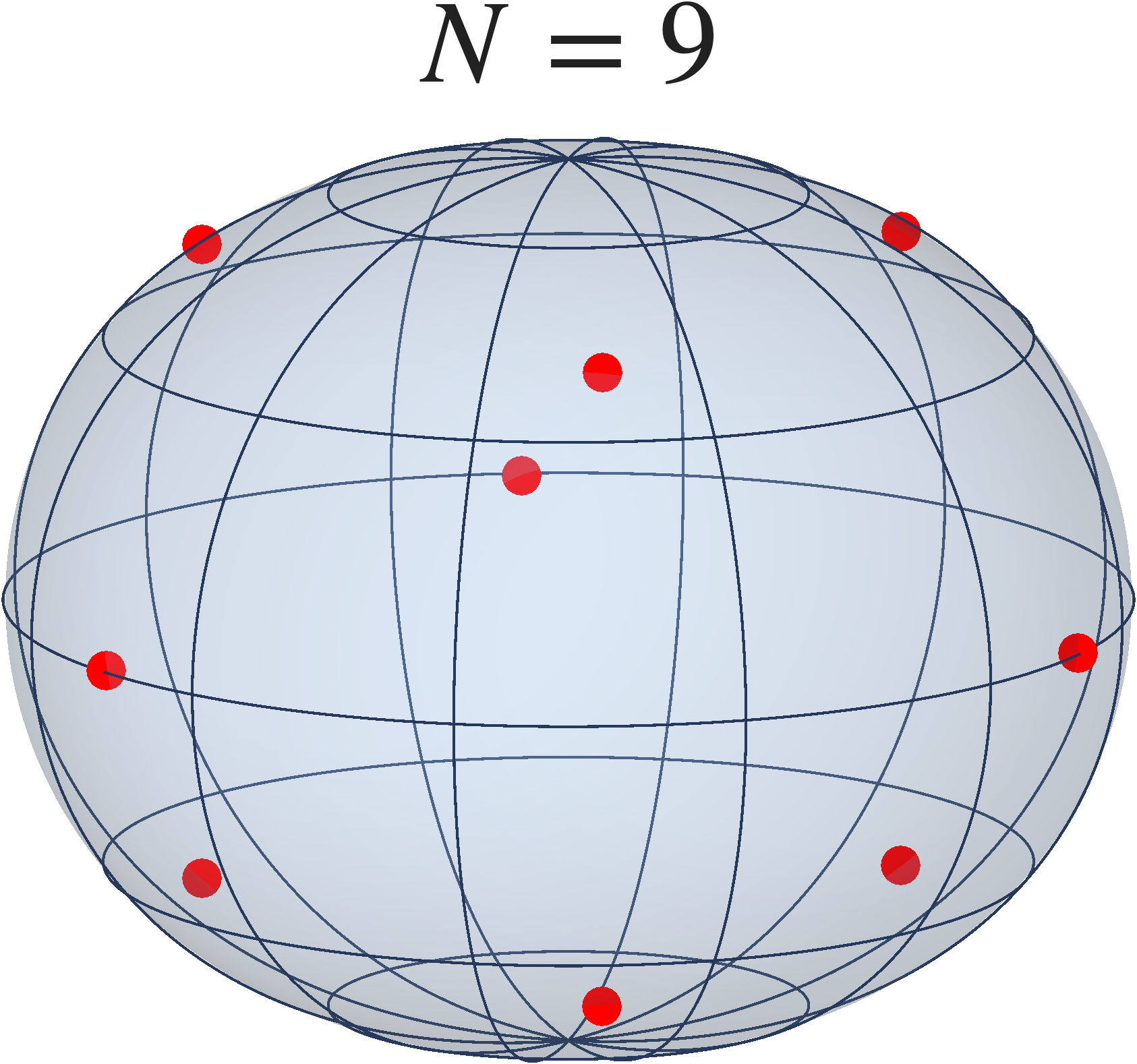}
    \includegraphics[width =0.15\textwidth]{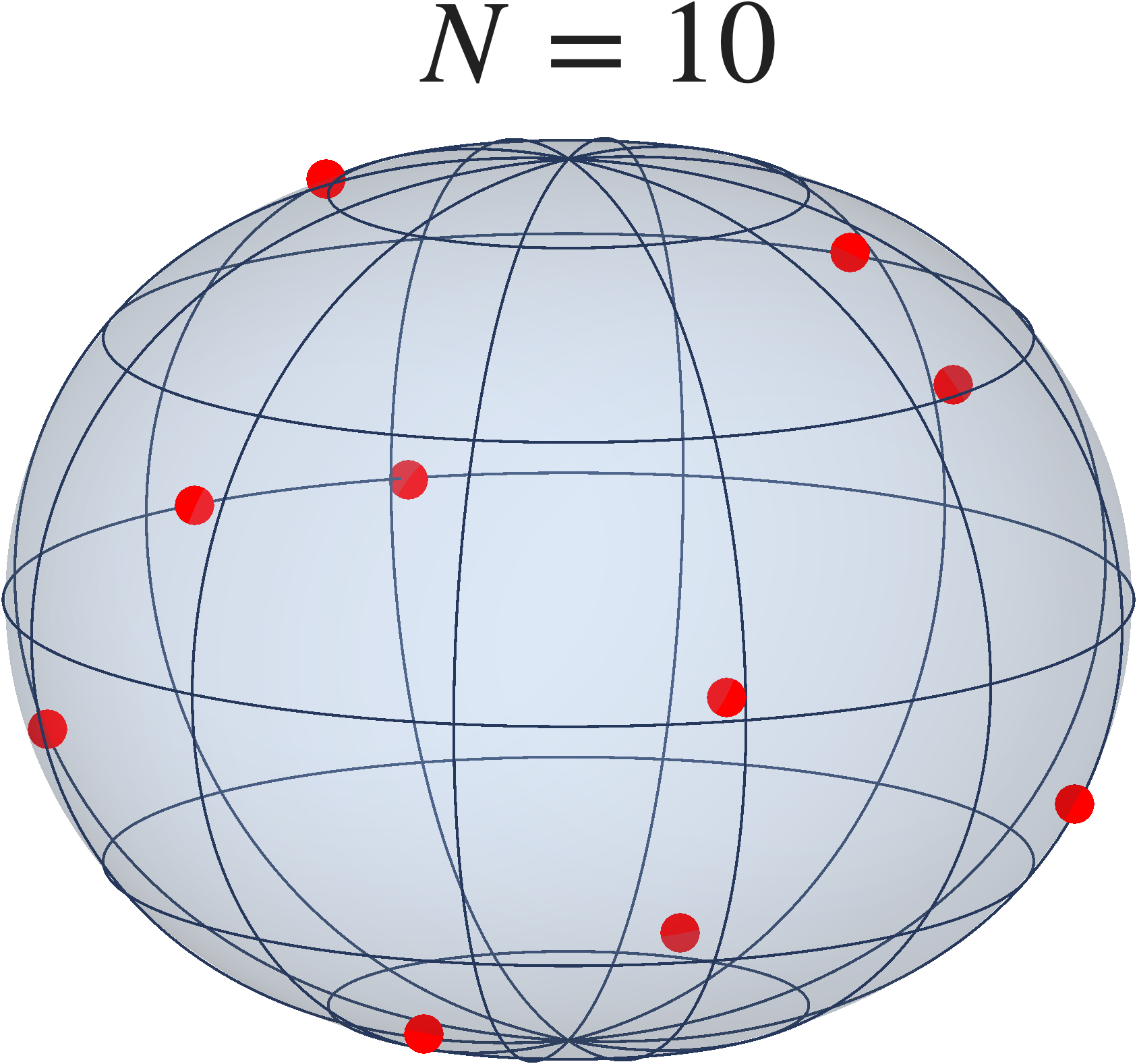}
    \includegraphics[width =0.15\textwidth]{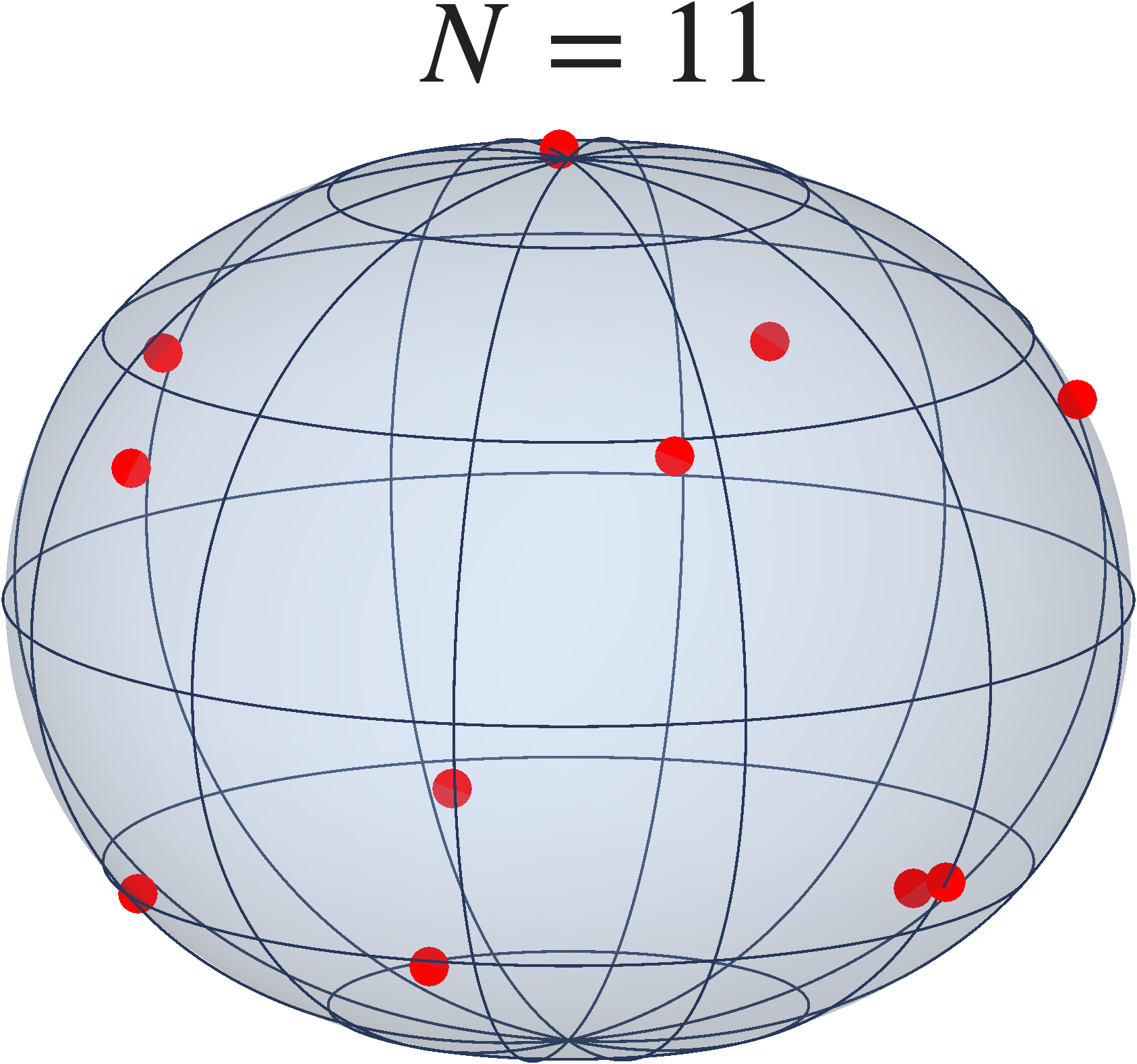}
    \includegraphics[width =0.15\textwidth]{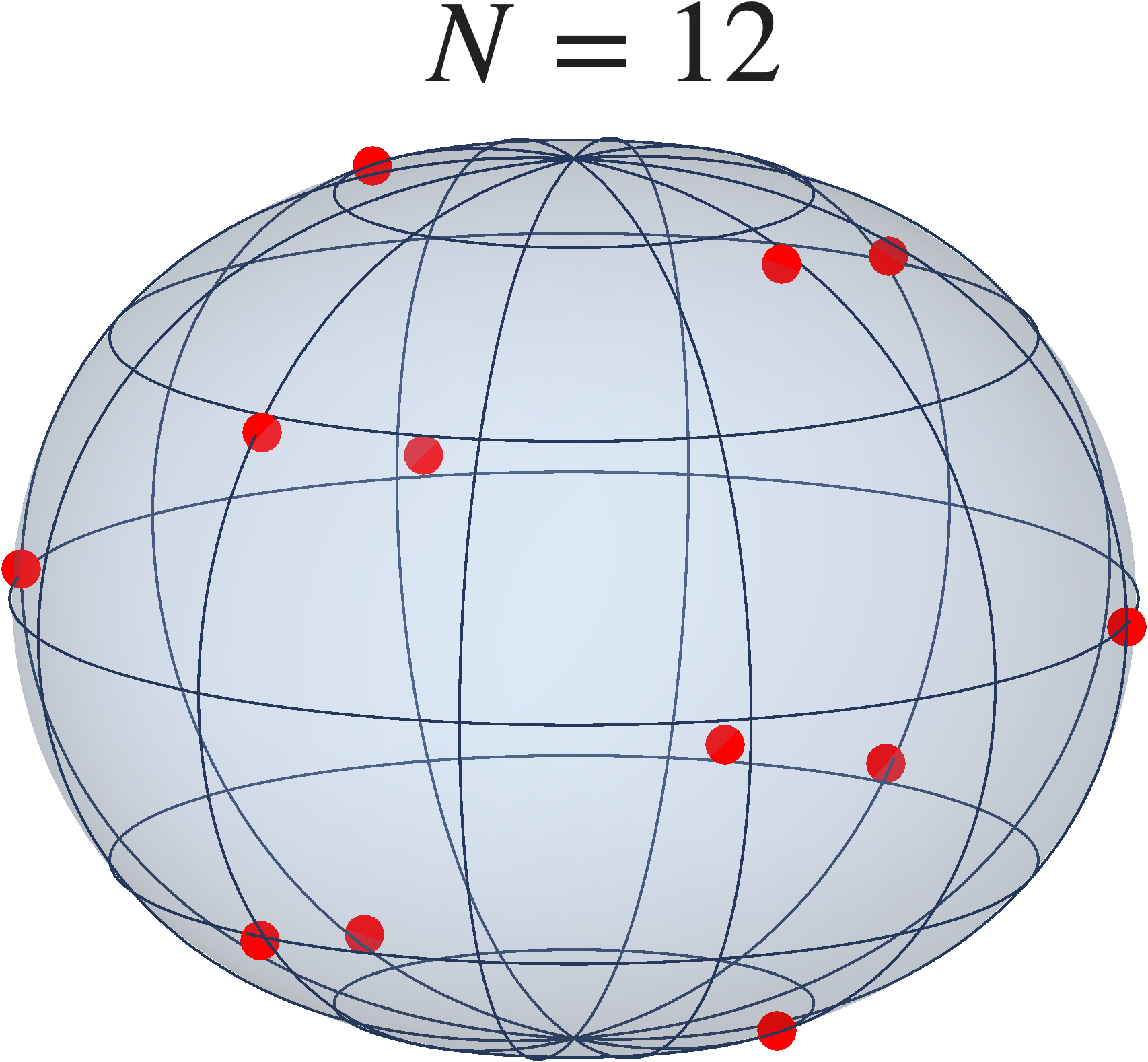}
\caption{
Optimization of the capacitance $C_T$ for $N=1,\ldots,12$ surface
patches in the Berg-Purcell problem, Case II.  We consider  {\clb a
nearly spherical boundary} defined by radial coordinate $r = 1+
\mu\sin^2\theta$ for $\mu = 0.25$.  For $N=1,\ldots,12$ points, we
numerically calculate the points $\{\x_j\}_{j=1}^N$ that minimize the
discrete energy $\eqref{optim:example}$.}
\label{fig:optim_case2}
\end{figure}

\section{Discussion}\label{sec:discussion}

We have developed a unified asymptotic theory for two canonical
diffusive capture problems in general smooth closed three-dimensional
domains whose boundary consists of small non-overlapping partially
reactive boundary patches of arbitrary shape on an otherwise
reflecting surface.  First, we considered the exterior Berg-Purcell
problem of receptor-mediated absorption.  Then, we adapted the
analysis to treat the interior narrow escape problem for the mean
first-reaction time for a diffusing particle to react on any of small
boundary patches.  Previous treatments were largely confined to the
sphere, where the surface Neumann Green's function is known explicitly
and the boundary is a surface of constant curvature.  By performing a
matched asymptotic analysis relying on a local orthogonal coordinate
system, we have obtained three-term expansions (Propositions
\ref{berg:main_res} and \ref{mfpt_b:main_res}) for the capacitance
$C_T$ and the global (or volume-averaged) MFRT $\overline{u}$.
The central outcome of our analysis consists in accurate closed-form
expressions for the key quantities that are determined in terms of
both local {\clb properties} of each patch and those representing
long-range interactions.  In particular, the local geometry near each
patch enters through the principal curvatures $\kappa_{1i},
\kappa_{2i}$ and the patch reactivity $b_i$, as encoded in the
reaction capacitance $C_i$ and certain monopole coefficients
$E_{i\pm}$.  These latter quantities are determined up to various
quadratures that involve the charge density on each patch.  In
contrast, the long-range interactions that encode the global domain
geometry enter through a surface Neumann Green's matrix.

Resolving the weak, path-dependent singularity of the surface Neumann
Green's function on a curved boundary was the key technical step that
enabled our general-geometry analysis. For the special case of $N$
identical circular patches, the results collapse to compact formulas
where the curvature appears only through the mean curvature
$\HC_i=\frac{1}{2}(\kappa_{1i}+ \kappa_{2i})$, and where they reduce to
previously derived results for a spherical domain in the appropriate
limit.

A natural application of these formulas that awaits to be explored is
the optimization of capture over different surface geometries. More
specifically, given a manifold where should a fixed number of
receptors be placed, and how should their sizes and reactivities be
apportioned, to maximize the absorption rate for capture of diffusing
ligands. Our derived asymptotic results reduce this question to a
finite-dimensional optimization problem in the patch locations,
involving the Green's interaction energy
\[
  p(\x_1,\ldots,\x_N) = \sum_{i=1}^N\Big[ R(\x_i) +
  \sum_{\substack{j=1\\i\neq j}}^NG(\x_j;\x_i)\Big]\,,
\]
together with the local mean-curvature contributions $\HC_i$. In
practice, optimization will require coupling the asymptotic formulas
to a numerical solver to build the Green's interaction matrix on the
surface of interest, for instance through the boundary-integral
methods \cite{lindsay20263D_GFun}. We note one qualitative feature
that distinguishes the three-dimensional setting from its
two-dimensional counterpart
\cite{chakraborty2025fastintegralmethodsneumann,GrebWard2D}: the spatially
varying corrections enter at the $\mathcal{O}(\eps\log\eps)$ term due
to local mean curvature effects while global effects are encoded in
the $\mathcal{O}(\eps)$ term. This differs from planar and spherical
narrow-capture problems where the equivalent terms are independent of
the trap configuration in the limit $\eps\to0$. We hypothesize that
the set of optimizing locations will depend on the patch radius
$\eps$ with smaller values being associated with clustered
configurations around critical points of mean curvature.
 
A second direction for future work is to derive effective scaling laws
in the dense-receptor limit $N \to \infty$ while keeping the receptor area
$\mathcal{O}(N\eps^2)$ fixed. On the sphere, the leading Berg-Purcell
rate together with a packing correction can be captured by a
homogenized effective reactivity to be imposed on the entire domain
boundary (see \cite{LWB2017,GrebenkovWard2026}). We
conjecture that on a general surface the analogous
law must involve some average of the local curvatures and an
interaction energy whose growth with $N$ reflects the geometry of
optimal point configurations on the manifold. Establishing this law
and identifying the effective conditions that summarize a densely
patterned surface as a single homogenized boundary condition would
connect the discrete asymptotics developed here to a continuum
description of partially absorbing curved boundaries.
 
Finally, the asymptotic framework developed herein unlocks the
potential for examining a broader class of geometry-dependent
first-passage questions.  The same inner/outer structure and the same
curved-surface Green's function apply, with minor modification, to
studying splitting probabilities among competing receptors
\cite{LLM2020,GrebenkovWard2026big}, to directional sensing
\cite{BJNL2023} and source localization by a cell reading out the
spatial pattern of capture events, and to the extreme-value statistics
of the fastest few arrivals that govern the timing of many cellular
decisions.  {\clb We remark that the splitting probability is readily
  obtained from our analysis by calculating the local flux to a specific receptor and dividing by the
  total flux.}  Yet another application concerns the asymptotic
analysis of mixed Steklov eigenvalue problems (see
\cite{GrebenkovWard2026big}).  In each case the local curvature and
the global Green's interactions are expected to play the organizing
role they do here, and the methods of this paper provide a systematic
route to quantifying how geometric effects shape stochastic processes.

\section*{Acknowledgments}
AEL acknowledges support from the NSF under grant {DMS-2052636}.  JGH
was supported in part by a Sloan Research Fellowship.  MJW is grateful
for the support of the NSERC Discovery Grant Program (Canada).  DSG
acknowledges the Simons Foundation for supporting his sabbatical
sojourn in 2024 at the CRM (CNRS -- University of Montr\'eal, Canada),
and the Alexander von Humboldt Foundation for support within a Bessel
Prize award.  This research was supported in part by grants from the
NSF (DMS-2235451) and Simons Foundation (MPS-NITMB-00005320) to the
NSF-Simons National Institute for Theory and Mathematics in Biology
(NITMB).

\appendix
\renewcommand{\theequation}{\Alph{section}.\arabic{equation}}

\section{Boundary-fitted local orthogonal coordinates}\label{app_b:bernoff}

We introduce a local coordinate system defined near $\x_i$ using the
intrinsic geometry of the surface as was done in
\cite{GrebenkovWard2026}.  We let $t_1$ and $t_2$ be the local
orthogonal surface coordinates, centered at $\x_i$, that correspond to
the two principal directions through $\x_i\in \partial\Omega$ with
principal curvatures $\kappa_1$ and $\kappa_2$, respectively.  Locally
the surface is described by the quadratic form
\begin{equation}\label{app_b:quad}
  z^{\prime}= {\mathcal B}(t_1,t_2) \equiv
  \frac{1}{2}\kappa_1 t_1^2 + \frac{1}{2} \kappa_2 t_{2}^2 +
  o(t_1^2+t_2^2) \,.
\end{equation}
In terms of a suitable orthogonal {\clb (rotation)} matrix
${\orthomat}$, we introduce the local change of coordinates defined by
\bsub \label{app_b:coord}
\begin{equation}\label{app_b:coord_1}
  \x^{\prime}\equiv {\orthomat}(\x-\x_i)=
  \left(t_1,t_2,{\mathcal B}(t_1,t_2)
  \right)^{T} + d  {\bf \hat{n}} \,,
\end{equation}
where $d\ge 0$ and ${\bf \hat{n}}$ is the inward pointing unit normal
for the local parameterization $z^{\prime}={\mathcal B}(t_1,t_2)$ of
the surface, which is given for $|t_1|\ll 1$ and $|t_2|\ll 1$ by
\begin{equation}\label{app_b:coord_2}
  {\bf \hat{n}} =
  \frac{ \left(-{\mathcal B}_{t_1}, -{\mathcal B}_{t_2}, 1\right)^T}{
    \sqrt{1 + {\mathcal B}_{t_1}^2 + {\mathcal B}_{t_2}^2 }} =
  \left(-\kappa_1t_1,-\kappa_2t_2,1\right)^{T} \left[ 1 +
    {\mathcal O}(t_1^2 + t_2^2) \right]\,.
\end{equation}
\esub {\clb For the validity of this coordinate system locally we
  require that $d<{1/\max(0,\kappa_1,\kappa_2)}$.}

To transform the Laplacian of a generic function $V(\x)$ from Cartesian to
the local coordinates $(t_1,t_2,d)$ of (\ref{app_b:coord}), we use the
fact that ${\orthomat}$ preserves lengths, and so
\begin{equation}\label{app_b:scale_fac}
  \begin{split}
    \nu_{t_1} &\equiv |{\partial \x/\partial t_1}| = 1-
    d \kappa_1 + {\mathcal O}(t_1^2+t_2^2+d^2) \,,\\
    \nu_{t_2} &\equiv |{\partial \x/\partial t_2}| = 1-
    d \kappa_2 + {\mathcal O}(t_1^2+t_2^2+ d^2) \,,\\
    \nu_{d} &\equiv |{\partial \x/\partial d}| = 1
    + {\mathcal O}(t_1^2+t_2^2) \,.
  \end{split}
\end{equation}
In terms of these scale factors, the Laplacian is transformed to
\begin{equation}\label{app_b:laplace_scale}
  \begin{split}
    \Delta_{\x} V &= \frac{1}{\nu_{t_1}\nu_{t_2}\nu_{d}} \left[
      \frac{\partial}{\partial t_1}\left( \frac{\nu_{t_2}\nu_{d}}
        {\nu_{t_1}} V_{t_1} \right) +
      \frac{\partial}{\partial t_2}\left( \frac{\nu_{t_1}\nu_{d}}
        {\nu_{t_2}} V_{t_2} \right) +
            \frac{\partial}{\partial d}\left( \frac{\nu_{t_1}\nu_{t_2}}
              {\nu_{d}} V_{d} \right) \right] \,,\\
          &= V_{dd} - \left(\frac{\kappa_1}{1-\kappa_1 d} +
            \frac{\kappa_2}{1-\kappa_2 d}\right) V_{d} \\
          & \qquad 
          + \frac{1}{(1- d \kappa_1)(1- d \kappa_2)} \left[
            \frac{\partial}{\partial t_1} \left(
              \frac{1- d \kappa_2}{1- d \kappa_1} V_{t_1} \right) +
            \frac{\partial}{\partial t_2} \left(
              \frac{1- d \kappa_1}{1- d \kappa_2} V_{t_2} \right)\right] \,.
          \end{split}
\end{equation}          

Next, we introduce the $\eps$-localized variables
$(\txione,\txitwo,\teta)$ defined by
\begin{equation}\label{app_b:innvar}
  \txione\equiv {t_1/\eps} \,, \quad  \txitwo\equiv {t_2/\eps} \,, \quad
  \teta\equiv {d/\eps} \,, \quad \tdelta \equiv \partial_{\txione\txione}
+  \partial_{\txitwo\txitwo} + \partial_{\teta\teta} \,.
\end{equation}
In terms of these local variables, for $\eps\to 0$ we find that the
Laplacian (\ref{app_b:laplace_scale}) reduces to
\begin{equation}\label{app_b:local_laplace}
  \Delta_{\x}V = \frac{1}{\eps^2} \tdelta V +
  \frac{1}{\eps} \bigl[-2\HC V_{\teta} + 2 \HC \teta \left(V_{\txione\txione}
      + V_{\txitwo\txitwo}\right)  + 2\kapdif \teta
    \left(V_{\txione\txione}- V_{\txitwo\txitwo}\right) \bigr] + {\mathcal O}(1)\,,
\end{equation}
where $\HC$ is the mean curvature and $\kapdif$ is the curvature difference
defined in terms of the two principal curvatures $\kappa_1$ and $\kappa_2$
of $\partial\Omega$ at $\x=\x_i$ by
\begin{equation}\label{app_b:curv_def}
  \HC = \frac{\kappa_1+\kappa_2}{2} \,, \qquad \kapdif=
  \frac{\kappa_1-\kappa_2}{2} \,.
\end{equation}

Next, to derive a two-term approximation for the Euclidian distance
$|\x-\x_i|$, we use (\ref{app_b:innvar}) in (\ref{app_b:coord}) and
collect powers of $\eps$. Since ${\orthomat}$ is an orthogonal
matrix, we obtain
\bsub \label{app_b:vab_all}
\begin{equation}
 {\orthomat}(\x-\x_i) = \eps \y + \eps^2 \vb + {\mathcal O}(\eps^3)\,, \qquad
  |\x-\x_i|^2=\eps^2 \y^T\y + 2 \eps^3 \y^T \vb  +
  {\mathcal O}(\eps^4)\,,
\end{equation}
where $\y$ and $\vb$ are defined by
\begin{equation}\label{app_g:vab}
  \y \equiv \left(\txione,\txitwo,\teta\right)^{T}\,, \qquad
  \vb \equiv \left(-\kappa_1 \txione\teta,-\kappa_2\txitwo\teta,
    \frac{1}{2}\kappa_1\txione^2 + \frac{1}{2}\kappa_2\txitwo^2\right)^{T}\,.
\end{equation}
\esub
By calculating
$\y^T\y =\txione^2+\txitwo^2+\teta^2 \equiv \rho^2$ and
$\y^T\vb={-\teta\left(\kappa_1\txione^2 + \kappa_2 \txitwo^2\right)/2}$,
we get
\begin{equation}\label{app_g:loc}
  \begin{split}
  |\x-\x_i| &\sim \eps \rho\left( 1  - \frac{\eps \teta}{2\rho^2}
    \left[ \HC (\txione^2 +\txitwo^2) + \kapdif(\txione^2-\txitwo^2)
    \right] \right) + {\mathcal O}(\eps^3), \\
  \frac{1}{|\x-\x_i|} &\sim \frac{1}{\eps \rho} \left( 1 +
    \frac{\eps \teta}{2\rho^2} \left[ \HC (\txione^2 +\txitwo^2)
  + \kapdif(\txione^2-\txitwo^2) \right]
    \right) + {\mathcal O}(\eps) \,.
\end{split}
\end{equation}

{\clb Finally, we observe from (\ref{app_g:loc}) that $|\x-\x_i|=\eps
  \rho + {\mathcal O}(\eps^3)$ and $\rho=(\txione^2+\txitwo^2)^{1/2}$
  when $\teta=0$. As a result, if the orthogonal projection of the
  Robin patch $\partial\Omega_{i}^{\eps}$ onto the tangent is a disk
  of radius $\eps a_i$, then the rescaled surface patch $\PT_{i}$ is
  described by $\left(\txione^2+\txitwo^2\right)^{1/2}\leq a_i +
  {\mathcal O}(\eps^2)$.  This small ${\mathcal O}(\eps^2)$ difference
  between the the orthogonally-projected and on-surface patches does
  not influence our three-term asymptotic result in Propositions
  \ref{berg:main_res} and \ref{mfpt_b:main_res}.}

\section{The surface Neumann Green's function}\label{app_g:int_green}

We now derive (\ref{berg:green_int_sing}) and show that the
unspecified $e(\x;\x_i)$ term in (\ref{berg:green_int_sing}), which is
determined solely by the local behavior of $\partial\Omega$ near
$\x=\x_i$, exhibits a mild singularity in that the value as
$\x\to\x_i$ depends on the specific path of approach to $\x_i$.  This
detailed analysis below produces a result equivalent to that derived
in \cite{NURSULTANOV2021202} from microlocal analysis; here, we
present an alternative approach based on a local coordinate system
defined in Appendix \ref{app_b:bernoff} which is suited for the
high-order asymptotic analysis of (\ref{berg_bp:ssp}) and
(\ref{mfpt:ssp}).  In our analysis below, for ease of notation we omit
the subscript $e$ for this Green's function.

In terms of the $\eps$-localized coordinates (\ref{app_b:innvar}), we
apply (\ref{app_b:local_laplace}) to the problem
(\ref{berg:green_int}) for $G$. The boundary condition in
(\ref{berg:green_int}) on the surface $\teta=0$ transforms locally to
\begin{equation*}
  \partial_{n} G = -\frac{1}{\eps\nu_{d}} \partial_{\teta} G = \delta(\x-\x_i)=
  \delta( {\orthomat}^{T}\x^{\prime})= \frac{1}{\eps^2 |\det{{\orthomat}}|}
  \delta(\txione) \delta(\txitwo)\,.
\end{equation*}
Since $\nu_{d}=1+{\mathcal O}(\eps^2)$ and $|\det{{\orthomat}}|=1$,
we obtain that
\begin{equation}\label{app_g:bc_loc}
  G_{\teta} \sim - \frac{1}{\eps} \delta(\txione) \delta(\txitwo) \,, \quad
  \mbox{on} \quad \teta=0\,.
\end{equation}
On the domain $\R_{+}^{3}$, as defined in (\ref{berg:r3+}), we substitute
the expansion
\begin{equation}\label{app_g:gexpan}
  G= \frac{G_0}{\eps} + G_1 + {\mathcal O}(\eps)
\end{equation}
into both (\ref{app_b:local_laplace}) applied to $G$ and the boundary
condition (\ref{app_g:bc_loc}). By collecting powers of $\eps$, we 
obtain the leading-order problem
\begin{equation}\label{app_g:g0_prob}
    \tdelta G_0 = 0 \,, \quad \mbox{in} \,\,\, \R_{+}^{3} \,; \qquad
    \partial_{\teta} G_0 =-\delta(\txione)\delta(\txitwo) \,, \quad \mbox{on}
    \,\,\, \teta=0 \,,
\end{equation}
where $\tdelta G_0\equiv G_{0,\txione\txione}+G_{0,\txitwo\txitwo}+G_{0,\teta\teta}$.
At next order we obtain that $G_1$ satisfies
\begin{equation}\label{app_g:g1_prob}
  \begin{split}
    \tdelta G_1 &= 2\HC G_{0,\teta} -2\HC\teta \left(G_{0,\txione\txione}
      +G_{0,\txitwo\txitwo}\right)-2\kapdif\teta\left(G_{0,\txione\txione}-
      G_{0,\txitwo\txitwo}\right) \,, \quad \mbox{in} \,\,\, \R_{+}^{3} \,, \\
    \partial_{\teta} G_1 &=0 \,, \quad \mbox{on} \,\,\, \teta=0 \,.
  \end{split}
\end{equation}

The solution to (\ref{app_g:g0_prob}) is
\begin{equation}\label{app_g:g0_sol}
  G_{0} = \frac{1}{2\pi \rho} \,, \quad \mbox{where} \quad
  \rho \equiv \left(s^2 +\teta^2\right)^{1/2} \,, \quad \mbox{and} \quad
  s^2= \txione^2+\txitwo^2 \,,
\end{equation}
and we conveniently decompose $G_1$ as
\begin{equation}\label{app_g:g1_sol}
  G_{1}=2 \HC G_{+} - 2\kapdif G_{-} \,.
\end{equation}
By using $G_{0,\txione\txione}+G_{0,\txitwo\txitwo}=-G_{0,\teta\teta}$ for
$(\txione,\txitwo,\teta)\ne (0,0,0)$ in (\ref{app_g:g1_prob}), the
decomposition (\ref{app_g:g1_sol}) yields that $G_{+}$ satisfies
\begin{equation}\label{app_g:g1+_prob}
    \tdelta G_{+} = G_{0,\teta}+\teta G_{0,\teta\teta}=\left[\teta G_{0,\teta}
    \right]_{\teta} \,, \quad \mbox{in} \quad \R_{+}^{3} \,; \qquad
     \partial_{\teta} G_{+} =0 \,, \quad \mbox{on} \quad \teta=0 \,,
\end{equation}
while $G_{-}$ satisfies
\begin{equation}\label{app_g:g1-_prob}
    \tdelta G_{-} = \teta \left( G_{0,\txione\txione}-G_{0,\txitwo\txitwo}\right)
    \,, \quad \mbox{in} \quad \R_{+}^{3} \,; \qquad
    \partial_{\teta} G_{-} =0 \,, \quad \mbox{on} \quad \teta=0 \,.
\end{equation}

We can determine $G_{+}$ by simply using the PDE that $G_0$
solves. The result is as follows:

\begin{lemma}\label{lemma:G1+sol}
  For $\big(\txione,\txitwo,\teta\big)\ne (0,0,0)$, the solution to
  (\ref{app_g:g1+_prob}) that has no singularity in $\R_{+}^{3}$ has
  the form
\begin{equation}\label{app_g:g1+decomp}
  G_{+}=\frac{\teta^2}{4}G_{0,\teta} + \frac{\teta}{4}G_0 -\frac{1}{4}
  \int^{\teta} G_{0} \, d\teta + {\mathcal F}(\txione,\txitwo)\,,
\end{equation}
where ${\mathcal F}(\txione,\txitwo)$ is a 2-D harmonic solution
satisfying
$\Delta_{\xi}{\mathcal F}\equiv {\mathcal F}_{\txione\txione} +
{\mathcal F}_{\txitwo\txitwo}=0$.
\end{lemma}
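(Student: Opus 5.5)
The plan is to verify directly that the three explicit terms in (\ref{app_g:g1+decomp}) produce the forcing $[\teta G_{0,\teta}]_{\teta}$. Then the remaining freedom is a harmonic function that is independent of $\teta$, and the Neumann condition together with regularity forces it to have the stated form. The only facts used are $\tdelta G_0=0$ away from the origin and the explicit form $G_0=1/(2\pi\rho)$ from (\ref{app_g:g0_sol}).

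First, write $\Delta_\xi\equiv\partial_{\txione\txione}+\partial_{\txitwo\txitwo}$, so that $\Delta_\xi G_0=-G_{0,\teta\teta}$ for $\y\neq 0$. Then compute $\tdelta$ of each term separately.
\begin{itemize}
\item For $\tfrac{\teta^2}{4}G_{0,\teta}$: since $G_{0,\teta}$ is itself harmonic, the product rule gives $\tdelta(\teta^2 G_{0,\teta}) = 2G_{0,\teta}+4\teta G_{0,\teta\teta}$.
\item For $\tfrac{\teta}{4}G_0$: similarly $\tdelta(\teta G_0)=2G_{0,\teta}$.
\item For the antiderivative term, set $I(\txione,\txitwo,\teta)\equiv\int^{\teta}G_0\,d\teta$. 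Then $I_{\teta\teta}=G_{0,\teta}$. Also, $\Delta_\xi I=\int^{\teta}\Delta_\xi G_0\,d\teta=-\int^{\teta}G_{0,\teta\teta}\,d\teta=-G_{0,\teta}+h(\txione,\txitwo)$. Hence $\tdelta I = h$, a function of $(\txione,\txitwo)$ only.
\end{itemize}
Summing with the coefficients $\tfrac14,\tfrac14,-\tfrac14$ gives
\[
\tfrac14\bigl(2G_{0,\teta}+4\teta G_{0,\teta\teta}\bigr)+\tfrac14\bigl(2G_{0,\teta}\bigr)-\tfrac14 h = G_{0,\teta}+\teta G_{0,\teta\teta}-\tfrac14 h ,
\]
which equals the required right-hand side up to $h$.

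To pin down $I$ concretely, take the explicit antiderivative $I=\tfrac{1}{2\pi}\log(\teta+\rho)$. This form matches the logarithm appearing in (\ref{berg:green_int_sing}). For this choice one checks directly that $\tdelta I=0$ away from the origin, so $h=0$. Any other choice of antiderivative differs from this one by a function of $(\txione,\txitwo)$, and that difference is absorbed into $\mathcal F$. The remaining homogeneous part $\mathcal F(\txione,\txitwo)$ must then satisfy $\Delta_\xi\mathcal F=0$.

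Next, check the boundary condition at $\teta=0$. Differentiating in $\teta$ gives
\[
\partial_\teta\Bigl[\tfrac{\teta^2}{4}G_{0,\teta}+\tfrac{\teta}{4}G_0-\tfrac14 I\Bigr]=\tfrac{\teta}{2}G_{0,\teta}+\tfrac{\teta^2}{4}G_{0,\teta\teta}+\tfrac14 G_0+\tfrac{\teta}{4}G_{0,\teta}-\tfrac14 G_0 .
\]
The two $G_0$ terms cancel and every remaining term carries a factor of $\teta$. At $\teta=0$ with $s\ne0$, $G_0$ and its derivatives are bounded, so these terms vanish. Since $\mathcal F$ is independent of $\teta$, we get $\partial_\teta G_+=0$ on $\teta=0$ away from the origin.

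The step I expect to be the main obstacle is the uniqueness claim, namely that the general nonsingular solution has exactly this form. The difference of two solutions is harmonic in $\R^3_+$ with zero Neumann data. By even reflection across $\teta=0$ it becomes harmonic in $\R^3\setminus\{0\}$. Excluding singular behaviour, and allowing only the growth permitted by the later matching, reduces the admissible homogeneous solutions. The lemma's formulation, which leaves an arbitrary 2-D harmonic $\mathcal F$, suggests the authors intend only a representation statement. In that case the real work is the verification above, and $\mathcal F$ is fixed later by matching to the behaviour (\ref{berg:Phi+_4}).
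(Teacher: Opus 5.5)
Your proof is correct and follows essentially the same route as the paper. Both are a direct verification, using only $\Delta_\xi G_0=-G_{0,\teta\teta}$: the three explicit terms generate the forcing $[\teta G_{0,\teta}]_{\teta}$, the normal derivative $\partial_\teta G_+=\tfrac34\teta G_{0,\teta}+\tfrac{\teta^2}{4}G_{0,\teta\teta}$ vanishes on $\teta=0$, and $\mathcal F$ contributes nothing. Your product-rule bookkeeping and your explicit tracking of the integration function $h$ (removed by choosing the harmonic antiderivative $\tfrac{1}{2\pi}\log(\teta+\rho)$) are slightly more careful than the paper, which silently writes $\int^{\teta}G_{0,\teta\teta}\,d\teta=G_{0,\teta}$; like the paper, you rightly read the lemma as a representation rather than a uniqueness claim.
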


\begin{proof} For $(\txione,\txitwo,\teta)\ne (0,0,0)$ we readily
  calculate that
  \begin{equation}\label{app_g:decomp_1}
  G_{+,\teta} = \frac{3}{4}\teta G_{0,\teta} + \frac{\teta^2}{4}
  G_{0,\teta\teta} \,, \qquad
  G_{+,\teta\teta} = \frac{\teta^2}{4} G_{0,\teta\teta\teta} + \frac{5}{4}
  \teta G_{0,\teta\teta} + \frac{3}{4}G_{0,\teta}\,.
\end{equation}
Upon defining $\Delta_{\xi} \equiv \partial_{\txione\txione}+
\partial_{\txitwo\txitwo}$, we use the PDE that $G_0$ solves to get
\begin{equation}\label{app_g:decomp_2}
  \begin{split}
    \Delta_{\xi} G_{+} &= \frac{\teta^2}{4}
    \left[\Delta_{\xi}G_0\right]_{\teta} + \frac{\teta}{4} \Delta_{\xi} G_0
    - \frac{1}{4} \int^{\teta} \Delta_{\xi} G_0 \, d\teta + \Delta_{\xi}
    {\mathcal F} \,, \\
    & = - \frac{\teta^2}{4} G_{0,\teta\teta\teta} - \frac{\teta}{4}
    G_{0,\teta\teta} + \frac{1}{4} \int^{\teta} G_{0, \teta\teta} \, d\teta +
    \Delta_{\xi} {\mathcal F} \,, \\
    & = -\frac{\teta^2}{4} G_{0,\teta\teta\teta} - \frac{\teta}{4} G_{0,\teta\teta}
    + \frac{1}{4} G_{0,\teta} + \Delta_{\xi} {\mathcal F} \,.
  \end{split}
\end{equation}
Adding the last line in (\ref{app_g:decomp_2}) to $G_{+,\teta\teta}$
from (\ref{app_g:decomp_1}), and setting
$\Delta_{\xi} {\mathcal F}=0$ for $(\txione,\txitwo)\ne (0,0)$, we
conclude that $G_{+}$ satisfies (\ref{app_g:g1+_prob}) on
$\R_{+}^{3}$ as required. From (\ref{app_g:decomp_1}) it follows that
the boundary condition $G_{+\teta}=0$ on $\teta=0$ for
$(\txione,\txitwo)\ne (0,0)$ holds.
\end{proof}

To determine $G_{+}$ explicitly, we use (\ref{app_g:g0_sol}) for
$G_0$ to obtain
\begin{equation*}
  G_{0,\teta} = -\frac{\teta}{2\pi \rho^3} \,, \qquad G_{0,\teta\teta}=
  -\frac{1}{2\pi \rho^3} + \frac{3\teta^2}{2\pi \rho^5} \,.
\end{equation*}
Replacing the integral in (\ref{app_g:g1+decomp}) with a definite
integral, and recalling that $\int^{x} (a^2+x^2)^{-1/2}\, dx =
\log\left(x+\sqrt{x^2+a^2}\right)$, we get from  (\ref{app_g:g1+decomp}) that
\begin{equation*}
  \begin{split}
  G_{+} &= \frac{\teta}{8\pi\rho}\left(1 - \frac{\teta^2}{\rho^2}\right)
  -\frac{1}{8\pi}\int_{0}^{\teta} \frac{dt}{\sqrt{\txione^2+\txitwo^2+
      t^2}} + {\mathcal F}(\txione,\txitwo)\,, \\
  & = \frac{\teta}{8\pi \rho^3} \left(\txione^2+\txitwo^2\right) -
  \frac{1}{8\pi} \log\left( \teta + \rho\right) + \frac{1}{8\pi}
  \log\left(\sqrt{\txione^2+\txitwo^2}\right) + {\mathcal F}(\txione,\txitwo)
  \,.
  \end{split}
\end{equation*}
To ensure that $G_{+}$ has no singularity in $\R_{+}^{3}$, we must choose,
up to an additive constant $K$, that
${\mathcal F}=-(8\pi)^{-1}\log\left(\sqrt{\txione^2+\txitwo^2}\right)$. In this
way, we conclude that
\begin{equation}\label{app_g:g1+_sol}
  G_{+} = \frac{\teta}{8\pi \rho^3} \left(\txione^2+\txitwo^2\right)-
  \frac{1}{8\pi} \log(\teta + \rho) + K \,.
\end{equation}

Next, we solve (\ref{app_g:g1-_prob}) for $G_{-}$. By using 
(\ref{app_g:g0_sol}) for $G_0$, (\ref{app_g:g1-_prob}) becomes
\begin{equation}\label{app_g:ng1-_prob}
 \tdelta G_{-} = \frac{3\teta \left(\txione^2-\txitwo^2\right)}{2\pi \rho^5}
       \,, \quad \mbox{in} \quad \R_{+}^{3} \,; \qquad
    \partial_{\teta} G_{-} =0 \,, \quad \mbox{on} \quad \teta=0 \,.
\end{equation}
We first seek a particular solution $G_{-p}$ to (\ref{app_g:ng1-_prob}) by
using the following simple lemma.

\begin{lemma}\label{lemma:homog}
  Let $\y=(\txione,\txitwo,\teta)^{T}$, $\rho=|\y|$, and let
  $P_{n}(\y)$ be a homogeneous polynomial in $\y$ of degree $n$, so
  that $P_n(t\y)=t^{n}P_{n}(\y)$ for any scalar $t$. If $P_{n}(\y)$ is
  also harmonic in 3-D, then
  \begin{equation}\label{app_g:lemma_g1-}
    \tdelta \left(\frac{P_{n}(\y)}{\rho^3}\right) =
    \frac{6(1-n)P_n(\y)}{\rho^5} \,.
  \end{equation}
\end{lemma}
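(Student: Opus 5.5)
The plan is a direct computation using the product rule for the Laplacian,
\[
\tdelta (fg) = f\,\tdelta g + 2\nabla f\cdot\nabla g + g\,\tdelta f ,
\]
applied with $f=P_n(\y)$ and $g=\rho^{-3}$. This is valid for $\y\neq 0$, where all quantities are smooth. Three ingredients are needed, each computed in the order below.

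First, since $P_n$ is harmonic by hypothesis, the term $g\,\tdelta f$ vanishes identically.

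Second, for the radial function $\rho^{-3}$ I use $\tdelta \rho^{m} = m(m+1)\rho^{m-2}$ in three dimensions. With $m=-3$ this gives
\[
\tdelta \rho^{-3} = 6\rho^{-5}.
\]

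Third, for the cross term I compute $\nabla \rho^{-3} = -3\rho^{-5}\y$. Euler's identity for homogeneous functions gives $\y\cdot\nabla P_n = nP_n$. Hence
\[
2\nabla P_n\cdot\nabla\rho^{-3} = -6\rho^{-5}\,\y\cdot\nabla P_n = -6n\rho^{-5}P_n .
\]

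Summing the three contributions yields $6P_n\rho^{-5} - 6nP_n\rho^{-5} = 6(1-n)P_n/\rho^5$, which is (\ref{app_g:lemma_g1-}).

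There is no real obstacle; the only points requiring care are the following.
\begin{itemize}
\item The radial Laplacian formula must be taken with the correct dimension (here $\tdelta = \partial_{\rho\rho} + (2/\rho)\partial_\rho$ on radial functions).
\item Euler's identity follows from differentiating $P_n(t\y)=t^nP_n(\y)$ in $t$ at $t=1$.
\item The identity is a pointwise statement for $\y\ne 0$ only. This is all that is needed for constructing the particular solution $G_{-p}$ away from the origin, for instance with $P_2=\teta(\txione^2-\txitwo^2)$ of degree $3$, which is harmonic.
\end{itemize}
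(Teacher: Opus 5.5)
Your proposal is correct and follows essentially the same route as the paper: the product rule for $\tdelta(P_n\rho^{-3})$, combined with $\tdelta\rho^{-3}=6\rho^{-5}$, $\nabla\rho^{-3}=-3\y/\rho^{5}$, the harmonicity of $P_n$, and Euler's identity $\y\cdot\nabla P_n=nP_n$. One minor slip: the cubic $\teta(\txione^2-\txitwo^2)$ in your final remark should be labeled $P_3$, not $P_2$.
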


\begin{proof}
We calculate
\begin{equation*}
  \tdelta\left( \frac{P_n}{\rho^3} \right) = \frac{\tdelta P_n}{\rho^3}
  + P_n \tdelta\left( \frac{1}{\rho^3}\right) + 2 \nabla P_n
  {\bf \cdot} \nabla \left( \frac{1}{\rho^3} \right) \,.
\end{equation*}
Since $\nabla \rho^{-3}=-{3\y/\rho^5}$ and
$\tdelta \rho^{-3}=6\rho^{-5}$ we get
\begin{equation}\label{app_g:poly}
  \tdelta\left( \frac{P_n}{\rho^3} \right) = \frac{\tdelta P_n}{\rho^3}
  +\frac{6P_n}{\rho^5} - \frac{6}{\rho^5} \nabla P_n {\bf \cdot} \y \,.
\end{equation}
By using Euler's result $\y {\bf \cdot} \nabla P_n = nP_n$ for
homogeneous polynomials of degree $n$ together with $\tdelta P_n=0$,
we observe that (\ref{app_g:poly}) reduces to (\ref{app_g:lemma_g1-}).
\end{proof}

To find a particular solution for (\ref{app_g:ng1-_prob}) we simply
choose the third-degree homogeneous polynomial
$P_{3}(\y)=\alpha \teta\big(\txione^2-\txitwo^2\big)$, where
$\alpha$ is a parameter. By equating the right-hand sides of
(\ref{app_g:lemma_g1-}) and the PDE in (\ref{app_g:ng1-_prob}), we
identify that $\alpha=-{1/(8\pi)}$. This yields a particular solution
for (\ref{app_g:ng1-_prob}) given by
\begin{equation}\label{app_g:g1-p_sol}
  G_{-p}= - \frac{\teta}{8\pi}  \frac{\left(\txione^2-\txitwo^2\right)}{\rho^3}
  \,.
\end{equation}
To satisfy the boundary condition on $\teta=0$ in
(\ref{app_g:ng1-_prob}) we decompose $G_{-}$ as
\begin{equation}\label{app_g:g1-_decomp}
  G_{-} = G_{-p} + G_{-c} \,,
\end{equation}
where the complementary solution $G_{-c}$ satisfies
\begin{equation}\label{app_g:ng1c-_prob}
  \begin{split}
    \tdelta G_{-c} &= 0 \,, \quad \mbox{in} \,\,\, \R_{+}^{3} \,, \\
    \partial_{\teta} G_{-c} &=-\partial_{\teta}G_{-p} = \frac{1}{8\pi}
    \frac{ \left(\txione^2-\txitwo^2 \right)}{\rho^3}
     = \frac{1}{8\pi s} \frac{ \left(\txione^2 -
        \txitwo^2\right)}{s^2} \,, \quad \mbox{on} \,\,\, \teta=0 \,,
\end{split}
\end{equation}
with $\rho^2=\teta^2+s^2$ and $s^2\equiv \txione^2+\txitwo^2$.

To determine the solution to (\ref{app_g:ng1c-_prob}) that has no
singularity in $\R_{+}^{3}$, it is convenient to use spherical coordinates
$(\rho,\theta,\phi)$, where $\theta$ and $\phi$ are the polar and
azimuthal angles, respectively, so that
\begin{equation}\label{app_g:sphere_coord}
  \teta=\rho \cos\theta \,, \quad s=\rho \sin\theta \,, \quad
   \frac{\txione^2-\txitwo^2}{s^2}= \cos(2\phi) \,.
\end{equation}
Since $\partial_{\theta} G_{-c}\vert_{\theta=\pi/2}=-s \partial_{\teta} G_{-c}
\vert_{\teta=0}$, we write $\tdelta$ in spherical coordinates to obtain that
(\ref{app_g:ng1c-_prob}) becomes
\begin{equation}\label{app_g:ng1c-_sphere}
  \begin{split}
    \tdelta G_{-c} &= 0\,, \quad
    \mbox{in} \,\,\, \rho>0\,, \quad 0\leq \theta\leq {\pi/2} \,, \quad
     0\leq \phi<2\pi \,, \\
     \partial_{\theta} G_{-c} &=-\frac{1}{8\pi} \cos(2\phi) \,, \quad
      \mbox{on} \,\,\, \theta={\pi/2} \,.
   \end{split}
\end{equation}
By seeking a solution to (\ref{app_g:ng1c-_sphere}) with no $\rho$ dependence
of the form
\begin{equation}\label{app_g:comp_1}
  G_{-c}= v(\mu) \cos(2\phi) \,,
\end{equation}
with $\mu=\cos\theta$, we conclude that $v(\mu)$ satisfies the ODE
\begin{equation}\label{app_g:comp_2}
  \left[ (1-\mu^2) v^{\prime}\right]^{\prime} - \frac{4}{1-\mu^2} v=0 \,,
  \quad \mbox{on} \quad 0\leq \mu < 1 \,; \quad v^{\prime}(0)=\frac{1}{8\pi}\,.
\end{equation}
It is readily verified that the unique solution to (\ref{app_g:comp_2})
that has no singularity at $\mu=1$ has the form $v=c{(1-\mu)/(1+\mu)}$, where
$c$ is a constant. By satisfying $v^{\prime}(0)={1/(8\pi)}$ we determine
this constant, and conclude that
\begin{equation}\label{app_g:comp_3}
  v = -\frac{1}{16\pi} \left( \frac{1-\mu}{1+\mu} \right) \,.
\end{equation}
Upon replacing $\mu=\cos\theta={\teta/\rho}$, we obtain from
(\ref{app_g:comp_3}) and (\ref{app_g:comp_1}) that 
\begin{equation}\label{app_g:comp_4}
  G_{-c}=-\frac{1}{16\pi} \left( \frac{\rho-\teta}{\rho+\teta}\right)
  \cos(2\phi) \,.
\end{equation}
Then, upon substituting (\ref{app_g:comp_4}) and (\ref{app_g:g1-p_sol})
into (\ref{app_g:g1-_decomp}), the solution to (\ref{app_g:ng1-_prob}) is
\bsub\label{app_g:g-_final_all}
\begin{equation}\label{app_g:g-_final}
  G_{-} = -\frac{1}{16\pi} \left[ \frac{2\teta s^2}{\rho^3} +
    \left( \frac{\rho-\teta}{\rho+\teta} \right) \right] \cos(2\phi)\,.
\end{equation}
Equivalently, upon using $\cos(2\phi)={(\txione^2-\txitwo^2)/s^2}$ and
$\rho^2-\teta^2=s^2$, we have
\begin{equation}\label{app_g:g-_final2}
  G_{-} = -\frac{1}{8\pi} \frac{\teta \left(\txione^2-\txitwo^2\right)}{\rho^3}
  -\frac{1}{16\pi} \frac{ \left(\txione^2-\txitwo^2\right)}{(\rho+\teta)^2}\,,
  \quad \mbox{where} \quad \rho=|\y|\,.
\end{equation}
\esub
Finally, by substituting (\ref{app_g:g-_final}) and
(\ref{app_g:g1+_sol}) into (\ref{app_g:g1_sol}), we obtain an explicit
representation for the two-term asymptotic expansion
(\ref{app_g:gexpan}). {\clb We emphasize that this truncated two-term
expansion describes the local behavior of the solution to Laplace's
equation with a Dirac singularity on the boundary.}  We summarize our
result as follows:

\begin{prop} 
Let $\HC={(\kappa_1+\kappa_2)/2}$ and $\kapdif=
{(\kappa_1-\kappa_2)/2}$, where $\kappa_1$ and $\kappa_2$ are the
principal curvatures of $\partial\Omega$ at $\x=\x_i$. {\clb Then, in terms
of the $\eps$-local boundary-fitted coordinates (\ref{app_b:innvar})
for the local coordinate transformation (\ref{app_b:coord}), and up to
an arbitrary constant $K$, the local singularity behavior as
$\x\to\x_i$ for the exterior surface Neumann Green's function
satisfying (\ref{berg:green_int}) is}
\bsub\label{app_g:glocal}
\begin{equation}\label{app_g:glocal_1}
    G_e\sim \frac{1}{2\pi\eps \rho} + G_1 + {\mathcal O}(\eps) \,,
\end{equation}
where
\begin{equation}\label{app_g:glocal_2}
    G_{1} =\frac{\HC}{4\pi} \left[ \frac{\teta s^2}{\rho^3} - \log(\teta+\rho)
    \right] +  K + \frac{\kapdif}{8\pi} \left[ \frac{2\teta s^2}{\rho^3}
      + \left( \frac{\rho-\teta}{\rho+\teta} \right) \right]
   \frac{(\txione^2-\txitwo^2)}{s^2} \,.
\end{equation}
\esub 
Equivalently, we can write (\ref{app_g:glocal}) as
\begin{equation}\label{app_g:glocal_3} 
G_e \sim \frac{1}{2\pi\eps \rho} + \frac{\HC}{4\pi} \left(
\frac{\teta(\txione^2+\txitwo^2)}{\rho^3} - \log(\teta+\rho)\right) +
K + \frac{\kapdif}{4\pi}\frac{\teta(\txione^2-\txitwo^2)}{\rho^3} +
\frac{\kapdif}{8\pi} \frac{(\txione^2-\txitwo^2)}{(\rho+\teta)^2} +
{\mathcal O}(\eps) \,.  
\end{equation}
\end{prop}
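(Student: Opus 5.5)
The plan is to assemble the two-term local expansion (\ref{app_g:gexpan}) from the pieces already computed, and then check consistency of signs and constants. First, I would substitute the expansion $G=G_0/\eps+G_1+{\mathcal O}(\eps)$ into the transformed Laplacian (\ref{app_b:local_laplace}) and into the boundary condition (\ref{app_g:bc_loc}). This reproduces the hierarchy (\ref{app_g:g0_prob})--(\ref{app_g:g1_prob}). The leading-order problem is the half-space Neumann problem with a point flux, whose decaying solution is $G_0=(2\pi\rho)^{-1}$ as in (\ref{app_g:g0_sol}). One checks that the flux through a small hemisphere equals one, matching the unit Dirac strength. This yields the first term $1/(2\pi\eps\rho)$ in (\ref{app_g:glocal_1}).

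For $G_1$ I would use the decomposition (\ref{app_g:g1_sol}), which splits the forcing into a mean-curvature part and a curvature-difference part. The mean-curvature part is handled by Lemma \ref{lemma:G1+sol}, together with the explicit integration leading to (\ref{app_g:g1+_sol}). The choice ${\mathcal F}=-(8\pi)^{-1}\log s$ removes the spurious $\log s$ singularity on the axis $s=0$, $\teta>0$. Multiplying by $2\HC$ gives
\[
\frac{\HC}{4\pi}\left[\frac{\teta s^2}{\rho^3}-\log(\teta+\rho)\right]+K .
\]
The curvature-difference part is $G_-=G_{-p}+G_{-c}$:
\begin{itemize}
\item the particular solution $G_{-p}$ comes from Lemma \ref{lemma:homog} with $n=3$;
\item the complementary harmonic piece $G_{-c}$, given by (\ref{app_g:comp_4}), is found through separation in spherical angles, with the Legendre-type ODE (\ref{app_g:comp_2}) for $m=2$ having the regular solution $(1-\mu)/(1+\mu)$.
\end{itemize}
Multiplying (\ref{app_g:g-_final}) by $-2\kapdif$ gives the bracketed $\kapdif/(8\pi)$ term in (\ref{app_g:glocal_2}), after writing $\cos 2\phi=(\txione^2-\txitwo^2)/s^2$.

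The equivalent form (\ref{app_g:glocal_3}) then follows from (\ref{app_g:g-_final2}) by the identity
\[
\frac{\rho-\teta}{\rho+\teta}\cdot\frac{1}{s^2}=\frac{1}{(\rho+\teta)^2},
\]
which uses $\rho^2-\teta^2=s^2$. Finally, I would justify that the decaying, nonsingular solutions chosen are the correct ones. In particular, the only freedom at ${\mathcal O}(1)$ is an additive constant $K$, since the residual solves a homogeneous Neumann half-space problem and grows at most logarithmically. Matching to the outer problem (\ref{berg:green_int}) absorbs $K$ into the global regular part $R_e$, which is why the statement holds only up to an arbitrary constant.

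I expect the main obstacle to be confirming that no further homogeneous solution must be added to $G_-$ or $G_+$. The candidates are harmonic functions of degree zero in $\rho$ that satisfy the Neumann condition away from the origin and are bounded on the upper axis. I would show that the angular ODE for each azimuthal mode $m$ admits only the trivial regular solution, except $m=0$, which gives constants. This fixes the expansion uniquely up to $K$.
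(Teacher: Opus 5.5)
Your proposal is correct and follows the paper's own derivation essentially step for step: the same hierarchy (\ref{app_g:g0_prob})--(\ref{app_g:g1_prob}), the split $G_1=2\HC G_{+}-2\kapdif G_{-}$, Lemma \ref{lemma:G1+sol} with $\mathcal{F}=-(8\pi)^{-1}\log s$, the construction $G_{-}=G_{-p}+G_{-c}$ via Lemma \ref{lemma:homog} and the $m=2$ angular ODE, and the identity $\rho^2-\teta^2=s^2$ to pass to (\ref{app_g:glocal_3}). The only addition is your closing uniqueness check on degree-zero Neumann harmonics: for $m\geq 1$ the solution regular at $\mu=1$ is a multiple of $((1-\mu)/(1+\mu))^{m/2}$, whose derivative at $\mu=0$ is $-m\neq 0$, so only constants survive. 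This makes explicit what the paper enforces only implicitly, through its ``no singularity'' selection and its $\rho$-independent ansatz for $G_{-c}$.
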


Finally, we identify the constant $e(\x;\x_i)$ in
(\ref{berg:green_int_sing}) and the constant $K$ in
(\ref{app_g:glocal_3}) by relating (\ref{app_g:glocal_3}) to
(\ref{berg:green_int_sing}). To do so, we substitute the two-term
expansion for $|\x-\x_i|$, as given in (\ref{app_g:loc}), into
(\ref{berg:green_int_sing}), while recalling $d=\teta \eps$.  Upon
comparing the resulting expression with (\ref{app_g:glocal_3}), we
obtain that \bsub \label{app_g:glocal_all}
\begin{equation} \label{app_g:glocal_ex}
   G_e  \sim \frac{1}{2\pi\eps \rho} + \frac{\HC}{4\pi} \left(
        \frac{\teta(\txione^2+\txitwo^2)}{\rho^3} - \log(\teta+\rho)\right)
      + R_e - \frac{\HC}{4\pi}\log\eps  +
      \frac{\kapdif}{4\pi} \frac{\teta(\txione^2-\txitwo^2)}{\rho^3} +
      e +  {\mathcal O}(\eps) \,,
\end{equation}
which determines the constant $K$ as $K=R_e-\tfrac{\HC}{4\pi}\log\eps$, and
identifies $e(\x;\x_i)$ as
\begin{equation}\label{app_g:e_1}
  e(\x;\x_i) = \frac{\kapdif}{8\pi} \frac{(\txione^2  -\txitwo^2)}
  {(\rho+\teta)^2}\,.
\end{equation}
\esub
We emphasize that the remaining terms in (\ref{app_g:glocal_ex}) match
identically, and these terms simply represent the effect of our chosen
local parameterization (\ref{app_b:coord_1}) of the surface.

We now write $e$ in (\ref{app_g:e_1}) in terms of the Cartesian
coordinate system.  In (\ref{app_g:e_1}), we replace $\rho\sim
{|\x-\x_i|/\eps}$, $\teta={d/\eps}$, $\txione={t_1/\eps}$, and
$\txitwo={t_2/\eps}$, and use (\ref{app_b:coord_1}) to solve for $t_1$
and $t_2$. In this way, we calculate that
\begin{equation}\label{app_g:e_final}
   e(\x;\x_i) = \frac{\kapdif}{8\pi( |\x-\x_i|+ d )^2}
    \left( \frac{p_1^2}{(1-\kappa_1 d)^2} - \frac{p_2^2}{(1-\kappa_2 d)^2}
        \right) + o(1) \,,
\end{equation}
where $d=\mbox{dist}(\x,\partial\Omega)$,
$\kapdif={(\kappa_1-\kappa_2)/2}$, and $p_1$ and $p_2$ are the
projections of ${\orthomat}(\x-\x_i)$ onto the two principal
directions of $\partial\Omega$ at $\x=\x_i$, defined in terms of the
components of ${\orthomat}(\x-\x_i)$ by $p_j=\left(
{\orthomat}(\x-\x_i)\right)_{j}$ for $j=1,2$.

{\clb We remark that the singularity structure of the interior surface
Neumann Green's function, satisfying (\ref{mfpt:green_int}), has
exactly the same two-term asymptotic behavior as that given in
(\ref{app_g:glocal_3}) for the exterior problem. This is because
the ${1/|\Omega|}$ term in (\ref{mfpt:green_int}) would only appear at
at one higher-order.} However, care is needed to determine the correct signs
of $\kappa_1$ and $\kappa_2$ for the interior problem as they will be
of opposite signs than that for the exterior problem.

\subsection{Explicit results for the sphere}\label{app:explicit}
  
When $\Omega$ is the unit sphere, the surface Neumann Green's $G_e$
satisfying (\ref{berg:green_int}) for the {\em exterior problem} is
(see \cite{SurfaceGreen3D,LLM2020,LWB2017})
\bsub \label{berg:gs_exact}
\begin{equation}\label{berg:gs_exact_1}
G_e(\x;\x_i) = \frac{1}{2 \pi \left|\x-\x_i\right|} -
\frac{1}{4\pi } \log\left( 1 + \frac{2}{|\x-\x_i|+|\x|-1} \right)\,,
\end{equation}
which has the following local behavior as $\x\to\x_i$:
\begin{equation}\label{berg:gs_loc1}
    G_e(\x;\x_i)\sim \frac{1}{2\pi|\x-\x_i|} +\frac{1}{4\pi}
    \log\bigl(|\x-\x_i|+|\x|-1\bigr) + R_{e} + o(1) \,; \quad
 R_{e}\equiv -\frac{\log{2}}{4\pi} \,.
\end{equation}
\esub 
For this case, $\HC_i=-1$ and $\kapdif_i=0$ for any $\x_i\in
\partial\Omega$.

%In addition, when $\Omega$ is the unit sphere, 
The surface Neumann Green's function satisfying (\ref{mfpt:green_int})
for the {\em interior problem} is (see Appendix A of
\cite{ChevWard2010})
\bsub \label{mfpt:gs_exact}
\begin{equation}
G_s(\x;\x_i) = \frac{1}{2 \pi \left|\x-\x_i\right|} +
\frac{|\x|^2 + 1}{8\pi} + \frac{1}{4
\pi } \log\left(\frac{2}{1 - \x {\bf \cdot} \x_i +
  \left|\x-\x_i\right|}\right) - \frac{7}{10 \pi} \,, \label{mfpt:gs_exact_1}
\end{equation}
which has the following local behavior as $\x\to\x_i$:
\begin{equation}\label{mfpt:gs_loc1}
  G_s(\x;\x_i)\sim \frac{1}{2\pi|\x-\x_i|} -\frac{1}{4\pi}
  \log\bigl(|\x-\x_i|+1 -|\x|\bigr) + R_{s} + o(1) \,; \quad
  R_{s}\equiv \frac{\log{2}}{4\pi} -
  \frac{9}{20\pi}\,.
\end{equation}
\esub
For the unit sphere, $\HC_i=1$ and $\kapdif_i=0$ for any
$\x_i\in \partial\Omega$.

\section{Monopole Coefficient $E_{i+}$}\label{app:mono-plus}

{\clb Although the properties of $E_{i+}$ were already reported in
\cite{GrebenkovWard2026,GrebenkovWard2026big}, for completeness we
outline the analysis of \cite{GrebenkovWard2026} (see also
\cite{GrebenkovWard2026big}) for the inner problem (\ref{berg:Phi+}).}
This analysis identifies the monopole coefficient in the refined
far-field behavior (\ref{berg:Phi+_ff}), whose properties were
summarized in Lemma \ref{lemma:monop}. For convenience we omit the
subscript $i$ below for the $i$-th patch.

We begin by decomposing the solution $\Phi_{+}$ to (\ref{berg:Phi+})  as
\begin{equation}\label{app+:decomp}
  \Phi_{+}= \Phi_{p+} + \Phi_{c+} \,,
\end{equation}
where $\Phi_{p+}$ accounts for the inhomogeneous term in the PDE
(\ref{berg:Phi+_1}) and satisfies the Neumann condition
$\partial_{\teta}\Phi_{p+}=0$ on the plane $\teta=0$. The complementary
part $\Phi_{c+}$ will satisfy the homogeneous PDE, but with an
inhomogeneous Robin condition on the patch $\PT$. As similar
to the derivation in Lemma \ref{lemma:G1+sol} for the analogous
component of the surface Neumann Green's function, $\Phi_{p+}$
can be determined analytically in terms of $w$ satisfying
(\ref{berg:wc}). 

\begin{lemma}(Lemma C.1 of \cite{GrebenkovWard2026}) \label{app+:Phip+}
The solution to (\ref{berg:Phi+}) satisfying
$\partial_{\teta}\Phi_{p+}=0$ on $\teta=0$ is
\begin{equation}\label{app+:Phip+_sol}
  \Phi_{p+}=-\frac{\teta^2}{2}w_{\teta} - \frac{\teta}{2}w + \frac{1}{2}
  \int_{0}^{\teta} w(\txione,\txitwo,t)\, dt +
  {\mathcal F}_{+}(\txione,\txitwo;b)\,,
\end{equation}
where ${\mathcal F}_{+}(\txione,\txitwo;b)$, with
$\Delta_{\xi}{\mathcal F}_{+} \equiv {\mathcal F}_{+,\txione\txione}+
{\mathcal F}_{+,\txitwo\txitwo}$, is the unique solution to
\bsub \label{app+:F+}
\begin{gather}
  \Delta_{\xi}{\mathcal F}_{+} = q(\txione,\txitwo;b) I_{\PT} \,; \,\,\,
  q \equiv - \left(\frac{1}{2} w_{\teta}\vert_{\teta=0}\right)\,,
  \,\,\, I_{\PT} \equiv \left\{\begin{array}{ll}
        1 \,, & (\txione,\txitwo) \in \PT \\
         0 \,, & (\txione,\txitwo) \notin \PT\,,  \end{array}\right.
                             \label{app+:F+1}\\
                            {\mathcal F}_{+} \sim \frac{C}{2}\log{s} +
                            o(1)\,,  \quad \mbox{as} \quad
                            s\equiv (\txione^2+\txitwo^2)^{1/2}\to \infty
                            \,. \label{app+:F+2}
\end{gather}                 
\esub The $o(1)$ condition in the far-field (\ref{app+:F+2})
specifies ${\mathcal F}_{+}$ uniquely. From the identity
(\ref{berg:wc_charge}) relating $C$ and $q$, an application of the
divergence theorem shows that the logarithmic growth specified in
(\ref{app+:F+2}) is consistent with the PDE
(\ref{app+:F+1}). Finally, the far-field behavior for
(\ref{app+:Phip+_sol}) is
\begin{equation}\label{app+:Phi+ff}
  \Phi_{p+}\sim \frac{C}{2} \left( \log(\teta+\rho) -
    \frac{\teta(\txione^2+\txitwo^2)}{\rho^3}\right) +
  o\left({1/\rho}\right)\,, \quad \mbox{as} \quad
  \rho=(\txione^2+\txitwo^2+\teta^2)^{1/2} \to\infty\,. 
\end{equation}
\end{lemma}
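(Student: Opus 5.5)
The plan is to mirror the proof of Lemma \ref{lemma:G1+sol}, with $w$ in place of $G_0$ and with the opposite sign convention. Since $V_{0}=U_0(1-w)$, the forcing in (\ref{berg:Phi+_1}) is $-2(\teta w_{\teta\teta}+w_{\teta})=-2[\teta w_{\teta}]_{\teta}$. This is $-2$ times the forcing in (\ref{app_g:g1+_prob}), with $G_0$ replaced by $w$. So the candidate (\ref{app+:Phip+_sol}) is obtained from (\ref{app_g:g1+decomp}) by multiplying by $-2$.

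The key steps are as follows.

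\textbf{Step 1.} Differentiate the candidate in $\teta$. This gives $\Phi_{p+,\teta}=-\tfrac32\teta w_{\teta}-\tfrac{\teta^2}{2}w_{\teta\teta}$ (the $\pm\tfrac12 w$ terms cancel). Hence $\partial_{\teta}\Phi_{p+}=0$ at $\teta=0$ automatically, whether or not the point lies on $\PT$.

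\textbf{Step 2.} Compute $\Delta_{\xi}\Phi_{p+}$ using $\Delta_\xi w=-w_{\teta\teta}$. Because the integral is the definite one from $0$, the term $\tfrac12\int_0^{\teta}\Delta_\xi w\,dt$ becomes $-\tfrac12\bigl(w_{\teta}-w_{\teta}\vert_{\teta=0}\bigr)=-\tfrac12 w_{\teta}-q\,I_{\PT}$. The boundary trace $w_{\teta}\vert_{\teta=0}=-2q I_{\PT}$ is nonzero only on the patch, by (\ref{berg:wc_3}). Adding $\Phi_{p+,\teta\teta}$ reproduces $-2[\teta w_\teta]_\teta$, plus the leftover $\Delta_\xi\mathcal F_+-qI_{\PT}$. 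This leftover vanishes exactly under (\ref{app+:F+1}). This is the one point where the argument differs from Lemma \ref{lemma:G1+sol}: there the analogous boundary trace was a delta at the origin, whereas here it is the charge density spread over $\PT$.

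\textbf{Step 3.} Establish existence and uniqueness of $\mathcal F_+$. Take $\mathcal F_+=\frac{1}{2\pi}\int_{\PT}q(\y')\log|\y-\y'|\,d\y'$. This satisfies (\ref{app+:F+1}) and, by (\ref{berg:wc_charge}), behaves like $\frac{1}{2\pi}(\pi C)\log s=\tfrac C2\log s+o(1)$. The divergence theorem on a large disk gives the consistency: the flux of $\nabla\mathcal F_+$ equals $\int_{\PT} q=\pi C$, which matches $2\pi s\cdot\tfrac{C}{2s}$. For uniqueness, the difference of two solutions is harmonic in $\R^2$ and $o(1)$ at infinity, so it vanishes by Liouville's theorem.

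\textbf{Step 4.} Obtain the far field. Insert $w\sim C/\rho$ together with its dipole term. The explicit terms give $-\tfrac{\teta^2}{2}w_\teta-\tfrac\teta2 w\sim \tfrac{C\teta^3}{2\rho^3}-\tfrac{C\teta}{2\rho}=-\tfrac{C\teta s^2}{2\rho^3}$. For the integral term, split $\int_0^{\teta}w\,dt$ using $w=C/\sqrt{s^2+t^2}+\tilde w$ with $\tilde w=O(\rho^{-2})$. The monopole part integrates exactly to $\tfrac C2[\log(\teta+\rho)-\log s]$. Its $-\tfrac C2\log s$ cancels the leading term of $\mathcal F_+$, leaving $\tfrac C2\bigl(\log(\teta+\rho)-\teta s^2/\rho^3\bigr)$, which is (\ref{app+:Phi+ff}).

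The main obstacle I expect is the error control in Step 4, namely showing that the remainders are $o(1/\rho)$ uniformly in direction. Two points need care. First, the dipole contribution $\tfrac12\int_0^\teta \DT\cdot\y\,|\y|^{-3}dt$ must be handled, including near the plane $\teta\to0$ with $s$ large. I would use that $\DT$ has no $\teta$-component, so this term equals $\DT\cdot(\xi,0)$ times a bounded integral. It is $O(1/\rho)$ and is odd in $\xi$, so it may need to be absorbed as a dipole-type term rather than treated as $o$. Second, $\mathcal F_+-\tfrac C2\log s$ is only $O(1/s)$ off the patch, and this term must be merged with that dipole term. I would verify that these dipole-type $1/\rho$ pieces cancel or are dipolar, so that they do not contribute to the monopole that is later identified as $E_+$.
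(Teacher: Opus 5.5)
Your proposal is correct and follows the paper's proof essentially line by line. Steps 1, 2 and 4 are the paper's computation: the lower limit of $\tfrac12\int_0^{\teta}\Delta_\xi w\,dt$ leaves $\tfrac12 w_{\teta}\vert_{\teta=0}=-qI_{\PT}$, which $\Delta_\xi\mathcal F_+$ absorbs, and the $-\tfrac C2\log s$ from $\tfrac12\int_0^{\teta}w\,dt$ is cancelled by $\mathcal F_+$. Your Step 3 supplies the logarithmic-potential and Liouville argument that the paper only asserts.

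Your Step 4 concern is well founded and goes beyond the paper, whose proof inserts only $w\sim C/\rho$. Write $\DT=(p_1,p_2,0)^T$. The dipole contributions from the explicit terms, from $\tfrac12\int_0^{\teta}w\,dt$ and from $\mathcal F_+$ do not cancel. They sum exactly to $-\tfrac12(p_1\partial_{\txione}+p_2\partial_{\txitwo})\bigl[\log(\teta+\rho)-\teta(\txione^2+\txitwo^2)/\rho^3\bigr]$, which is homogeneous of degree $-1$ with zero azimuthal mean. So the $o(1/\rho)$ remainder in the stated far field is literally true only when $\DT=0$, e.g.\ for centrally symmetric patches such as the disk or the ellipse. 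This extra term carries no monopole and no net flux, so $E_{+}$ and the downstream solvability conditions are unaffected.
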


\begin{proof}
We calculate that
\begin{equation}\label{app+:phi2p_1}
  \Phi_{p+,\teta}=-\frac{\teta^2}{2}w_{\teta\teta}-\frac{3}{2}\teta w_{\teta} \,,
  \quad
  \Phi_{p+,\teta\teta} = - \frac{\teta^2}{2} w_{\teta\teta\teta} - \frac{5}{2}
  \teta w_{\teta \teta} - \frac{3}{2} w_{\teta} \,,
\end{equation}
with the first equation yielding $\Phi_{p+,\teta}=0$ on $\teta=0$. Next
by calculating
$\Delta_{\xi}\Phi_{p+}\equiv
\Phi_{p+,\txione\txione}+\Phi_{p+,\txitwo\txitwo}$, and by using
$w_{\teta\teta}=-\Delta_{\xi} w$ from (\ref{berg:wc_1}), we derive
\begin{equation}\label{app+:phi2p_s}
  \begin{split}
  \Delta_{\xi}\Phi_{p+} &= -\frac{\teta^2}{2}\partial_{\teta}\left[
    \Delta_{\xi} w\right] - \frac{\teta}{2}\Delta_{\xi}
  w + \frac{1}{2}\int_{0}^{\teta} \Delta_{\xi} w\, dt +
  \Delta_{\xi} {\mathcal F}_{+}  \\
      &= \frac{\teta^2}{2} w_{\teta\teta\teta} +
  \frac{\teta}{2} w_{\teta\teta} - \frac{1}{2} w_{\teta} + \frac{1}{2} w_{\teta}
  \vert_{\teta=0} + \Delta_{\xi} {\mathcal F}_{+}\,.
  \end{split}
\end{equation}
By adding (\ref{app+:phi2p_1}) and (\ref{app+:phi2p_s}), and by
choosing ${\mathcal F}_{+}$ to satisfy the PDE in (\ref{app+:F+1}), we
get 
\begin{equation}\label{app_h:phi2_add}
  \tdelta \Phi_{p+} \equiv \Phi_{p+,\teta\teta} + \Delta_{\xi}
  \Phi_{p+} =-2\left(\teta w_{\teta\teta} + w_{\teta}\right) \,.
\end{equation}
Therefore, $\Phi_{p+}$ satisfies the inhomogeneous PDE
(\ref{berg:Phi+_1}) subject to $\partial_{\teta}\Phi_{p+}=0$ on
$\teta=0$.

Next, to establish (\ref{app+:Phi+ff}), we use $w\sim C\rho^{-1}$ as
$\rho\to \infty$ with $\rho=(\teta^2+s^2)^{1/2}$ and
$s\equiv (\txione^2+\txitwo^2)^{1/2}$. We readily calculate for $\rho\to\infty$
that
\bsub \label{app+:ffes}
\begin{gather}
  -\frac{1}{2} \teta^2 w_{\teta} - \frac{1}{2} \teta w \sim 
 -\frac{C}{2} \frac{\teta s^2}{\rho^3} \,,
   \label{app+:ffes_1} \\
  \frac{1}{2} \int_{0}^{\teta} w(\txione,\txitwo,t)\, dt \sim 
  \frac{C}{2} \int_{0}^{\teta} \frac{1}{\left(s^2 + t^2\right)^{1/2}}
  \, dt \sim \frac{C}{2} \left[
  \log\left( \teta + \rho \right) - \log{s} \right] 
  \,. \label{app+:ffes_2}
\end{gather}
\esub
We conclude that $\Phi_{p+}$ in (\ref{app+:Phip+_sol}) has the
divergent far-field behavior
\begin{equation}
  \Phi_{p+} \sim  \frac{C}{2}\left( \log\left( \teta + \rho \right)
 - \frac{\teta}{\rho^3} \left(\txione^2+\txitwo^2\right) \right)  
  - \frac{C}{2} \log{s} + {\mathcal F}_{+} \,. \label{app+:phi2p_ff}
\end{equation}
From (\ref{app+:F+2}) we impose
${\mathcal F}_{+}\sim \left({C/2}\right)\log{s} + o(1)$ as
$s\to \infty$ to obtain (\ref{app+:Phi+ff}).

\end{proof}

This lemma shows that the particular solution $\Phi_{p+}$ in
(\ref{app+:Phip+_sol}) accounts for both the inhomogeneous term in the PDE
(\ref{berg:Phi+_1}) as well as the two divergent terms
in the required far-field behavior (\ref{berg:Phi+_4}). There is
no monopole term associated with the far-field behavior of $\Phi_{p+}$.

To determine the complementary solution $\Phi_{c+}$, we substitute
(\ref{app+:decomp}) into (\ref{berg:Phi+}) to conclude that $\Phi_{c+}$
satisfies
\bsub\label{app+c:inn2}
\begin{align}
\tdelta \Phi_{c+} &= 0 \,, \quad \y \in \R_{+}^{3} \,,\label{app+c:inn2_1}\\
  -\partial_{\teta} \Phi_{c+} + b \Phi_{c+} &= -b {\mathcal F}_{+}
                                                 \,, \quad \teta=0 \,,\,
    (\txione,\txitwo)\in \PT\,,  \label{app+c:inn2_2}\\
  \partial_{\teta} \Phi_{c+} &=0 \,, \quad \teta=0 \,,\,
                               (\txione,\txitwo)\notin \PT
    \,, \label{app+c:inn2_3}\\
    \Phi_{c+} & \sim \frac{E_{+}}{\rho}\,,  \quad
    \mbox{as} \quad \rho\to \infty \,, \label{app+c:inn2_4}
\end{align}
\esub 
where ${\mathcal F}_{+}$ satisfies (\ref{app+:F+}).  To determine
$E_{+}$ we apply Green's second identity to $\Phi_{c+}$ and
$\hat{w}=-1+w$, where $w$ satisfies (\ref{berg:wc}), over a hemisphere
$\Omega_{\sigma}$ of radius $\sigma \gg 1$ in the upper half-space
$\R_{+}^{3}$. By letting $\sigma\to\infty$ we get
\begin{equation}\label{app+c:green}
  \begin{split}
    0 = \int_{\R_{+}^{3}} \left(\Phi_{c+}\tdelta \hat{w} - \hat{w}\tdelta
      \Phi_{c+}\right) d\y &=
   2\pi \lim_{\sigma\to \infty} \sigma^2 \left(
    \Phi_{c+} \frac{\partial \hat{w}}{\partial |\y|} -
    \hat{w} \frac{\partial \Phi_{c+}}{\partial |\y|} \right)\Big{\vert}_{
    |\y|=\sigma} \\
  & \qquad +   \int_{\PT} \left(b \hat{w} {\mathcal F}_{+}\right)
  \vert_{\teta=0} \, d\txione\txitwo  \,.
  \end{split}
\end{equation}
Then, by using $\hat{w}\sim -1+{C/|\y|}$ and $\Phi_{c+}\sim
{E_{+}/|\y|}$ as $|\y|\to \infty$, together with
$b\hat{w}=\partial_{\teta}\hat{w}=\partial_{\teta}w$ on $\PT$ when
$\teta=0$, we obtain from (\ref{app+c:green}) that
\begin{equation}\label{app+c:e}
  E_{+} = -\frac{1}{\pi} \int_{\PT} q(\txione,\txitwo;b) \,
  {\mathcal F}_{+}(\txione,\txitwo;b) \, d\txione\txitwo \,.
\end{equation}
Finally, upon representing the solution ${\mathcal F}_{+}$ to
(\ref{app+:F+}) in terms of the 2-D free-space Green's function, we
find that (\ref{app+c:e}) yields (\ref{berg:Ei_general0}) of Lemma
\ref{lemma:monop}.  For $b\ll 1$, we have $q\sim {b/2}$ for an
arbitrary patch shape $\PT$ and so (\ref{berg:Ei_general0}) reduces to
(\ref{berg:Ei_asympt0}).

When $\PT$ is a disk of radius $a$, we observe that ${\mathcal
F}_{+}={\mathcal F}_{+}(s;b)$ satisfies the radially symmetric ODE
$\left(s{\mathcal F}_{+,s}\right)_{s}= q(s;b)\, s$.  Upon integrating
this ODE we determine $E_{+}$ in (\ref{app+c:e}) as the quadrature
\begin{equation}\label{app+c:e_rad}
  E_{+} = -2 \int_{0}^{a}  q(s;b) {\mathcal F}_{+}(s;b)\, s \, 
ds\,; \qquad
  {\mathcal F}_{+,s} = \frac{1}{s} \int_{0}^{s} 
  q(t;b) \, t \, dt \,, \quad 0\leq s \leq a\,,
\end{equation}
with ${\mathcal F}_{+}=\left({C/2}\right)\log{a}$ at $s=a$.  Upon
integrating (\ref{app+c:e_rad}) by parts, we readily obtain
(\ref{berg:Ej_all}) of Lemma \ref{lemma:monop}.

Finally, we obtain the asymptotic behavior for $E_{+}$ in
(\ref{berg:Ej_asy}).  For $b\to \infty$, we use $C(\infty)\sim
{2a/\pi}$ and $q(t;\infty)= \pi^{-1}\left(a^2-t^2\right)^{-1/2}$ from
Lemma \ref{lemma:Cj_kappa} directly in (\ref{berg:Ej_all}).  By
performing the integration, we obtain (\ref{berg:Ej_asy_large}).  For
$b\to 0$, we use $C\sim {b a^2/2}$ from (\ref{berg:Cj_small}) and
$q\sim {b/2}$ in (\ref{berg:Ej_all}).  By integrating the resulting
expression we get (\ref{berg:Ej_asy_small}).

\section{Monopole coefficient $E_{i-}$}\label{app:mono-minus}

In this appendix, we analyze the new problem (\ref{berg:Phi-}) to
identify the monopole coefficient in the refined far-field behavior
(\ref{berg:Phi-_ff}), whose properties were summarized in Lemma
\ref{lemma:monom}.  For clarity we again omit the subscript $i$ below
for the $i$-th patch.

For (\ref{berg:Phi-}), we decompose $\Phi_{-}$ as
\begin{equation}\label{app-:decomp}
  \Phi_{-}= \Phi_{p-} + \Phi_{c-} \,,
\end{equation}
where $\Phi_{p-}$ is taken to satisfy
\bsub \label{app-:Phi-p}
\begin{align}
  \tdelta \Phi_{p-} &= {\mathcal N}_{-}(\y) \equiv
    2 \teta \left( w_{\txione\txione} -w_{\txitwo\txitwo} \right) \,, \quad
    \y \in \R_{+}^{3} \,, \label{app-:Phi-p0} \\
    \partial_{\teta} \Phi_{p-} &=0 \,, \quad \mbox{on} \quad \teta=0 \,.
\end{align}
\esub 
We now verify indirectly that $\Phi_{p-}$ satisfies the intricate
far-field behavior given in (\ref{berg:Phi-_4}).  To do so, we use
$w\sim {C/\rho}$ as $\rho\to\infty$ from the far-field behavior in
(\ref{berg:wc_4}) to estimate that
\begin{equation}\label{app-:R-asy}
  {\mathcal N}_{-} \sim 4\pi C {\mathcal N}_{-0} \,,  \quad
  \mbox{where} \quad {\mathcal N}_{-0} \equiv \frac{3\teta}{2\pi \rho^5}
  \left(\txione^2-\txitwo^2\right) \,,
  \quad \mbox{as} \quad \rho=|\y|\to \infty  \,.
\end{equation}
We substitute (\ref{app-:R-asy}) into the right-hand side of
(\ref{app-:Phi-p0}) and compare the resulting limiting problem with
the corresponding problem (\ref{app_g:ng1-_prob}) for the term $G_{-}$
in our analysis of the surface Neumann Green's function. In this way,
we conclude that
\begin{equation}\label{app-:Phip-_asy}
  \Phi_{p-} \sim 4\pi C G_{-} \,, \quad \mbox{as} \quad \rho=|\y|\to \infty
  \,.
\end{equation}
By substituting the exact solution in (\ref{app_g:g-_final2}) for
$G_{-}$ into (\ref{app-:Phip-_asy}) we conclude that the limiting
behavior of $\Phi_{p-}$ as $\rho=|\y|\to \infty$ agrees precisely with
that required in (\ref{berg:Phi-_4}).  As a result, $\Phi_{p-}$ has no
monopole behavior as $|\y|\to \infty$.

Next, we use the method of images to represent the solution
$\Phi_{p-}$ of (\ref{app-:Phi-p}) in terms of the Green's function
$g(\y^{\prime};\y)$, with
$\y^{\prime}=\big(\txione^{\prime},\txitwo^{\prime},\teta^{\prime}\big)^T$,
satisfying
\begin{equation}\label{app-:gimage}
  \Delta_{\y^{\prime}} g = \delta(\y^{\prime}-\y) \,, \quad
  \y^{\prime}\in \R_{+}^{3} \,; \qquad
  \partial_{\teta^{\prime}} g =0 \,, \quad \mbox{on} \quad \teta^{\prime} =0 \,,
\end{equation}
which has the solution
\begin{equation}\label{app-:gimage_sol}
  g(\y^{\prime};\y) = -\frac{1}{4\pi |\y^{\prime}-\y|} -
  \frac{1}{4\pi |\y^{\prime}-\bar{\y}|}  \,, \quad
  \mbox{with} \quad {\bar \y} \equiv (\txione,\txitwo,-\teta)^T .
\end{equation}
In deriving an integral representation for $\Phi_{p-}$ from Green's
second identity we must take into account that the far-field behavior
for $\Phi_{p-}$ is non-vanishing as
$\rho^{\prime}=|\y^{\prime}|\to\infty$.  To do so, we write the
far-field behavior (\ref{berg:Phi-_4}) with no additional monopole in
terms of spherical coordinates
$(\rho^{\prime},\theta^{\prime},\phi^{\prime})$, with $0\leq
\theta^{\prime}\leq {\pi/2}$ and $0\leq \phi^{\prime}<2\pi$.  We
readily conclude that $\Phi_{p-} \sim \Phi_{p\infty} +
o\left({1/\rho^{\prime}}\right)$ as $\rho^{\prime}\to \infty$, where
\begin{equation}\label{app-:Phip-_ff}
  \Phi_{p\infty}(\theta^{\prime},\phi^{\prime}) \equiv
  -\frac{C}{4} \left( 2\cos\theta^{\prime} +
    \frac{1}{\left(1+\cos\theta^{\prime}\right)^2}
  \right) \sin^{2}(\theta^{\prime}) \, \cos(2\phi^{\prime})\,.
\end{equation}
By applying Green's second identity to $g$ and $\Phi_{p-}$ over a
large hemisphere $\Omega_{\sigma}$ in the upper half-space, and using
$g\sim -{1/\left(2\pi \rho^{\prime}\right)}$ for $\rho^{\prime}\to \infty$
together with (\ref{app-:Phip-_ff}), we obtain for $\sigma\to \infty$
that
\begin{equation}\label{app-:Phip-_green}
 \Phi_{p-}(\y) = \int_{\R_{+}^{3}} g(\y^{\prime};\y) {\mathcal N}_{-}(\y^{\prime}) \,
    d\y^{\prime} + \frac{1}{2\pi} 
    \int_{0}^{\pi/2} \sin(\theta^{\prime})
    \left(\int_{0}^{2\pi} \Phi_{p\infty}(\theta^{\prime},\phi^{\prime})
     \, d\phi^{\prime}\right) \, d\theta^{\prime} \,.
\end{equation}
By using (\ref{app-:Phip-_ff}), we calculate
$\int_{0}^{2\pi} \Phi_{p\infty} \, d\phi^{\prime}=0$, so that
(\ref{app-:Phip-_green}) reduces to
\begin{equation}\label{app-:Phip-_sol}
  \Phi_{p-}(\y) = -\frac{1}{4\pi} \int_{\R_{+}^{3}}
  {\mathcal N}_{-}(\y^{\prime}) \left(
    \frac{1}{|\y^{\prime}-\y|} + \frac{1}{|\y^{\prime}-{\bar \y}|} \right)
  \, d\y^{\prime} \,.
\end{equation}

To determine the complementary solution we substitute (\ref{app-:decomp})
into (\ref{berg:Phi-}) to obtain
\bsub\label{app-c:inn2}
\begin{align}
\tdelta \Phi_{c-} &= 0 \,, \quad \y \in \R_{+}^{3} \,,\label{app-c:inn2_1}\\
  -\partial_{\teta} \Phi_{c-} + b \Phi_{c-} &= -b {\mathcal F}_{-}
                                                 \,, \quad \teta=0 \,,\,
    (\txione,\txitwo)\in \PT\,,  \label{app-c:inn2_2}\\
  \partial_{\teta} \Phi_{c-} &=0 \,, \quad \teta=0 \,,\,
                               (\txione,\txitwo)\notin \PT
    \,, \label{app-c:inn2_3}\\
    \Phi_{c-} & \sim \frac{E_{-}}{\rho}\,,  \quad
    \mbox{as} \quad \rho\to \infty \,, \label{app-c:inn2_4}
\end{align}
\esub where ${\mathcal F}_{-}={\mathcal F}_{-}(\txione,\txitwo)$ is given
by
\begin{equation}\label{app-c:f-val}
  {\mathcal F}_{-}=\Phi_{p-} \vert_{\teta=0}=-\frac{1}{2\pi}
  \int_{\R_{+}^{3}} \frac{ {\mathcal N}_{-}(\y^{\prime})} {|\y^{\prime}-\y|}
  \, d\y^{\prime} \,, \quad \mbox{where} \quad \y=(\txione,\txitwo,0)^T \,.
\end{equation}

To determine our first expression for $E_{-}$ we apply Green's identity to
$\Phi_{c-}$ and $\hat{w}=-1+w$, in a similar way as was done to find
$E_{+}$ in (\ref{app+c:green}) and (\ref{app+c:e}). This yields that
\begin{equation}  \label{berg:Ei_-1}
  E_{-}(b) =  -\frac{1}{\pi} \int_{\PT} q(\txione,\txitwo)\,
  {\mathcal F}_{-}(\txione,\txitwo) \, d\txione d\txitwo \,, \qquad \mbox{with}
  \quad
   q =-\frac{1}{2} w_{\teta}\vert_{\teta=0} \,.
\end{equation}
Upon using (\ref{app-c:f-val}) for ${\mathcal F}_{-}(\txione,\txitwo)$,
with ${\mathcal N}_{-}$ as  defined in (\ref{app-:Phi-p0}), (\ref{berg:Ei_-1})
reduces to
\begin{equation}\label{berg:Ei_-2}
  E_{-}(b) =  \frac{1}{\pi^2} \int_{\PT} q(\txione,\txitwo)
  {\mathcal I}(\txione,\txitwo)
  \, d\txione d\txitwo \,, \quad \mbox{with} \quad
  {\mathcal I} \equiv \int_{\R_{+}^{3}}  \frac{\teta^{\prime}\left(
      w_{\txione^{\prime}\txione^{\prime}} - w_{\txitwo^{\prime}\txitwo^{\prime}}\right)}
  { |\y^{\prime}-\y|} \, d\y^{\prime} \,,
\end{equation}
where $\y^{\prime}=(\txione^{\prime},\txitwo^{\prime},\teta^{\prime})^{T}$ and
$\y=(\txione,\txitwo,0)^{T}$.

Next, we derive an alternative expression to (\ref{berg:Ei_-2}) that,
more conveniently, determines the integral ${\mathcal I}$ not from a
3-D integration, but from an integration only over the
patch $\PT$.  To do so, we introduce an auxiliary function
$\Psi(\y^{\prime})$ satisfying
\begin{equation}\label{app-c:aux_prob}
  \begin{split}
  \Delta_{\y^\prime} \Psi &= \frac{3\teta^{\prime}}{(r^{\prime})^{5}} \left[
    \left(\txione^{\prime}-\txione\right)^2 -
    \left(\txitwo^{\prime}-\txitwo\right)^2 \right] \,,
  \quad \y^{\prime}\in \R_{+}^{3}\,, \\
  \Psi_{\teta^{\prime}} &=0 \,,  \quad  \mbox{on} \quad \teta^{\prime}=0 \,,
\end{split}
\end{equation}
where we have defined $r^{\prime}\equiv |\y^{\prime}-\y|$. In our analysis below
we will use the equivalent representation that
\begin{equation}\label{app-c:aux_force}
  \Delta_{\y^\prime} \Psi = \teta^{\prime} \left(
    \partial_{\txione^{\prime}\txione^{\prime}} \left(\frac{1}{r^{\prime}} \right)
    -\partial_{\txitwo^{\prime}\txitwo^{\prime}} \left(\frac{1}{r^{\prime}} \right)
  \right) \,,   \quad \y^{\prime}\in \R_{+}^{3}\,.
\end{equation}
The explicit solution to (\ref{app-c:aux_prob}) is readily obtained
from a translation and a scaling of the Green's function $G_{-}$ in
(\ref{app_g:g-_final2}), which solved the closely related problem
(\ref{app_g:ng1-_prob}). In this way, we conclude that
\begin{equation}\label{app-c_aux_sol}
  \Psi = - \left( \frac{\teta^{\prime}}{4 (r^{\prime})^3} +
    \frac{1}{8 \left(r^{\prime}+\teta^{\prime}\right)^2} \right)
  \left[ \left(\txione^{\prime}-\txione\right)^2 -
    \left(\txitwo^{\prime}-\txitwo\right)^2 \right] \,,
\end{equation}
so that on the plane $\teta^{\prime}=0$, we have
\begin{equation}\label{app-c_aux_sol_z}
  \Psi\vert_{\teta^{\prime}=0} = - \frac{1}{8}
 \left( \frac{\left(\txione^{\prime}-\txione\right)^2 -
      \left(\txitwo^{\prime}-\txitwo\right)^2 }{
     \left(\txione^{\prime}-\txione\right)^2 +
      \left(\txitwo^{\prime}-\txitwo\right)^2 }\right) \,.
\end{equation}
  
Next, we apply Green's second identity to $\Psi$ and $w$ over a large
hemisphere of radius $\rho^{\prime}\equiv |\y^{\prime}|=\sigma\gg 1$
in the upper half-space $\teta^{\prime}\ge 0$, and use the fact that
$w$ is harmonic, that $\Psi_{\teta^{\prime}}=0$ on $\teta^{\prime}=0$,
and that $w_{\teta^{\prime}}$ is non-vanishing on $\teta^{\prime}=0$
only on the patch $\PT$. In this way, we obtain that
\begin{equation}\label{app-c:final_1}
  \begin{split}
    \int_{\R_{+}^{3}} w \Delta_{\y^\prime} \Psi \, d\y^{\prime} &= \int_{\PT}
    \left(\Psi \partial_{\teta^{\prime}} w \right)_{\teta^{\prime}=0} \,
    d\txione^{\prime} \, d\txitwo^{\prime}  \\
    & \qquad +
\lim_{\sigma\to\infty} \left( \sigma^{2} 
  \int_{0}^{\pi/2} \int_{0}^{2\pi}
  \left( w \partial_{\rho^{\prime}} \Psi - 
      \Psi \partial_{\rho^{\prime}} w \right)
  {\Big\vert}_{\rho^{\prime}=\sigma} \sin\theta^{\prime} d\theta^{\prime}
  \, d\phi^{\prime} \right)\,,
  \end{split}
\end{equation}
where $0\leq \theta^{\prime}\leq {\pi/2}$ and
$0\leq \phi^{\prime}<2\pi$ are the angular spherical
coordinates. Then, by using (\ref{app-c:aux_force}) for
$\Delta_{\y^\prime} \Psi$ on the left-hand side of
(\ref{app-c:final_1}), we integrate by parts and use decay at
infinity. Upon recalling the definition (\ref{berg:Ei_-2}) for
${\mathcal I}$, we conclude that
\begin{equation}\label{app-c:final_2}
  \begin{split}
    {\mathcal I} &= \int_{\R_{+}^{3}}  \frac{\teta^{\prime}\left(
      w_{\txione^{\prime}\txione^{\prime}} - w_{\txitwo^{\prime}\txitwo^{\prime}}\right)}
  { |\y^{\prime}-\y|} \, d\y^{\prime}  = \int_{\PT} 
    \left(\Psi \partial_{\teta^{\prime}} w \right)_{\teta^{\prime}=0} \,
    d\txione^{\prime} \, d\txitwo^{\prime} \,, \\
    & \qquad \quad +
\lim_{\sigma\to\infty} \left( \sigma^{2} 
  \int_{0}^{\pi/2} \int_{0}^{2\pi}
  \left( w \partial_{\rho^{\prime}} \Psi 
    -  \Psi \partial_{\rho^{\prime}} w \right)
  {\Big\vert}_{\rho^{\prime}=\sigma} \sin\theta^{\prime}\, d\theta^{\prime}
  \, d\phi^{\prime} \right) \,.
  \end{split}
\end{equation}
To estimate the integral over the hemisphere, we use
$w\sim {C/\rho^{\prime}}$ and
$\partial_{\rho^{\prime}} w \sim -{C/(\rho^{\prime})^2}$ as
$\rho^{\prime}\to \infty$. In addition, in terms of spherical
coordinates, we calculate that the far-field behavior of $\Psi$ given in
(\ref{app-c_aux_sol}) has the form
\begin{subequations}\label{app-c:Psi_ff}
\begin{equation}
  \Psi \sim g(\theta^{\prime}) \cos(2\phi^{\prime}) +
  {\mathcal O}\left({1/\rho^{\prime}}\right)
  \,, \quad \partial_{\rho^{\prime}} \Psi =
  {\mathcal O}\left({1/(\rho^{\prime})^2}\right) \,,  \quad \mbox{as} \quad
  \rho^{\prime}=|\y^{\prime}|\to \infty \,,
\end{equation}
where $g(\theta^{\prime})$ is defined by
\begin{equation}
 g(\theta^{\prime})\equiv -\frac{1}{4}\cos\theta^{\prime} \sin^{2}(\theta^{\prime})
  -\frac{1}{8}\left(\frac{1-\cos\theta^{\prime}}{1+\cos\theta^{\prime}}\right) \,.
\end{equation}
\end{subequations}
Upon using these far-field estimates of $w$ and $\Psi$ we conclude that
\begin{equation}\label{app-c:Psi_ff_llim}
  \begin{split}
 & \lim_{\sigma\to\infty} \left( \sigma^{2} 
  \int_{0}^{\pi/2} \int_{0}^{2\pi}
  \left( w \partial_{\rho^{\prime}} \Psi - 
     \Psi \partial_{\rho^{\prime}} w \right)
   {\Big\vert}_{\rho^{\prime}=\sigma} \sin\theta^{\prime} \, d\theta^{\prime}
   \, d\phi^{\prime} \right) \\
 & = \lim_{\sigma\to\infty} \left(C \int_{0}^{\pi/2} 
   g(\theta^{\prime}) \, d\theta^{\prime}
   \left(\int_{0}^{2\pi} \cos(2\phi^{\prime})\, d\phi^{\prime}
   \right) +  {\mathcal O}({1/\sigma})\right) =0 \,.
 \end{split}
\end{equation}
As a result, since the limit over the hemisphere in
(\ref{app-c:final_2}) vanishes, we obtain upon using
(\ref{app-c_aux_sol_z}) in (\ref{app-c:final_2}) that
\begin{equation}\label{app-c:final_J}
  {\mathcal I}= -\frac{1}{8} \int_{\PT}
 \left( \frac{\left(\txione^{\prime}-\txione\right)^2 -
      \left(\txitwo^{\prime}-\txitwo\right)^2 }{
     \left(\txione^{\prime}-\txione\right)^2 +
     \left(\txitwo^{\prime}-\txitwo\right)^2 }\right) \,
 \partial_{\teta^{\prime}} w \vert_{\teta^{\prime}=0} \, d\txione^{\prime}\,
 d\txitwo^{\prime} \,.
\end{equation}
Finally, by setting
$\partial_{\teta^{\prime}} w \vert_{\teta^{\prime}=0} =-2q$ in
(\ref{app-c:final_J}) we combine (\ref{app-c:final_J}) and
(\ref{berg:Ei_-2}) to obtain our main result for $E_{-}$ given in
(\ref{berg:Ei_-}) of Lemma \ref{lemma:monom}.

\section{Perfectly reactive elliptically-shaped patches}
\label{app:ellipe}

For a perfectly reactive elliptically-shaped patch of semi-axes $\eps
a_{i1}$ and $\eps a_{i2}$, we now adapt the approach in
\cite{Strieder09} to calculate the reactive capacitance
$C_{i}(\infty)$ in (\ref{berg:wc_charge}) as well as the monopole
coefficients $E_{i+}(\infty)$ and $E_{i-}(\infty)$ in
(\ref{berg:Ei_general0}) and (\ref{berg:Ei_-}), respectively.  This is
done by calculating the charge density $q_i$ from an exact solution
$w_i$ of (\ref{berg:wc}).  In our derivation below, for clarity we
omit the subscript $i$ patch index.

To solve (\ref{berg:wc}) with $b_i=\infty$ when $\PT$ is an ellipse of
semi axes $a_1$ and $a_2$, with $a_1\ge a_2$, we first introduce
ellipsoidal coordinates in the form
\begin{equation}\label{app-ell:coord}
  \frac{\txione^2}{a_{1}^2+\theta} + \frac{\txitwo^2}{a_{2}^{2}+\theta} +
  \frac{\teta^2}{\theta}=1 \,.
\end{equation}
For a given $(\txione,\txitwo,\teta)$ the three roots $\theta$
of (\ref{app-ell:coord}), labeled by $\lambda$, $\mu$, and $\nu$,
satisfy $\lambda\geq 0\geq\mu\ge -a_2^2\ge \nu\ge -a_1^2$. Level sets
of constant $\lambda>0$ are ellipsoids that have the limiting behavior
${\txione^2/a_1^2}+{\txitwo^2/a_2^2}\to 1$ and $\teta\to 0$ as
$\lambda\to 0^{+}$, while for $\lambda\to \infty$ the ellipsoid becomes
a large sphere $\txione^2+ \txitwo^2+\teta^2\to \lambda$ of radius
$\sqrt{\lambda}$.

As a result, we look for a solution to (\ref{berg:wc}) in the form
$w(\txione,\txitwo,\teta)={\mathcal W}(\lambda)$. From separating
variables in the Laplacian written in terms of ellipsoidal coordinates, we
find that ${\mathcal W}(\lambda)$ satisfies the boundary value
problem
\begin{equation}\label{app-ell:Wprob}
  \begin{split}
  & \frac{d}{d\lambda} \left[ \sqrt{\lambda (a_1^2+\lambda)(a_2^2+\lambda)}
    \frac{d {\mathcal W}}{d\lambda} \right] =0 \,, \quad 0<\lambda<\infty \,,\\
  & \quad {\mathcal W}(0^{+})=1 \,, \quad \mbox{(on elliptical patch)} \,;
  \qquad {\mathcal W}\to  0  \quad \mbox{as} \quad \lambda\to \infty\,.
  \end{split}
\end{equation}
The solution is
\begin{equation}\label{app-ell:Wsol_1}
  {\mathcal W}(\lambda) = \frac{1}{A} \int_{\lambda}^{\infty} \frac{d\zeta}{
    \sqrt{\zeta (a_1^2+\zeta)(a_2^2+\zeta)}} \,, \quad
   A \equiv \int_{0}^{\infty} \frac{d\zeta}{
    \sqrt{\zeta (a_1^2+\zeta)(a_2^2+\zeta)}} \,.
\end{equation}    
When $a_1>a_2$, $A$ can be written in terms of the complete elliptic
integral $K(m)$ as
\begin{equation}\label{app-ell:K}
  A = \frac{2}{a_1} K(m) \,, \quad \mbox{with} \quad
  m \equiv \sqrt{1 - \frac{a_2^2}{a_1^2}} \,, \quad \mbox{where} \quad
  K(m)\equiv \int_{0}^{\pi/2} \frac{1}{\sqrt{1-m^2\sin^{2}\theta}} d\theta\,,
\end{equation}
so that (\ref{app-ell:Wsol_1}) becomes
\begin{equation}\label{app-ell:Wsol}
  {\mathcal W}(\lambda) = \frac{a_1}{2K(m)}
  \int_{\lambda}^{\infty} \frac{d\zeta}{
    \sqrt{\zeta (a_1^2+\zeta)(a_2^2+\zeta)}}  \,.
\end{equation}

To determine the reactive capacitance $C(\infty)$ we let $\lambda\to \infty$ in
(\ref{app-ell:Wsol}) and use that $|\y|=\txione^2+\txitwo^2+\teta^2\approx
\lambda$ as $\lambda\to \infty$, so that $|\y|\sim \sqrt{\lambda}$.  This
yields for $|\y|\to \infty$ that
\begin{equation*}
  {\mathcal W} \sim \frac{a_1}{2 K(m)} \int_{\lambda}^{\infty}
  \zeta^{-3/2} \, d\zeta \sim \frac{a_1}{K(m) \sqrt{\lambda}} \sim
  \frac{a_1}{K(m) |\y|} \,, \quad \mbox{as} \quad |\y|\to\infty \,,
\end{equation*}
which identifies that $C(\infty)={a_1/K(m)}$. In this way, we recover
the classical result for the capacitance of an elliptic patch (see,
e.g., \cite{Strieder09}).  If $a_1<a_2$, we simply swap $a_1$ and
$a_2$ in the formula for $C(\infty)$.

Next, we determine the charge density $q(\y;\infty)$ by first
calculating $\partial_{\teta} \lambda$. From an implicit
differentiation of (\ref{app-ell:coord}) we get
\begin{equation}\label{app-ell:lam_z1}
  \frac{\partial\lambda}{\partial\teta} = 2\teta \left[
    \frac{\teta^2}{\lambda} + \lambda
    \left( \frac{\txione^2}{(a_1^2+\lambda)^2} +
      \frac{\txitwo^2}{(a_2^2+\lambda)^2} \right)\right]^{-1} \,.
\end{equation}
To evaluate (\ref{app-ell:lam_z1}) on the elliptical patch we let
$\lambda\to 0$ in (\ref{app-ell:lam_z1}) while using
${\teta^{2}/\lambda}\to 1-{\txione^2/a_1^2} -{\txitwo^2/a_2^2}$ as
$\lambda\to 0$.  This yields that
\begin{equation}\label{app-ell:lam_z}
  \frac{\partial\lambda}{\partial\teta} \to \frac{2\teta}{{\teta^2/\lambda}}
  = \frac{2\lambda}{\teta} = 2\sqrt{\lambda} \left( 1 -\frac{\txione^2}{a_1^2}
    - \frac{\txitwo^2}{a_2^2} \right)^{-1/2} \,, \quad \mbox{as} \quad
  \teta\to 0 \,, \quad \frac{\txione^2}{a_1^2} + \frac{\txitwo^2}{a_2^2} \leq 1.
\end{equation}
Then, we calculate ${\mathcal W}^{\prime}(\lambda)$ from (\ref{app-ell:Wsol})
and estimate it as $\lambda\to 0$ to obtain
\begin{equation}\label{app-ell:wlambda}
  {\mathcal W}^{\prime}(\lambda) = -\frac{a_1}{2K(m)} \frac{1}{
    \sqrt{\lambda (\lambda+a_1^2) (\lambda+a_2^2)}} \sim -\frac{\lambda^{-1/2}}
      {2 a_2 K(m)} \,, \quad \mbox{as} \quad \lambda \to 0 \,.
\end{equation}
From using (\ref{app-ell:lam_z}) and (\ref{app-ell:wlambda}) to
evaluate the chain rule $w_{\teta}={\mathcal W}^{\prime}(\lambda)
\partial_{\teta} \lambda$ as $\lambda\to 0$, we calculate the
charge density on the patch as 
\begin{equation}\label{app-ell:q_temp}
  q(\y;\infty)  =-\tfrac{1}{2}w_{\teta}\vert_{\teta=0} = \frac{1}{2a_2 K(m)}
  \left( 1 -\frac{\txione^2}{a_1^2}- \frac{\txitwo^2}{a_2^2} \right)^{-1/2}
  = \frac{C}{2a_1 a_2} \left( 1 -\frac{\txione^2}{a_1^2}-
    \frac{\txitwo^2}{a_2^2} \right)^{-1/2} ,
\end{equation}
where $C=C(\infty)$ is the reactive capacitance.  Therefore, we have
recovered the result presented in \cite{Strieder09}.

Finally, to determine the monopole coefficients $E_{+}(\infty)$ and
$E_{-}(\infty)$ we substitute the second expression in
(\ref{app-ell:q_temp}) for $q$ into (\ref{berg:Ei_general0}) and
(\ref{berg:Ei_-}), respectively, to yield iterated double integrals
over the elliptical patch $\PT$.  Upon replacing
$\tilde{\txione}={\txione/a_1}$ and $\tilde{\txitwo}={\txitwo/a_2}$ in
these formulae (and dropping the tilde) the integrations can be
expressed in terms of the unit disk $\mathbb{D}$ centered at the origin.
{\clb In this way, we obtain from (\ref{berg:Ei_general0}) that
\begin{equation}\label{app:e+}
    \frac{E_{+}}{C^2}= -\frac{1}{8\pi^2} \int_{\mathbb{D}}
    \frac{1}{\sqrt{1 - |\y^{\prime}|^2}}
      \left(\int_{\mathbb{D}}  \frac{\log\left[ a_{1}^2(\txione-\txione^{\prime})^2 +
            a_2^{2} (\txitwo-\txitwo^{\prime})^2\right]^{1/2}} {
          {\sqrt{1 - |\y|^2}}} \, d\y \right) \,
      d\y^{\prime} \,,
\end{equation}
while from (\ref{berg:Ei_-}) we derive
\begin{equation}\label{app:e-}
    \frac{E_{-}}{C^2}= \frac{1}{16\pi^2} \int_{\mathbb{D}}
    \frac{1}{\sqrt{1 - |\y^{\prime}|^2}}
      \left(\int_{\mathbb{D}}  \left(\frac{ a_1^2(\txione^{\prime}-\txione)^2 -
          a_2^2(\txitwo^{\prime}-\txitwo)^2}{
          a_1^2(\txione^{\prime}-\txione)^2 +
          a_2^2(\txitwo^{\prime}-\txitwo)^2}\right)\frac{1}
          {\sqrt{1 - |\y|^{2}}} \, d\y \right) \,
      d\y^{\prime} \,.
\end{equation}
Here $|\y|^2=\txione^2+\txitwo^2$ and
$|\y^{\prime}|^2=(\txione^{\prime})^2+
(\txitwo^{\prime})^2$. Remarkably, these two iterated double integrals
can be evaluated analytically in closed form. We summarize our results
as follows:}

\begin{lemma}\label{lemma:ellipse} 
Let $\PT$ be a perfectly absorbing elliptical-shaped patch
${\txione^2/a_1^2}+{\txitwo^2/a_2^2}\leq 1$ with semi-axes $a_1>0$ and
$a_2>0$ that are aligned with the two principal directions of
curvature.  Then, the reactive capacitance $C=C(\infty)$ and the
charge density $q=q(\y;\infty)$ defined in (\ref{berg:wc_charge}) are
by
\begin{equation}\label{app:c1}
    C=\frac{a_{>}}{K(m)} \,, \quad q = \frac{C}{2a_2a_1} \frac{1}
    {\sqrt{1 -{\txione^2/a_1^2}-{\txitwo^2/a_2^2}}} \,, \quad m\equiv
    \sqrt{1- \frac{a_{<}^2}{a_{>}^2}} \,,
\end{equation}
where $a_{<}=\min(a_1,a_2)$, $a_{>}=\max(a_1,a_2)$, and $K(m)$ is the
complete elliptic integral of the first kind.  Finally, the integrals
{\clb in (\ref{app:e+}) and (\ref{app:e-}) for the two ratios ${E_{\pm}/C^2}$
are given explicitly by
\begin{equation}\label{app:amazing}
     \frac{E_{+}}{C^2} = -\frac{1}{2}\log\left(\frac{a_1+a_2}{2}\right)
     -\log{2} + \frac{3}{4}  \,, \qquad \frac{E_{-}}{C^2} = \frac{a_1-a_2}
     {4(a_1+a_2)} \,.
\end{equation}}
\end{lemma}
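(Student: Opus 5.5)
The plan is to evaluate the two iterated integrals (\ref{app:e+}) and (\ref{app:e-}) probabilistically, without computing any potentials directly. The capacitance $C=a_{>}/K(m)$ and the density $q$ in (\ref{app:c1}) are already established by (\ref{app-ell:Wsol}) and (\ref{app-ell:q_temp}). For (\ref{app:amazing}), first check that after the substitution $\txione=a_1u_1$, $\txitwo=a_2u_2$ the factors $a_1a_2$ from $d\y$ cancel against $C/(2a_1a_2)$ in $q$. This leaves the weight $(1-|\mathbf u|^2)^{-1/2}$ on the unit disk $\mathbb{D}$, which has total mass $2\pi$. Define the probability measure $\mu(d\mathbf u)=(2\pi)^{-1}(1-|\mathbf u|^2)^{-1/2}d\mathbf u$ on $\mathbb{D}$. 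Then (\ref{app:e+}) and (\ref{app:e-}) read
\begin{equation*}
\frac{E_{+}}{C^2}=-\frac12\,\mathbb{E}\Big[\log|A(\mathbf u-\mathbf u')|\Big],\qquad
\frac{E_{-}}{C^2}=\frac14\,\mathbb{E}\Big[\frac{a_1^2Z_1^2-a_2^2Z_2^2}{a_1^2Z_1^2+a_2^2Z_2^2}\Big],
\end{equation*}
where $\mathbf u,\mathbf u'$ are i.i.d.\ with law $\mu$, $\mathbf Z=\mathbf u-\mathbf u'$, and $A=\mathrm{diag}(a_1,a_2)$.

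The key step is the observation that $\mu$ is rotationally invariant. Hence the law of $\mathbf Z$ is rotationally invariant. Writing $\mathbf Z=|\mathbf Z|(\cos\theta,\sin\theta)$, the angle $\theta$ is uniform on $[0,2\pi)$ and independent of $|\mathbf Z|$.

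For $E_-$, the integrand depends only on $\theta$. So $E_-/C^2=\tfrac14\cdot\frac{1}{2\pi}\int_0^{2\pi}\frac{a_1^2\cos^2\theta-a_2^2\sin^2\theta}{a_1^2\cos^2\theta+a_2^2\sin^2\theta}\,d\theta$. This is an elementary integral. Writing the integrand as $1-2a_2^2\sin^2\theta/(a_1^2\cos^2\theta+a_2^2\sin^2\theta)$ and using the standard $\tan\theta$ substitution gives the mean value $(a_1-a_2)/(a_1+a_2)$, hence the stated formula.

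For $E_+$, split $\log|A\mathbf Z|=\log|\mathbf Z|+\tfrac12\log(a_1^2\cos^2\theta+a_2^2\sin^2\theta)$. The angular term has expectation $\log\big((a_1+a_2)/2\big)$ by the classical integral $\frac{1}{2\pi}\int_0^{2\pi}\log(a_1^2\cos^2\theta+a_2^2\sin^2\theta)\,d\theta=2\log\frac{a_1+a_2}{2}$. The radial term $\mathbb{E}\log|\mathbf Z|$ is shape independent, so no new computation is needed. Take $a_1=a_2=1$, where the patch is the unit disk and (\ref{berg:Ej_asy_large}) with $C=2/\pi$ gives $E_+/C^2=-\tfrac12(\log4-\tfrac32)$. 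Comparing with the representation above yields $-\tfrac12\mathbb{E}\log|\mathbf Z|=-\log2+\tfrac34$. Adding the two pieces gives (\ref{app:amazing}).

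The main obstacle is justifying the representation step carefully. One must confirm that the normalizations in (\ref{app:e+}) and (\ref{app:e-}) agree with (\ref{berg:Ei_general0}) and (\ref{berg:Ei_-}) after the change of variables. For (\ref{berg:Ei_-}) one must also check that the kernel ratio is evaluated in the physical variables, i.e.\ with $A\mathbf Z$ rather than $\mathbf Z$. The independence of $\theta$ and $|\mathbf Z|$ needs only rotational invariance of $\mu\otimes\mu$ under simultaneous rotation, which is immediate. Integrability of the log and of the bounded ratio kernel against $\mu\otimes\mu$ is clear, since the density singularities $(1-|\mathbf u|^2)^{-1/2}$ are integrable.
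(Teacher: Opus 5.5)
Your proposal is correct, and it takes a genuinely different route from the paper.

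\textbf{The paper's route.} The paper factors the argument of the logarithm as $\mathsf{U}(\theta,\psi)\mathsf{U}(-\theta,-\psi)$ and writes $\mathsf{U}$ as a Laurent polynomial in $z=e^{i\theta}$ (or $w=e^{i\psi}$). It locates the roots with the winding-number criterion of Lemma~\ref{lemma:complex2}, then uses Jensen's formula (Lemma~\ref{lemma:complex1}) to get the exact angular average $\mathcal{J}=4\pi^2\log(\overline{a}\rho_{>})$. The radial integrals are done by hand. $E_-/C^2$ is then obtained without a second integration, from the scaling identity $E_-/C^2=-\tfrac{a_1}{2}\partial_{a_1}(E_+/C^2)+\tfrac{a_2}{2}\partial_{a_2}(E_+/C^2)$.

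\textbf{Your route.} You use the rotation invariance of the rescaled charge measure, so the anisotropy enters only through the uniformly distributed direction of $\mathbf{Z}$. Then $E_-/C^2$ becomes a single elementary angular integral. $E_+/C^2$ becomes the classical integral $\frac{1}{2\pi}\int_0^{2\pi}\log(a_1^2\cos^2\theta+a_2^2\sin^2\theta)\,d\theta=2\log\frac{a_1+a_2}{2}$ plus a shape-independent constant. You calibrate that constant from the disk value \eqref{berg:Ej_asy_large}. The resulting $-\tfrac12\mathbb{E}\log|\mathbf{Z}|=\tfrac34-\log 2$ agrees with the paper's radial computation.

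\textbf{What each buys.} Your argument is shorter and makes transparent why only $(a_1+a_2)/2$ appears. It also makes clear that $E_-/C^2=(a_1-a_2)/(4(a_1+a_2))$ holds for any charge density depending only on $\txione^2/a_1^2+\txitwo^2/a_2^2$, for example the uniform density of the $b\to0$ limit; only the additive constant in $E_+/C^2$ changes. The paper's argument is self-contained: it needs no input from the disk case and recovers it at $a_1=a_2$, whereas yours imports it. The classical logarithmic integral you quote is itself usually proved via Jensen's formula, so the complex analysis is reduced to a one-dimensional instance rather than removed. If you want to avoid importing the disk result, $\mathbb{E}\log|\mathbf{Z}|$ follows directly from $\frac{1}{2\pi}\int_0^{2\pi}\log|\rho e^{i\theta}-\rho^{\prime}|\,d\theta=\log\max(\rho,\rho^{\prime})$ together with the radial density $\rho(1-\rho^2)^{-1/2}$.

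\textbf{On your normalization check.} \eqref{app:e-} is consistent with the appendix derivation \eqref{app-c:final_J}, whose kernel has denominator $|\y^{\prime}-\y|^2$. The denominator $|\y^{\prime}-\y|$ printed in \eqref{berg:Ei-main_J} is a typo: it would make $E_-$ scale like $a^3$ rather than $a^2$ under dilation of the patch. So check against the appendix formula, not the main-text one.
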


We need only derive the explicit results in (\ref{app:amazing}).  Our
derivation relies on two elementary results from complex analysis.

\begin{lemma}\label{lemma:complex1}
Suppose that $\mathsf{P}(z)$ is a polynomial of degree $M>1$ of the
form $\mathsf{P}(z)=d\prod_{j=1}^{M} (z-z_j)$, with $d\ne 0$, for
which $P(0)\neq 0$ and $P(z)$ is non-vanishing on the boundary
$|z|=1$.  Suppose that $|z_j|>1$ for $j=1,\ldots,k$, while $|z_j|<1$
for $j=k+1,\ldots,M$ for some $k\in\lbrace{0,\ldots,M\rbrace}$.  Then,
\begin{equation}\label{app:jensen}
    \frac{1}{2\pi} \int_{0}^{2\pi} \log| \mathsf{P}\left(e^{i\theta}\right)|\,
    d\theta = \log|d| + \sum_{j=1}^{k} \log|z_j| \,.
\end{equation}
\end{lemma}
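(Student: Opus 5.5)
This is Jensen's formula for a polynomial with no zeros on the unit circle. The plan is to factor $\log|\mathsf{P}|$ into a sum over the roots and handle each factor separately. On $|z|=1$,
\[
\log|\mathsf{P}(e^{i\theta})| = \log|d| + \sum_{j=1}^{M}\log|e^{i\theta}-z_j| ,
\]
and every term is continuous in $\theta$ because no $z_j$ lies on the circle. So it suffices to establish the single-factor identity
\[
\frac{1}{2\pi}\int_0^{2\pi}\log|e^{i\theta}-a|\,d\theta = \max\bigl(0,\log|a|\bigr), \qquad |a|\ne 1 .
\]
Summing this over $j$ and adding $\log|d|$ gives (\ref{app:jensen}) at once: the roots with $|z_j|>1$ contribute $\log|z_j|$, and those with $|z_j|<1$ contribute nothing. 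The hypothesis $\mathsf{P}(0)\neq0$ simply rules out $z_j=0$; that case would in any event be covered by the same single-factor identity.

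\textbf{Case $|a|<1$.} The function $f(z)=1-\bar a z$ is holomorphic and nonvanishing on a neighbourhood of the closed unit disk, since its only zero is at $1/\bar a$, outside the disk. Hence $\log|f|$ is harmonic there. The mean value property gives
\[
\frac{1}{2\pi}\int_0^{2\pi}\log|1-\bar a e^{i\theta}|\,d\theta=\log|f(0)|=0 .
\]
On the circle, $|e^{i\theta}-a| = |e^{i\theta}|\,|1-a e^{-i\theta}| = |1-\bar a e^{i\theta}|$, where the last step uses that a complex number and its conjugate have the same modulus. So the average of $\log|e^{i\theta}-a|$ is $0$, as claimed.

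\textbf{Case $|a|>1$.} Write $\log|e^{i\theta}-a| = \log|a| + \log|1-e^{i\theta}/a|$. The function $1-z/a$ is holomorphic and zero-free on the closed unit disk, since its zero $z=a$ lies outside. The same mean value argument shows its logarithmic average is $\log 1 = 0$. The average is therefore $\log|a|$.

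The only point needing care is the correct side of the circle in each case. For each root, one must pick the factorization whose holomorphic factor does not vanish inside the closed disk, so that $\log|\cdot|$ is genuinely harmonic and the mean value property applies. No step is a real obstacle. The proof is the standard derivation of Jensen's formula, specialized to polynomials with no roots on the boundary, so no limiting argument is needed.
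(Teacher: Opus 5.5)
Your proof is correct, but it takes a different route from the paper's. The paper uses Jensen's formula as a black box in its usual form, $\frac{1}{2\pi}\int_0^{2\pi}\log|\mathsf{P}(e^{i\theta})|\,d\theta=\log|\mathsf{P}(0)|-\sum_{j=k+1}^{M}\log|z_j|$, which sums over the roots \emph{inside} the disk. It then substitutes $|\mathsf{P}(0)|=|d|\prod_{j=1}^{M}|z_j|$, so the inside roots cancel and the outside roots remain.

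You instead derive the identity factor by factor from the mean value property for $\log|f|$ with $f$ holomorphic and zero-free. You choose the normalization ($1-\bar a z$ for $|a|<1$, $1-z/a$ for $|a|>1$) so that the harmonic function is defined on a neighbourhood of the closed disk. This gives the outside-root form directly, without passing through $\log|\mathsf{P}(0)|$.

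The paper's route is shorter once Jensen's theorem is granted. Yours is self-contained; it is the standard proof of Jensen's formula specialized to polynomials with no zeros on $|z|=1$. It also dispenses with the hypothesis $\mathsf{P}(0)\neq 0$, as you noted. The paper's route needs that hypothesis so that $\log|\mathsf{P}(0)|$ is finite. The restriction $M>1$ is not used by either argument.
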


\begin{proof} 
By applying Jensen's formula (see \cite{stein}) to the polynomial
$\mathsf{P}(z)$, we obtain in terms of its roots inside the unit disk
that
\begin{equation}\label{app:jensen_thm}
    \frac{1}{2\pi} \int_{0}^{2\pi} \log| \mathsf{P}\left(e^{i\theta}\right)|\,
    d\theta = \log|\mathsf{P}(0)| - \sum_{j=k+1}^{M} \log|z_j| \,.
\end{equation}
We readily obtain (\ref{app:jensen}) by substituting
$|\mathsf{P}(0)|=|d|\prod_{j=1}^{M}|z_j|$ into (\ref{app:jensen_thm}).
When there {\clb is no root} outside the unit disk, the second term on
the right side of (\ref{app:jensen_thm}) vanishes.
\end{proof}

The second result that we need from complex analysis is the following:

\begin{lemma}\label{lemma:complex2} 
Consider the quadratic polynomial $\mathsf{A}(z)=z^2+\zeta z + \beta$,
where $\beta$ is real with $-1<\beta<1$ and $\zeta$ is complex-valued.
Then, the two roots of $\mathsf{A}(z)$ are inside the unit disk
$|z|<1$ if and only if $\zeta=\zeta_{R}+i\zeta_{I}$ satisfies
\begin{equation}\label{app:inside}
      \frac{ \zeta_{R}^2}{(1+\beta)^2} + \frac{\zeta_{I}^2}{(1-\beta)^2}<1\,.
    \end{equation}
\end{lemma}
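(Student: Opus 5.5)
The plan is a continuity (root-counting) argument: find the exact set of $\zeta$ for which $\mathsf{A}$ has a root on the unit circle, and show that this set is precisely the ellipse in (\ref{app:inside}) taken with equality. The number of roots of $\mathsf{A}$ in $|z|<1$ is then constant on each connected component of the complement of that ellipse, and it suffices to evaluate it at one point of each component.

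\emph{Step 1 (boundary locus).} Suppose $\mathsf{A}(e^{i\theta})=0$ for some real $\theta$. Dividing by $z=e^{i\theta}\neq 0$ gives
\begin{equation*}
\zeta=-\left(e^{i\theta}+\beta e^{-i\theta}\right)=-(1+\beta)\cos\theta-i(1-\beta)\sin\theta .
\end{equation*}
So $\zeta_R=-(1+\beta)\cos\theta$ and $\zeta_I=-(1-\beta)\sin\theta$. Conversely, each such $\zeta$ has $e^{i\theta}$ as a root. Because $-1<\beta<1$, both semi-axes $1\pm\beta$ are positive, so this curve is a nondegenerate ellipse. Hence $\mathsf{A}$ has a root on $|z|=1$ if and only if $\zeta_R^2/(1+\beta)^2+\zeta_I^2/(1-\beta)^2=1$. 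The complement of this ellipse in $\mathbb{C}$ has two components: the open interior $\mathcal{E}$ and the unbounded exterior.

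\emph{Step 2 (constancy of the count).} The roots of a monic quadratic depend continuously on its coefficients. Equivalently, for fixed $\zeta_0$ off the ellipse, $\mathsf{A}$ is nonvanishing on $|z|=1$, and Rouch\'e's theorem (or the argument principle) shows that the number of roots in $|z|<1$ is locally constant in $\zeta$. It is therefore constant on $\mathcal{E}$ and constant on the exterior.

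\emph{Step 3 (evaluation).} At $\zeta=0\in\mathcal{E}$ the roots are $\pm\sqrt{-\beta}$, of modulus $|\beta|^{1/2}<1$, so both roots lie inside the disk throughout $\mathcal{E}$. For $|\zeta|$ large, the roots satisfy $z_1+z_2=-\zeta$, so at least one root has modulus at least $|\zeta|/2>1$. Thus not both roots are inside on the exterior. (Since $|z_1z_2|=|\beta|<1$, exactly one root is inside there, though this is not needed.) On the ellipse itself a root lies on $|z|=1$, so again not both roots are in $|z|<1$. This proves the equivalence with the strict inequality (\ref{app:inside}).

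The only delicate point is Step 2, namely justifying that the root count cannot change without a root crossing $|z|=1$. I would settle this directly by citing Hurwitz's theorem or Rouch\'e's theorem applied on the unit circle, uniformly on compact subsets of each component.
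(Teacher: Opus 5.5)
Your argument is correct, but it takes a different route from the paper. The paper never varies $\zeta$. It factors $\mathsf{A}(e^{i\theta})=e^{i\theta}h(\theta)$ with $h(\theta)=e^{i\theta}+\zeta+\beta e^{-i\theta}$, and observes that $h$ traces, counterclockwise, the ellipse centered at $\zeta$ with semi-axes $1+\beta$ and $1-\beta$. It then reads off the winding number of $\mathsf{A}$ as $1+N_w(h)$, which equals $2$ exactly when the origin lies strictly inside that ellipse, i.e.\ when (\ref{app:inside}) holds.

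You instead find the same ellipse (reflected through the origin) as the locus in the $\zeta$-plane where a root crosses $|z|=1$. You then use local constancy of the root count on the two components of its complement, and evaluate at the test points $\zeta=0$ and $|\zeta|>2$.

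The paper's version is shorter and gives the root count for every $\zeta$ in a single computation. It relies on the elementary fact that a positively oriented ellipse winds once around interior points and zero times around exterior points.

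Your version needs only continuity of roots, connectedness of the interior and exterior of an ellipse, and two trivial evaluations. It is the standard boundary-crossing technique, which transfers readily to other root-location problems. It also explicitly handles the case of $\zeta$ on the ellipse, where $h$ passes through the origin, the paper's winding number is undefined, and the exclusion is only implicit.
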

  
\begin{proof}
We prove this result by calculating the winding number of
$\mathsf{A}$, labeled by $N_{w}(\mathsf{A})$, as $z$ traverses the
unit disk $|z|=1$ once counterclockwise, with parameterization
$z=e^{i\theta}$ for $0\leq \theta\leq 2\pi$.  We write
$\mathsf{A}(e^{i\theta})= e^{i\theta} h(\theta)$, with
$h(\theta)\equiv e^{i\theta}+ \zeta + \beta e^{-i\theta}$, so that
\begin{equation*}
      N_{w}\left(\mathsf{A}(e^{i\theta})\right) = N_{w}\left(e^{i\theta}\right) +
      N_{w}\left(h(\theta)\right)\,.
\end{equation*}
Since $N_{w}\left(e^{i\theta}\right)=1$, we conclude that
$\mathsf{A}(z)$ has its two roots inside the unit disk if and only if
$N_{w}\left(h(\theta)\right)=1$.  By separating $h(\theta)$ into real
and imaginary parts as $h(\theta)=h_{R}+ih_{I}$, we calculate that
\begin{equation}
    h_{R} = \zeta_{R} + (1+\beta)\cos\theta \,, \quad h_{I} \equiv
     \zeta_{I} + (1-\beta) \sin\theta \,,
\end{equation}
where $\zeta=\zeta_{R}+i\zeta_{I}$. Since $-1<\beta<1$, it follows
that the image of the unit disk is
\begin{equation}\label{app:quad_ell}
    \frac{(h_R-\zeta_R)^2}{(1+\beta)^2} + \frac{(h_I-\zeta_I)^2}
    {(1-\beta)^2} =1 \,.
\end{equation}
The necessary and sufficient condition for $N_w(h)=1$ is that the
origin $(h_R,h_I)=(0,0)$ lies strictly within the ellipse
(\ref{app:quad_ell}), which yields (\ref{app:inside}).
\end{proof}

With these two elementary results, we now derive the explicit
formulas in (\ref{app:amazing}).

\begin{proof}{(Proof of (\ref{app:amazing}) of Lemma \ref{lemma:ellipse})}
We will first prove (\ref{app:amazing}) for ${E_+/C^2}$.  We begin by
introducing
\begin{equation*}
    \txione=\rho\cos\theta\,, \quad \txitwo=\rho\sin\theta \,, \quad
    \txione^{\prime}=\rho^{\prime}\cos\psi \,, \quad \txitwo^{\prime}=
    \rho^{\prime}\sin\psi \,,
\end{equation*}
so that the argument of the logarithm in (\ref{app:e+}) becomes
\begin{equation*}
   \mathsf{F}(\theta,\psi) \equiv a_{1}^2(\txione-\txione^{\prime})^2 +
   a_2^{2} (\txitwo-\txitwo^{\prime})^2  = \mathsf{X}^2 +
   \mathsf{Y}^2 = \left(\mathsf{X}+ i \mathsf{Y}\right)
   \left(\mathsf{X}-i \mathsf{Y}\right)\,,
\end{equation*}
where $\mathsf{X}\equiv
a_1\left(\rho\cos\theta-\rho^{\prime}\cos\psi\right)$,
$\mathsf{Y}\equiv
a_2\left(\rho\sin\theta-\rho^{\prime}\sin\psi\right)$, and
$i=\sqrt{-1}$.  By defining $\mathsf{U}\equiv \mathsf{X}+i\mathsf{Y}$,
this leads to the factorization
\bsub \label{app:factor}
\begin{equation}\label{app:factor_1}
 \mathsf{F}(\theta,\psi) = \mathsf{U}(\theta,\psi) \mathsf{U}(-\theta,
  -\psi) \,,
\end{equation}
where $\mathsf{U}=\mathsf{U}(\theta,\phi)$ is defined by
\begin{equation}\label{app:factor_2}
  \mathsf{U}(\theta,\phi)= \rho\left(a_1\cos\theta + i a_2 \sin\theta\right)
  -\rho^{\prime}\left(a_1\cos\psi + i a_2 \sin\psi\right)\,.
\end{equation}
\esub
By performing the angular integrations we observe from symmetry that
\begin{equation}\label{app:ang_1}
  \begin{split}
  \int_{0}^{2\pi}  \int_{0}^{2\pi} \log\left[\mathsf{F}(\theta,\psi)\right]
    d\theta\, d\psi &=  \int_{0}^{2\pi}  \int_{0}^{2\pi}
      \left( \log |\mathsf{U}(\theta,\psi)| +
        \log |\mathsf{U}(-\theta,-\psi)| \right) \, d\theta \, d\psi \\
    & = 2 \int_{0}^{2\pi}\int_{0}^{2\pi} \log|\mathsf{U}(\theta,\psi)|
    \, d\theta \, d\psi \,.
  \end{split}
\end{equation}
Upon substituting (\ref{app:ang_1}) into (\ref{app:e+}) and converting
to polar coordinates we obtain
\begin{equation}\label{app:e+new}
  \frac{E_{+}}{C^2} = -\frac{1}{8\pi^2} \int_{0}^{1} \int_{0}^{1}
  \frac{ {\mathcal J} \rho \rho^{\prime}}{\sqrt{1-\rho^2}\sqrt{1-(\rho^{\prime})^2}}
   \, d\rho \, d\rho^{\prime} \,, \quad
  {\mathcal J}\equiv \int_{0}^{2\pi}\int_{0}^{2\pi}
  \log|\mathsf{U}(\theta,\psi)| \, d\theta \, d\psi \,.
\end{equation}

Next, we will calculate ${\mathcal J}={\mathcal
J}(\rho,\rho^{\prime})$ explicitly. To do so, we first introduce the
complex variables $z=e^{i \theta}$ and $w=e^{i \psi}$, so that after
some algebra we write (\ref{app:factor_2}) as
\begin{equation}\label{app:u_new}
  \mathsf{U} = d_1 \left(z+ \frac{\beta}{z}\right) + d_2
  \left(w+ \frac{\beta}{w}\right)\,,
\end{equation}
where $d_1$, $d_2$ and $\beta$, satisfying $-1<\beta<1$ are defined by
\begin{equation}\label{app:u_new_p}
  d_1 \equiv \frac{\rho}{2} (a_1+a_2) \,, \quad d_2 \equiv
  -\frac{\rho^{\prime}}{2}  (a_1+a_2) \,, \quad  \beta \equiv
  \frac{a_1-a_2}{a_1+a_2}\,.
\end{equation}

{\bf Case I:} Suppose that $|d_1|>|d_2|$, so that
$\rho>\rho^{\prime}$. For this case we write $\mathsf{U}$ in
(\ref{app:u_new}) in terms of a polynomial in $z$ as
\bsub\label{app:int1}
\begin{equation}\label{app:int1a}
  \mathsf{U} = \frac{\mathsf{P}(z)}{z}\,, \quad \mbox{with} \quad
  \mathsf{P}(z) = d_1 \mathsf{A}(z)\,,
\end{equation}
where
\begin{equation}\label{app:q}
  \mathsf{A}(z) = z^2 + \zeta z + \beta \,, \quad \mbox{with} \quad
  \zeta\equiv -\rho_{\star}  \left(w + \frac{\beta}{w}\right) \,,
\end{equation}
\esub
and $\rho_{\star}\equiv {\rho^{\prime}/\rho}<1$. We set $z=e^{i
\theta}$ and integrate $\log|\mathsf{U}|$ first with respect to
$\theta$, for each fixed $w=e^{i\psi}$ with $0<\psi<2\pi$. On the unit
disk we have from (\ref{app:int1a}) that $\log|\mathsf{U}| =
\log|\mathsf{P}(e^{i\theta})|$, and so since $\mathsf{P}(z)=d_1
\mathsf{A}(z)$, we must first determine the location of the zeroes of
$\mathsf{A}(z)$ to apply Lemma \ref{lemma:complex1}.

To do so, we use Lemma \ref{lemma:complex2}.  By setting $w=e^{i\psi}$
in $\zeta$ in (\ref{app:q}) we decompose $\zeta=\zeta_R+i\zeta_I$ to
get $\zeta_R=-\rho_{\star}(1+\beta)\cos\psi$ and
$\zeta_I=-\rho_{\star}(1-\beta) \sin\psi$.  The criterion
(\ref{app:inside}) becomes
\begin{equation*} \frac{\rho_{\star}^2
(1+\beta)^2}{(1+\beta)^2} \cos^{2}\psi + \frac{\rho_{\star}^2
(1-\beta)^2}{(1-\beta)^2} \sin^{2}\psi =
\rho_{\star}^2 < 1 \,,
\end{equation*}
which holds since $|\rho_{\star}|<1$.  As a result, $\mathsf{A}(z)$
and $\mathsf{P}(z)$ have no roots outside the unit disk.  Therefore,
by (\ref{app:jensen}) of Lemma \ref{lemma:complex1} we have
\begin{equation}
  \frac{1}{2\pi} \int_{0}^{2\pi} \log|\mathsf{U}|\, d\theta =
  \frac{1}{2\pi}\int_{0}^{2\pi} \log|\mathsf{P}(e^{i\theta})| \, d\theta=
  \log|d_1| \,.
\end{equation}
Upon trivially performing the second integration over $\psi$, we
conclude that ${\mathcal J}$ in (\ref{app:e+new}) in {\bf Case I} is
\begin{equation}\label{app:j1}
  {\mathcal J} = 4\pi^2 \log|d_1| \,, \quad \mbox{with} \quad
  d_1= \frac{\rho}{2} (a_1+a_2)  \,.
\end{equation}

For {\bf Case II} where $|d_1|<|d_2|$, so that $\rho<\rho^{\prime}$,
we we write $\mathsf{U}$ in (\ref{app:u_new}) in terms of a polynomial
in $w$ as
\bsub\label{app:int2}
\begin{equation}
  \mathsf{U} = \frac{\mathsf{P}(w)}{w}\,, \quad \mbox{with} \quad
  \mathsf{P}(w) = d_2 \mathsf{A}(w)\,,
\end{equation}
where $\mathsf{A}(w)$ is now defined by
\begin{equation}\label{app:qw}
  \mathsf{A}(w) = w^2 + \zeta w + \beta \,, \quad \mbox{with} \quad
  \zeta\equiv \rho_{\star}  \left(z + \frac{\beta}{z}\right) \,,
\end{equation}
\esub
and $\rho_{\star}\equiv {\rho/\rho^{\prime}}<1$.  By repeating the
analysis as in Case I, it follows by Lemma \ref{lemma:complex2} that
$\mathsf{A} (w)$ has no roots in $|w|>1$.  From Lemma
\ref{lemma:complex1} we get ${\mathcal J}= 4\pi^2 \log|d_2|$.
  
By combining these two cases, and by recalling (\ref{app:u_new_p}) for
$d_1$ and $d_2$, we have derived the key result
\begin{equation}\label{app:key}
  {\mathcal J} = 4\pi^{2} \log\left(\max(|d_1|,|d_2|)\right) = 4\pi^2 
    \log\left(\overline{a} \rho_{>} \right) \,; \quad 
  \rho_{>}\equiv \max(\rho,\rho^{\prime})\,,\quad \overline{a}\equiv
  \frac{(a_1+a_2)}{2},
\end{equation}
so that we need only to perform the radial integrations.  Upon
substituting (\ref{app:key}) into (\ref{app:e+new}), and using
$\int_{0}^{1}\left(1-\rho^2\right)^{-1/2}\, d\rho=1$, we obtain that
\begin{equation}\label{app:e+neww}
  \frac{E_{+}}{C^2} = -\frac{1}{2} \log \overline{a} - \frac{1}{2}
  \int_{0}^{1}\int_{0}^{1} \mathsf{K}(\rho,\rho^{\prime}) \, d\rho \,
  d\rho^{\prime}\,; \qquad \mathsf{K} \equiv \frac{\rho \rho^{\prime} \log \rho_{>}} {\sqrt{1-\rho^2}\sqrt{1-(\rho^{\prime})^2}} \,.
\end{equation}
Since $\mathsf{K}$ is symmetric about the diagonal, i.e.
$\mathsf{K}(\rho,\rho^{\prime})=\mathsf{K}(\rho^{\prime},\rho)$,
(\ref{app:e+neww}) becomes
\begin{equation}\label{app:e+newww}
\frac{E_{+}}{C^2} = -\frac{1}{2} \log \overline{a} - 
\int_{0}^{1} \frac{\rho \log\rho}{\sqrt{1-\rho^2}} \left(
  \int_{0}^{\rho} \frac{\rho^{\prime}}{\sqrt{1-(\rho^{\prime})^2}} \, d\rho^{\prime}
\right) d\rho\,.
\end{equation}
The inner integral in (\ref{app:e+newww}) is simply
$1-\sqrt{1-\rho^2}$, so that (\ref{app:e+newww}) reduces to
\begin{equation}\label{app:e+new4}
  \frac{E_{+}}{C^2} = -\frac{1}{2} \log \overline{a} +\int_{0}^{1}\rho\log\rho
  \, d\rho - \int_{0}^{1} \frac{\rho \log\rho}{\sqrt{1-\rho^2}} \, d\rho\,.
\end{equation}
Integration by parts yields $\int_{0}^{1}\rho \log\rho \, d\rho={-1/4}$.  To
evaluate the second integral we set $t=\sqrt{1-\rho^2}$, and then integrate
by parts to get
\begin{equation*}
  \int_{0}^{1} \frac{\rho \log\rho}{\sqrt{1-\rho^2}} \, d\rho =
  \frac{1}{2} \int_{0}^{1} \log\left(1-t^2\right) \, dt = \log{2}-1 \,.
\end{equation*}
In this way, we obtain
${E_{+}/C^2}=-\tfrac{1}{2}\log\overline{a} -{1/4} - \log{2}+1$, with
$\overline{a}={(a_1+a_2)/2}$, which confirms the first result in
(\ref{app:amazing}).

To calculate ${E_-/C^2}$ from (\ref{app:e-}), we simply observe that
\begin{equation*}
  \frac{E_-}{C^2} = -\frac{a_1}{2}\frac{\partial}{\partial a_1}
  \left( \frac{E_+(a_1,a_2)}{C^2(a_1,a_2)} \right) +
  \frac{a_2}{2}\frac{\partial}{\partial a_2}\left(
    \frac{E_+(a_1,a_2)}{C^2(a_1,a_2)} \right) \,.
\end{equation*}
Therefore, by using our formula for ${E_{+}/C^2}$ we get
${E_-/C^2} = {(a_1-a_2)/(4[a_1+a_2])}$, which confirms the second
result in (\ref{app:amazing}). 
\end{proof}

Finally, we remark that since $C$, $q$, and $E_{+}$ are invariant
under rotations, these quantities are independent of the orientation
of the elliptical patch with respect to the principal directions of
curvature. However, in our derivation of $E_{-}$ we assumed that the
semi-axes of the small elliptical patch were aligned with the
principal directions of curvature on the boundary through the center
of the patch. The more general case of an oblique elliptical patch is
readily treated through a rotation matrix.

\bibliographystyle{plain} \bibliography{references_n}

\end{document}